\newcounter{newcounter}
\newcommand\Tstrut{\rule{0pt}{2.7ex}}         
\crefname{equation}{}{}
\crefname{subsection}{subsection}{Subsections}
\crefname{theo}{Theorem}{Theorems}
\crefname{coro}{Corollary}{Corollaries}
\crefname{prop}{Proposition}{Propositions}
\crefname{assum}{Assumption}{Assumptions}
\newcommand{\refcheckize}[1]{%
	\expandafter\let\csname @@\string#1\endcsname#1%
	\expandafter\DeclareRobustCommand\csname relax\string#1\endcsname[1]{%
		\csname @@\string#1\endcsname{##1}\wrtusdrf{##1}}%
	\expandafter\let\expandafter#1\csname relax\string#1\endcsname
}
\newcommand{\overbar}[1]{\mkern 1.5mu\overline{\mkern-1.5mu#1\mkern-1.5mu}\mkern 1.5mu}
\newtheorem{theo}{Theorem}[section]
\newtheorem{coro}[theo]{Corollary}
\newtheorem{lemm}[theo]{Lemma}
\newtheorem{prop}[theo]{Proposition}
\theoremstyle{definition}
\newtheorem{defi}[theo]{Definition}
\newtheorem{exam}[theo]{Example}
\newtheorem{rema}[theo]{Remark}
\newtheorem{assum}[theo]{Assumption}
\newtheorem{conven}[theo]{Convention}
\numberwithin{equation}{section}
\newcommand{\R}{\mathbb R}
\newcommand{\bN}{\mathbb N}
\newcommand{\bQ}{\mathbb Q}
\newcommand{\bD}{\mathbb D}
\newcommand{\E}{\mathbb E}
\newcommand{\p}{\mathbb P}
\newcommand{\bF}{\mathbb F}
\newcommand{\1}{\mathbbm{1}}
\newcommand{\m}{\mathbbm{m}}
\newcommand{\bI}{\mathbb I}
\newcommand{\cB}{\mathcal B}
\newcommand{\cN}{\mathcal N}
\newcommand{\cE}{\mathcal E}
\newcommand{\G}{\mathcal G}
\newcommand{\F}{\mathcal F}
\newcommand{\cS}{\mathcal S}
\newcommand{\cT}{\mathcal T}
\newcommand{\cSM}{\mathcal{SM}}
\newcommand{\RH}{\mathcal{RH}}
\newcommand{\cR}{\mathcal{R}}
\newcommand{\scrD}{\mathscr D}
\newcommand{\sS}{\mathscr S}
\newcommand{\sUS}{\mathscr{US}}
\newcommand{\BMO}{\mathrm{BMO}}
\newcommand{\bmo}{\mathrm{bmo}}
\newcommand{\rc}{\mathrm{c}}
\newcommand{\corr}{\mathrm{corr}}
\newcommand{\sign}{\mathrm{sgn}}
\newcommand{\re}{\mathrm{Re}}
\newcommand{\Lip}{\mathrm{Lip}}
\newcommand{\riem}{\mathrm{Rm}}
\newcommand{\im}{\mathrm{i}}
\newcommand{\od}{\mathrm{d}}
\newcommand{\adm}{\mathcal A(S)}
\newcommand{\CL}{\mathrm{CL}}
\newcommand{\wt}{\widetilde}
\newcommand{\ol}{\overbar}
\newcommand{\lee}{\leqslant}
\newcommand{\gee}{\geqslant}
\newcommand{\ep}{\varepsilon}
\newcommand{\e}{\mathrm{e}}
\newcommand{\Leb}{\lambda}
\newcommand{\pd}{\partial}
\newcommand{\QV}{\<\vartheta, \tau\>}
\newcommand{\ts}{\textstyle}
\def\({\left(}
\def\){\right)}
\def\[{\hspace{-.05cm}\left[}
\def\]{\right]}
\def\<{\langle}
\def\>{\rangle}
\newcommand{\ce}[2]{\E^{#1}\hspace{-.05cm}\left [ #2 \right ]}
\newcommand{\bce}[2]{\E^{#1} \hspace{-.05cm}\big[ #2 \big]}
\newcommand{\bbce}[2]{\E^{#1} \hspace{-.05cm}\bigg[ #2 \bigg]}
\newcommand{\comb}[2]{#1 \sqcup #2 }
\begin{document}
	
	\title[Approximation of stochastic integrals with jumps via weighted BMO]{Approximation of stochastic integrals with jumps via weighted BMO approach}

	\author{Nguyen Tran Thuan}
	\address{$^{1}$Department of Mathematics, Saarland University,  Postfach 15~11~50, 66041 Saarbr\"ucken, Germany}
	\email{nguyen@math.uni-sb.de}
	
	\address{$^{2}$Department of Mathematics, Vinh University, 182 Le Duan, Vinh, Nghe An, Vietnam}
	
	\thanks{The author was supported by the Project 298641 ``\textit{Stochastic Analysis and Nonlinear Partial Differential Equations, Interactions and Applications}'' of the Academy of Finland, and by Vietnam’s National Foundation for Science and Technology	Development (NAFOSTED) under grant number 101.03-2020.18.}
	
	\date{\today}
	
	\subjclass[2020]{Primary: 60H05, 91G20; Secondary: 41A25, 60G51}
	
	\keywords{Approximation of stochastic integral, Convergence rate, L\'evy process, Semimartingale, Weighted bounded mean oscillation.}
	
	\maketitle

	\begin{abstract} This article investigates discrete-time approximations of stochastic integrals driven by semimartingales with jumps via weighted bounded mean oscillation (BMO) approach. This approach enables $L_p$-estimates, $p \in (2, \infty)$, for the approximation error depending on the weight, and it allows a change of the underlying measure which leaves the error estimates unchanged. To take advantage of this approach, we propose a new approximation scheme obtained from a correction for the Riemann approximation based on tracking jumps of the underlying semimartingale. We also discuss a way to optimize the approximation rate by adapting the discretization times to the setting. When the small jump activity of the semimartingale behaves like an $\alpha$-stable process with $\alpha \in (1, 2)$, our scheme achieves under a regular regime the same convergence rate for the error as in Rosenbaum and Tankov [\textit{Ann. Appl. Probab.} \textbf{24} (2014) 1002--1048]. Moreover, our approach extends to the case $\alpha \in (0, 1]$ and to the $L_p$-setting which are not treated there. As an application, we apply the methods in the special case where the  semimartingale is an  exponential L\'evy process to mean-variance hedging of European type options.
	\end{abstract}
	
	
	\section{Introduction}
	
	\subsection{The problem and main results}
	\label{subsec:problem}
	This article is concerned with discrete-time approximation problems for  stochastic integrals and studies the error process 
	$ E= (E_t)_{t\in [0,T]}$ defined by
	\begin{align}\label{eq:stochastic-integral-general}
		E_t := \int_0^t  \vartheta_{u-} \od S_u- A_t,
	\end{align}
	where the time horizon $T \in (0,\infty)$ is fixed, $ \vartheta$  is an admissible integrand, $S$ is a semimartingale on a complete filtered probability space $(\Omega, \F, \p, (\F_t)_{t\in [0, T]})$  and $A = (A_t)_{t \in [0, T]}$ is an approximation scheme for the stochastic integral.

	We will consider two approximation methods, where the second builds on the first one. For the first method, the  \textbf{basic approximation method},  we
	assume that   $A = A^{\riem}$ is   the Riemann approximation  process of the above integral, 
	$$ A_t^{\riem}(\vartheta, \tau)  :=\sum_{i=1}^n \vartheta_{t_{i-1}-}(S_{t_i \wedge t}-S_{t_{i-1}\wedge t})$$
	for the \textit{deterministic} time-net $\tau_n=\{ 0=t_0 < t_1<\cdots <t_n =T\}.$
	We will study the corresponding error $E^{\riem}$   in $L_2,$ but {\it locally in time} in the sense that for any stopping time $\rho$ with values in $[0,T]$ we measure the error which accumulates within $[\rho,T].$  The term  {\it locally in time}  also includes that at the random time $\rho$ we restrict our problem
	to all sets $B\in \mathcal{F}_\rho$ of positive measure, which leads to the notion of {\it Bounded Mean Oscillation} (there are two abbreviations for it used in this article, $\bmo$ and $\BMO$, which express two different spaces).
	More precisely, we will work with {\it weighted}  $\bmo$-norms introduced in \cite{Ge05, GN20},  because we consider
	\begin{align} 
		\label{bmo-introduction}
		\ce{\F_\rho}{| E^{\riem}_T - E^{\riem}_{\rho} |^2} \lee c_{\eqref{bmo-introduction}}^2  \Phi^2_\rho \quad \mbox{a.s.},  \forall \rho. 
	\end{align}
	Here, $\E^{\F_\rho}$ stands for the conditional expectation with respect to $\F_{\rho},$ and the {\it weight process} $\Phi = (\Phi_t)_{t \in [0,T]}$ will be specified later.  
	Denote by $\|E^{\riem}\|_{{\bmo}_2^{\Phi}(\p)}$ the infimum of the  $c_{\eqref{bmo-introduction}} >0$ such that \eqref{bmo-introduction} is satisfied. We assert in \cref{theo:approximation-QV-BMO} that, under certain conditions, one has
	$$ \|E^{\riem}\|_{{\bmo}_2^{\Phi}(\p)} \lee c \sqrt{\|\tau_n\|_\theta}, $$
	where $\theta\in (0,1]$ is related (but not only) to the growth property of the integrand  $\vartheta$ by
	\begin{align}\label{eq:intro-growth}
		\ts \sup_{t \in [0, T)}(T -t)^{\frac{1 - \theta}{2}} |\vartheta_t| < \infty \quad \mbox{a.s.,}
	\end{align}
 and $\|\tau_n\|_\theta$ denotes a nonlinear mesh size of $\tau_n$ related to $\theta$. In \Cref{subsec:approx-accuracy} we discuss that $\tau_n$ can be chosen such that $\|\tau_n\|_\theta \lee c/n$, implying the approximation rate 
	$$ \|E^{\riem}\|_{{\bmo}_2^{\Phi}(\p)} \lee c/ \sqrt{n}.$$ Roughly speaking, the  faster  the  integrand  grows as $t\uparrow T$, the more the time-net should be concentrated near $T$ to compensate the growth.
	
	If the semimartingale $S$ has jumps, replacing  $E_{\rho}$ by  $E_{\rho-}$ in \eqref{bmo-introduction} leads to different norms, the $\BMO_2^{\Phi}(\p)$-norms. We will see in \eqref{eq:tail-estimate-BMO} and \cref{lemm:feature-BMO} that the $\BMO_2^\Phi(\p)$-norm gives us a way to achieve good distributional tail estimates for the error $E$ such as polynomial or exponential tail decay depending on the weight. Moreover, this approach allows us to switch the underlying measure $\p$ to an equivalent measure $\bQ$, which is frequently encountered in mathematical finance,  provided the change of measure satisfies a reverse H\"older inequality, so that the $\BMO_2^\Phi(\bQ)$-norm is equivalent to the $\BMO_2^\Phi(\p)$-norm.

	However, \cref{counterexample} below shows that if $S$ has jumps, then the Riemann approximation error $E^{\riem}$ does  in general not converge  to zero  if measured in the   $\BMO_2^{\Phi}(\p)$-norm.  The reason for
	this fact is that the $\BMO_2^{\Phi}(\p)$-norm is too sensitive to detect possibly large jumps of $S$, which is in contrast to the case of no jump in \cite{Ge05}.
	To overcome this difficulty, we adapt and develop further the idea using a
	\textit{small-large jump decomposition} of $S$ presented in Dereich and Heidenreich \cite{DH11} to our problem. This lets us design a new approximation scheme based on an adjustment of
	the Riemann sum which approximates the stochastic integral. This will be our  second method, the \textbf{jump correction method}, see \cref{defi:approximation-correction}. 
	
	Generally speaking, the
	approximation with jump correction $A^{\corr}(\vartheta, \tau|\ep, \kappa)$ is of the form
	\begin{align*}
		A^{\corr}_t(\vartheta, \tau|\ep, \kappa)  = A^{\riem}_t(\vartheta, \tau) + \mbox{Correction}_t(\ep, \kappa)
	\end{align*}
	where the parameters $\ep>0$ and $\kappa\gee 0$ in the correction term relate to the threshold for which we decide which jumps of $S$ are (relatively)
	large or small, and this threshold might continuously shrink when the time $t$ approaches $T$, see \cref{defi:random-times}. The time-net used in this approximation method is a combination of the given deterministic time-net $\tau$ in
	the Riemann sum and random times of carefully chosen large jumps of $S$.  A consequence of \cref{prop:cardinality-combined-nets} shows that the expected value of the cardinality of this combined time-net is, up to a multiplicative constant, comparable to the cardinality of $\tau$. This new approximation scheme can be interpreted in the context of mathematical finance as follows: Before trading, we arrange
	to use the Riemann approximation $A^\riem(\vartheta, \tau)$ associated with a trading strategy $\vartheta$ along with
	a preselected deterministic trading dates represented by $\tau$. During trading with that initial
	plan, as soon as the large jumps of $S$ occur, we trade additionally with the amount given in the correction term.
	
	Denote by $E^{\corr}$ the error caused from the approximation with the jump correction scheme.  To formulate the result, we assume  that $S$ is given as the (strong) solution of 
	$$  \od S_t  = \sigma (S_{t-}) \od Z_t, $$
	with $ \sigma $ specified later, where $Z$ is a square integrable semimartingale defined in \Cref{subsec:setting-stochastic-integal}. Then 
	\cref{theo:BMO-convergent-rate} implies that, for suitably chosen  time-nets and corrections, and for a suitable modification $\ol \Phi$ of $\Phi$, it holds that
	\begin{align}\label{eq:intro-1}
		\| E^{\corr} \|_{\BMO_2^{\ol \Phi}(\p)} \lee  c/ \sqrt{n}	
	\end{align}
	under the condition that  the random measure $\pi_Z$ of the predictable semimartingale characteristics  of $Z$ satisfies that $\pi_Z(\od t, \od z) = \nu_t(\od z) \od t$ and that
	\begin{align}\label{eq:intro-2}
		\ts \sup_{r \in (0, 1)} \big\|(\omega, t) \mapsto \int_{r < |z| \lee 1}  z \nu_t(\omega, \od z) \big\|_{L_\infty(\Omega \times [0, T], \p \otimes \Leb)} < \infty,
	\end{align}
	where $\Leb$ is the Lebesgue measure, and one has
	\begin{align}\label{eq:intro-3}
		\| E^{\corr} \|_{\BMO_2^{\ol\Phi}(\p)} \lee c \begin{cases}
			1/\sqrt[2\alpha]{n} & \mbox { if } \alpha \in (1, 2]\\
			(1+\log n)/\sqrt{n}& \mbox{ if } \alpha = 1\\			
			1/\sqrt{n} & \mbox{ if } \alpha \in (0, 1)
		\end{cases}
	\end{align} 
	provided that 
	\begin{align}\label{eq:intro-4}
		\ts \sup_{r \in (0, 1)} \big\|(\omega,t) \mapsto r^\alpha \int_{r < |z| \lee 1} \nu_t(\omega, \od z)\big\|_{L_\infty(\Omega \times [0, T], \p \otimes \Leb)} <\infty.
	\end{align}
	Condition \eqref{eq:intro-2} aims to indicate a local symmetry of $\nu$ around the origin rather than the small jump intensity of $Z$ which is described by \eqref{eq:intro-4}.

	Since the integrator $Z$ and structure conditions imposed on the approximated stochastic integral to achieve \eqref{eq:intro-1} and \eqref{eq:intro-3} are quite general, those obtained convergence rates are in general not the best possible. We will show in \Cref{subsec:model-improve-rate} that one can drastically improve those convergence rates in the particular case when $S$ is the Dol\'eans--Dade exponential of a pure jump process $Z$ where $Z$ has independent increments. Namely, \cref{theo:BMO-convergent-rate} asserts that, for the error $E^{\corr}$ as above and under certain structure conditions for the approximated stochastic integral, one has
	\begin{align}\label{eq:intro-5}
		\| E^{\corr} \|_{\BMO_2^{\ol\Phi}(\p)} \lee c \begin{cases}
			1/n^{\frac{1}{\alpha}(1- \frac{1}{2}(1- \theta)(\alpha -1))} & \mbox { if } \eqref{eq:intro-4} \mbox{ holds for } \alpha \in (1, 2]\\
			(1+\log n)/n& \mbox{ if } \eqref{eq:intro-4} \mbox{ holds for } \alpha = 1\\			
			1/n & \mbox{ if } \eqref{eq:intro-4} \mbox{ holds for } \alpha = 1 \mbox{ and } \eqref{eq:intro-2} \mbox{ holds}\\
			1/n & \mbox{ if } \eqref{eq:intro-4} \mbox{ holds for } \alpha \in (0, 1),
		\end{cases}
	\end{align} 
	where $\theta \in (0, 1]$ relates to the growth of $\vartheta$ mentioned in \eqref{eq:intro-growth}.

	Furthermore, \cref{theo:BMO-convergent-rate} also reveals that, if the weight $\Phi$ is sufficiently regular, then the estimates \eqref{eq:intro-1}, \eqref{eq:intro-3} and \eqref{eq:intro-5} hold true for the $L_p$-norm, $p \in (2, \infty)$, in place of the $\BMO_2^{\ol\Phi}(\p)$-norm. In addition, the measure $\p$ can be substituted by a suitable equivalent probability measure $\bQ$ while keeping those estimates unchanged. 
	
	The parameter $n$ in \eqref{eq:intro-1}, \eqref{eq:intro-3} and \eqref{eq:intro-5} refers to certain moments of the cardinality of the combined time-net used in the approximation. This cardinality represents in the context of mathematical finance the number of transactions performed in trading, see \cref{rema:factor-n}.

	As an application, we  choose $S$ to be an  exponential L\'evy  process and measure the discretization error for stochastic integrals where the integrands are  mean-variance hedging (MVH) strategies  of European payoffs.
	To do this, we provide in \cref{theo:MVH-strategy} using Malliavin calculus an explicit representation of the MVH  strategy for a European payoff for which we do not require any
	regularity for payoff functions nor specific structures from the underlying L\'evy process. This result is, to the best of our knowledge, 
	new in this generality and it might have an independent interest.

	Let us end this subsection by listing some examples taken from \cref{coro:convergence-rate-levy} showing convergence rates for $E^{\corr}$ under the ${\BMO_2^{\ol\Phi}(\p)}$-norm in the exponential L\'evy setting. Namely, we let $S = \e^X$ where $X$ is a L\'evy process without the Brownian part whose small jump intensity behaves like an $\alpha$-stable process with $\alpha \in (0, 2)$, and let $\vartheta$ be the MVH strategy of a European payoff $g(S_T)$. Then, for the European call/put option (or any Lipschitz $g$), the convergence rate is of order:  $1/n$ if $\alpha \in (0, 1)$, $(1+\log n)/n$ if $\alpha =1$, and $1/\sqrt[\alpha]{n}$ if $\alpha \in (1, 2)$. For the binary option (or any bounded $g$), the order of convergence rate is: $1/n$ if $\alpha \in (0, 1)$, $(1+\log n)/n$ if $\alpha =1$, and   $1/n^{\frac{1}{\alpha}[1- \frac{1}{\alpha}(\alpha -1)^2] - \delta}$ (for any $0<\delta < \frac{1}{2}(1- \frac{1}{\alpha})(\frac{2}{\alpha}-1)$) if $\alpha \in (1, 2)$. Moreover, if $\E \e^{pX_T} <\infty$ for some $p \in (2, \infty)$, then measuring $E^{\corr}$ in $L_p$ yields the same rates case-wise as above.  Lastly, our results are valid for some powered call/put options which are obtained from an interpolation, in a sense, between the binary and the call/put option.

	\subsection{Literature overview} 
	Besides its own mathematical interest and its application to numerical methods, the  approximation of a stochastic integral has a direct motivation in mathematical finance. Let us briefly discuss this for the Black--Scholes model. Assume that the (discounted) price of a risky asset is modelled by a stochastic process $S$ which solves the stochastic differential equation (SDE) $\od S_t = \sigma(S_t) \od W_t$, where $W$ is the standard Brownian motion and the function $\sigma$ satisfies some suitable conditions. For a  European type payoff $g(S_T)$ satisfying an integrability condition, it is known that $g(S_T) =\E g(S_T) + \int_0^T \pd_y G(t, S_t)\od S_t$,
	where $G(t, y): = \E(g(S_T)|S_t =y)$ is the option price function and $(\pd_y G(t, S_t))_{t\in [0, T)}$ is the so-called delta-hedging strategy. The stochastic integral in the representation of $g(S_T)$ above can be interpreted as the theoretical hedging portfolio which is rebalanced continuously. However, it is not feasible in practice because one can only readjust the portfolio finitely many times. This leads to a replacement of the stochastic integral by a discretized version, and this substitution causes the discretization error. 
	
	The error represented by the difference between a stochastic integral and its discretization has been extensively analyzed in various contexts. It is usually studied in $L_2$ for which one can exploit the orthogonality to reduce the probabilistic setting to a ``more deterministic'' setting where the corresponding quadratic variation is employed instead of the original error. In the Wiener space, we refer, e.g., to \cite{GG04, Ge02, GT01}, where the error along with its convergence rates was examined. The weak convergence of the error was treated in  \cite{GT09, GT01}. When the driving process is a continuous semimartingale, the convergence in the $L_2$-sense was studied in \cite{Fu14}, and in the almost sure sense it was considered in \cite{GL14}.

	In this article, we allow the semimartingale to jump since many important processes used in financial modelling are not continuous (see \cite{CT03, Sc03}), and the presence of jumps has a significant effect on the hedging errors. Moreover, models with jumps typically correspond to incomplete markets. This means that beside the error resulting from the impossibility of continuously rebalancing a portfolio, there is another hedging error due to the incompleteness of the market. The latter problem was studied in many works (see an overview in \cite{Sc01} and the references therein). The present article mainly focuses on the first type of hedging error. The discretization error was studied within L\'evy models in the weak convergence sense in \cite{TV09}, in the $L_2$-sense in \cite{BT11, GGL13}, and for a general jump model under the $L_2$-setting in \cite{RT14}.

	In general, the classical $L_2$-approach for the error yields a second-order polynomial decay for its distributional tail by Markov's inequality. If higher-order decays are needed, then the $L_p$-approach  ($2 < p < \infty$) is considered as a natural choice, and this direction has been investigated for diffusions on the Wiener space in \cite{GT15}. A remarkably different route given in \cite{Ge05} is that one can study the error in weighted $\BMO$ spaces. The main benefit of the weighted $\BMO$-approach is a John--Nirenberg type inequality (\cite[Corollary 1(ii)]{Ge05}): \textit{If the error process $E$ belongs to  $\BMO_p^\Phi(\p)$ for some $p\in (0, \infty)$, where $\Phi$ is some weight function specified in \cref{definition:weighted_bmo}, then there are constants $c, d>0$ such that for any stopping time $\rho \colon \Omega \to [0, T]$ and any $\alpha, \beta >0$,
		\begin{align}\label{eq:tail-estimate-BMO}
			\textstyle \p\(\sup_{u \in [\rho, T]} |E_u - E_{\rho-}| > c \alpha \beta \big|\F_\rho\) \lee \e^{1- \alpha} + d \p\(\sup_{u\in [\rho, T]} \Phi_u > \beta\big|\F_\rho\).
	\end{align}}
	Obviously, if $\Phi$ has a good distributional tail  estimate, for example, if it has a polynomial or exponential tail decay, then by adjusting $\alpha$ and $\beta$ one can derive a tail estimate for $E$ accordingly. Especially, one can then derive $L_p$-estimates, $p\in (2, \infty)$, for the error.

	\subsection{Comparison to other works} Regarding models with jumps, let us first mention the works done by Brod\'en and Tankov \cite{BT11} and by Geiss, Geiss and Laukkarinen \cite{GGL13} which treat the Riemann approximation of stochastic integrals driven by the stochastic exponential of a L\'evy process using deterministic discretization times. Although the approaches in \cite{BT11} and \cite{GGL13} are different, both arrive at a result saying that, if the approximated stochastic integral is sufficiently regular, then the \textit{asymptotically optimal} convergence rate of the error measured in $L_2$ is of order $1/\sqrt{n}$ when $n \to \infty$ (see \cite[Corollary 3.1]{BT11} and \cite[Theorem 5]{GGL13}), where $n$ is the cardinality of the used time-net. For this direction, we also achieve in \cref{theo:BMO-convergent-rate}\eqref{item:coro:bmo-convergent-rate} the convergence rate of order $1/\sqrt{n}$ for the error under the $\bmo_2^{\Phi}$-norm, which is stronger than the $L_2$-norm.
	
	Later, Rosenbaum and Tankov \cite{RT14} show  that the convergence rate can be faster than $1/\sqrt{n}$ by using Riemann approximations associated with random discretization times. It is asserted in \cite[Remark 5]{RT14}  that, when the small jump activity of the semimartingale integrand behaves like an $\alpha$-stable process with $\alpha \in (1, 2)$, then the convergence rate measured in $L_2$ is of order $1/\sqrt[\alpha]{n}$, which is also \textit{asymptotically optimal} in their setting. In our framework, under a \textit{regular regime} when $\theta =1$, we derive from \eqref{eq:intro-5} the rate $1/\sqrt[\alpha]{n}$ under the weighted BMO-norm (which is stronger than the $L_2$-norm) when \eqref{eq:intro-4}  holds for some $\alpha \in (1, 2]$. This rate is consistent with that in \cite[Remark 5]{RT14} when $n$ represents the expected number of transactions. Moreover, our results are valid for $\alpha \in (0,1]$ which is not covered in \cite{RT14}.

	We stress that our jump correction scheme is different from that in \cite{RT14}. The authors in \cite{RT14} use Riemann approximation schemes along with random times and  employed time-nets are the hitting times of a space grid which are obtained by continuously tracking
	jumps of the integrand (which represents the trading strategy). Differently from that, time-nets in our method are a combination between preselected deterministic time-nets with random times obtained by continuously tracking jumps of the integrator (which represents the price process). In general, the computational cost of our method is less expensive than that in \cite{RT14}. This can be argued in a situation when many options with different strategies are hedged at the same time with respect to a risky asset. 
	
	One also remarks that, in this article, we only require upper bound conditions for integrands, e.g., \eqref{eq:intro-growth}, and for small jump intensity of the integrator $Z$, e.g., \eqref{eq:intro-2} or \eqref{eq:intro-4}, to obtain upper bounds for approximation errors, and so far no lower bounds are investigated.
	
	Other contributions of this work are, thanks to features of the (weighted) BMO approach as aforementioned, to provide a situation that one can deduce $L_p$-estimates, $p \in (2, \infty)$, for the approximation error which are, to the best of our knowledge, still missing in the literature for models with jumps. Moreover, as a benefit to applications in mathematical finance, our results allow a change of the underlying measure which leaves the error estimates unchanged if the change of measure satisfies a reverse H\"older inequality, see \cref{lemm:feature-BMO}.

	\subsection{Structure of the article} Some standard notions and notations are contained in \cref{sec:preliminaries}. The main results are provided in \cref{sec:main-results} and theirs proofs are given in \cref{sec:proofs-main-results}. In
	\cref{sec:application-Levy-model}, we give some applications of those main results in exponential L\'evy models. \cref{app-sec:proof-MVH-strategy} presents briefly Malliavin calculus for L\'evy processes which is the main tool to obtain an explicit MVH strategy for a European type option in \cref{theo:MVH-strategy}. The regularity of weight processes used in this article is shown in \cref{sec:weight-regularity}. In \cref{sub-sec:estimae-semigroup}, we establish some gradient type estimates for a L\'evy semigroup on H\"older spaces, which are used to verify the main results in the L\'evy setting.

	\section{Preliminaries}
	\label{sec:preliminaries}
	
	\subsection{Notations and conventions} \label{subsec:notation}
	\subsubsection*{General notations}
	Denote $\bN := \{0, 1, 2, \ldots\}$, $\R_+:=(0, \infty)$ and $\R_0 := \R \backslash \{0\}$. For $a, b \in \R$, we set $a\vee b := \max\{a, b\}$ and $a\wedge b : = \min\{a, b\}$. For $A, B \gee 0$ and $c\gee 1$, the notation $A \sim_c B$ stands for $A/c \lee B \lee cA$. The notation $\log$ indicates the logarithm to the base 2 and $\log^+ x : = \log (x\vee 1)$. Subindexing a symbol by a label means the place where that symbol appears (e.g., $c_{\eqref{eq:reverse-Holder-condition}}$ refers to the relation \eqref{eq:reverse-Holder-condition}).

	The Lebesgue measure on the Borel $\sigma$-algebra $\cB(\R)$ is denoted by $\Leb$, and we also write $\od x$ instead of $\Leb(\od x)$ for simplicity. For $p \in [1, \infty]$ and $A \in \cB(\R)$, the notation $L_p(A)$ means the space of all $p$-order integrable Borel functions on $A$ with respect to $\Leb$, where the essential supremum is taken when $p=\infty$. 
	
	Let $\xi$ be a random variable defined on a probability space $(\Omega, \F, \p)$. The push-forward measure of $\p$ with respect to $\xi$ is denoted by $\p_\xi$. If $\xi$ is integrable (non-negative), then the (generalized) conditional expectation of $\xi$ given a sub-$\sigma$-algebra $\G\subseteq \F$ is denoted by  $\ce{\G}{\xi}$. We also agree on the notation $L_p(\p) : = L_p(\Omega, \F, \p)$. 

	\subsubsection*{Notations for stochastic processes} Let $T \in (0, \infty)$ be fixed and $(\Omega, \F, \p)$  a complete probability space equipped with a right continuous filtration $\bF = (\F_t)_{t\in [0, T]}$. Assume that $\F_0$ is generated by $\p$-null sets only. Because of the  conditions imposed on $\bF$, we may assume that every martingale adapted to this filtration is \textit{c\`adl\`ag} (right-continuous with left limits). For $\bI = [0, T]$ or $\bI = [0, T)$, we use the following notations:
	\begin{enumerate}[--]
		\itemsep0.3em
		
		\item For two processes $X =(X_t)_{t\in \bI}$, $Y = (Y_t)_{t\in \bI}$, by writing $X = Y$ we mean that $X_t = Y_t$ for all $t\in \bI$ a.s., and similarly when the relation ``$=$'' is replaced by some standard relations such as ``$\gee$'', ``$\lee$'', etc. 
		\item For a c\`adl\`ag  process $X = (X_t)_{t\in \bI}$, we define the process $X_{-} = (X_{t-})_{t\in \bI}$ by setting $X_{0-}: = X_0$ and $X_{t-} : = \lim_{0<s \uparrow t}X_s$ for $t\in \bI\backslash\{0\}$. In addition, set $\Delta X := X - X_-$.
		\item $\CL(\bI)$ denotes the family of all  c\`adl\`ag and $\bF$-adapted processes $X= (X_t)_{t\in \bI}$.
		\item $\CL_0(\bI)$ (resp. $\CL^+(\bI)$)  consists of all $X\in \CL(\bI)$ with $X_0=0$ a.s.  (resp. $X\gee 0$);
		\item Let $M= (M_t)_{t\in \bI}$ and $N= (N_t)_{t\in \bI}$ be $L_2(\p)$-martingales adapted to $\bF$. The \textit{predictable quadratic covariation} of $M$ and $N$ is denoted by $\<M, N\>$. If $M=N$, then we simply write $\<M\>$ instead of $\<M, M\>$.
		\item For $p \in [1, \infty]$ and $X \in \CL([0, T])$, we denote $\|X\|_{S_p(\p)}: = \|\sup_{t \in [0, T]}|X_t|\|_{L_p(\p)}$.
	\end{enumerate}

	\subsection{Weighted bounded mean oscillation and regular weight} \label{defi:weighted-BMO/bmo} 
	We recall the notions of weighted bounded mean oscillation and the space $\cSM_p(\p)$ of regular weight processes (the abbreviation $\cSM$ indicates the property resembling a supermartingale). Let $\cS([0, T])$ be the family of all stopping times $\rho\colon \Omega \to [0, T]$, and set $\inf\emptyset : = 
	\infty$.
	\begin{defi}[\cite{Ge05, GN20}]\label{definition:weighted_bmo} 
		For $p\in(0,\infty)$, $Y\in \CL_0([0, T])$ and $\Phi \in \CL^+([0, T])$, define
		\begin{align*}
			\|Y\|_{\BMO_p^{\Phi}(\p)} &:= \inf\left\{ c \gee 0 : \ce{\F_\rho}{|Y_T-Y_{\rho-}|^p} \leqslant c^p \Phi_{\rho}^p \quad \mbox{a.s., } \forall \rho \in \cS([0, T])\right\},\\
			\|Y\|_{\bmo_p^{\Phi}(\p)} &:= \inf\left\{ c \gee 0 : \ce{\F_\rho}{|Y_T-Y_{\rho}|^p} \leqslant c^p \Phi_{\rho}^p \quad \mbox{a.s., } \forall \rho \in \cS([0, T])\right\},\\
			\| \Phi\|_{\cSM_p(\p)} & := \inf\left\{c \gee 0 : \ce{\F_\rho}{\ts \sup_{\rho \lee t \lee T} \Phi_t^p} \leqslant c^p \Phi_\rho^p \quad \mbox{a.s., } \forall \rho \in \cS([0, T]) \right\}.
		\end{align*}
		For $\Gamma \in \{\BMO^\Phi_p(\p), \bmo^\Phi_p(\p)\}$, if $\|Y\|_{\Gamma} <\infty$ (resp. $\|\Phi\|_{\cSM_p(\p)} <\infty$), then we write $Y \in\Gamma$ (resp. $\Phi \in \cSM_p(\p)$). In the non-weighted case, i.e., $\Phi \equiv 1$, we drop $\Phi$ and simply use the notation $\BMO_p(\p)$ or $\bmo_p(\p)$.
	\end{defi}
	
	\begin{rema}\label{rema:deterministic-bmo}
		By \cite[Propositions A.4 and A.1]{GN20}, the definitions of $\|\cdot\|_{\bmo^\Phi_p(\p)}$ and $\|\cdot\|_{\cSM_p(\p)}$ can be simplified by using deterministic times $a \in [0, T]$ instead of stopping times $\rho$, i.e.,
		\begin{align*}
			\|Y\|_{\bmo_p^{\Phi}(\p)} &= \inf\left\{ c \gee 0 : \ce{\F_a}{|Y_T-Y_{a}|^p} \leqslant c^p \Phi_{a}^p \quad \mbox{a.s., } \forall a\in [0, T] \right\},\\
			\| \Phi\|_{\cSM_p(\p)} &= \inf\left\{c \gee 0 : \ce{\F_a}{\ts \sup_{a \lee t \lee T} \Phi_t^p} \leqslant c^p \Phi_a^p \quad \mbox{a.s., } \forall a\in [0, T] \right\}.
		\end{align*}
	\end{rema}

	The theory of classical non-weighted $\BMO$/$\bmo$-martingales can be found in \cite[Ch.VII]{DM82} or \cite[Ch.IV]{Pr05}, and they were used later in different contexts (see, e.g., \cite{CKS98, DMSSS97}). The notion of weighted $\BMO$ space above was introduced and discussed in \cite{Ge05} where it was developed for general c\`adl\`ag processes which are not necessarily martingales. 
	
	It is clear that if $Y \in \CL_0([0, T])$ is continuous, then $\|Y\|_{\bmo^\Phi_p(\p)}  = \|Y\|_{\BMO^\Phi_p(\p)}$. If $Y$ has jumps, then the relation between weighted $\BMO$ and weighted $\bmo$  is 
	as follows.
	
	\begin{lemm}[\cite{GN20}, Propositions A.5 and A.3]\label{lemm:relation-BMO-bmo} If $\Phi \in \cSM_p(\p)$ for some $p \in (0, \infty)$, then there is a constant $c= c(p, \|\Phi\|_{\cSM_p(\p)}) >0$ such that for all $Y \in \CL_0([0, T])$,
		\begin{align*}
			\|Y\|_{\BMO^\Phi_p(\p)} \sim_{c} \|Y\|_{\bmo^\Phi_p(\p)} + |\Delta Y|_{\Phi},
		\end{align*}
		where $|\Delta Y|_{\Phi} : = \inf\{ c\gee 0 : |\Delta Y_t| \lee c \Phi_t \;\mbox{ for all } t\in [0, T] \mbox{ a.s.}\}.$
	\end{lemm}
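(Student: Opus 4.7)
The plan is to prove the two inequalities making up the equivalence separately, organised around the identity
\[
Y_T - Y_{\rho-} = (Y_T - Y_\rho) + \Delta Y_\rho, \qquad \rho \in \cS([0, T]),
\]
and the fact that $\Delta Y_\rho$ is $\F_\rho$-measurable (since for a c\`adl\`ag adapted $Y$, $Y_{\rho-}$ is already $\F_{\rho-} \subset \F_\rho$-measurable). First I would handle the easy direction $\|Y\|_{\BMO^\Phi_p(\p)} \lee c(\|Y\|_{\bmo^\Phi_p(\p)} + |\Delta Y|_\Phi)$, which does not require the $\cSM_p$-hypothesis at all: applying conditional Minkowski (if $p\gee 1$) or the subadditivity $|a+b|^p \lee |a|^p + |b|^p$ (if $p < 1$) to the identity, together with $|\Delta Y_\rho| \lee |\Delta Y|_\Phi \, \Phi_\rho$, delivers the bound directly; the $p<1$ constant is absorbed via $(a^p + b^p)^{1/p} \lee 2^{1/p}(a+b)$.

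The harder direction $\|Y\|_{\bmo^\Phi_p(\p)} + |\Delta Y|_\Phi \lee c \|Y\|_{\BMO^\Phi_p(\p)}$ is where the $\cSM_p$-regularity of $\Phi$ enters. Writing $C := \|Y\|_{\BMO^\Phi_p(\p)}$ and $\kappa := \|\Phi\|_{\cSM_p(\p)}$, I would first bound the $\bmo$-part by approximating $\rho$ from the right: set $\rho_k := (\rho + 1/k) \wedge T \in \cS([0,T])$, so that $\rho_k \downarrow \rho$ with $\rho_k > \rho$ on $\{\rho < T\}$. The c\`adl\`ag property then forces $Y_{\rho_k-} \to Y_\rho$ a.s.\ on $\{\rho < T\}$, while the target $Y_T - Y_\rho$ already vanishes on $\{\rho = T\}$. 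Chaining the $\BMO$ bound at $\rho_k$ with the tower property, the pointwise inequality $\Phi_{\rho_k}^p \lee \sup_{\rho \lee t \lee T}\Phi_t^p$, and the $\cSM_p$-inequality at $\rho$ yields
\begin{align*}
\ce{\F_\rho}{|Y_T - Y_{\rho_k-}|^p}
= \Bce{\F_\rho}{\ce{\F_{\rho_k}}{|Y_T - Y_{\rho_k-}|^p}}
\lee C^p \Bce{\F_\rho}{\sup_{\rho \lee t \lee T}\Phi_t^p}
\lee (C\kappa)^p \Phi_\rho^p,
\end{align*}
and conditional Fatou as $k \to \infty$ upgrades this to $\ce{\F_\rho}{|Y_T - Y_\rho|^p} \lee (C\kappa)^p \Phi_\rho^p$, i.e.\ $\|Y\|_{\bmo^\Phi_p(\p)} \lee C\kappa$.

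With the $\bmo$ bound in hand, the jump estimate $|\Delta Y|_\Phi \lee c C$ becomes routine: it suffices to bound $|\Delta Y_\tau|$ for a generic $\tau \in \cS([0,T])$ since the jumps of a c\`adl\`ag adapted process are exhausted by countably many stopping times. Writing $\Delta Y_\tau = (Y_T - Y_{\tau-}) - (Y_T - Y_\tau)$ and exploiting its $\F_\tau$-measurability gives
\begin{align*}
|\Delta Y_\tau|^p = \ce{\F_\tau}{|\Delta Y_\tau|^p}
\lee c_p\big(\ce{\F_\tau}{|Y_T - Y_{\tau-}|^p} + \ce{\F_\tau}{|Y_T - Y_\tau|^p}\big)
\lee c_p\big(1 + \kappa^p\big)C^p \Phi_\tau^p,
\end{align*}
by combining the $\BMO$ hypothesis with the $\bmo$ bound just obtained. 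The hard part of the whole argument is the Fatou step above: the $\BMO$ hypothesis only controls increments ending at $\rho-$, and the $\cSM_p$-property is exactly the device needed to absorb the infinitesimal time-shift $\rho \rightsquigarrow \rho_k$ back into $\Phi_\rho$ when passing to the limit.
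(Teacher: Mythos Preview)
Your argument is correct. Note, however, that the paper does not actually prove this lemma: it is stated with attribution to \cite[Propositions~A.5 and~A.3]{GN20} and no proof is supplied here. Your self-contained approach --- the easy direction via the decomposition $Y_T - Y_{\rho-} = (Y_T - Y_\rho) + \Delta Y_\rho$, and the harder direction via the right-approximation $\rho_k := (\rho + 1/k)\wedge T$ combined with conditional Fatou and the $\cSM_p$-property to pass from $\BMO$ to $\bmo$, followed by $\Delta Y_\tau = (Y_T - Y_{\tau-}) - (Y_T - Y_\tau)$ and exhaustion of jumps by stopping times to control $|\Delta Y|_\Phi$ --- is a clean direct proof and presumably close in spirit to what is done in the cited reference. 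Your observation that the easy direction does not require the $\cSM_p$-hypothesis is also correct.
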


	\begin{defi}[\cite{Ge05}]\label{defi:reverse-Holder} 
		Let $\bQ$ be an equivalent probability measure to $\p$ so that $U : =\od \bQ/\od \p >0$. Then $\bQ \in \RH_s(\p)$ for some $s\in (1, \infty)$ if $U \in L_s(\p)$ and if there is a constant $c_{\eqref{eq:reverse-Holder-condition}}>0$ such that $U$ satisfies the following \textit{reverse H\"older inequality}
		\begin{align}\label{eq:reverse-Holder-condition}
			\ce{\F_\rho}{U^s} \lee c_{\eqref{eq:reverse-Holder-condition}}^s (\ce{\F_\rho}{U})^s \quad\mbox{a.s., } \forall \rho \in \cS([0, T]),
		\end{align}
		where the conditional expectation $\E^{\F_\rho}$ is computed under $\p$.
	\end{defi}

	We recall in \cref{lemm:feature-BMO} some features of weighted $\BMO$ which play a key role in our applications. Notice that \cref{lemm:feature-BMO} is, in general, \textit{not} valid for weighted $\bmo$.
	
	\begin{prop}[\cite{Ge05, GN20}]\label{lemm:feature-BMO} Let $p, q \in (0, \infty)$ and $\Phi \in \CL^+([0, T])$.
		\begin{enumerate}[\rm (1)]
			\itemsep0.3em
			
			\item\label{item:Lp-estimate-BMO-feature} There exists a constant $c_1 = c_1(p, q)>0$ such that $\|\cdot\|_{S_p(\p)} \lee c_1 \|\Phi\|_{S_p(\p)} \|\cdot\|_{\BMO^\Phi_q(\p)}$. 
			
			\item \label{item:SM-BMO-relation} If $\Phi \in \cSM_{p}(\p)$, then for any $r \in (0, p]$ there is a constant $c_2=c_2(r, p, \|\Phi\|_{\cSM_p(\p)})>0$ such that  $\|\cdot\|_{\BMO_p^\Phi(\p)} \sim_{c_2} \|\cdot\|_{\BMO_r^\Phi(\p)}$.
			
			\item \label{item:BMO-feature-RH} If $\bQ \in \RH_s(\p)$ for some $s \in (1, \infty)$ and $\Phi \in \cSM_p(\bQ)$, then there is a constant $c_3 = c(s, p) >0$ such that $		\|\cdot\|_{\BMO^\Phi_p(\bQ)} \lee c_3 \|\cdot\|_{\BMO^\Phi_p(\p)}$.
		\end{enumerate}
	\end{prop}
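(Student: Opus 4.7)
The three assertions share a common technical core: a conditional John--Nirenberg type tail inequality for weighted $\BMO$. Once this inequality is established, parts (1), (2), (3) follow by short additional computations. Concretely, the plan is to first prove that, for every $q \in (0, \infty)$, there exist constants $c, d > 0$ depending only on $q$ such that, for every $Y \in \BMO_q^\Phi(\p)$ with $K := \|Y\|_{\BMO_q^\Phi(\p)}$, every $\rho \in \cS([0,T])$ and every $\alpha, \beta > 0$,
\begin{align*}
	\p\bigl(\ts\sup_{u \in [\rho,T]} |Y_u - Y_{\rho-}| > c \alpha \beta K \bigm| \F_\rho\bigr) \lee \e^{1-\alpha} + d\, \p\bigl(\ts\sup_{u \in [\rho,T]} \Phi_u > \beta \bigm| \F_\rho\bigr),
\end{align*}
which is exactly the inequality cited in \eqref{eq:tail-estimate-BMO}.

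To prove the tail bound, fix $\rho, \beta$ and set $\sigma := \inf\{t \gee \rho : \sup_{[\rho, t]} \Phi_u > \beta\} \wedge T$, so that $\Phi$ is controlled by $\beta$ on $[\rho, \sigma)$. Then define stopping times $\tau_0 := \rho$ and, inductively, $\tau_{k+1} := \inf\{t \gee \tau_k : |Y_t - Y_{\rho-}| > (k+1) c_0 \beta K\} \wedge \sigma$ for a suitable $c_0$. On $\{\tau_k < \sigma\}$, the $\BMO_q^\Phi(\p)$-condition applied at $\tau_k$, combined with the bound $\Phi_{\tau_k} \lee \beta$ and conditional Chebyshev, yields $\p(\tau_{k+1} < \sigma \mid \F_{\tau_k}) \lee 1/\e$; iterating gives $\p(\tau_k < \sigma \mid \F_\rho) \lee \e^{-k}$. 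Choosing $k \approx \alpha$ and noting that the event $\{\sup_{[\rho,T]}|Y_u - Y_{\rho-}| > c\alpha\beta K\}$ is, outside $\{\tau_k < \sigma\}$, contained in $\{\sigma < T\} \subseteq \{\sup_{[\rho,T]}\Phi_u > \beta\}$ produces the claim.

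Parts (1) and (2) are then distributional integrations of this tail bound. For (1), set $\rho = 0$ and apply the layer-cake formula to $\sup_t |Y_t|^p$ with $\lambda = c\alpha\beta K$; splitting the integral into an exponentially decaying $\alpha$-piece and a $\beta$-piece controlled by $\E\sup_t \Phi_t^p = \|\Phi\|_{S_p(\p)}^p$ yields $\E \sup_t |Y_t|^p \lee c_1^p \|\Phi\|_{S_p(\p)}^p K^p$. For (2), the direction $\|\cdot\|_{\BMO_r^\Phi(\p)} \lee \|\cdot\|_{\BMO_p^\Phi(\p)}$ for $r \lee p$ is conditional Jensen applied to the concave map $x \mapsto x^{r/p}$. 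For the reverse direction, apply the tail inequality with $q = r$ and integrate the conditional layer-cake $\ce{\F_\rho}{|Y_T - Y_{\rho-}|^p} = \int_0^\infty p \lambda^{p-1} \p(|Y_T - Y_{\rho-}| > \lambda \mid \F_\rho)\,\od \lambda$: the exponential term integrates in $\alpha$ to a constant, while the weight term is absorbed by the $\cSM_p(\p)$-assumption via $\ce{\F_\rho}{\sup_{[\rho, T]}\Phi_u^p} \lee \|\Phi\|_{\cSM_p(\p)}^p \Phi_\rho^p$, giving a bound of the required form $c^p \Phi_\rho^p \|Y\|_{\BMO_r^\Phi(\p)}^p$.

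For part (3), combine H\"older with part (2) applied under $\bQ$. Writing $U := \od\bQ/\od\p$ and $U_\rho := \ce{\F_\rho}{U}$, for $r \in (0, p]$ one has $\E^{\F_\rho,\bQ}|Y_T - Y_{\rho-}|^r = U_\rho^{-1} \ce{\F_\rho}{|Y_T - Y_{\rho-}|^r U}$ (under $\p$). Apply conditional H\"older with conjugate exponents $p/r$ and $p/(p-r)$; the reverse H\"older inequality \eqref{eq:reverse-Holder-condition} controls the $U$-factor provided $p/(p-r) \lee s$, which is ensured by choosing $r \lee p(s-1)/s$. This produces $\|Y\|_{\BMO_r^\Phi(\bQ)} \lee c\,\|Y\|_{\BMO_p^\Phi(\p)}$ for such $r$, and then part (2) applied under $\bQ$---which is where the hypothesis $\Phi \in \cSM_p(\bQ)$ is used---lifts the exponent from $r$ back to $p$, yielding $\|Y\|_{\BMO_p^\Phi(\bQ)} \lee c_3 \|Y\|_{\BMO_p^\Phi(\p)}$. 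The main obstacle throughout is the John--Nirenberg tail bound: since $Y$ is only c\`adl\`ag (not a martingale), the iterative Chebyshev step has to be performed with $Y_T - Y_{\tau_k-}$ rather than martingale increments $Y_{\tau_{k+1}} - Y_{\tau_k}$, and the pasting with the cut-off $\sigma$ requires careful bookkeeping of the left limits and of the event where $\sup \Phi$ first crosses $\beta$.
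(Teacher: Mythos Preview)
Your overall architecture is right and is essentially the content of the references the paper cites: the John--Nirenberg tail estimate \eqref{eq:tail-estimate-BMO} is indeed the common engine, and your sketch of its proof via the iterated stopping times $\tau_k$ and conditional Chebyshev is the standard one from \cite{Ge05}. The paper itself does not give a self-contained argument here; it just invokes \cite[Proposition~A.6]{GN20} for (1)--(2) and \cite[Corollary~1(i) combined with Theorem~3]{Ge05} for (3), with the only additional manoeuvre being to apply those results to the strictly positive weight $\Phi+\ep$ and let $\ep\downarrow 0$. So your proposal is not a different route but rather an unpacking of the cited material.

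One point does need sharpening. For (1) and the hard direction of (2) you say ``apply the layer-cake formula \dots\ splitting the integral into an exponentially decaying $\alpha$-piece and a $\beta$-piece''. A direct layer-cake integration of the two-parameter tail bound does \emph{not} close: for any choice of $\alpha=\alpha(\lambda)$, $\beta=\beta(\lambda)$ with $c\alpha\beta K=\lambda$, one of the two pieces produces a divergent $\int \lambda^{p-1}\,\od\lambda$. What actually works (and what is behind \cite[Theorem~1]{Ge05}) is a good-$\lambda$ self-improving step: apply the tail bound at the further stopping time $\tau=\inf\{t\gee\rho:|Y_t-Y_{\rho-}|>\lambda\}$ to get
\[
\p\bigl(\ts\sup_{[\rho,T]}|Y_u-Y_{\rho-}|>2\lambda \bigm| \F_\rho\bigr)\lee \e^{1-1/\delta}\,\p\bigl(\ts\sup_{[\rho,T]}|Y_u-Y_{\rho-}|>\lambda \bigm| \F_\rho\bigr)+d\,\p\bigl(\Phi^*_\rho>\delta\lambda/c \bigm| \F_\rho\bigr),
\]
integrate against $p\lambda^{p-1}\od\lambda$, choose $\delta$ small so that $\e^{1-1/\delta}<2^{-p}$, and absorb the first term into the left-hand side (after a truncation to ensure finiteness). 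The $\cSM_p$-hypothesis then controls the remaining term by $\Phi_\rho^p$ exactly as you say. This is the same iterative mechanism you already used to prove the tail bound, so the fix is routine, but the ``split the layer-cake'' description as written is a gap.

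Your argument for (3) via conditional H\"older with exponent $r\lee p(s-1)/s$, the reverse H\"older inequality, and then part~(2) under $\bQ$ to lift from $\BMO_r^\Phi(\bQ)$ back to $\BMO_p^\Phi(\bQ)$ is correct and matches the logic of \cite[Theorem~3 and Corollary~1(i)]{Ge05}.
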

	\begin{proof}
		Items \eqref{item:Lp-estimate-BMO-feature} and \eqref{item:SM-BMO-relation} are due to \cite[Proposition A.6]{GN20}. For Item \eqref{item:BMO-feature-RH}, we apply \cite[combine Corollary 1(i) with Theorem 3]{Ge05} to the weight $\Phi + \ep>0$ and then let $\ep \downarrow 0$.
	\end{proof}


	\subsection{The class of approximated stochastic integrals} \label{subsec:setting-stochastic-integal}
	Throughout this article, the assumptions for the stochastic integral in \eqref{eq:stochastic-integral-general} are the following. 
\begin{enumerate}[\quad]
	\item[\textbf{[S]}] The process $S \in \CL([0, T])$ is the strong solution of the SDE\footnote{See, e.g., \cite[Ch.V, Sec.3]{Pr05}, for the existence and uniqueness of $S$.}
	\begin{align}\label{eq:SDE-price-process}
		\od S_t = \sigma(S_{t-})\od Z_t, \;\;S_0 \in \cR_S,
	\end{align}	
	where $\sigma\colon \cR_S \to (0, \infty)$ is a Lipschitz function on an open set $\cR_S \subseteq \R$ with $S_t(\omega), S_{t-}(\omega) \in \cR_S$ for all $(\omega, t) \in \Omega \times [0, T]$. We denote
	\begin{align*}
		|\sigma|_{\Lip} : = \sup_{x, y \in \cR_S,\, x\neq y} \left|\frac{\sigma(y)-\sigma(x)}{y-x}\right| <\infty.
	\end{align*}

\item[\textbf{[Z]}] The process $Z \in \CL([0, T])$ is a square integrable semimartingale on $(\Omega, \F, \p, (\F_t)_{t\in [0, T]})$ with the representation
\begin{align}\label{eq:decomposition-Z}
	Z_t = Z_0  + Z^{\rc}_t  + \int_0^t \!\!\int_{\R_0} z (N_Z - \pi_Z)(\od u, \od z) + \int_0^t b^Z_u \od u,\quad t \in [0, T],
\end{align}
where $Z_0 \in \R$, $b^Z$ is a progressively measurable process, $Z^\rc$ is a pathwise continuous square integrable martingale with $Z^\rc_0=0$, $N_Z$ is the jump random measure\footnote{$N_Z((s, t] \times B): = \#\{u \in (s, t] : \Delta Z_u \in B\}$ and $N_Z(\{0\} \times B): = 0$ for $0\lee s <t \lee T$, $B \in \cB(\R_0)$.} of $Z$ and $\pi_Z$ is the predictable compensator\footnote{$\pi_Z$ is such that: (i) for any $\omega \in \Omega$, $\pi_Z(\omega, \cdot)$ is a measure on $\cB([0, T]\times \R)$ with $\pi_Z(\omega, \{0\} \times \R) = 0$; (ii) for any $\mathcal P \otimes \cB(\R)$-measurable and non-negative $f$, the process $\int_0^{\boldsymbol{\cdot}}\int_{\R} f(u, z) \pi_Z(\od u, \od z)$ is $\mathcal P$-measurable satisfying $\E \int_0^T \!\int_{ \R}f(u, z) N_Z(\od u, \od z) = \E \int_0^T \!\int_{ \R} f(u, z) \pi_Z(\od u, \od z)$, where $\mathcal P$ is the predictable $\sigma$-algebra on $\Omega \times [0, T]$  (see  \cite[Ch.II, Sec.1]{JS03}  for more details).} of $N_Z$. Assumptions for $Z$ are the following:
\begin{enumerate}[\rm (Z1)]
	\item For all $\omega \in \Omega$,
	\begin{align} \label{eq:condition-compensator-measure}
		\pi_Z(\omega, \od t, \od z) = \nu_t(\omega, \od z) \od t,
	\end{align} 
	where the transition kernel $\nu_t(\omega, \cdot)$ is a L\'evy measure, i.e., a Borel measure on $\cB(\R)$ satisfying $\nu_t(\omega, \{0\}) := 0$ and $\int_{\R}(z^2 \wedge 1) \nu_t(\omega, \od z) <\infty$. 
	
	\item There is a progressively measurable process $a^Z$ such that $\od \< Z^\rc\>_t = |a_t^Z|^2 \od t$ and 
	\begin{align}\label{eq:uniformly-bounded-C}
		a^Z_{\eqref{eq:uniformly-bounded-C}} : = \|a^Z\|_{L_\infty(\Omega \times [0, T], \p \otimes \Leb)} <\infty.
	\end{align}
	
	\item The processes $b^Z$ and $j^Z$, where $j^Z_t : = \big(\int_{\R} z^2 \nu_t(\od z)\big)^{1/2}$, satisfy that
	\begin{align}\label{eq:uniformly-bounded-KV}
		& b^Z_{\eqref{eq:uniformly-bounded-KV}} : = \|\|b^Z\|_{L_2([0, T], \Leb)}\|_{L_\infty(\p)}  <\infty, \quad j^Z_{\eqref{eq:uniformly-bounded-KV}} : = \|j^Z\|_{L_\infty(\Omega \times [0, T], \p \otimes \Leb)} <\infty.
	\end{align}
\end{enumerate}

\item[\textbf{[I]}] The process $\vartheta$ belongs to the family $\adm$ of \textit{admissible integrands},  where
\begin{align*}
	\adm : = \left\{\vartheta \in \CL([0, T)) : \E \int_0^T \vartheta_{t-}^2 \sigma(S_{t-})^2 \od t <\infty \quad \mbox{and} \quad \Delta \vartheta_{t} = 0 \mbox{ a.s., } \forall t\in [0, T) \right\}.
\end{align*}
\end{enumerate}

	\begin{rema} \label{rema:assumption-S-Z}
		\begin{enumerate}[\rm (1)]
			\item By a standard stopping argument and Gronwall's lemma, \eqref{eq:SDE-price-process} implies that $S$ is an $L_2(\p)$-semimartingale and 
			\begin{align} \label{eq:square-integrability-S}
				\E \int_0^T \sigma(S_{u})^2 \od u = \E \int_0^T \sigma(S_{u-})^2 \od u  <\infty.
			\end{align}
			
			\item  For each $t\in [0, T]$, it follows from \eqref{eq:condition-compensator-measure} that $N_Z(\{t\} \times \R_0
			) = 0$ a.s., which verifies $\Delta Z_t = 0$ a.s.,  and hence, $\Delta S_t =0$ a.s. In other words, $Z$ and $S$ have no fixed-time discontinuity. Since admissible integrands $\vartheta$ in applications are often functionals of the integrator $S$, it is natural to assume technically that $\Delta \vartheta_t =0$ a.s.
		\end{enumerate}

		
	\end{rema}

	\section{Approximation via weighted bounded mean oscillation approach} \label{sec:main-results}

	
	To examine the discrete-time approximation problem in weighted $\bmo$ or weighted $\BMO$, further structure of the integrand is required. We begin with the following assumption which is an adaptation of \cite[Assumption 5.1]{GN20}.

	\begin{assum}\label{pre-assumption-stochastic-integral} Assume for a $\vartheta \in  \adm$ that there exists a random measure $$\Upsilon \colon \Omega \times \cB((0, T)) \to [0, \infty]$$
		such that $\Upsilon(\omega, (0, t]) <\infty$ for all $(\omega, t) \in \Omega \times (0, T)$
		and there is a constant $c_{\eqref{eq:assumption-stochastic-integral}}>0$ such that for any $0 \lee a < b <T$,
		\begin{align}\label{eq:assumption-stochastic-integral}
			\bbce{\F_a}{\int_{(a, b]} |\vartheta_t - \vartheta_a|^2 \sigma(S_t)^2 \od t} \lee c_{\eqref{eq:assumption-stochastic-integral}}^2 \bbce{\F_a}{\int_{(a, b]} (b-t) \Upsilon (\cdot, \od t)} \quad \mbox{a.s.}
		\end{align}
	\end{assum}
	
	The left-hand side of \eqref{eq:assumption-stochastic-integral} appears as the one-step conditional $L_2$-approximation error. This error is assumed to be controlled from above by a conditional integral with respect to an appropriate random measure $\Upsilon$, where $\Upsilon$ might have some singularity at the terminal time $T$. This structure condition allows us to derive the multi-step approximation from the one-step approximation. Apparently, the measure $\Upsilon$ looks  artificial, however, it origins from the diffusion setting on the Wiener space. We briefly explain this in \cref{exam:Upsilon-diffusion-setting} below.
	\begin{exam}[Diffusion setting]\label{exam:Upsilon-diffusion-setting} We recall the setting from \cite{GG04} (see also \cite{GN20}):
		Let $\hat \sigma\colon \R \to \R$ be bounded, $\inf_{x\in \R}\hat \sigma(x) >0$ and infinitely differentiable with bounded derivatives.  Let $\sigma(x): = x \hat \sigma(\ln x)$, which is Lipschitz on $(0, \infty)$, and consider the SDE 
		$$\od S_t = \sigma(S_t) \od W_t,\quad S_0 = \e^{x_0} >0,$$
		 where $W$ is a standard Brownian motion. Then one has $S_t = \e^{X_t}$,  where  $\od X_t = \hat \sigma(X_t) \od W_t  - \frac{1}{2}\hat \sigma(X_t)^2 \od t$, $X_0=x_0 \in \R$. For any  Borel function $g\colon (0, \infty) \to \R$ with polynomial growth, It\^o's formula asserts $g(S_T) = \E g(S_T) + \int_0^T \pd_y G(t, S_t) \od S_t$
		where $G(t, y) : = \E(g(S_T)|S_t = y)$.  Then \cite[Corollary 3.3]{Ge02} (see also \cite[Section 7]{GN20}) verifies that \cref{pre-assumption-stochastic-integral} is satisfied for
		$$\vartheta_t := \pd_y G(t, S_t), \quad \Upsilon(\omega, \od t)  := |(\sigma^2 \pd^2_y G)(t, S_t(\omega))|^2 \od t.$$
		In this case, since $\Upsilon$ is related to the second derivative, it describes some kind of \textit{curvature} of the stochastic integral.
	\end{exam}

	We now provide in \cref{exam:Upsilon-martingale-setting} another formula for $\Upsilon$ which is used later in the exponential L\'evy setting in \cref{sec:application-Levy-model}.

	\begin{exam}\label{exam:Upsilon-martingale-setting}
		Assume for $\vartheta \in  \adm$ that $\vartheta \sigma(S)$  is a semimartingale with the representation
		\begin{align*}
			\vartheta_t \sigma(S_t) = M_t + V_t, \quad t \in [0, T),
		\end{align*}
	where $M = (M_t)_{t \in [0, T)}$ is an $L_2(\p)$-martingale, $V_t = \int_0^t v_u \od u$ for a progressively measurable $v$ with $\int_0^t v_u^2(\omega) \od u < \infty$ for all $(\omega, t) \in \Omega \times [0, T)$. Then \cref{pre-assumption-stochastic-integral} is satisfied for
		\begin{align*}
			\Upsilon(\omega, \od t) : = \od \<M\>_t(\omega) +  |\sigma|_{\Lip}^2 M^2_t(\omega) \od t + v_t^2(\omega) \od t + |\sigma|_{\Lip}^2 \int_0^t v_u^2(\omega) \od u\, \od t.
		\end{align*}
		Indeed, for any $0\lee a <b<T$, first using the triangle inequality, then applying conditional It\^o's isometry for the martingale term and H\"older's inequality for the Lebesgue integrals, together with Fubini's theorem and  \cref{lem:sigma(Y)-estimate}, we have, a.s., 
		\begin{align*}
			&\frac{1}{3}\bbce{\F_a}{\int_{(a, b]} |\vartheta_t - \vartheta_a|^2 \sigma(S_t)^2 \od t}\\
			 & \lee \bbce{\F_a}{\int_{(a, b]}|M_t - M_a|^2 \od t} + \bbce{\F_a}{\int_a^b |V_t - V_a|^2 \od t} + \bbce{\F_a}{\int_a^b \vartheta_a^2 |\sigma(S_t) - \sigma(S_a)|^2 \od t}\\
			& \lee \bbce{\F_a}{\int_{(a, b]} \int_{(a, t]} \od \<M\>_u \od t} + (b-a) \bbce{\F_a}{\int_a^b \!\int_a^t v_u^2 \od u \, \od t} +  c_{\eqref{eq:2:conditional-Y-1-step}}^2(b-a)^2 \vartheta_a^2 \sigma(S_a)^2\\
			& \lee \bbce{\F_a}{\int_{(a, b]} (b-u) (\od \<M\>_u + (b-a) v_u^2 \od u)} + 2c_{\eqref{eq:2:conditional-Y-1-step}}^2 (b-a)^2 (M_a^2 + V_a^2)\\
			& \lee \bbce{\F_a}{\int_{(a, b]} (b-u) (\od \<M\>_u + T v_u^2 \od u)} + 4c_{\eqref{eq:2:conditional-Y-1-step}}^2 \bbce{\F_a}{\int_a^b (b-u) \bigg(M_u^2  + T\int_0^u v_r^2 \od r\bigg)\od u}.
		\end{align*}
	Since $|\sigma|_{\Lip}$ is a factor of $c_{\eqref{eq:2:conditional-Y-1-step}}$ as shown in the proof of \cref{lem:sigma(Y)-estimate}, the assertion follows.
	\end{exam}
	
	The key assumption which enables to derive the approximation results is as follows.

	\begin{assum}\label{assumption-stochastic-integral}
		Let $\theta \in (0, 1]$. Assume that \cref{pre-assumption-stochastic-integral} is satisfied and  there is an a.s. non-decreasing process $\Theta \in \CL^+([0, T])$ such that the following two conditions hold:
		
		\begin{enumerate}[\quad \rm(a)]
			
			\item \label{item:growth-varphi}   \textit{$($Growth condition$)$} There is a constant $c_{\eqref{eq:growth-strategy}}>0$ such that 
			\begin{align} \label{eq:growth-strategy}
				|\vartheta_a| \lee c_{\eqref{eq:growth-strategy}} (T-a)^{\frac{\theta -1}{2}} \Theta_a \quad \mbox{a.s., } \forall a \in [0, T).
			\end{align}

			\item  \label{item:subordination-varphi} $($\textit{Curvature condition}$)$ For $\Phi : = \Theta \sigma(S)$, there is a constant $c_{\eqref{eq:thm:curvature-condition}}>0$ such that
			\begin{align}\label{eq:thm:curvature-condition}
				\bbce{\F_a}{\int_{(a, T)} (T-t)^{1- \theta} \Upsilon(\cdot, \od t)} \lee c_{\eqref{eq:thm:curvature-condition}}^2 \Phi_a^2 \quad \mbox{a.s., } \forall a \in [0, T).
			\end{align}
		\end{enumerate}
	Here, the constants $c_{\eqref{eq:growth-strategy}}$ and $c_{\eqref{eq:thm:curvature-condition}}$ may depend on $\theta$.
	\end{assum}
	
	The parameter $\theta$ in \eqref{eq:growth-strategy} describes the growth (pathwise and relatively to $\Theta$) of $\vartheta$ and the integrand $(T-t)^{1- \theta}$ in \eqref{eq:thm:curvature-condition} is employed to compensate the singularity of $\Upsilon$ when the time variable approaches $T$. Hence, the bigger $\theta$ is, the less singular at $T$ of both $\vartheta$ and $\Upsilon$ get, which leads the approximated stochastic integral to be more regular. In particular, for the Black--Scholes model with the delta-hedging strategy $\vartheta$, i.e., $\sigma(x) = x$ in \cref{exam:Upsilon-diffusion-setting}, the parameter $\theta$ can be interpreted as the \textit{fractional smoothness} of the payoff in the sense of \cite{GG04, GT15} where $\theta =1$ corresponds to the smoothness of order 1. Thus, in this article we will refer to the situation when \cref{assumption-stochastic-integral} holds for  $\theta =1$ as \textit{regular regime}. It is also clear that if \cref{assumption-stochastic-integral} is satisfied for a $\theta \in (0, 1]$, then it also holds, with the same $\Theta$, for any $\theta' \in (0, \theta)$ with a suitable change for $c_{\eqref{eq:growth-strategy}}$ and $c_{\eqref{eq:thm:curvature-condition}}$.
	
	Various specifications of \cref{assumption-stochastic-integral} in the Brownian setting or in the L\'evy setting are provided in \cite{GN20}. In \cref{sec:application-Levy-model} below, we consider \cref{assumption-stochastic-integral} in the exponential L\'evy setting which extends \cite{GN20}.

	\subsection{The basic method: Riemann approximation} \label{subsec:Riemann-approx}

	\begin{defi}\label{defi:time-net-Riemann-approx}
		\begin{enumerate}[(1)]
			\item Let $\cT_{\det}$ be the family of all \textit{deterministic} time-nets $\tau = (t_i)_{i=0}^n$ on $[0, T]$ with $0 = t_0 < t_1 < \cdots < t_n = T$, $n \gee 1$. The mesh size of $\tau = (t_i)_{i=0}^n \in \cT_{\det}$ is measured with respect to a parameter $\theta \in (0, 1]$ by
			\begin{align*}
				\|\tau\|_\theta : = \max_{i=1, \ldots, n}\frac{t_i - t_{i-1}}{(T-t_{i-1})^{1- \theta}}.
			\end{align*}
			
			\item For $\vartheta \in \adm$, $\tau = (t_i)_{i=0}^n \in \cT_{\det}$ and $t\in [0, T]$, we define
			\begin{align*}
				A^{\riem}_t(\vartheta, \tau): = \sum_{i=1}^n \vartheta_{t_{i-1}-}(S_{t_i \wedge t} - S_{t_{i-1}\wedge t}),\quad E^{\riem}_t(\vartheta, \tau): = \int_0^t \vartheta_{u-} \od S_u - A^{\riem}_t(\vartheta, \tau).
			\end{align*} 
		\end{enumerate}
	\end{defi}
	
	Below is the main result in this subsection.

	\begin{theo}\label{theo:approximation-QV-BMO} Let \cref{assumption-stochastic-integral} hold for some $\theta \in (0, 1]$. Then there exists a constant $c_{\eqref{eq:local-Riemann-approx}}>0$ such that for any $\tau \in \cT_{\det}$,
		\begin{align}\label{eq:local-Riemann-approx}
			\|E^{\riem}(\vartheta, \tau)\|_{\bmo^\Phi_2(\p)} \lee c_{\eqref{eq:local-Riemann-approx}}  \sqrt{\|\tau\|_\theta}.
		\end{align}
	\end{theo}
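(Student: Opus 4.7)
The proof strategy is to exploit the deterministic-time reformulation of the $\bmo_2^\Phi(\p)$-norm in \cref{rema:deterministic-bmo}. Fix $a \in [0, T)$, let $k$ be the unique index with $a \in [t_{k-1}, t_k)$, and set $\varphi_u := \vartheta_{t_{i-1}-}$ for $u \in (t_{i-1}, t_i]$. Using \eqref{eq:SDE-price-process}, one obtains
\begin{align*}
E^{\riem}_T - E^{\riem}_a = \int_a^T (\vartheta_{u-} - \varphi_u)\, \sigma(S_{u-})\, \od Z_u.
\end{align*}
Decomposing $Z$ via \eqref{eq:decomposition-Z} into its martingale part $M_Z$ and drift $\int_0^\cdot b^Z_u \od u$, and using the bound $\od \langle M_Z\rangle_u \lee [(a^Z_{\eqref{eq:uniformly-bounded-C}})^2 + (j^Z_{\eqref{eq:uniformly-bounded-KV}})^2]\od u$ from (Z1)-(Z2) together with $\|b^Z\|_{L_2([0,T])}\lee b^Z_{\eqref{eq:uniformly-bounded-KV}}$ from (Z3), conditional It\^o isometry and Cauchy-Schwarz yield a constant $K$ depending only on the bounds in \textbf{[Z]} for which
\begin{align*}
\E^{\F_a}\bigl[|E^{\riem}_T - E^{\riem}_a|^2\bigr] \lee K\, \E^{\F_a}\!\int_a^T (\vartheta_u - \varphi_u)^2\, \sigma(S_u)^2\, \od u,
\end{align*}
where the left-limits disappear under the Lebesgue integral since $Z$ and $S$ have no fixed-time jumps (\cref{rema:assumption-S-Z}).

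On each bulk interval $(t_{i-1}, t_i]$ with $k+1 \lee i \lee n-1$, since $a < t_{i-1}$ the tower property $\E^{\F_a} = \E^{\F_a}\,\E^{\F_{t_{i-1}}}$ combined with \cref{pre-assumption-stochastic-integral} applied at starting point $t_{i-1}$ gives
\begin{align*}
\E^{\F_a}\!\int_{(t_{i-1}, t_i]}(\vartheta_u - \vartheta_{t_{i-1}})^2\, \sigma(S_u)^2\, \od u \lee c_{\eqref{eq:assumption-stochastic-integral}}^2\, \E^{\F_a}\!\int_{(t_{i-1}, t_i]}(t_i - u)\, \Upsilon(\cdot, \od u).
\end{align*}
The terminal interval $(t_{n-1}, T]$ is included by letting $b \uparrow T$ in \eqref{eq:assumption-stochastic-integral} via monotone convergence. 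For the starting interval $(a, t_k]$, I split $(\vartheta_u - \vartheta_{t_{k-1}})^2 \lee 2(\vartheta_u - \vartheta_a)^2 + 2(\vartheta_a - \vartheta_{t_{k-1}})^2$: the first summand is bounded by \cref{pre-assumption-stochastic-integral} at $a$; the second is $\F_a$-measurable and is estimated through the growth condition \eqref{eq:growth-strategy} (bounding both $|\vartheta_a|$ and $|\vartheta_{t_{k-1}}|$ by $c\,(T-a)^{(\theta-1)/2}\Theta_a$ using the monotonicity of $\Theta$) together with a local $L_2$-estimate for $\sigma(S)$ of the type given in \cref{lem:sigma(Y)-estimate}.

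It remains to convert $\sum_i \E^{\F_a}\!\int(t_i - u)\Upsilon(\cdot, \od u)$ into a bound matching the curvature condition \eqref{eq:thm:curvature-condition}. I classify each interval as \emph{good} ($t_i - t_{i-1} \lee T - t_i$) or \emph{bad} ($t_i - t_{i-1} > T - t_i$). On a good interval $T - t_{i-1} \lee 2(T-u)$ for every $u \in (t_{i-1}, t_i]$, so $t_i - u \lee \|\tau\|_\theta (T - t_{i-1})^{1-\theta} \lee 2^{1-\theta}\|\tau\|_\theta\,(T - u)^{1-\theta}$. An elementary iteration of the mesh bound $t_i - t_{i-1} \lee \|\tau\|_\theta(T - t_{i-1})^{1-\theta}$ shows that bad intervals (along with the terminal one) all lie in $[T - (2\|\tau\|_\theta)^{1/\theta}, T]$, so on any such interval $(T - t_{i-1})^\theta \lee 2\|\tau\|_\theta$ and consequently
\begin{align*}
t_i - u \lee T - u \lee (T-t_{i-1})^\theta (T-u)^{1-\theta} \lee 2\|\tau\|_\theta(T-u)^{1-\theta}.
\end{align*}
Summing both cases over all intervals and applying \eqref{eq:thm:curvature-condition} yields $\sum_i \E^{\F_a}\!\int(t_i - u)\Upsilon(\cdot, \od u) \lee C\,\|\tau\|_\theta \Phi_a^2$; the starting-interval contribution is absorbed analogously.

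The main obstacle is precisely the last step: the mesh $\|\tau\|_\theta$ controls $t_i - t_{i-1}$ by $(T-t_{i-1})^{1-\theta}$, whereas the curvature condition supplies the weight $(T-u)^{1-\theta}$, and these quantities are not pointwise comparable for $u$ near the terminal time. The good/bad classification, together with the geometric confinement of bad intervals to a window of width $\|\tau\|_\theta^{1/\theta}$ near $T$, is the key device bridging this gap; a subordinate subtlety is reconciling $\E^{\F_a}$ with $\E^{\F_{t_{i-1}}}$ on the starting piece, which forces the split-and-use-growth argument above.
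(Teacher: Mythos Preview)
Your proof is correct and follows the paper's overall strategy: reduce the conditional squared error to $\ce{\F_a}{\QV_T - \QV_a}$ via conditional It\^o isometry (this is \eqref{eq:dominant-QV}), then bound the latter by $c\|\tau\|_\theta\Phi_a^2$ (this is \cref{prop:QV-BMO}). The difference lies in how the second step is executed. The paper invokes \cite[Theorem~5.3]{GN20} as a black box to pass from $\sum_i\int(t_i-u)\,\Upsilon(\cdot,\od u)$ to $\|\tau\|_\theta\int(T-u)^{1-\theta}\,\Upsilon(\cdot,\od u)$, whereas you supply a direct elementary argument via the good/bad interval dichotomy. This makes your proof self-contained and exposes the geometric mechanism --- bad intervals are confined to a window of width $(2\|\tau\|_\theta)^{1/\theta}$ near $T$, on which $(T-t_{i-1})^\theta$ is automatically dominated by $\|\tau\|_\theta$ --- at the cost of some extra bookkeeping; the paper's citation buys brevity. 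Your handling of the starting interval (split plus growth condition plus \cref{lem:sigma(Y)-estimate}) matches the paper's treatment in \cref{prop:QV-BMO} almost line by line.
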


	Since the weighted $\bmo$- and weighted $\BMO$-norms of  $E^{\riem}(\vartheta, \tau)$ coincide when $S$ is continuous, we derive directly from \cref{theo:approximation-QV-BMO} and \cref{lemm:feature-BMO} the following result.

	\begin{coro}\label{coro:approximation-continuous-case-BMO} Let \cref{assumption-stochastic-integral} hold for some $\theta \in (0, 1]$. If $S$ is continuous, then the following assertions hold, where the constants $c_1, c_2, c_2', c_3 >0$ do not depend on $\tau$:
		\begin{enumerate}[\rm (1)]
			\item \label{item:1-coro:approximation-continuous-case-BMO} One has $\|E^{\riem}(\vartheta, \tau)\|_{\BMO^{\Phi}_2(\p)} \lee c_1 \sqrt{\|\tau\|_\theta}$ for any $\tau \in \cT_{\det}$.\\ Furthermore, if $\Phi \in \cSM_p(\p)$ for some $p \in [2, \infty)$, then for any $\tau \in \cT_{\det}$,
			\begin{align*}
				\|E^{\riem}(\vartheta, \tau)\|_{S_p(\p)} \lee c_2 \|E^{\riem}(\vartheta, \tau)\|_{\BMO^\Phi_p(\p)} \lee c'_2 \sqrt{\|\tau\|_\theta}.
			\end{align*}
			
			\item \label{item:2-coro:approximation-continuous-case-BMO} If $\bQ \in \RH_s(\p)$ for some $s \in (1, \infty)$ and $\Phi \in \cSM_2(\bQ)$, then for any $\tau \in \cT_{\det}$,
			\begin{align*}
				\|E^{\riem}(\vartheta, \tau)\|_{\BMO^{\Phi}_2(\bQ)} \lee c_3 \sqrt{\|\tau\|_\theta}.
			\end{align*}
		\end{enumerate}
	\end{coro}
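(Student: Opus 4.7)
My plan is to assemble the corollary directly from Theorem 3.3 together with the three items of Proposition 2.4, with the only nontrivial observation being that the error process is continuous in this setting.

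First I would verify that $E^{\riem}(\vartheta,\tau)$ is continuous whenever $S$ is. On the one hand, $\int_0^{\cdot}\vartheta_{u-}\od S_u$ is the stochastic integral of a c\`agl\`ad integrand against a continuous semimartingale, hence continuous; on the other hand, the Riemann sum
\[
A^{\riem}_t(\vartheta,\tau)=\sum_{i=1}^n \vartheta_{t_{i-1}-}(S_{t_i\wedge t}-S_{t_{i-1}\wedge t})
\]
is a finite linear combination of continuous processes. Combined with $E^{\riem}_0=0$, this places $E^{\riem}(\vartheta,\tau)$ in $\CL_0([0,T])$ and guarantees $|\Delta E^{\riem}(\vartheta,\tau)|_{\Phi}=0$. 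By the observation in the excerpt just before \cref{lemm:relation-BMO-bmo}, the weighted $\bmo$- and $\BMO$-norms then coincide, so \cref{theo:approximation-QV-BMO} yields
\[
\|E^{\riem}(\vartheta,\tau)\|_{\BMO^{\Phi}_2(\p)}=\|E^{\riem}(\vartheta,\tau)\|_{\bmo^{\Phi}_2(\p)}\lee c_{\eqref{eq:local-Riemann-approx}}\sqrt{\|\tau\|_\theta},
\]
which is the first inequality of \eqref{item:1-coro:approximation-continuous-case-BMO}.

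For the $L_p$-statement in \eqref{item:1-coro:approximation-continuous-case-BMO} with $p\in[2,\infty)$ and $\Phi\in\cSM_p(\p)$, I would first upgrade the $\BMO^{\Phi}_2$-bound to a $\BMO^{\Phi}_p$-bound via \cref{lemm:feature-BMO}\eqref{item:SM-BMO-relation} (applied with $r=2\lee p$), then convert to the supremum norm via \cref{lemm:feature-BMO}\eqref{item:Lp-estimate-BMO-feature}. The constant $\|\Phi\|_{S_p(\p)}$ that appears there is finite: applying the defining inequality of $\cSM_p(\p)$ at the deterministic time $a=0$, together with the triviality of $\F_0$ (so that $\Phi_0$ is a.s.\ a constant), gives
\[
\|\Phi\|_{S_p(\p)}^p=\E\sup_{t\in[0,T]}\Phi_t^p \lee \|\Phi\|_{\cSM_p(\p)}^p\Phi_0^p<\infty.
\]
Chaining these two inequalities with the $\BMO^{\Phi}_2$-bound produces the claimed $\sqrt{\|\tau\|_\theta}$ rate in both the $\BMO^{\Phi}_p(\p)$- and $S_p(\p)$-norms.

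For part \eqref{item:2-coro:approximation-continuous-case-BMO} the argument is a one-line application of \cref{lemm:feature-BMO}\eqref{item:BMO-feature-RH}: the reverse H\"older assumption $\bQ\in\RH_s(\p)$ together with $\Phi\in\cSM_2(\bQ)$ gives $\|E^{\riem}(\vartheta,\tau)\|_{\BMO^{\Phi}_2(\bQ)}\lee c\,\|E^{\riem}(\vartheta,\tau)\|_{\BMO^{\Phi}_2(\p)}$, and the right-hand side has already been controlled by $\sqrt{\|\tau\|_\theta}$ in part \eqref{item:1-coro:approximation-continuous-case-BMO}. There is essentially no hard step here; the only point worth emphasizing is the continuity observation, since without it the transition from $\bmo^{\Phi}_2$ in \cref{theo:approximation-QV-BMO} to $\BMO^{\Phi}_2$ (needed to trigger both \cref{lemm:feature-BMO}\eqref{item:SM-BMO-relation} and \cref{lemm:feature-BMO}\eqref{item:BMO-feature-RH}) would require the jump term $|\Delta E^{\riem}|_\Phi$ from \cref{lemm:relation-BMO-bmo}, which is precisely the obstacle that motivates the jump-correction scheme developed in the sequel.
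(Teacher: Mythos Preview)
Your proposal is correct and follows essentially the same approach as the paper, which proves the corollary in one line by noting that continuity of $S$ makes the weighted $\bmo$- and $\BMO$-norms of $E^{\riem}(\vartheta,\tau)$ coincide, and then invoking \cref{theo:approximation-QV-BMO} together with \cref{lemm:feature-BMO}. Your write-up simply unpacks these references more explicitly (including the useful observation that $\|\Phi\|_{S_p(\p)}<\infty$ follows from $\Phi\in\cSM_p(\p)$ evaluated at $a=0$), but the logical route is identical.
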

	
	In particular, when $S$ is a geometric Brownian motion and $\vartheta$ is the delta-hedging strategy of a Lipschitz functional of $S_T$, then \cref{coro:approximation-continuous-case-BMO} gives the upper bound in \cite[Theorem 7]{Ge05}.

	\subsection{The jump correction method: General results} \label{sebsec:approx-correction}
	
	In \cref{coro:approximation-continuous-case-BMO}, the continuity of $S$ is crucial to derive the conclusions for $E^{\riem}$. If $S$ has jumps, then those results may fail as shown in the following example.
	
	\begin{exam} \label{counterexample} In the notations of \Cref{subsec:setting-stochastic-integal}, we let $Z = \tilde J$, where $\tilde J_t : = J_t - rt$ is a compensated Poisson process with intensity $r>0$. Choose $\sigma \equiv 1$ so that $S = Z$. Let  $f\colon (0, T] \times \bN \to \R$ be a Borel function with $\|f\|_\infty: =\sup_{(t, k) \in (0, T] \times \bN}|f(t, k)| <\infty$ and $\ep : = \inf_{t \in (0, T]}|f(t, 0)| > 0$. Assume that 
		\begin{align*}
			\delta: = \ep - r T \|f\|_\infty >0.
		\end{align*}
		
		Let $\rho_1: = \inf\{t>0 : \Delta J_t =1\}\wedge T$ and $\rho_2 : = \inf\{t> \rho_1 : \Delta J_t = 1\}\wedge T$.  Let $\vartheta_0 \in \R$ and define $\vartheta_t = \vartheta_0 + \int_{(0, t \wedge \rho_2]} f(s, J_{s-}) \od \tilde  J_s$, $t\in (0, T]$. It is not difficult to check that $\vartheta \in \adm$ is a martingale with $\|\vartheta_T\|_{L_\infty(\p)} <\infty$. Then \cref{pre-assumption-stochastic-integral} is satisfied with the selection $\Upsilon(\cdot, \od t) : = \od\<\vartheta\>_t$ as showed in \cref{exam:Upsilon-martingale-setting} (with $V \equiv 0$). In addition, it is straightforward to check that \cref{assumption-stochastic-integral} holds true for $\Theta \equiv \Phi \equiv 1$ and for $\theta =1$.

		Take $\tau = (t_i)_{i=0}^n \in \cT_{\det}$ arbitrarily. On the set $\{0< \rho_1 < \rho_2 < t_1\}$ we have
		\begin{align*}
			\big|\Delta E^{\riem}_{\rho_2}(\vartheta, \tau)\big| & = \sum_{i=1}^n |\vartheta_{\rho_2-} - \vartheta_{t_{i-1}-}| \1_{(t_{i-1}, t_i]}(\rho_2) |\Delta J_{\rho_2}|  \\
			& = |\vartheta_{\rho_2-} - \vartheta_0| = \bigg|f(\rho_1, J_{\rho_1-}) - r \int_{(0,\rho_2)} f(s, J_{s-})\od s\bigg|\\
			& \gee  |f(\rho_1, 0)| - r T  \|f\|_\infty \gee \delta.
		\end{align*}
		Since $\p(0< \rho_1 < \rho_2 < t_1)>0$, it implies that $\inf_{\tau \in \cT_{\det}} \|\Delta E^{\riem}_{\rho_2}(\vartheta, \tau)\|_{L_\infty(\p)} \gee \delta$. Due to \cref{lemm:relation-BMO-bmo}, we obtain $\inf_{\tau \in \cT_{\det}} \|E^{\riem}(\vartheta, \tau)\|_{\BMO_p(\p)} >0$ for any $p\in (0, \infty)$.
	\end{exam}

	Therefore, in order to exploit benefits of the weighted $\BMO$ approach to derive results as in \cref{coro:approximation-continuous-case-BMO} for jump models, we introduce a new approximation scheme based on an adjustment of the classical Riemann approximation. The time-net for this scheme is obtained by  combining a given deterministic time-net, which is used in the Riemann sum of the stochastic integral, and a suitable sequence of random times which captures the (relative) large jumps of the driving process. With this scheme, we not only can utilize the features of weighted $\BMO$, but can also control the cardinality of the combined time-nets.

	Let us begin with the random times. Due to the assumptions imposed on $S$ in \Cref{subsec:setting-stochastic-integal}, one has $\sigma(S_{-})>0$ and
	\begin{align}\label{eq:jump-S-Z}
		\Delta S = \sigma(S_-)\Delta Z
	\end{align}
	from which we can see that jumps of $S$ can be determined from knowing jumps of $Z$. However, if we would use $S$ to model the stock price process, then it is more realistic to track the jumps of $S$ rather than of $Z$.
	Therefore, we define the random times $\rho(\ep, \kappa) = (\rho_i(\ep, \kappa))_{i\gee 0}$ based on tracking the jumps of $S$ as follows (recall that $\inf \emptyset : = \infty$).
	
	\begin{defi}\label{defi:random-times}
		For $\ep >0$ and $\kappa \gee 0$, let $\rho_0(\ep, \kappa) :=0$ and
		\begin{align}
			\rho_{i}(\ep, \kappa) &: = \inf\left\{T\gee t >\rho_{i-1}(\ep, \kappa) : |\Delta S_t| > \sigma(S_{t-}) \ep(T-t)^\kappa\right\}\wedge T, \; i \gee 1, \label{eq:random-times-S} \\
			\cN_{\eqref{eq:cardinality-S}}(\ep, \kappa) &: = \inf\{i \gee 1 : \rho_i(\ep, \kappa) = T\}.\label{eq:cardinality-S}
		\end{align}
	\end{defi}

	The quantity $\ep(T-t)^\kappa$ in \eqref{eq:random-times-S} is the level at time $t$ from which we decide which jumps of $S$ are (relatively) large, and moreover, for $\kappa >0$, this level continuously shrinks when $t$ approaches the terminal time $T$. Hence, $\kappa$ describes the \textit{jump size decay rate}.   The idea for using the decay function $(T-t)^\kappa$ is to compensate the growth of integrands. By specializing $\kappa=0$, the control parameter $\ep$ can be interpreted as the \textit{jump size threshold}.

	\begin{defi}[Jump correction scheme]\label{defi:approximation-correction}
		Let $\ep>0$, $\kappa \in [0, \frac{1}{2})$ and $\tau = (t_i)_{i=0}^n \in \cT_{\det}$. 
		\begin{enumerate}[(1)]
			\item \label{item:defi:combined-net} Let $\comb{\tau}{\rho(\ep, \kappa)}$ be the (random) discretization times of $[0, T]$ by combining $\tau$ with $\rho(\ep, \kappa)$ and re-ordering their time-knots. 
			
			\item For $t\in [0, T]$, we define
			\begin{align}\label{eq:approximation-correction}
				\vartheta^\tau_t &:= \sum_{i=1}^n \vartheta_{t_{i-1}-}\1_{(t_{i-1}, t_i]}(t),\notag\\
				A^{\corr}_t(\vartheta, \tau | \ep, \kappa) &:= A^{\riem}_t(\vartheta, \tau)  +   \sum_{\rho_i(\ep, \kappa) \in [0, t] \cap [0, T)} \big(\vartheta_{\rho_i(\ep, \kappa)-}- \vartheta^\tau_{\rho_i(\ep, \kappa)}\big) \Delta S_{\rho_i(\ep, \kappa)},\\
				E_t^{\corr}(\vartheta, \tau | \ep, \kappa)  &:= \int_0^t \vartheta_{u-} \od S_u - A^{\corr}_t(\vartheta, \tau | \ep, \kappa),\notag
			\end{align}
			where $A^{\riem}(\vartheta, \tau)$ is given in \cref{defi:time-net-Riemann-approx}.
		\end{enumerate}
	\end{defi}

	As verified later in \Cref{subsec:proofs-sec3},  each $\rho_i(\ep, \kappa)$ is a stopping time. Moreover, in our setting the sum on the right-hand side of \eqref{eq:approximation-correction} is a finite sum a.s. as a consequence of \cref{lemm:cardinality-random-nets} below. Hence, by adjusting this sum on a set of probability zero, we may assume that $A^{\corr}(\vartheta, \tau | \ep, \kappa)  \in \CL_0([0, T])$. Besides, we also restrict the sum over the stopping times taking values in $[0, T)$ instead of $[0, T]$ because of two technical reasons. First, $\vartheta$ does not necessarily have the left-limit at $T$, and secondly, since $\Delta S_T =0$ a.s. as mentioned in \cref{rema:assumption-S-Z},  any value of the form $a \Delta S_T$ ($a \in \R$) added to the correction term does not affect the approximation in our context.

	To formulate main results in this section, we need to modify the weight processes. For $\Phi \in \CL^+([0, T])$ and $t\in [0, T]$, we define
	\begin{align}\label{defi-Phi-bar}
		\ol\Phi_t : = \Phi_t + \ts\sup_{s \in [0, t]} |\Delta \Phi_s|.
	\end{align}
	The reason to consider $\ol \Phi$ is that in the calculation below we will end up with $\Phi_{-}$ which is not c\`adl\`ag and therefore is not a candidate for a weight process. For $\ol \Phi$, it is clear that $\ol \Phi \in \CL^+([0, T])$ with $\Phi \vee \Phi_{-} \lee \ol \Phi$, and $\Phi \equiv \ol \Phi$ if and only if $\Phi$ is continuous. Moreover, \cref{lemm:properties-BMO-SM}\eqref{item:bar-Phi-SM} shows that  $\Phi \in \cSM_p(\p)$ implies  $\ol \Phi \in \cSM_p(\p)$.

	\begin{theo}\label{theo:conergence-rate-BMO} 
		Let \cref{assumption-stochastic-integral} hold for some $\theta \in (0, 1]$ and let $\Phi \in \cSM_2(\p)$.
		\begin{enumerate}[\rm (1)]
			\item \label{item:1:small-jump-intensity} If there is some $\alpha \in (0, 2]$ such that
			\begin{align}\label{eq:item:1:small-jump-intensity}
				c_{\eqref{eq:item:1:small-jump-intensity}}: = \ts \sup_{r \in (0, 1)}\big\|(\omega, t) \mapsto r^\alpha \int_{r < |z|\lee 1}   \nu_t(\omega, \od z)\big\|_{L_\infty(\Omega \times [0, T], \p \otimes \Leb)} <\infty,
			\end{align}
			then a constant $c_{\eqref{1eq:approx-BMO}}>0$ exists such that for all $\tau \in \cT_{\det}$, $\ep >0$,
			\begin{align}\label{1eq:approx-BMO}
				\big\|E^{\corr}\big(\vartheta, \tau \,\big|\, \ep, \tfrac{1- \theta}{2}\big)\big\|_{\BMO_2^{\ol \Phi}(\p)} \lee
				c_{\eqref{1eq:approx-BMO}}\max\left\{\ep, \sqrt{\|\tau\|_\theta}, \,h(\ep) \sqrt{ \|\tau\|_\theta} \right\},
			\end{align}
			where $h(\ep) = \ep^{1- \alpha}$ if $\alpha \in (1, 2]$, $h(\ep) = \log^+(1/\ep)$ if $\alpha =1$, and $h(\ep) = 1$ if $\alpha \in (0, 1)$.

			\item \label{item:2:jump-behavior} If \begin{align}\label{eq:item:2:jump-behavior}
				c_{\eqref{eq:item:2:jump-behavior}} : = \ts \sup_{r \in (0, 1)} \big\| (\omega, t) \mapsto \int_{r < |z| \lee 1}  z \nu_t(\omega, \od z) \big\|_{L_\infty(\Omega \times [0, T], \p \otimes \Leb)} < \infty,
			\end{align}
			then a constant $c_{\eqref{1eq:approx-BMO-symmetric}}>0$ exists such that for all $\tau \in \cT_{\det}$, $\ep >0$, 
			\begin{align}\label{1eq:approx-BMO-symmetric}
				\big\|E^{\corr}\big(\vartheta, \tau \,\big|\, \ep, \tfrac{1- \theta}{2}\big)\big\|_{\BMO_2^{\ol\Phi}(\p)} \lee
				c_{\eqref{1eq:approx-BMO-symmetric}}\max\left\{\ep, \sqrt{\|\tau\|_\theta}\right\}.
			\end{align}
		\end{enumerate}  
	\end{theo}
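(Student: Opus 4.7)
The plan is to use a Lévy–Itô decomposition of the error combined with the key observation that the correction scheme cancels precisely the large-jump Poisson part (but not its compensator) of $\int(\vartheta_{u-}-\vartheta^\tau_u)\,dS_u$. Writing $dS_u = \sigma(S_{u-})\,dZ_u$ and invoking \eqref{eq:decomposition-Z}, with the jump martingale split at threshold $\ep(T-u)^{(1-\theta)/2}$, I would first establish, for every deterministic $a\in[0,T]$, a decomposition $E^{\corr}_T - E^{\corr}_a = E^{(1)} + E^{(2)} + E^{(3)} + E^{(4)}$, where $E^{(1)}$ is the continuous-martingale part, $E^{(2)}$ the small-jump compensated-Poisson part, $E^{(3)} = -\int_a^T (\vartheta_{u-}-\vartheta^\tau_u)\sigma(S_{u-})\big[\int_{|z|>\ep(T-u)^{(1-\theta)/2}} z\,\nu_u(dz)\big]du$ the large-jump compensator, and $E^{(4)}$ the $b^Z$-drift. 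The cancellation uses $\Delta S_u = \sigma(S_{u-})\Delta Z_u$ and $\Delta S_T=0$ a.s., so the large-jump $N_Z$-integral equals $\sum_{\rho_i\in(a,T)}(\vartheta_{\rho_i-}-\vartheta^\tau_{\rho_i})\Delta S_{\rho_i}$ and is killed by the correction. Preliminarily I would verify that each $\rho_i$ is a stopping time and that only finitely many $\rho_i<T$ occur a.s., so $A^{\corr}\in\CL_0([0,T])$.

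For the bmo-portion of the $\BMO$ norm, I would bound $\E^{\F_a}[(E^{(j)}_T)^2]$ termwise. The martingale parts $E^{(1)}$, $E^{(2)}$ are handled by conditional Itô isometry, reducing to integrals of $|\vartheta_{u-}-\vartheta^\tau_u|^2\sigma(S_{u-})^2$ weighted by either $|a^Z|^2$ or $\int_{|z|\le \ep(T-u)^{(1-\theta)/2}} z^2\nu_u(dz)$. The one-step bound \eqref{eq:assumption-stochastic-integral} telescoped over $\tau$, together with the grid estimate $t_i - u \le \|\tau\|_\theta (T-t_{i-1})^{1-\theta}$, converts this to $\|\tau\|_\theta$ times an integral against $(T-u)^{1-\theta}\Upsilon(du)$, which is controlled by the Curvature condition \eqref{eq:thm:curvature-condition}. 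For $E^{(2)}$, the factor $\int_{|z|\le\delta} z^2 \nu_u(dz) \le c\,\delta^{2-\alpha}$ (derived from \eqref{eq:item:1:small-jump-intensity} by a standard Fubini/layer-cake argument) is absorbed into the time weight, with the growth condition \eqref{eq:growth-strategy} handling any residual $(T-u)$-powers. The drift $E^{(4)}$ is treated by Cauchy--Schwarz using $\|b^Z\|_{L_2([0,T])}\le b^Z_{\eqref{eq:uniformly-bounded-KV}}$ and \eqref{eq:thm:curvature-condition}.

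The main technical work is the estimate of $E^{(3)}$. Under hypothesis~(1), I would split $\int_{|z|>\delta}|z|\nu_u(dz)$ into the $|z|>1$ tail (controlled by $j^Z_{\eqref{eq:uniformly-bounded-KV}}$) and a middle piece $\int_{\delta<|z|\le 1}|z|\nu_u(dz) \le c\,g_\alpha(\delta)$, where $g_\alpha(\delta) = \delta^{1-\alpha}$ if $\alpha\in(1,2]$, $\log^+(1/\delta)$ if $\alpha=1$, or a constant if $\alpha\in(0,1)$, all via \eqref{eq:item:1:small-jump-intensity}. Taking $\delta = \ep(T-u)^{(1-\theta)/2}$ and extracting the $\ep$-factor, a Cauchy--Schwarz argument combined with the growth condition and the Curvature condition yields $\E^{\F_a}[(E^{(3)}_T)^2]^{1/2} \le c\,h(\ep)\sqrt{\|\tau\|_\theta}\,\Phi_a$. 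Under hypothesis~(2), the symmetry assumption \eqref{eq:item:2:jump-behavior} makes the \emph{signed} integral $|\int_{|z|>\delta} z\,\nu_u(dz)|$ uniformly bounded in $\delta$, so $E^{(3)}$ is only of size $c\sqrt{\|\tau\|_\theta}\,\Phi_a$, removing the $h(\ep)$ factor. Summing all pieces yields the $\bmo^{\ol\Phi}_2(\p)$-bound in each part.

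Finally, to pass from bmo to $\BMO$ I would bound the jump process. At $\rho_i<T$ the correction exactly cancels $\Delta E^{\corr}_{\rho_i}$, and at any other jump time $|\Delta S_t|\le\sigma(S_{t-})\ep(T-t)^{(1-\theta)/2}$ by definition of the $\rho_i$; the growth condition \eqref{eq:growth-strategy}, together with the analogous bound for $|\vartheta^\tau_t|$, then yields $|\Delta E^{\corr}_t|\le c\,\ep\,\Phi_{t-}\le c\,\ep\,\ol\Phi_t$, i.e., $|\Delta E^{\corr}|_{\ol\Phi}\le c\,\ep$. Since $\Phi\in\cSM_2(\p)$ implies $\ol\Phi\in\cSM_2(\p)$, \cref{lemm:relation-BMO-bmo} combines the bmo estimate with this jump bound to produce the stated maxima in \eqref{1eq:approx-BMO} and \eqref{1eq:approx-BMO-symmetric}. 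The hardest bookkeeping is matching the threshold exponent $(1-\theta)/2$ against the curvature weight $(T-u)^{1-\theta}$ on each interval of $\tau$, especially the terminal interval where $(T-t_{n-1})/(T-u)$ can be arbitrarily large; this is precisely where the choice $\kappa=(1-\theta)/2$ is calibrated for clean exponent cancellation.
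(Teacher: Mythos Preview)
Your proposal is correct and follows essentially the same route as the paper. The paper groups your $E^{(1)}$ and $E^{(4)}$ into a single ``continuous part'' $E^{\rm C}$, packages the telescoping argument you describe for $\sum_i\int|\vartheta_u-\vartheta_{t_{i-1}}|^2\sigma(S_u)^2\,du$ into a separate proposition (invoking \cite[Theorem~5.3]{GN20}), and applies \cref{lemm:relation-BMO-bmo} only to the small-jump piece $E^{\rm S}$ rather than to the full $E^{\corr}$ (which is equivalent since the other pieces are continuous); otherwise the decomposition, the cancellation of the large-jump Poisson integral by the correction, the layer-cake estimates for $\int_{\delta<|z|\le 1}|z|\nu_u(dz)$ producing $h(\ep)$, and the jump bound $|\Delta E^{\corr}|_{\ol\Phi}\le c\ep$ via the growth condition are exactly as you outline.
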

	We postpone the proof to \cref{sec:proofs-main-results}.
	Minimizing the right-hand side of \eqref{1eq:approx-BMO} and \eqref{1eq:approx-BMO-symmetric} over $\ep>0$ leads us to the  following:
	\begin{coro}\label{coro:conergence-rate-BMO} Let \cref{assumption-stochastic-integral} hold for some $\theta \in (0, 1]$ and let $\Phi \in \cSM_2(\p)$.
		\begin{enumerate}[\rm (1)]
			\item If \eqref{eq:item:1:small-jump-intensity} is satisfied for some $\alpha \in (0, 2]$, then there exists a constant $c'>0$ independent of $\tau$ such that, for $\ep(\tau, \alpha) : = \|\tau\|_\theta^{\frac{1}{2}(\frac{1}{\alpha} \wedge 1)}$, one has
			\begin{align*}
				\big\|E^{\corr}\big(\vartheta, \tau \,\big| \ep(\tau, \alpha), \tfrac{1- \theta}{2}\big)\big\|_{\BMO_2^{\ol\Phi}(\p)} \lee c' \begin{cases}
					\sqrt[2\alpha]{\|\tau\|_\theta} & \mbox{ if } \alpha \in (1, 2]\\
					\big[1 + \frac{1}{2}\log^+ \big(\tfrac{1}{\|\tau\|_\theta}\big)\big] \sqrt{\|\tau\|_\theta} & \mbox{ if } \alpha =1\\[2pt]
					\sqrt{\|\tau\|_\theta} & \mbox{ if } \alpha \in (0, 1).
				\end{cases}
			\end{align*}
			
			\item If \eqref{eq:item:2:jump-behavior} is satisfied, then $
				\big\|E^{\corr} \big(\vartheta, \tau \,|\, \sqrt{\|\tau\|_\theta}, \tfrac{1- \theta}{2}\big)\big\|_{\BMO_2^{\ol\Phi}(\p)} \lee c_{\eqref{1eq:approx-BMO-symmetric}} \sqrt{\|\tau\|_\theta}$.
		\end{enumerate}
		
	\end{coro}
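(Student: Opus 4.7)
The plan is to treat \Cref{coro:conergence-rate-BMO} as a pure optimization corollary of \Cref{theo:conergence-rate-BMO}: once the bounds \eqref{1eq:approx-BMO} and \eqref{1eq:approx-BMO-symmetric} are in hand, nothing probabilistic remains, and we only need to minimize the three-term maximum on the right-hand side of \eqref{1eq:approx-BMO} over $\ep>0$ case by case, and check that the proposed value $\ep(\tau,\alpha)=\|\tau\|_\theta^{\frac12(\frac1\alpha\wedge 1)}$ balances the dominant terms. Without loss of generality we may assume $\|\tau\|_\theta\lee 1$ (otherwise the estimate is absorbed into the constant by monotonicity of powers), so that $\|\tau\|_\theta^a\gee \|\tau\|_\theta^b$ whenever $a\lee b$.

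Part (2) is immediate: plugging $\ep=\sqrt{\|\tau\|_\theta}$ into \eqref{1eq:approx-BMO-symmetric} collapses the maximum $\max\{\ep,\sqrt{\|\tau\|_\theta}\}$ to $\sqrt{\|\tau\|_\theta}$.

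For Part (1), I would treat the three sub-cases separately. If $\alpha\in(1,2]$, then $h(\ep)=\ep^{1-\alpha}$ and $\ep(\tau,\alpha)=\|\tau\|_\theta^{1/(2\alpha)}$; direct substitution gives $\ep=h(\ep)\sqrt{\|\tau\|_\theta}=\|\tau\|_\theta^{1/(2\alpha)}$, and the third term $\sqrt{\|\tau\|_\theta}=\|\tau\|_\theta^{1/2}$ is dominated because $\frac{1}{2\alpha}\lee \frac12$; hence the max equals $\sqrt[2\alpha]{\|\tau\|_\theta}$. If $\alpha=1$, then $\ep(\tau,\alpha)=\sqrt{\|\tau\|_\theta}$, and
\begin{align*}
h(\ep)\sqrt{\|\tau\|_\theta} \;=\; \log^+\!\big(1/\sqrt{\|\tau\|_\theta}\big)\sqrt{\|\tau\|_\theta} \;=\; \tfrac12\log^+\!\big(1/\|\tau\|_\theta\big)\sqrt{\|\tau\|_\theta},
\end{align*}
so the maximum of the three terms is bounded by $\bigl[1+\tfrac12\log^+(1/\|\tau\|_\theta)\bigr]\sqrt{\|\tau\|_\theta}$ (the additive $1$ absorbs the pure $\ep$ and $\sqrt{\|\tau\|_\theta}$ terms when the logarithm is small). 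If $\alpha\in(0,1)$, then $h\equiv 1$ and $\ep(\tau,\alpha)=\sqrt{\|\tau\|_\theta}$, which directly gives the max $\sqrt{\|\tau\|_\theta}$.

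I do not anticipate a real obstacle here; the only mild subtlety is verifying that, in the case $\alpha\in(1,2]$, the ``collateral'' term $\sqrt{\|\tau\|_\theta}$ in the maximum is indeed absorbed by $\|\tau\|_\theta^{1/(2\alpha)}$, which requires the normalization $\|\tau\|_\theta\lee 1$; and in the case $\alpha=1$, expressing $\log^+(1/\sqrt{\|\tau\|_\theta})$ as $\tfrac12 \log^+(1/\|\tau\|_\theta)$ uses $\|\tau\|_\theta\lee 1$ as well. Both reductions are harmless and only affect the universal multiplicative constant $c'$.
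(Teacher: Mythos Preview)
Your proposal is correct and matches the paper's approach exactly: the paper simply states that the corollary follows by ``minimizing the right-hand side of \eqref{1eq:approx-BMO} and \eqref{1eq:approx-BMO-symmetric} over $\ep>0$'' without giving further details, and your case-by-case verification with the harmless normalization $\|\tau\|_\theta\lee 1$ (justified since $\|\tau\|_\theta\lee T^\theta$ is bounded) is precisely what is needed to fill this in.
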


	\begin{rema}
		\begin{enumerate}[\rm(1)]
			\item The assumption $j^Z_{\eqref{eq:uniformly-bounded-KV}}<\infty$  means that
			\begin{align}\label{eq:square-integrability-nu}
				\ts \left\|(\omega, t) \mapsto \int_{\R} z^2  \nu_t(\omega, \od z)\right\|_{L_\infty(\Omega \times [0, T], \p \otimes \Leb)} <\infty,
			\end{align}
			which implies that \eqref{eq:item:1:small-jump-intensity} automatically holds for $\alpha = 2$ in our context.
			
			\item It is easy to check that condition \eqref{eq:item:2:jump-behavior} is satisfied if: (finite variation property)
			\begin{align}\label{eq:finite-variation-prop}
				\ts \big\|(\omega, t) \mapsto \int_{|z|\lee 1}  |z| \nu_t(\omega, \od z)\big\|_{L_\infty(\Omega \times [0, T], \p \otimes \Leb)} <\infty,
			\end{align}
			or  if: (local symmetry property) there is an $r_0 \in (0, 1)$ such that the measure $\nu_t(\omega, \cdot)$ is symmetric  on $(-r_0, r_0)$ for $\p \otimes \Leb$-a.e. $(\omega, t) \in \Omega \times [0, T]$.
			
			\item If  \eqref{eq:item:1:small-jump-intensity} is satisfied for some $\alpha \in (0, 1)$, then \eqref{eq:finite-variation-prop}, and hence, \eqref{eq:item:2:jump-behavior} hold true. This assertion can be deduced from applying 
			\cref{lem:small-ball-property} $(\omega, t)$-wise with $\alpha < \gamma =1$.
		\end{enumerate}
	\end{rema}

	\subsection{A pure-jump model with faster convergence rates}\label{subsec:model-improve-rate}
	
	We investigate in this part the effect of small jumps of the underlying process $Z$ on the convergence rate of the approximation error resulting from the jump correction method.
	
	 \begin{assum}\label{assumption-pure-jump} In \Cref{subsec:setting-stochastic-integal}, we assume that
	 \begin{enumerate}[\quad \rm(a)]
	 	\item \label{item:assum-purejum-1} $Z^{\rm{c}} \equiv 0$, $b^Z \equiv 0$, i.e., $Z$ is a purely discontinuous martingale,
	 	\item \label{item:assum-purejum-2} The family of L\'evy measures $(\nu_t)_{t \in [0, T]}$ does not depend on $\omega$,
	 	\item \label{item:assum-purejum-3} $\sigma(x) = x$, i.e., $S$ is the Dol\'eans--Dade exponential of $Z$,
	 	\setcounter{newcounter}{\value{enumi}}
	 \end{enumerate}
 and assume in addition for some $\vartheta \in \adm$ that
 \begin{enumerate}[\quad \rm(a)]
 	\setcounter{enumi}{\value{newcounter}}
 	\item \label{item:assum-purejum-4} $\vartheta S = (\vartheta_t S_t)_{t \in [0, T)}$ has the semimartingale representation
 	\begin{align*}
 		\vartheta_t S_t = M_t + V_t, \quad t \in [0, T),
 	\end{align*}
 where $M = (M_t)_{t \in [0, T)}$ is an $L_2(\p)$-martingale and $V_t = \int_0^t v_u \od u$ for a progressively measurable $v$ with $\int_0^t v_u^2(\omega) \od u <\infty$ for $(\omega, t) \in \Omega \times [0, T)$.

\item \label{item:assum-purejum-5} \cref{assumption-stochastic-integral} holds for some $\theta \in (0, 1]$, non-decreasing $\Theta \in \CL^+([0, T])$ and for 
\begin{align*}
	\Upsilon(\cdot, \od t)  = \od \<M\>_t + M_t^2 \od t + v_t^2 \od t + \int_0^t v_u^2 \od u\, \od t, \quad t \in [0, T).
\end{align*}
Here, recall $\Phi = \Theta S$.
 \end{enumerate}
	\end{assum}
Let us shortly discuss conditions in \cref{assumption-pure-jump}. According to \cite[Ch.II, Theorem 4.15]{JS03}, condition \eqref{item:assum-purejum-2} implies that $Z$ has independent increments. Typical examples for $Z$ are time-inhomogeneous L\'evy processes, and especially, L\'evy processes.  This condition is necessary for our technique exploiting the orthogonality of martingale increments which appear in the approximation error with the integrand given in \eqref{item:assum-purejum-4}. Condition \eqref{item:assum-purejum-3} is merely a convenient condition and it can be extended to more general context with an appropriate modification for weight processes. We use here $\sigma(x) = x$ to make the proof more transparent and reduce unnecessary technicalities, and moreover, this is a classical case in application. Condition \eqref{item:assum-purejum-4} is the semimartingale decomposition of $\vartheta S$, and this can be easily verified if one knows the semimartingale representation of $\vartheta$. Condition \eqref{item:assum-purejum-5} enables to obtain convergence rates for the approximation error, and this structure of $\Upsilon$ is discussed in \cref{exam:Upsilon-martingale-setting}. We will show in \cref{theo:application-Levy} below that \cref{assumption-pure-jump} is fully satisfied in the exponential L\'evy model when $\vartheta$ is the mean-variance hedging strategy of a European type option. 
	
	The conditions for small jump behavior of the underlying process $Z$ are adapted respectively from \eqref{eq:item:1:small-jump-intensity} and  \eqref{eq:item:2:jump-behavior} to the current setting as follows:
	\begin{align}
		& \ts \sup_{r \in (0, 1)} \big\|t \mapsto r^\alpha \int_{r < |z| \lee 1} \nu_t(\od z)\big\|_{L_\infty([0, T], \Leb)} < \infty \quad \mbox{for some }  \alpha \in (0, 2], \label{eq:small-jump-pure-jump-1}\\
		& \ts \sup_{r \in (0, 1)} \big\|t \mapsto \int_{r < |z| \lee 1} z \nu_t(\od z)\big\|_{L_\infty([0, T], \Leb)} < \infty. \label{eq:small-jump-pure-jump-2}
	\end{align}
It is easy to check that the condition $\|t \mapsto \int_{|z| \lee 1} |z| \nu_t(\od z) \|_{L_\infty([0, T], \Leb)} <\infty$ implies both \eqref{eq:small-jump-pure-jump-1} (for $\alpha =1$) and \eqref{eq:small-jump-pure-jump-2}. We recall the modification $\ol \Phi$ of $\Phi$ from \eqref{defi-Phi-bar}.
	
	\begin{theo}\label{theo:improve-rate}
		Let \cref{assumption-pure-jump} hold for some $\theta \in (0, 1]$ and $\Phi  \in \cSM_2(\p)$. If \eqref{eq:small-jump-pure-jump-1} is satisfied for some $\alpha \in (0, 2]$, then   a constant $c>0$ exists such that for all $\tau \in \cT_{\det}$, $\ep>0$,
		\begin{align*}
			&\big\|E^{\corr}\big(\vartheta, \tau \,\big|\, \ep, \tfrac{1- \theta}{2}\big)\big\|_{\BMO_2^{\ol \Phi}(\p)} \\
			& \lee c\begin{cases}
				\max\Big\{\ep,\, \ep^{1 - \frac{\alpha}{2}} \sqrt{\|\tau\|_\theta}, \, \|\tau\|_\theta + \ep^{1- \alpha} \|\tau\|_\theta^{1- \frac{1}{2}(1- \theta)(\alpha -1)}\Big\} & \mbox{if } \alpha \in (1, 2],\\[3pt]
				\max\left\{\ep,\, \sqrt{\ep} \sqrt{\|\tau\|_\theta}, \, \big[1+ \log^+(\tfrac{1}{\ep}) + \log^+\big(\tfrac{1}{\|\tau\|_\theta}\big)\big]\|\tau\|_\theta\right\} & \mbox{if } \alpha =1,\\[3pt]
				\max\Big\{\ep,\, \sqrt{\ep} \sqrt{\|\tau\|_\theta}, \, \|\tau\|_\theta \Big\} & \mbox{if } \alpha =1 \mbox{ and } \eqref{eq:small-jump-pure-jump-2} \mbox{ holds},\\[3pt]
				\max\Big\{\ep,\, \sqrt{\ep} \sqrt{\|\tau\|_\theta}, \, \|\tau\|_\theta \Big\} & \mbox{if } \alpha \in (0, 1).
			\end{cases}
		\end{align*}
		In particular, minimizing the right-hand side over $\ep >0$ yields another constant $c' >0$  such that for all $\tau \in \cT_{\det}$, 
		\begin{align*}
			&\big\|E^{\corr}\big(\vartheta, \tau \,\big|\, \ep(\tau,\alpha ), \tfrac{1- \theta}{2}\big)\big\|_{\BMO_2^{\ol \Phi}(\p)} \lee c' \begin{cases}
				\|\tau\|_\theta^{\frac{1}{\alpha}(1- \frac{1}{2}(1-\theta)(\alpha -1))} & 
				\mbox{if } \alpha \in (1, 2], \\
				\big[1+ \log^+\big(\tfrac{1}{\|\tau\|_\theta}\big)\big]\|\tau\|_\theta & \mbox{if } \alpha = 1,\\
				\|\tau\|_\theta & \mbox{if } \alpha =1 \mbox{ and } \eqref{eq:small-jump-pure-jump-2} \mbox{ holds},\\
				\|\tau\|_\theta & \mbox{if } \alpha \in (0, 1),
			\end{cases}
		\end{align*}
	where $\ep(\tau, \alpha): = \|\tau\|_\theta^{\frac{1}{\alpha}(1- \frac{1}{2}(1-\theta)(\alpha -1))}$ if $\alpha \in [1, 2]$, and $\ep(\tau, \alpha): = \|\tau\|_\theta$ if $\alpha \in (0, 1)$.
	\end{theo}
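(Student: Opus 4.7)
The plan is to refine the argument of \cref{theo:conergence-rate-BMO} by exploiting two additional ingredients provided by \cref{assumption-pure-jump}: the independent increments of $Z$ (a consequence of $(\nu_t)$ being deterministic, \eqref{item:assum-purejum-2}) together with the semimartingale decomposition $\vartheta S = M + V$ from \eqref{item:assum-purejum-4}. By \cref{lemm:relation-BMO-bmo} and the fact that $\ol \Phi \in \cSM_2(\p)$ whenever $\Phi$ is, it suffices to estimate $\|E^{\corr}\|_{\bmo_2^{\ol \Phi}(\p)}$ and $|\Delta E^{\corr}|_{\ol \Phi}$ separately. The pointwise jump norm is handled exactly as in the proof of \cref{theo:conergence-rate-BMO}: for a time $t$ outside the family $(\rho_i(\ep,(1-\theta)/2))$, \eqref{eq:random-times-S} gives $|\Delta S_t| \lee \ep S_{t-}(T-t)^{(1-\theta)/2}$, and the growth condition \eqref{eq:growth-strategy} yields $|\Delta E^{\corr}_t| \lee c\,\ep\,\Theta_t S_{t-} \lee c\,\ep\,\ol \Phi_t$.

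For the bmo piece, since the correction cancels the uncompensated large jumps of $S$ at the $\rho_i$'s exactly, one may rewrite, for any $a \in [0, T]$,
\[
	E^{\corr}_T - E^{\corr}_a \;=\; \int_{(a, T]} (\vartheta_{u-} - \vartheta^\tau_u)\, S_{u-}\, \od \tilde Z^{\ep}_u \;-\; \int_{(a, T]} (\vartheta_{u-} - \vartheta^\tau_u)\, S_{u-}\, \bigg(\int_{|z|>r_u}\!\! z\, \nu_u(\od z)\bigg) \od u,
\]
with $r_u := \ep(T-u)^{(1-\theta)/2}$ and $\tilde Z^{\ep}$ the compensated small-jump martingale of $Z$ below the threshold $r_u$. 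The drift term vanishes under \eqref{eq:small-jump-pure-jump-2}, which is the mechanism behind the rate $\|\tau\|_\theta$ in the $\alpha=1$ sub-case with \eqref{eq:small-jump-pure-jump-2}. Otherwise it contributes at most an order $\ep^{1-\alpha}\|\tau\|_\theta^{1 - (1-\theta)(\alpha-1)/2}\Phi_a$ (for $\alpha \in [1, 2]$), obtained by applying \eqref{eq:small-jump-pure-jump-1} dyadically on $(r_u, 1]$ to get $|\int_{|z|>r_u} z\,\nu_u(\od z)| \lee c\,\ep^{1-\alpha}(T-u)^{-(1-\theta)(\alpha-1)/2}$, together with the curvature condition \eqref{eq:thm:curvature-condition}; in the case $\alpha \in (0, 1)$ the drift is even absolutely integrable in $z$.

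The crux of the improvement is the martingale integral against $\tilde Z^{\ep}$. On each $(t_{i-1}, t_i]$ I would decompose
\[
	(\vartheta_{u-} - \vartheta^\tau_u)\, S_{u-} \;=\; (M_{u-} - M_{t_{i-1}}) + (V_{u-} - V_{t_{i-1}}) - \vartheta_{t_{i-1}-}\big(S_{u-} - S_{t_{i-1}}\big).
\]
Because $Z$ is purely discontinuous with independent increments, the stochastic integrals of these three pieces against $\tilde Z^{\ep}|_{(t_{i-1}, t_i]}$ are pairwise orthogonal across different $i$ conditionally on $\F_a$. A conditional It\^o isometry combined with the small-jump bound $\int_{|z| \lee r} z^2\, \nu_u(\od z) \lee c\, r^{2-\alpha}$ at $r = r_u$ (a consequence of \eqref{eq:small-jump-pure-jump-1}; for $\alpha=2$ one uses \eqref{eq:square-integrability-nu}), H\"older on the $V$-piece, and viewing $\vartheta_{t_{i-1}-}(S_{u-} - S_{t_{i-1}})$ itself as a stochastic integral of $Z$ over $(t_{i-1}, u)$ with $\F_{t_{i-1}}$-measurable integrand, then yields
\[
	\E^{\F_a}\big|E^{\corr}_T - E^{\corr}_a\big|^2 \lee c\, \ep^{2-\alpha}\, \|\tau\|_\theta^{1 - (1-\theta)(\alpha-1)/2}\, \E^{\F_a}\!\!\int_{(a, T]}(T-u)^{1-\theta}\,\Upsilon(\cdot, \od u) \;+\; \text{(drift)}^2,
\]
after pulling one factor of $\|\tau\|_\theta$ out of $\max_i(t_i - t_{i-1})(T-t_{i-1})^{\theta - 1}$ and a second factor out of the additional $(t_i - t_{i-1})$ that H\"older/Jensen produces on the $V$ and $S$-increment terms. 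Bounding the $\Upsilon$-integral by $\Phi_a^2$ via \eqref{eq:thm:curvature-condition} and extracting the square root gives the bmo-bound; combining with the jump bound and the drift and optimising over $\ep > 0$ then produces the stated rate.

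The main obstacles are (i) making the interval-wise orthogonality rigorous, for which the deterministic-characteristics assumption \eqref{item:assum-purejum-2} is essential; (ii) ensuring the finite-variation piece $V$ and the $S$-increment piece gain the \emph{extra} factor of $\sqrt{\|\tau\|_\theta}$ rather than degrading to the standard $\sqrt{\|\tau\|_\theta}$-rate of \cref{theo:conergence-rate-BMO}, which requires a careful double application of H\"older/Jensen; and (iii) handling the threshold case $\alpha = 1$, where the small-jump moment produces a logarithm $\log^+(1/\ep)$ via $\int_\ep^1 r^{-1}\,\od r$ that either survives (yielding the $(1 + \log^+(1/\|\tau\|_\theta))\|\tau\|_\theta$ rate) or is killed by the symmetry condition \eqref{eq:small-jump-pure-jump-2}.
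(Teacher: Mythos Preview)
Your decomposition is right in spirit but you have placed the key step in the wrong component. The small-jump martingale integral $\int(\vartheta_{u-}-\vartheta^\tau_u)S_{u-}\,\od\tilde Z^{\ep}_u$ does \emph{not} benefit from the $M+V$ splitting: after conditional It\^o isometry it becomes $\E^{\F_a}\int(\vartheta_u-\vartheta^\tau_u)^2 S_u^2\,\od\langle\tilde Z^{\ep}\rangle_u$, and whether or not you split the integrand, the bound you obtain is $c\,\ep^{2-\alpha}\,\|\tau\|_\theta\,\Phi_a^2$, i.e.\ exactly the $E^{\rm S}$-estimate \eqref{eq:error-small-jump-BMO-1} already used in \cref{theo:conergence-rate-BMO}. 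There is no ``second factor of $\|\tau\|_\theta$'' to extract here; orthogonality across the $(t_{i-1},t_i]$ is automatic from the isometry and gives nothing new.

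The improvement lives entirely in the \emph{drift} $\int(\vartheta_u-\vartheta^\tau_u)S_u\,D_{\ep,\kappa}(u)\,\od u$ with $D_{\ep,\kappa}(u)=\int_{|z|>r_u}z\,\nu_u(\od z)$, and your sketched argument for it (pointwise bound $|D_{\ep,\kappa}(u)|\lee c\,\ep^{1-\alpha}(T-u)^{-\kappa(\alpha-1)}$ plus the curvature condition) only reproduces the old rate $\ep^{1-\alpha}\sqrt{\|\tau\|_\theta}$ via Cauchy--Schwarz, not the claimed $\ep^{1-\alpha}\|\tau\|_\theta^{1-\kappa(\alpha-1)}$. (Also, under \eqref{eq:small-jump-pure-jump-2} the drift does not ``vanish''; $D_{\ep,\kappa}$ is merely bounded.) What is actually needed is to apply the $M+V$ decomposition \emph{inside the drift integral}: on each $(s_{i-1},s_i]$ write $(\vartheta_u-\vartheta_{s_{i-1}})S_u=(M_u-M_{s_{i-1}})+(V_u-V_{s_{i-1}})-\vartheta_{s_{i-1}}(S_u-S_{s_{i-1}})$, then use stochastic Fubini to rewrite $\int_{s_{i-1}}^{s_i}(M_u-M_{s_{i-1}})D_{\ep,\kappa}(u)\,\od u=\int_{(s_{i-1},s_i]}\big(\int_r^{s_i}D_{\ep,\kappa}\,\od u\big)\,\od M_r$. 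Since $D_{\ep,\kappa}$ is \emph{deterministic} (this is where \eqref{item:assum-purejum-2} enters), these pieces are orthogonal across $i$, and It\^o's isometry against $\od M$ (resp.\ $\od S$) brings in the factor $\big|\int_r^{s_i}D_{\ep,\kappa}\,\od u\big|^2$, whose weighted supremum $\sup_{i,r}(T-r)^{-\kappa}\int_r^{s_i}|D_{\ep,\kappa}|\,\od u$ is what carries the extra power of $\|\tau\|_\theta$ (it is $\lee c\big[\|\tau\|_\theta+\ep^{1-\alpha}\|\tau\|_\theta^{1-\kappa(\alpha-1)}\big]$ for $\alpha\in(1,2]$, etc.). Combined with the curvature bound on $\int(T-r)^{1-\theta}\od\langle M\rangle_r$ this yields the third term in the maximum; the $E^{\rm S}$-part supplies the first two terms unchanged.
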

	
	The proof will be provided later in \cref{sec:proofs-main-results}.
	
	\subsection{Adapted time-nets and approximation accuracy} \label{subsec:approx-accuracy} We discuss in this part how to improve the approximation accuracy by using suitable time-nets.

	\subsubsection*{Adapted time-net}
	The conclusions in \cref{theo:approximation-QV-BMO,theo:improve-rate},  \cref{coro:approximation-continuous-case-BMO,coro:conergence-rate-BMO}  assert that the errors measured in $\bmo_2^\Phi(\p)$ or in $\BMO_2^{\ol \Phi}(\p)$ are, up to multiplicative constants, upper bounded by  $\|\tau\|_\theta^r$  with $r \in [\frac{1}{4}, 1]$. Assume $\tau_n \in \cT_{\det}$ with $\#\tau_n = n+1$, where $n \gee 1$ represents in the context of stochastic finance the number of transactions in trading. If  one uses the equidistant nets $\tau_n = (Ti/n)_{i=0}^n$, then  $\|\tau_n\|_\theta = T^\theta/n^\theta$, and thus $\theta \in (0, 1]$ describes the convergence rate in this situation.  
	
	In order to accelerate the convergence rate we need to employ other suitable time-nets. First, it is straightforward to check that $\|\tau_n\|_\theta \gee T^\theta/n$ for any $\tau_n \in \cT_{\det}$ with $\#\tau_n = n+1$. Next, minimizing $\|\tau_n\|_\theta$  over $\tau_n \in \cT_{\det}$ with $\# \tau_n = n+1$ leads us to the following adapted time-nets, which were used (at least) in \cite{GG04, GGL13, Ge05, GT15, GT09}: For $\theta \in (0, 1]$ and $n\gee 1$, the \textit{adapted time-net} $\tau^\theta_n = (t^\theta_{i, n})_{i=0}^n$ is defined by 
	\begin{align*}
		t^\theta_{i, n} := T \big(1- (1-i/n)^{1/\theta}\big), \quad i = 1,\ldots, n.
	\end{align*} 
	Obviously, the equidistant time-net corresponds to $\theta =1$. By a computation, it holds that 
	\begin{align}\label{eq:theta-norm-time-net}
		T^\theta/n \lee \|\tau^\theta_n\|_\theta \lee T^\theta/(\theta n).
	\end{align}

	\subsubsection*{Cardinality of the combined time-net} The time-net used in \cref{theo:conergence-rate-BMO,theo:improve-rate} is $\comb{\tau}{\rho(\ep, \frac{1-\theta}{2})}$. Due to the randomness, a simple way to quantify the cardinality of this combined time-net is to evaluate its expected cardinality, i.e., $\E \[\#\comb{\tau}{\rho(\ep, \frac{1-\theta}{2})}\]$ (see, e.g., \cite{Fu11} or \cite[Eq. (10) with $\beta = 0$]{RT14}). Thus, we provide in the next result an estimate for certain moments of the cardinality. Since we aim to apply \cref{lemm:feature-BMO}\eqref{item:BMO-feature-RH} later, changes of the underlying measure are also taken into account.

	\begin{prop}\label{prop:cardinality-combined-nets}
		Let $q \in [1, 2]$, $r\in [2, \infty]$ with $\frac{q}{2} + \frac{1}{r} =1$. 
		Assume a probability measure $\bQ$ absolutely continuous with respect to $\p$ with $\od \bQ/\od \p \in L_r(\p)$. If \eqref{eq:item:1:small-jump-intensity} holds for some $\alpha \in (0, 2]$, then for any $\theta \in (0, 1]$ and $(\ep_n)_{n\gee 1} \subset (0, \infty)$ with $\inf_{n\gee 1}\sqrt[\alpha]{n} \ep_n >0$, there exists a constant $c_{\eqref{eq:norm-combined-nets}}>0$ such that for any $n\gee 1$, any $\tau_n \in \cT_{\det}$ with $\#\tau_n = n+1$ one has
		\begin{align}\label{eq:norm-combined-nets}
			\left\|\#\comb{\tau_n}{ \rho\(\ep_n, \tfrac{1-\theta}{2}\)}\right\|_{L_q(\bQ)} \sim_{c_{\eqref{eq:norm-combined-nets}}} n.
		\end{align}
	\end{prop}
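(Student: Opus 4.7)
The plan is to establish matching upper and lower bounds for $\|\#\comb{\tau_n}{\rho(\ep_n, \kappa)}\|_{L_q(\bQ)}$, where I write $\kappa := (1-\theta)/2$. The lower bound is immediate: since $\tau_n \subseteq \comb{\tau_n}{\rho(\ep_n, \kappa)}$ pointwise, one has $\#\comb{\tau_n}{\rho(\ep_n, \kappa)} \gee n+1 \gee n$, so its $L_q(\bQ)$-norm is at least $n$. For the upper bound, the crude inclusion
$$\#\comb{\tau_n}{\rho(\ep_n, \kappa)} \lee \#\tau_n + \cN_{\eqref{eq:cardinality-S}}(\ep_n, \kappa) = n+1 + \cN_{\eqref{eq:cardinality-S}}(\ep_n, \kappa)$$
reduces everything to showing $\|\cN_{\eqref{eq:cardinality-S}}(\ep_n, \kappa)\|_{L_q(\bQ)} \lee c\,n$.

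To move from $\bQ$ to $\p$, I would exploit that $\tfrac{q}{2} + \tfrac{1}{r} = 1$ is exactly the conjugacy between $\tfrac{2}{q}$ and $r$, so Hölder's inequality with the density $\od\bQ/\od\p$ gives
$$\|\cN(\ep_n,\kappa)\|_{L_q(\bQ)} \lee \|\cN(\ep_n,\kappa)\|_{L_2(\p)}\,\|\od\bQ/\od\p\|_{L_r(\p)}^{1/q}.$$
Since $\od\bQ/\od\p \in L_r(\p)$ by hypothesis, it remains to prove the $L_2(\p)$-estimate $\|\cN(\ep_n,\kappa)\|_{L_2(\p)} \lee c\,n$.

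For this, I would use \eqref{eq:jump-S-Z} and the positivity of $\sigma$ to rewrite the defining event as $\{|\Delta Z_t|> \ep(T-t)^\kappa\}$, so that
$$\cN(\ep,\kappa) = \int_0^T\!\!\int_{\R_0} f_\ep(t,z)\, N_Z(\od t, \od z) = M_T^\ep + C_T^\ep,\qquad f_\ep(t,z) := \1_{\{|z|>\ep(T-t)^\kappa\}},$$
with $C^\ep = \int_0^{\boldsymbol{\cdot}}\!\!\int f_\ep(t,z)\,\pi_Z(\od t, \od z)$ the predictable compensator and $M^\ep$ the associated compensated-Poisson martingale. Combining \eqref{eq:item:1:small-jump-intensity} with the square integrability \eqref{eq:uniformly-bounded-KV} (which bounds $\int_{|z|>1}\nu_t(\od z)$ by $(j^Z_{\eqref{eq:uniformly-bounded-KV}})^2$) yields $\int_{|z|>r}\nu_t(\omega,\od z) \lee c_{\eqref{eq:item:1:small-jump-intensity}}\, r^{-\alpha} + (j^Z_{\eqref{eq:uniformly-bounded-KV}})^2$ for all $r>0$, whence
$$C_T^\ep \lee c_{\eqref{eq:item:1:small-jump-intensity}}\, \ep^{-\alpha}\!\int_0^T (T-t)^{-\alpha(1-\theta)/2}\od t + T(j^Z_{\eqref{eq:uniformly-bounded-KV}})^2\quad \mbox{a.s.}$$
It\^o's isometry for compensated Poisson integrals gives $\E(M_T^\ep)^2 = \E C_T^\ep$, and the hypothesis $\inf_{n\gee 1}\sqrt[\alpha]{n}\,\ep_n>0$ converts $\ep_n^{-\alpha}$ into $O(n)$; the standard expansion $(M+C)^2 \lee 2M^2 + 2C^2$ then yields $\|\cN(\ep_n,\kappa)\|_{L_2(\p)} \lee c\,n$.

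The only genuinely delicate point---hence the main obstacle I expect---is the integrability at $T$ of the singular factor $(T-t)^{-\alpha(1-\theta)/2}$ in the compensator bound, which forces me to calibrate $\kappa$ to $\theta$. The choice $\kappa = (1-\theta)/2 < 1/2$ combined with $\alpha \in (0,2]$ gives $\alpha(1-\theta)/2 \lee 1-\theta < 1$ for $\theta \in (0,1]$, so the singularity is integrable on $[0,T]$. All remaining pieces (the Hölder transfer, the It\^o isometry, and the small-jump/large-jump dichotomy for $\nu_t$) are routine and contribute only multiplicative constants depending on $T$, $\theta$, $\alpha$, $c_{\eqref{eq:item:1:small-jump-intensity}}$, $j^Z_{\eqref{eq:uniformly-bounded-KV}}$ and $\|\od\bQ/\od\p\|_{L_r(\p)}$.
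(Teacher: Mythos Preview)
Your proof is correct and matches the paper's approach: the paper isolates the $L_2(\p)$-bound on $\cN_{\eqref{eq:cardinality-S}}(\ep,\kappa)$ as a separate lemma (\cref{lemm:cardinality-random-nets}), proved by exactly your compensator-plus-It\^o-isometry argument, and then applies the same H\"older transfer to $\bQ$. A minor slip: by definition $\cN_{\eqref{eq:cardinality-S}}(\ep,\kappa)$ is the first index $i$ with $\rho_i=T$, hence equals $1+\int_0^T\!\int f_\ep\,N_Z$ a.s.\ rather than the integral itself, but the extra constant is harmless.
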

	
	
	Plugging the adapted time-nets $\tau^\theta_n$ into previous results, we derive the following.
	\begin{theo} \label{theo:BMO-convergent-rate}
		Let \cref{assumption-stochastic-integral} hold true for some $\theta \in (0, 1]$.
		\begin{enumerate}[\rm (1)]
			\itemsep0.3em
			
			\item \label{item:coro:bmo-convergent-rate} One has
			$\sup_{n \gee 1} \sqrt{n} \|E^{\riem}(\vartheta, \tau^\theta_n)\|_{\bmo_2^\Phi(\p)} <\infty$.
		
			\item \label{item:1:coro:BMO-convergent-rate} Assume $\Phi \in \cSM_2(\p)$. 
			\begin{enumerate}[\rm (a)]
				\item \label{item:rate-general-alpha-(0,2]} If \eqref{eq:item:1:small-jump-intensity} holds for some $\alpha\in (0, 2]$, then 
				$$\ts \sup_{n \gee 1}  R(n)	 \big\|E^{\corr}\big(\vartheta, \tau^{\theta}_n \,\big|\, \ep_n , \tfrac{1-\theta}{2}\big)\big\|_{\BMO_2^{\ol\Phi}(\p)} <\infty,$$
				where $R(n) = 1/\ep_n = \sqrt[2 \alpha]{n}$ if $\alpha \in (1, 2]$, $R(n) = \sqrt{n}/(1 + \log n) $ and $\ep_n = \sqrt{1/n}$ if $\alpha = 1$, and $R(n) = 1/\ep_n = \sqrt{n}$ if $\alpha \in (0, 1)$.
				
				\item \label{item:rate-general-symmetry} If \eqref{eq:item:2:jump-behavior} holds, then 
				$$\ts \sup_{n \gee 1} \sqrt{n}	\big\|E^{\corr}\big(\vartheta, \tau^{\theta}_n \,\big| \sqrt{1/n}, \tfrac{1-\theta}{2}\big)\big\|_{\BMO_2^{\ol\Phi}(\p)} <\infty.$$
			\end{enumerate}

			\item \label{item:2:coro:BMO-convergent-rate} Assume \cref{assumption-pure-jump} and $\Phi \in \cSM_2(\p)$. If \eqref{eq:small-jump-pure-jump-1} holds for some $\alpha \in (0, 2]$, then $$\ts \sup_{n \gee 1}  R(n)	 \big\|E^{\corr}\big(\vartheta, \tau^{\theta}_n \,\big|\, \ep_n , \tfrac{1-\theta}{2}\big)\big\|_{\BMO_2^{\ol\Phi}(\p)} <\infty,$$
			where 
			\begin{align*}
				\begin{cases}
					R(n) = 1/\ep_n =  n^{\frac{1}{\alpha}(1-\frac{1}{2}(1- \theta)(\alpha -1))} & \mbox {if } \alpha \in (1, 2],\\
					R(n) = n/(1 + \log n), \ep_n = 1/n & \mbox {if } \alpha = 1,\\
					R(n) = 1/\ep_n = n & \mbox{if } \alpha = 1 \mbox{ and } \eqref{eq:small-jump-pure-jump-2} \mbox{ holds},\\
					R(n) = 1/\ep_n = n & \mbox{if } \alpha \in (0, 1).
				\end{cases}
			\end{align*}
			
			\item \label{item:Lp-estimate-BMO} If in addition $\Phi \in \cSM_p(\p)$ for some $p >2$, then the conclusions in Items \eqref{item:1:coro:BMO-convergent-rate}--\eqref{item:2:coro:BMO-convergent-rate} hold case-wise for the $\BMO_p^{\ol\Phi}(\p)$-norm and, consequently, for the $S_p(\p)$-norm, in place of the $\BMO_2^{\ol\Phi}(\p)$-norm.
			
			\item \label{item:change-measure-BMO} If in addition  $\bQ \in \RH_s(\p)$ for some $s\in (1, \infty)$ and $\Phi \in \cSM_2(\bQ)$, then the conclusions in Items \eqref{item:1:coro:BMO-convergent-rate}--\eqref{item:2:coro:BMO-convergent-rate} hold case-wise for the $\BMO_2^{\ol\Phi}(\bQ)$-norm in place of the $\BMO_2^{\ol\Phi}(\p)$-norm.
		\end{enumerate}
	\end{theo}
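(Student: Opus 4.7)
\medskip

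The plan is that each item is essentially a bookkeeping consequence of a previously established theorem combined with the mesh-size bound \eqref{eq:theta-norm-time-net} for the adapted time-net $\tau^\theta_n$, together with the features of weighted $\BMO$ stated in \cref{lemm:feature-BMO}.

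For Item \eqref{item:coro:bmo-convergent-rate}, I would apply \cref{theo:approximation-QV-BMO} with $\tau = \tau^\theta_n$ and invoke $\|\tau^\theta_n\|_\theta \lee T^\theta/(\theta n)$ from \eqref{eq:theta-norm-time-net} to deduce $\|E^{\riem}(\vartheta, \tau^\theta_n)\|_{\bmo_2^\Phi(\p)} \lee c/\sqrt{n}$. For Item \eqref{item:1:coro:BMO-convergent-rate}, I would plug $\tau^\theta_n$ into \cref{coro:conergence-rate-BMO}: the optimal $\ep$ from that corollary is $\ep(\tau, \alpha) = \|\tau\|_\theta^{\frac{1}{2}(\frac{1}{\alpha}\wedge 1)}$, which with $\|\tau^\theta_n\|_\theta \sim 1/n$ becomes exactly the $\ep_n$ listed in \eqref{item:rate-general-alpha-(0,2]}, and the corresponding rates $\|\tau\|_\theta^{1/(2\alpha)}$, $[1+\log^+(1/\|\tau\|_\theta)]\sqrt{\|\tau\|_\theta}$, $\sqrt{\|\tau\|_\theta}$ translate case-wise into the claimed $1/R(n)$. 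Item \eqref{item:rate-general-symmetry} follows from the second part of \cref{coro:conergence-rate-BMO} in the same way. For Item \eqref{item:2:coro:BMO-convergent-rate}, I would apply \cref{theo:improve-rate} to $\tau^\theta_n$: the choice $\ep(\tau,\alpha)$ specified there, combined with $\|\tau^\theta_n\|_\theta \sim 1/n$, yields precisely the $\ep_n$ and $R(n)$ stated.

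For Item \eqref{item:Lp-estimate-BMO}, I would first use \cref{lemm:properties-BMO-SM}\eqref{item:bar-Phi-SM} (cited in the paragraph following \eqref{defi-Phi-bar}) to pass from $\Phi \in \cSM_p(\p)$ to $\ol\Phi \in \cSM_p(\p)$. Then \cref{lemm:feature-BMO}\eqref{item:SM-BMO-relation} gives $\|\cdot\|_{\BMO_p^{\ol\Phi}(\p)} \sim \|\cdot\|_{\BMO_2^{\ol\Phi}(\p)}$, so the bounds from Items \eqref{item:1:coro:BMO-convergent-rate}--\eqref{item:2:coro:BMO-convergent-rate} upgrade automatically to the $\BMO_p^{\ol\Phi}(\p)$-norm. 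The $S_p(\p)$-bound then follows from \cref{lemm:feature-BMO}\eqref{item:Lp-estimate-BMO-feature}, using that $\|\ol\Phi\|_{S_p(\p)} \lee c_p \|\Phi\|_{\cSM_p(\p)} \Phi_0$ (a direct consequence of the $\cSM_p$-defining inequality at $\rho \equiv 0$). For Item \eqref{item:change-measure-BMO}, the same passage $\Phi \in \cSM_2(\bQ) \Rightarrow \ol\Phi \in \cSM_2(\bQ)$ together with \cref{lemm:feature-BMO}\eqref{item:BMO-feature-RH} yields $\|\cdot\|_{\BMO_2^{\ol\Phi}(\bQ)} \lee c_3 \|\cdot\|_{\BMO_2^{\ol\Phi}(\p)}$, which transports the estimates from $\p$ to $\bQ$ without loss.

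There is really no deep obstacle here; the work is organisational. The only point demanding some care is the bookkeeping in Item \eqref{item:2:coro:BMO-convergent-rate}, where the exponent $\frac{1}{\alpha}(1 - \frac{1}{2}(1-\theta)(\alpha-1))$ has to be tracked through \cref{theo:improve-rate} and matched against $\ep_n$ so that $1/\ep_n = R(n)$. Everything else is a one-line substitution of $\|\tau^\theta_n\|_\theta \sim 1/n$ into the ready-made estimates.
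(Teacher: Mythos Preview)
Your proposal is correct and follows essentially the same approach as the paper: each item is obtained by combining the relevant earlier estimate (\cref{theo:approximation-QV-BMO}, \cref{theo:conergence-rate-BMO}/\cref{coro:conergence-rate-BMO}, \cref{theo:improve-rate}) with the mesh bound \eqref{eq:theta-norm-time-net}, and Items \eqref{item:Lp-estimate-BMO}--\eqref{item:change-measure-BMO} are handled via \cref{lemm:feature-BMO} together with \cref{lemm:properties-BMO-SM}\eqref{item:bar-Phi-SM}. The only cosmetic difference is that you cite \cref{coro:conergence-rate-BMO} for Item \eqref{item:1:coro:BMO-convergent-rate} whereas the paper cites \cref{theo:conergence-rate-BMO} directly, which amounts to the same thing.
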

	
	\begin{proof}
		Item \eqref{item:coro:bmo-convergent-rate} (resp. \eqref{item:1:coro:BMO-convergent-rate}, \eqref{item:2:coro:BMO-convergent-rate}) follows from \cref{theo:approximation-QV-BMO} (resp. \cref{theo:conergence-rate-BMO}, \cref{theo:improve-rate}) and \eqref{eq:theta-norm-time-net}. Items \eqref{item:Lp-estimate-BMO}--\eqref{item:change-measure-BMO} are due to  \cref{lemm:feature-BMO,lemm:properties-BMO-SM}\eqref{item:bar-Phi-SM}.
	\end{proof}
	
	\begin{rema}\label{rema:factor-n}
		Lets us consider the parameter $n$ in the estimates in \cref{theo:BMO-convergent-rate}. 
		\begin{enumerate}[\rm (1)]
			\item  For Items \eqref{item:rate-general-alpha-(0,2]}, \eqref{item:2:coro:BMO-convergent-rate} and \eqref{item:Lp-estimate-BMO}, applying \cref{prop:cardinality-combined-nets} with $q=2$, $r=\infty$ and $\bQ = \p$ we find that  the parameter $n$ in front of the $\bmo_2^\Phi(\p)$-, $\BMO_p^{\ol \Phi}(\p)$- or $S_p(\p)$-norm ($p \in [2, \infty)$) can be regarded, up to a multiplicative constant, as the $L_2(\p)$-norm of the cardinality of the combined time-net used in the corresponding approximations.
			
			\item For Item \eqref{item:rate-general-symmetry}, we apply \cref{prop:cardinality-combined-nets} with $q=2$, $r=\infty$, $\bQ = \p$ and $\alpha =2$ (with keeping \eqref{eq:square-integrability-nu} in mind) to obtain the same interpretation for $n$.
			
			\item  For Item \eqref{item:change-measure-BMO}, thanks to \cref{prop:cardinality-combined-nets} (choose $q=1$, $r=2$), if $s\in [2, \infty)$, then this interpretation is still valid after a change of measure: The parameter $n$ in front of the $\BMO_2^{\ol \Phi}(\bQ)$-norm can be considered as the expected cardinality of the time-net \textit{under} $\bQ$. 	
		\end{enumerate} 
	\end{rema}

	\section{Applications to exponential L\'evy models} \label{sec:application-Levy-model}
	
	We provide several examples for \cref{assumption-stochastic-integral,assumption-pure-jump} in the exponential L\'evy setting so that the main results can be applied. As an important step to obtain them, we establish in \cref{theo:MVH-strategy} an explicit form for the mean-variance hedging strategy of a general European type option, and this formula might also have an independent interest.

	\subsection{L\'evy process}\label{subsection-exponential-levy-process}
	Let $X=(X_t)_{t \in [0, T]}$ be a one-dimensional L\'evy process on $(\Omega, \F, \p)$, i.e., $X_0 = 0$, $X$ has independent and stationary increments and $X$ has c\`adl\`ag paths. Let $\bF^X = (\F^X_t)_{t \in [0, T]}$ be the augmented natural filtration of $X$, and assume that $\F = \F_T^X$. According to the L\'evy--Khintchine formula (see, e.g., \cite[Theorem 8.1]{Sa13}), there is a \textit{characteristic triplet} $(\gamma, \sigma, \nu)$, where $\gamma \in \R$, coefficient of Brownian component $\sigma \gee 0$, L\'evy measure $\nu\colon \cB(\R) \to [0, \infty]$ (i.e., $\nu(\{0\}):=0$ and $\int_{ \R}(x^2 \wedge 1) \nu(\od x) <\infty$), such that  the \textit{characteristic exponent $\psi$} of $X$ defined by $\E \e^{\im u X_t} = \e^{- t \psi(u)}$ is of the form
	\begin{align*}
		\psi(u)=- \im \gamma u + \frac{\sigma^2 u^2}{2} - \int_{\R}\(\e^{\im u x}-1-\im ux \1_{\{|x| \lee 1\}}\)\nu(\od x), \quad u\in \R.
	\end{align*}

	
	\subsection{Mean-variance hedging (MVH)} \label{subsec:MVH}
	Assume that the underlying price process is modelled by the exponential $S = \e^X$. Since models with jumps correspond to incomplete markets in general, there is no ``optimal'' hedging strategy which replicates a payoff at maturity and eliminates risks completely. This leads to consider certain strategies that minimize some types of risk. Here, we use  quadratic hedging which is a common approach, see \cite{Sc01}. To simplify the quadratic hedging problem, we consider the martingale market. Applications of results in \cref{sec:main-results} for L\'evy markets under the semimartingale setting are studied  in \cite{TN20b}.
	
	\begin{assum}\label{assum-maringale-setting}
		$S=\e^X$ is an $L_2(\p)$-martingale and is not a.s. constant.
	\end{assum}

	Under \cref{assum-maringale-setting}, any $\xi \in L_2(\p)$ admits the \textit{Galtchouk--Kunita--Watanabe (GKW) decomposition} 
	\begin{align}\label{eq:GKW-decomposition}
		\xi = \E  \xi + \int_0^T \theta_{t}^\xi \od S_t + L_T^\xi,
	\end{align}
	where $\theta^{\xi}$ is predictable with $\E \int_0^T |\theta^\xi_t|^2 S_{t-}^2 \od t <\infty$, $L^{\xi} = (L^{\xi}_t)_{t\in [0, T]}$ is an $L_2(\p)$-martingale with zero mean and satisfies $\<S, L^{\xi}\>=0$. The integrand $\theta^{\xi}$ is called the \textit{MVH strategy} for $\xi$, which is unique in $L_2(\p\otimes \Leb, \Omega \times [0, T])$. The reader is referred to \cite{Sc01} for further discussion.
	
	Our aim is to apply the approximation results obtained in \cref{sec:main-results} for the stochastic integral term in \eqref{eq:GKW-decomposition}, which can be interpreted in mathematical finance as the hedgeable part of $\xi$. 	To do that, one of our main tasks is to find a representation of $\theta^{\xi}$ which is convenient for verifying assumptions in \cref{sec:main-results}. This issue is handled in \Cref{subsec:MVH-strategy} in which we focus on the European type options $\xi = g(S_T)$.

	\subsection{Explicit MVH strategy} \label{subsec:MVH-strategy} In the literature, there are several methods to determine an explicit form for the MVH strategy of a European type option $g(S_T)$. Let us mention some typical approaches for which the martingale representation of $g(S_T)$ plays the key role. A classical method is by using directly It\^o's formula (e.g., \cite{CTV05,JMP00}) which requires a certain smoothness of $(t, y) \mapsto \E g(y S_{T-t})$. Another idea is based on Fourier analysis to separate the payoff function $g$ and the underlying process $S$ (e.g., \cite{BT11, HKK06, Ta10}). To do that, some regularity for $g$ and $S$ is assumed. As a third method, one can use Malliavin calculus  to determine the MVH strategy (e.g., \cite{BNLOP03}), however the payoff $g(S_T)$ is assumed to be differentiable in the Malliavin sense so that the Clark--Ocone formula is applicable.
	
	To the best of our knowledge, \cref{theo:MVH-strategy} below is new and it provides an explicit formula for the MVH strategy of $g(S_T)$ without requiring any
	regularity from the payoff function $g$ nor any specific structure of the underlying
	process $S$. The proof is given  in \cref{app-sec:proof-MVH-strategy} by exploiting Malliavin calculus. Recall that $\sigma$ and $\nu$ are the coefficient of the Brownian component and the L\'evy measure of $X$ respectively.

	\begin{theo}\label{theo:MVH-strategy} Assume \cref{assum-maringale-setting}. For a Borel function $g\colon \R_+ \to \R$ with $g(S_T) \in L_2(\p)$, there exists a  $\vartheta^g  \in \CL([0, T))$ such that the following assertions hold:
		\begin{enumerate}[\rm (1)]
			\itemsep0.3em
			\item \label{item:prop:strategy} $\vartheta^g_-$ is a MVH strategy of $g(S_T)$;
			\item \label{item:prop:maringale-property} $\vartheta^gS$ is an $L_2(\p)$-martingale and $\vartheta^g_t = \vartheta^g_{t-}$ a.s. for each $t\in [0, T)$;
			\item \label{item:prop:gradient-formula-general} For any $t\in (0, T)$, one has, a.s.,
			\begin{align}\label{eq:integrand-general}
				\vartheta^g_t = \frac{1}{c_{\eqref{eq:integrand-general}}^2}\bigg[\sigma^2 \pd_y G(t, S_t) + \int_{ \R} \frac{G(t, \e^{x} S_t)- G(t, S_t)}{S_t} (\e^x -1)\nu(\od x)\bigg],
			\end{align}
			where $c_{\eqref{eq:integrand-general}}^2  = \sigma^2 + \int_{\R}(\e^x -1)^2 \nu(\od x)$ and $G(t, \cdot) \colon \R_+ \to \R$ is as follows:
			\begin{enumerate}[\rm (a)]
				\item \label{item:sigma>0} If $\sigma >0$, then we choose $G(t, y): = \E g(y S_{T-t})$;
				\item \label{item:sigma=0} If $\sigma =0$, then we choose  $G(t, \cdot)$ such that it is Borel measurable and $G(t, S_t) = \ce{\F_t}{g(S_T)}$ a.s., and we set  $\pd_y G(t, \cdot):=0$ by convention.
			\end{enumerate} 
		\end{enumerate}
	\end{theo}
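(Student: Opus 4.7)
My plan is to derive a martingale representation for $g(S_T)$ via Malliavin calculus for L\'evy processes and then extract the MVH strategy as the Galtchouk--Kunita--Watanabe (GKW) projection onto stochastic integrals with respect to $S$. I would first treat the case of regular $g$ (e.g., smooth and bounded with bounded derivatives, so that $g(S_T)$ lies in the domain of the Brownian and jump Malliavin derivatives) and apply a Clark--Ocone type formula for L\'evy functionals to obtain
\[
g(S_T) = \E g(S_T) + \int_0^T \phi^W_t \,\od W_t + \int_0^T\!\!\int_{\R_0}\phi^J_t(x)\,(N-\pi)(\od t,\od x),
\]
where $W$ is the Brownian component of $X$ and $(N,\pi)$ is its jump measure together with its compensator $\nu(\od x)\od t$. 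Using the Markov property of $S$ together with the semigroup $G(t,y)=\E g(yS_{T-t})$, the predictable projections of the Malliavin derivatives simplify to $\phi^W_t = \sigma S_{t-}\,\pd_y G(t,S_{t-})$ and $\phi^J_t(x) = G(t,\e^x S_{t-}) - G(t,S_{t-})$.

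Next, using $\od S_t = S_{t-}\bigl[\sigma\od W_t + \int_{\R_0}(\e^x-1)(N-\pi)(\od t,\od x)\bigr]$ and $\od\<S\>_t = c_{\eqref{eq:integrand-general}}^2 S_{t-}^2\od t$, I would compute $\theta_t := \od\<M,S\>_t/\od\<S\>_t$, with $M$ the martingale part of the representation above, to get
\[
\theta_t = \frac{1}{c_{\eqref{eq:integrand-general}}^2}\bigg[\sigma^2 \pd_y G(t,S_{t-}) + \int_{\R}\frac{G(t,\e^x S_{t-})-G(t,S_{t-})}{S_{t-}}(\e^x-1)\,\nu(\od x)\bigg],
\]
which coincides with the right-hand side of \eqref{eq:integrand-general} evaluated at $S_{t-}$ in place of $S_t$. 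Defining $\vartheta^g_t$ pointwise by \eqref{eq:integrand-general}, one then has $\vartheta^g_- = \theta$, which gives \eqref{item:prop:strategy}. The $L_2(\p)$-martingale property of $\vartheta^g S$ in \eqref{item:prop:maringale-property} is inherited from the Clark--Ocone representation via $\int_0^\cdot \theta_t\od S_t$, and the identity $\vartheta^g_t = \vartheta^g_{t-}$ a.s.\ at each fixed $t \in [0,T)$ follows because the L\'evy process $X$ has no fixed-time discontinuities, so $S_t=S_{t-}$ a.s.

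To remove the regularity assumption on $g$, I would approximate by smooth $g_n$ with $g_n(S_T)\to g(S_T)$ in $L_2(\p)$. Closedness of the GKW decomposition, together with It\^o's isometry for $\int\cdot\,\od S$, yields $L_2(\p\otimes\Leb)$-convergence of the corresponding strategies $\vartheta^{g_n}_-$ to an MVH strategy of $g(S_T)$. When $\sigma>0$, the Brownian semigroup smooths $y\mapsto G(t,y) = \E g(yS_{T-t})$, and passage to the limit in the pointwise formula \eqref{eq:integrand-general} (justified by dominated convergence relying on $j^Z_{\eqref{eq:uniformly-bounded-KV}}<\infty$ and $\E S_T^2<\infty$) shows that the version of $\vartheta^g$ defined by \eqref{item:sigma>0} agrees $\Leb\otimes\p$-a.e.\ with the $L_2$-limit. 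When $\sigma=0$ no spatial smoothing takes place, so I would instead choose $G(t,\cdot)$ as a Borel function with $G(t,S_t) = \ce{\F_t}{g(S_T)}$ a.s.\ via regular conditional expectation, as in \eqref{item:sigma=0}; the gradient term is then absent by convention and only the jump integral remains.

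The main obstacle is this passage to the limit combined with the insensitivity of the pointwise formula to the Borel choice of $G$ when $\sigma=0$. One must argue that the right-hand side of \eqref{eq:integrand-general}, which depends pointwise on such a Borel version, agrees $\p$-a.s.\ with the genuine $L_2$-limit of the smooth-case strategies, and is independent of the version chosen. Since $\p_{S_t}$ may be singular when $\sigma=0$, this requires exploiting that the evaluations $G(t,S_{t-})$ and $G(t,\e^x S_{t-})$ occur only at points attained by $S_{t-}$ (respectively its shift by $\e^x$) almost surely, which renders the formula meaningful modulo $\p$-null sets without relying on Brownian-type absolute continuity.
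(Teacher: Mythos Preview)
Your approach differs substantially from the paper's. You propose Clark--Ocone for smooth $g$ followed by $L_2$-approximation, whereas the paper works directly with It\^o's chaos expansion with respect to the L\'evy process $Z$ (where $S=\cE(Z)$): it defines $\varphi^g_t := \tilde h_0 + \sum_{n\geq 1}(n+1)I^Z_n(\tilde h_n\1_{[0,t]}^{\otimes n})$ from the chaos kernels of $g(S_T)$ and sets $\vartheta^g := \varphi^g/S$. This makes $\vartheta^g S = \varphi^g$ a martingale \emph{by construction}, and the explicit formula \eqref{eq:integrand-general} is then obtained by evaluating $\int D^Z G(t,S_t)\,\m_Z(\od s,\od z)$ in two ways, using that $G(t,S_t)=\ce{\F_t}{g(S_T)}\in\bD_{1,2}$ for \emph{any} Borel $g$ with $g(S_T)\in L_2$ (no smoothness assumed), together with an identification $\bD^X_{1,2}=\bD^Z_{1,2}$. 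No approximation step occurs, and \eqref{eq:integrand-general} is established a.s.\ for each fixed $t\in(0,T)$.

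Your sketch has a genuine gap in part \eqref{item:prop:maringale-property}: the $L_2$-martingale property of $\vartheta^g S$ does \emph{not} follow from the Clark--Ocone representation or from the fact that $\int_0^\cdot \theta_t\,\od S_t$ is a martingale. The product $\vartheta^g_t S_t$ and the stochastic integral $\int_0^t\vartheta^g_{s-}\,\od S_s$ are different processes; for the former to be a martingale one must show that $t\mapsto \sigma^2 S_t\,\pd_y G(t,S_t)+\int_\R(G(t,\e^xS_t)-G(t,S_t))(\e^x-1)\,\nu(\od x)$ is itself a martingale, which would require either a PIDE/It\^o-formula argument (needing regularity of $G$ unavailable for general $g$) or precisely the chaos-expansion identification the paper employs. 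Your approximation programme would also have to upgrade $L_2(\p\otimes\Leb)$-convergence of the $\vartheta^{g_n}$ to the a.s.\ pointwise statement \eqref{eq:integrand-general} at each fixed $t$, and to transport the martingale property of $\vartheta^{g_n}S$ through the limit; neither step is substantiated, and the second is exactly where the chaos construction does the real work.
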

	\cref{assum-maringale-setting} ensures that $c_{\eqref{eq:integrand-general}} \in (0, \infty)$.   For the case \eqref{item:sigma>0}, the function $G(t, \cdot)$ has derivatives of all orders on $\R_+$ due to the presence of the Gaussian component of $X$, see \cite[Theorem 9.13]{GN20}. Formula \eqref{eq:integrand-general} was also established in \cite[Section 4]{CTV05} and in \cite[Proposition 7]{Ta10} under some extra conditions for $g$ and $S$. A similar formula of \eqref{eq:integrand-general} in a general setting can be found in \cite[Theorem 2.4]{JMP00}.
	
	\subsection{Growth of the MVH strategy and weight regularity} \label{subsec:MVH-growth-weight} 
	
	We investigate the growth in time of $\vartheta^g$ obtained in \eqref{eq:integrand-general} pathwise and relatively to a weight process for H\"older continuous or bounded functions $g$. This growth property is examined in connection to the small jump behavior of the underlying L\'evy process.

	\begin{defi}\label{defi:holde-stable}
		\begin{enumerate}[\rm (1)]
			\item \label{defi:Holder-spaces}(\textit{H\"older spaces}) Let $\emptyset \neq U \subseteq \R$ be an open interval and let $\eta \in [0, 1]$. For a Borel function $f\colon U \to \R$, we define
			\begin{align*}
				|f|_{C^{0, \eta}(U)} : = \inf\{ c \in [0, \infty) : |f(x)- f(y)| \lee c|x-y|^\eta \mbox{ for all } x, y \in U, x\neq y\},
			\end{align*}
		and let $f \in C^{0, \eta}(U)$ if $|f|_{C^{0, \eta}(U)} <\infty$. It is clear that, on $U$, the space $C^{0, 1}(U)$ consists of all Lipschitz functions, $C^{0, \eta}(U)$ contains all $\eta$-H\"older continuous functions for $\eta \in (0, 1)$, and $C^{0, 0}(U)$ consists of all bounded (not necessarily continuous) Borel functions.
			
			\item \label{defi:alpha-stable} (\textit{$\alpha$-stable-like L\'evy measures}) For $\alpha \in (0, 2)$ and a L\'evy measure $\nu$, we let $\nu \in \sS(\alpha)$ if  $\nu = \nu_1 + \nu_2$, where $\nu_1, \nu_2$ are L\'evy measures that satisfy
			\begin{align*}
				\nu_1(\od x) = \frac{k(x)}{|x|^{\alpha +1}}\1_{\{x \neq 0\}} \od x,\quad \limsup_{|u| \to \infty} \frac{1}{|u|^\alpha} \int_{\R} (1-\cos (ux)) \nu_2(\od x) <\infty,
			\end{align*}
			where $0 < \liminf_{x \to 0} k(x) \lee \limsup_{x \to 0} k(x) <\infty$, and the function $x\mapsto k(x)/|x|^{\alpha}$ is non-decreasing on $(-\infty, 0)$ and is non-increasing on $(0, \infty)$.
		\end{enumerate}
	\end{defi}

	\begin{rema}[see also \cite{TN20b}, Lemma A.1]\label{remark:Holder-stable-like} 
		Let $\nu$ be a L\'evy measure and $\alpha \in (0, 2)$.
		\begin{enumerate}[(1)]	
			\itemsep0.3em

			\item \label{item:rema:conditions-C-alpha-1} If $\nu \in \sS(\alpha)$, then $\sup_{r \in (0, 1)} r^\alpha \int_{r < |x| \lee 1} \nu(\od x) <\infty$. Moreover, $\alpha$ is equal to the  \textit{Blumenthal--Getoor index} of $\nu$ due to \cite[Theorem 3.2]{BG61}, i.e., $\alpha = \inf\{q \in [0, 2] : \int_{|x|\lee 1} |x|^q \nu(\od x) <\infty\}$. 
			
			\item \label{item:rema:suff-conditions-S-alpha-1}
			One has $\nu \in \sS(\alpha)$ if $\nu$ has a density $p(x) := \nu(\od x)/\od x$ satisfying
			\begin{align*}
				0 < \liminf_{|x| \to 0} |x|^{1+ \alpha} p(x) \lee \limsup_{|x| \to 0} |x|^{1+ \alpha} p(x) <\infty.
			\end{align*}
		\end{enumerate}
	\end{rema}

	\begin{exam} We provide typical examples in mathematical finance using $C^{0, \eta}(U)$-payoff functions and $\alpha$-stable-like processes. 
		\begin{enumerate}[(1)]
			\itemsep0.3em
			
			\item Let $K>0$. The binary payoff $g_0(x) := \1_{(K, \infty)}(x)$ belongs to $C^{0, 0}(\R_+)$ obviously, the call payoff $g_1(x) := (x-K)^+$ is contained in $C^{0, 1}(\R_+)$, and for $\eta \in (0, 1)$, the \textit{powered call} payoff (see, e.g., \cite{HKK06}) $g_\eta(x) := ((x-K)^+)^\eta$ belongs to $C^{0, \eta}(\R_+)$.
			
			\item  The CGMY process  with parameters $C, G, M >0$ and $Y \in (0, 2)$ (see \cite[Section 5.3.9]{Sc03}) has the L\'evy measure $$\nu_{\mathrm{CGMY}}(\od x) = C \frac{\e^{Gx}\1_{\{x<0\}} + \e^{-Mx}\1_{\{x>0\}} }{|x|^{1+ Y}} \1_{\{x \neq 0\}} \od x$$ 
			which belongs to $\sS(Y)$ due to \cref{remark:Holder-stable-like}\eqref{item:rema:suff-conditions-S-alpha-1}. \\
			The Normal Inverse Gaussian (NIG) process (see \cite[Section 5.3.8]{Sc03}) has the L\'evy density $p_{\mathrm{NIG}}(x) : = \nu_{\mathrm{NIG}}(\od x)/\od x$ that satisfies $$\ts 0< \liminf_{|x| \to 0} x^2 p_{\mathrm{NIG}}(x) \lee \limsup_{|x|\to 0} x^2 p_{\mathrm{NIG}}(x) <\infty.$$
			Hence, \cref{remark:Holder-stable-like}\eqref{item:rema:suff-conditions-S-alpha-1} verifies that $\nu_{\mathrm{NIG}} \in \sS(1)$.
		\end{enumerate}
		
	\end{exam}
	
	Before stating the main result of this part, let us introduce the relevant weight processes. For $\eta \in [0, 1]$, define processes $\Theta(\eta), \Phi(\eta) \in \CL^+([0, T])$ by setting
	\begin{align}\label{eq:definiteion-weight-process}
		\ts \Theta(\eta)_t:=  \sup_{u \in [0, t]} (S_u^{\eta-1}) \quad \mbox{and} \quad \Phi(\eta)_t : = \Theta(\eta)_t S_t.
	\end{align}
	\cref{theo:application-Levy} below verifies \cref{assumption-stochastic-integral,assumption-pure-jump} in the exponential L\'evy setting, and its proof is given later in \Cref{subsec:proof-sec-4}.
	
	\begin{theo}\label{theo:application-Levy} Assume \cref{assum-maringale-setting}.
		Let $\eta \in [0, 1]$ and $g\in C^{0, \eta}(\R_+)$.
		\begin{enumerate}[\rm (1)]
			\item $($\textit{Weight regularity}$)$ \label{item: weight-regularity-Levy} One has $\Phi(\eta) \in \cSM_2(\p)$.
			
			\item $($\textit{MVH strategy growth}$)$ \label{item:strategy-growth} There is a constant $c_{\eqref{eq:thm:approx-bmo}} >0$ such that, for $\vartheta^g$ given in \eqref{eq:integrand-general},
			\begin{align}\label{eq:thm:approx-bmo}
				|\vartheta^g_t| \lee c_{\eqref{eq:thm:approx-bmo}} U(t) S_t^{\eta-1} \quad \mbox{a.s., } \forall t \in [0, T),
			\end{align}
			where the function $U(t)$ is provided in \cref{tab:theta-cases}.

			\item  \label{item:example-assumprion-stoc-inte} Denote $M: = \vartheta^g S$. Then \cref{assumption-stochastic-integral} holds true for
			\begin{align*}
				\vartheta =\vartheta^g,\quad \Upsilon(\cdot, \od t)  = \od \<M\>_t + M_t^2 \od t, \quad \Theta = \Theta(\eta), \quad \Phi = \Phi(\eta)
			\end{align*}
			and for $\theta$ provided in \cref{tab:theta-cases} accordingly. In particular, if $\sigma = 0$ (i.e., $X$ does not have a Brownian component), then  \cref{assumption-pure-jump} is satisfied.
		\end{enumerate}
			\vspace{-.3em}
		\begin{longtable}[c]{|l|l|l|l|l|}
			\caption{Conclusions for $U(t)$ and $\theta$}
			\label{tab:theta-cases}\\
			\hline
			& $\sigma$ and $\eta$  & \begin{tabular}[c]{@{}l@{}} Small jump condition \\
				for $X$
			\end{tabular}   & Function $U(t)$ & Values of  $\theta$\\ \hline
			\endfirsthead
			\endhead
			$\mathrm{A1}$ & \begin{tabular}[c]{@{}l@{}} $\sigma>0$ \\
				$\eta \in [0, 1]$
			\end{tabular}   &              &   $U(t) = (T-t)^{\frac{\eta -1}{2}}$  &     \begin{tabular}[c]{@{}l@{}}    $\theta = 1$ if $\eta =1$, \\
				$\forall \theta \in (0, \eta)$ if $\eta \in (0, 1)$
			\end{tabular}   \\ \hline
			$\mathrm{A2}$ &     \begin{tabular}[c]{@{}l@{}} $\sigma=0$ \\
				$\eta \in [0, 1]$
			\end{tabular}     &    $\int_{|x|\lee 1} |x|^{1+ \eta} \nu(\od x)<\infty$             & $U(t) = 1$      &    $\theta =1$      \\ \hline
			$\mathrm{A3}$ &     \begin{tabular}[c]{@{}l@{}} $\sigma=0$ \\
				$\eta \in [0,1)$
			\end{tabular}    &  \begin{tabular}[c]{@{}l@{}} $\nu \in \sS(\alpha)$\\ for some $\alpha \in [1+ \eta, 2)$ \end{tabular}             &   \begin{tabular}[c]{@{}l@{}}   $U(t) = (T-t)^{\frac{1+ \eta}{\alpha} -1}$ \Tstrut\\
			\qquad  if $\alpha \in (1+ \eta, 2)$, \\
				$U(t) = \max\{1, \log\frac{1}{T-t}\}$\\
				\qquad  if $\alpha = 1+ \eta$
			\end{tabular}  & 
		\begin{tabular}[c]{@{}l@{}}   $\forall\theta \in\(0, \frac{2(1+\eta)}{\alpha} -1\)$
		\end{tabular}   \\ \hline
		\end{longtable}
	\end{theo}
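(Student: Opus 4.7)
\emph{Overall strategy and Part \eqref{item: weight-regularity-Levy}.} The three parts are tackled separately. For Part~\eqref{item: weight-regularity-Levy}, given a stopping time $\rho$, split $\Theta(\eta)_t = \Theta(\eta)_\rho \vee \sup_{\rho\lee u \lee t} S_u^{\eta-1}$ to obtain
\begin{align*}
\ts \sup_{t\in [\rho,T]}\Phi(\eta)_t^2 \lee 2\Theta(\eta)_\rho^2\sup_{t\in[\rho,T]} S_t^2 + 2\sup_{t\in[\rho,T]}S_t^2\big/\bigl(\inf_{u\in[\rho,t]} S_u\bigr)^{2(1-\eta)}.
\end{align*}
The first term is controlled via conditional Doob on the positive $L_2(\p)$-martingale $S$ together with the stationary-increment bound $\E^{\F_\rho}[S_T^2]\lee c S_\rho^2$; the second, after factoring $S_u = S_\rho \e^{X_u - X_\rho}$ through the Markov property, reduces to an unconditional moment of a functional of the reflected L\'evy process, whose finiteness under the $L_2$-martingale assumption is verified in \cref{sec:weight-regularity}. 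Using $S_\rho^{2\eta-2}\lee\Theta(\eta)_\rho^2$ (from $\eta\lee 1$) pulls the $S_\rho$-dependence out as $\Phi(\eta)_\rho^2$.

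\emph{Part \eqref{item:strategy-growth}.} Write $G(t,y)=(P_{T-t}g)(y)$ for the semigroup $P_s g(y):=\E g(y\e^{X_s})$. In Case~A1 ($\sigma>0$), the Brownian component of $X$ smoothes $G$ and, by the gradient estimate of \cref{sub-sec:estimae-semigroup}, gives $|\partial_y G(t,y)|\lee c(T-t)^{(\eta-1)/2}y^{\eta-1}|g|_{C^{0,\eta}(\R_+)}$; inserting into \eqref{eq:integrand-general} with $\int(\e^x-1)^2\nu(\od x)<\infty$ (from the martingale property) yields $U(t)=(T-t)^{(\eta-1)/2}$. In Case~A2 ($\sigma=0$, $\int_{|x|\lee 1}|x|^{1+\eta}\nu(\od x)<\infty$), the Brownian piece drops, and the inherited $\eta$-H\"older continuity $|G(t,\e^x S_t)-G(t,S_t)|\lee c S_t^\eta|\e^x-1|^\eta$ (obtained by passing $|g|_{C^{0,\eta}}$ through the expectation and using Jensen on $\E\e^{\eta X_{T-t}}\lee 1$) combined with \eqref{eq:integrand-general} gives $|\vartheta^g_t|\lee c S_t^{\eta-1}\int|\e^x-1|^{1+\eta}\nu(\od x)$, i.e., $U(t)=1$. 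In Case~A3 ($\nu\in\sS(\alpha)$), split \eqref{eq:integrand-general} at a threshold $r>0$: for $|x|\lee r$ use $|G(t,\e^x S_t)-G(t,S_t)|\lee |\e^x-1|S_t\sup_y|\partial_y G(t,y)|$ combined with the stable-type semigroup estimate $|\partial_y G(t,y)|\lee c(T-t)^{-(1-\eta)/\alpha}y^{\eta-1}$ from \cref{sub-sec:estimae-semigroup}; for $|x|>r$ use H\"older. Balancing the small-jump contribution $\sim(T-t)^{-(1-\eta)/\alpha}r^{2-\alpha}$ against the large-jump contribution $\sim r^{1+\eta-\alpha}$ at $r=(T-t)^{1/\alpha}$ yields $U(t)=(T-t)^{(1+\eta)/\alpha-1}$ when $\alpha>1+\eta$; at the critical $\alpha=1+\eta$ the integral $\int_r^1|x|^{\eta-\alpha}\od x\sim\log(1/r)$ produces the logarithmic factor.

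\emph{Part \eqref{item:example-assumprion-stoc-inte}.} For the growth condition \eqref{eq:growth-strategy}, $\eta\lee 1$ gives $S_t^{\eta-1}\lee\Theta(\eta)_t$, so Part~\eqref{item:strategy-growth} upgrades to $|\vartheta^g_t|\lee cU(t)\Theta(\eta)_t$, and the requirement $U(t)\lee c'(T-t)^{(\theta-1)/2}$ recovers each row of \cref{tab:theta-cases}. For \eqref{eq:thm:curvature-condition}, \cref{theo:MVH-strategy}\eqref{item:prop:maringale-property} ensures $M:=\vartheta^g S$ is an $L_2(\p)$-martingale, so \cref{exam:Upsilon-martingale-setting} applies with $V\equiv 0$ and $|\sigma|_{\Lip}=1$, giving $\Upsilon(\cdot,\od t)=\od\langle M\rangle_t + M_t^2\od t$. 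Using $M_t^2\lee cU(t)^2 S_t^{2\eta}$, Jensen for the L\'evy increments yielding $\E^{\F_a}[S_t^{2\eta}]\lee cS_a^{2\eta}$, and $S_a^{2\eta-2}\lee\Theta(\eta)_a^2$, the $M_t^2$-part of \eqref{eq:thm:curvature-condition} reduces to finiteness of $\int_0^T(T-s)^{1-\theta}U(s)^2\od s$, which holds in the declared $\theta$-ranges. The $\od\langle M\rangle_t$-piece is controlled analogously via the explicit diffusion/jump decomposition of $M$. When $\sigma=0$, all of \eqref{item:assum-purejum-1}--\eqref{item:assum-purejum-5} of \cref{assumption-pure-jump} are immediate (in particular, $Z$ is a purely discontinuous martingale with $V\equiv 0$).

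\emph{Main obstacle.} Case~A3 of Part~\eqref{item:strategy-growth} is the most delicate: with no Brownian smoothing and without the strong small-jump integrability of A2, one must rely entirely on the $\alpha$-stable-type regularization of $P_s$. The semigroup gradient estimates of \cref{sub-sec:estimae-semigroup} must be tuned to the H\"older exponent $\eta$ of $g$, and the small/large-jump threshold must be chosen precisely to obtain the sharp $U(t)$; the borderline $\alpha=1+\eta$ case additionally produces the logarithmic factor.
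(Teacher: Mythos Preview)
Your strategy for Parts~\eqref{item: weight-regularity-Levy} and~\eqref{item:strategy-growth} is essentially the paper's: Part~\eqref{item: weight-regularity-Levy} is proved in \cref{sec:weight-regularity} by the same Doob--Markov splitting you sketch, and Part~\eqref{item:strategy-growth} is a direct application of the semigroup estimates in \cref{sub-sec:estimae-semigroup}. One imprecision in your Case~A3: the bound $|G(t,\e^x S_t)-G(t,S_t)|\lee |\e^x-1|S_t\sup_y|\pd_y G(t,y)|$ is vacuous, since $|\pd_y G(t,y)|\lee c(T-t)^{-(1-\eta)/\alpha}y^{\eta-1}$ blows up as $y\downarrow 0$ for $\eta<1$. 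The paper avoids this by using the two-sided estimate of \cref{prop:Holder-estimate}\eqref{item:2:sigma=0-function-U}, namely $|P_sg(z)-P_sg(y)|\lee c\bigl(s^{(\eta-1)/\alpha}\tfrac{|z^\eta-y^\eta|}{\eta}\bigr)\wedge|z-y|^\eta$, and then the ``threshold'' is implicit in the minimum; your balancing at $r=(T-t)^{1/\alpha}$ is morally the same device once the $y$-dependence is handled correctly.

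The real gap is in Part~\eqref{item:example-assumprion-stoc-inte}, specifically the $\od\<M\>_t$-contribution to the curvature condition~\eqref{eq:thm:curvature-condition}. Your proposal to control it ``via the explicit diffusion/jump decomposition of $M$'' would require an explicit formula for $\od\<M\>_t/\od t$ in terms of $S_t$, which is not available in this generality (recall $M=\vartheta^g S$ with $\vartheta^g$ given only through~\eqref{eq:integrand-general}). The paper instead uses an integration-by-parts trick: since $\ce{\F_a}{\<M\>_t-\<M\>_a}=\ce{\F_a}{|M_t-M_a|^2}$, one has
\[
\bbce{\F_a}{\int_{(a,T)}(T-t)^{1-\theta}\od\<M\>_t}
\lee (1-\theta)\,\bbce{\F_a}{\int_{(a,T)}(T-t)^{-\theta}|M_t-M_a|^2\od t},
\]
after checking that the boundary term $(T-b)^{1-\theta}|M_b-M_a|^2$ vanishes as $b\uparrow T$ (which follows from $M_t^2\lee c\,U(t)^2\Phi(\eta)_t^2$ and the chosen $\theta$). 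This reduces the $\od\<M\>_t$-piece to an $M_t^2\od t$-type integral, but with the \emph{worse} weight $(T-t)^{-\theta}$ in place of $(T-t)^{1-\theta}$. The finiteness condition $\int_0^T(T-t)^{-\theta}U(t)^2\od t<\infty$ is exactly what forces the sharp ranges in \cref{tab:theta-cases} (e.g.\ $\theta<\eta$ in A1, $\theta<\tfrac{2(1+\eta)}{\alpha}-1$ in A3); your $M_t^2\od t$-integral with weight $(T-t)^{1-\theta}$ would only give $\theta<\eta+1$, which is no constraint. So the integration-by-parts step is not a technicality but the mechanism producing the $\theta$-ranges, and your outline misses it.
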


	
Eventually, let us turn to the approximation problem in the exponential L\'evy setting.  Although results in \cref{sec:main-results} are stated in terms of the characteristic of $Z$ (the integrator of the SDE \eqref{eq:SDE-levy}), the result below are formulated involving the characteristic of the log price process $X$. This is slightly more convenient to verify in practice. Based on the relation between $X$ and $Z$ provided in \Cref{subsec:exponential-Levy}, we can easily translate conditions imposed on $X$ to $Z$ and vice versa.
	
	For $\alpha \in (0, 2]$, we let 
	\begin{align*}
		\ts \nu \in \sUS(\alpha) \Longleftrightarrow \sup_{r \in (0, 1)} r^{\alpha} \int_{r < |x| \lee 1} \nu(\od x) <\infty.
	\end{align*}
Combing \cref{theo:application-Levy} with \cref{theo:BMO-convergent-rate} yields the following corollary where the Brownian part of $X$ is also taken into account.
	
	\begin{coro}\label{coro:convergence-rate-levy}
		Assume \cref{assum-maringale-setting} and let $\eta \in [0, 1]$, $g \in C^{0, \eta}(\R_+)$. 
		\begin{enumerate}[\rm (1)]
			\item \label{item:1:coro:convergence-rate-levy}
			For $\vartheta^g$ given in \eqref{eq:integrand-general} and for $$\ts \Phi(\eta) = (\sup_{u \in [0, t]}(S^{\eta -1}_u) S_t)_{t \in [0, T]},\quad \ol \Phi(\eta) = \Phi(\eta) + \sup_{s \in [0, \cdot]}|\Delta \Phi(\eta)_s|,$$
			 one has
			\begin{align}\label{eq:coro:convergence-rate}
				\sup_{n \gee 1} R(n) \big\|E^{\corr}\big(\vartheta^g, \tau^\theta_n\, \big|\, \ep_n, \tfrac{1-\theta}{2}\big)\big\|_{\BMO_2^{\ol \Phi(\eta)}(\p)} <\infty,
			\end{align}
			where $\theta$, $R(n)$ and $\ep_n$ are provided in \cref{tab:rates}:
			\vspace{-.3em}
			\begin{longtable}{|l|l|l|l|}
				\caption{Convergence rate $R(n)$ and jump size threshold $\ep_n$}
				\label{tab:rates}
				\vspace{-.5em}\\
				\hline
				& Interplay between $g$ and $X$ & Values of  $\theta$ & $R(n)$ and $\ep_n$  \\ \hline
				\endfirsthead
				\endhead
				$\mathrm{B1}$ & \begin{tabular}[c]{@{}l@{}} 
					$\sigma =0$ and\\
					$\nu \in \sUS(\alpha)$ for some  \\
					$(\eta,\alpha)  \in ([0, 1) \times (0, 1+ \eta))$\\
					\qquad \qquad $\cup (\{1\} \times (0, 2])$
				\end{tabular}    & $\theta =1$    &      \begin{tabular}[c]{@{}l@{}}   $R(n) = 1/\ep_n = \sqrt[\alpha]{n}$  if $\alpha \in (1, 2]$,  \\
					$R(n) = n/(1+ \log n)$, $\ep_n = 1/n$\\
					\qquad  if $\alpha =1$,\\
					$R(n) = 1/\ep_n = n$ if $\alpha \in (0, 1)$   \end{tabular}   \\ \hline
				$\mathrm{B2}$ & \begin{tabular}[c]{@{}l@{}}
					$\sigma =0$ and\\
					$\nu \in \sS(\alpha)$ for some\\  $(\eta, \alpha) \in [0, 1) \times [1+ \eta, 2)$ \end{tabular}    &     $\forall\theta \in \Big(0, \frac{2(1+\eta)}{\alpha} -1\Big)$       &    \begin{tabular}[c]{@{}l@{}}  	$R(n) = 1/\ep_n = n^{\frac{1}{\alpha}(1- \frac{1}{2}(1- \theta)(\alpha -1))}$ \Tstrut \\
					\qquad if $(\eta, \alpha) \neq (0, 1)$,  \\
					$R(n) = n/(1+ \log n)$, $\ep_n = 1/n$ \\
					\qquad if $(\eta, \alpha) = (0, 1)$	
				\end{tabular} \\ \hline
				$\mathrm{B3}$ & \begin{tabular}[c]{@{}l@{}}  	$\sigma >0$ and $\eta =1$
				\end{tabular} & $\theta = 1$ & $R(n) = 1/\ep_n =  \sqrt{n}$
				\\ \hline 
				$\mathrm{B4}$ &	\begin{tabular}[c]{@{}l@{}}  	$\sigma >0$, $\eta \in (0, 1)$ and\\
					$\nu \in \sUS(\alpha)$ for some 
					$\alpha \in (0, 2]$ 
				\end{tabular} & $\forall \theta \in (0, \eta)$ &  \begin{tabular}[c]{@{}l@{}}  	$R(n) = 1/\ep_n = \sqrt{n}$ if $\alpha \in (0, \frac{3 - \theta}{2- \theta}]$,\\[2pt]
					$R(n) = 1/\ep_n = n^{\frac{1}{\alpha}(1- \frac{1}{2}(1- \theta)(\alpha -1))}$\\
					\qquad  if $\alpha \in (\frac{3- \theta}{2- \theta}, 2]$
				\end{tabular} 	\\ \hline 
			\end{longtable}
			\vspace{-.5em}
			
		\item\label{item:2:coro:convergence-rate-levy} 		If  $\int_{|x| >1} \e^{px} \nu(\od x) <\infty$ for some $p >2$, then \eqref{eq:coro:convergence-rate} holds for the  $\BMO^{\ol \Phi(\eta)}_p(\p)$-norm and, consequently, for the $S_p(\p)$-norm, in place of the $\BMO^{\ol \Phi(\eta)}_2(\p)$-norm.
		\end{enumerate} 
	\end{coro}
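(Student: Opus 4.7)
The plan is to combine \cref{theo:application-Levy} (which verifies \cref{assumption-stochastic-integral}, and when $\sigma=0$ also \cref{assumption-pure-jump}, for the MVH integrand $\vartheta^g$) with \cref{theo:BMO-convergent-rate} (which converts those assumptions into $\BMO_2^{\ol\Phi}$ convergence rates along the adapted nets $\tau^\theta_n$). The proof reduces to a case-by-case matching: each row $\mathrm{B}i$ of Table~\ref{tab:rates} is obtained by first selecting the appropriate row $\mathrm{A}j$ of Table~\ref{tab:theta-cases} (providing $\theta$, $\Theta(\eta)$, $\Phi(\eta)$ and $\Upsilon$), and then translating the small-jump condition on the L\'evy measure $\nu$ of $X$ into the condition required on the jump measure $\nu_t(\od z)$ of $Z$. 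For this translation one uses $\Delta Z=\e^{\Delta X}-1$ together with $|\e^x-1|\sim|x|$ for small $x$, so that $\sUS(\alpha)$ on $\nu$ transfers to \eqref{eq:item:1:small-jump-intensity} (resp.\ \eqref{eq:small-jump-pure-jump-1}) for $\nu_t$ up to multiplicative constants.

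For $\mathrm{B}1$ ($\sigma=0$, $\nu\in\sUS(\alpha)$), a dyadic decomposition over the shells $\{2^{-k-1}<|x|\lee 2^{-k}\}$ together with $\alpha<1+\eta$ shows $\int_{|x|\lee 1}|x|^{1+\eta}\nu(\od x)<\infty$, so row $\mathrm{A}2$ applies and forces $\theta=1$; \cref{theo:BMO-convergent-rate}\eqref{item:2:coro:BMO-convergent-rate} then delivers the claimed $R(n)$. Case $\mathrm{B}2$ is analogous via the inclusion $\sS(\alpha)\subset\sUS(\alpha)$, row $\mathrm{A}3$ (hence $\theta\in(0,\tfrac{2(1+\eta)}{\alpha}-1)$), and the pure-jump-improved formula in \cref{theo:BMO-convergent-rate}\eqref{item:2:coro:BMO-convergent-rate}. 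Cases $\mathrm{B}3$ and $\mathrm{B}4$ have $\sigma>0$, so \cref{assumption-pure-jump} is unavailable and one must fall back to \cref{theo:BMO-convergent-rate}\eqref{item:1:coro:BMO-convergent-rate} via row $\mathrm{A}1$: for $\mathrm{B}3$ the Lipschitz regularity $\eta=\theta=1$ together with the bound $j^Z_{\eqref{eq:uniformly-bounded-KV}}<\infty$ implied by the Brownian component produces the optimum balance $\ep_n\sim\sqrt{1/n}$ and the rate $1/\sqrt n$. The threshold $\alpha=\tfrac{3-\theta}{2-\theta}$ appearing in $\mathrm{B}4$ is exactly the value at which the continuous-component bound $\sqrt{\|\tau\|_\theta}$ and the pure-jump-type bound $\|\tau\|_\theta^{\frac{1}{\alpha}(1-\frac{1}{2}(1-\theta)(\alpha-1))}$ coincide.

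For Part~\eqref{item:2:coro:convergence-rate-levy}, the hypothesis $\int_{|x|>1}\e^{px}\nu(\od x)<\infty$ combined with the L\'evy--Khintchine formula yields $\sup_{t\in[0,T]}\E\e^{pX_t}<\infty$, hence $\E\sup_{t\in[0,T]}S_t^p<\infty$ by Doob's $L_p$-inequality applied to the martingale $S$. Repeating the argument of \cref{theo:application-Levy}\eqref{item: weight-regularity-Levy} at exponent $p$ in place of $2$ then gives $\Phi(\eta)\in\cSM_p(\p)$, so \cref{theo:BMO-convergent-rate}\eqref{item:Lp-estimate-BMO} transports each $\BMO_2^{\ol\Phi(\eta)}$ bound into a $\BMO_p^{\ol\Phi(\eta)}$ bound and, via \cref{lemm:feature-BMO}\eqref{item:Lp-estimate-BMO-feature}, also into an $S_p(\p)$ bound.

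The main obstacle I anticipate lies in $\mathrm{B}4$ when $\alpha\in(\tfrac{3-\theta}{2-\theta},2]$: \cref{theo:BMO-convergent-rate}\eqref{item:rate-general-alpha-(0,2]} alone only delivers the weaker rate $n^{1/(2\alpha)}$, so an extra pure-jump-type improvement must be extracted despite $\sigma>0$. I would do this by splitting $Z=Z^{\rc}+Z^d$ into its Brownian and its purely discontinuous parts, applying the continuous-case analysis underlying \cref{coro:approximation-continuous-case-BMO} to the $Z^{\rc}$-contribution (yielding $\sqrt{\|\tau\|_\theta}$) and the argument of \cref{theo:improve-rate} to the $Z^d$-contribution (yielding the pure-jump-improved rate), and then combining the two via the orthogonality of $Z^{\rc}$- and $Z^d$-driven martingale increments under $\p$. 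The delicate bookkeeping will be to propagate the modified weight $\ol\Phi(\eta)$ consistently through this decomposition and through the jump-correction terms indexed by the stopping times $\rho_i(\ep_n,\tfrac{1-\theta}{2})$.
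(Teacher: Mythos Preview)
Your plan for $\mathrm{B1}$, $\mathrm{B2}$ and Part~\eqref{item:2:coro:convergence-rate-levy} matches the paper's proof. The genuine gap is in $\mathrm{B3}$: you propose to extract the rate $\sqrt n$ from \cref{theo:BMO-convergent-rate}\eqref{item:1:coro:BMO-convergent-rate}, but with $\theta=1$ and only the automatic bound $\nu\in\sUS(2)$ available, Item~\eqref{item:rate-general-alpha-(0,2]} gives $R(n)=n^{1/4}$, and Item~\eqref{item:rate-general-symmetry} requires the local-symmetry condition~\eqref{eq:item:2:jump-behavior}, which is not assumed. (Your remark that ``$j^Z_{\eqref{eq:uniformly-bounded-KV}}<\infty$ is implied by the Brownian component'' is a slip: $j^Z$ depends only on the jump measure.) So $\mathrm{B3}$ needs exactly the same hybrid treatment you anticipate for $\mathrm{B4}$, and the obstacle you flag for $\mathrm{B4}$ is not an isolated difficulty but the mechanism behind both $\mathrm{B3}$ and $\mathrm{B4}$.

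The paper resolves both cases precisely along the lines of your proposed fix, but the decomposition is done at the level of the \emph{error} rather than of $Z$. One uses the splitting $E^{\corr}=E^{\rm C}+E^{\rm S}-E^{\rm D}$ from the proof of \cref{theo:conergence-rate-BMO} (equation~\eqref{eq:error-correction-decomposition}): the continuous piece $E^{\rm C}$ is bounded by $\sqrt{\|\tau\|_\theta}$ via~\eqref{eq:BMO-estimate-continuous-part}; the small-jump piece $E^{\rm S}$ via~\eqref{eq:error-small-jump-BMO-1}; and for the drift piece $E^{\rm D}$ one reuses the improved estimate~\eqref{eq:estimate-BMO-drift} from the proof of \cref{theo:improve-rate}. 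The key observation is that the derivation of~\eqref{eq:estimate-BMO-drift} uses only the martingale decomposition $\vartheta^g S=M$ (here $V\equiv0$ by \cref{theo:MVH-strategy}\eqref{item:prop:maringale-property}), the identity $\sigma(x)=x$, and the curvature condition---it never uses $Z^{\rc}\equiv0$; the one place where $\langle Z\rangle$ appears (term $\mathrm{IV}_{\eqref{eq:estimate-drift-term-special-case}}$) only needs $\od\langle Z\rangle_t\lee c\,\od t$, which holds with $c=\sigma^2+\|z^2\star\nu\|_{L_\infty}$ when $\sigma>0$. This avoids the bookkeeping you worry about with a $Z=Z^{\rc}+Z^d$ split of the integrator, where the nonlinear dependence $S=\cE(Z)$ would not factor.
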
	
	
According to \cref{prop:cardinality-combined-nets} with $q=2, r = \infty, \bQ = \p$, the parameter $n$ in \eqref{eq:coro:convergence-rate} is comparable to the $L_2(\p)$-norm of the cardinality of the combined time-nets $\comb{\tau_n^\theta}{\rho(\ep_n, \frac{1-\theta}{2})}$ used in the approximation.

	\section{Proofs of results in \cref{sec:main-results} and \Cref{subsec:MVH-growth-weight}} 	\label{sec:proofs-main-results}
	
	\subsection{Proofs of results in \Cref{subsec:Riemann-approx}} We need the following auxiliary result.

	\begin{lemm}\label{lem:sigma(Y)-estimate}
		There are constants $c_{\eqref{eq:1:conditional-Y-1-step}}, c_{\eqref{eq:2:conditional-Y-1-step}}>0$ such that for any $0\lee a <b\lee T$, a.s.,
		\begin{align}
			\ce{\F_a}{\int_a^b \sigma(S_t)^2 \od t} & \lee c_{\eqref{eq:1:conditional-Y-1-step}}^2 (b-a) \sigma(S_a)^2, \label{eq:1:conditional-Y-1-step}\\
			\ce{\F_a}{\int_a^b |\sigma(S_t) - \sigma(S_a)|^2 \od t} & \lee c_{\eqref{eq:2:conditional-Y-1-step}}^2 (b-a)^2 \sigma(S_a)^2 \label{eq:2:conditional-Y-1-step}.
		\end{align}
	\end{lemm}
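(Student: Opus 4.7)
The plan is to reduce both estimates to controlling $\E^{\F_a}[|S_t - S_a|^2]$, then to combine the Lipschitz property of $\sigma$ with a Gronwall bootstrap.

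First, using the SDE $\od S_t = \sigma(S_{t-}) \od Z_t$ together with the semimartingale decomposition \eqref{eq:decomposition-Z} of $Z$, I would write
\begin{align*}
    S_t - S_a = \int_a^t \sigma(S_{u-}) \od Z^{\rc}_u + \int_a^t\!\!\int_{\R_0} \sigma(S_{u-}) z (N_Z - \pi_Z)(\od u, \od z) + \int_a^t \sigma(S_{u-}) b^Z_u \od u.
\end{align*}
Applying conditional It\^o isometry to the continuous martingale part (which gives $\int_a^t \sigma(S_{u-})^2 |a^Z_u|^2 \od u$ via \eqref{eq:uniformly-bounded-C}), the compensated-Poisson $L_2$-isometry to the jump part (yielding $\int_a^t \sigma(S_{u-})^2 |j^Z_u|^2 \od u$ via \eqref{eq:uniformly-bounded-KV}), and Cauchy--Schwarz together with $\|\|b^Z\|_{L_2([0,T])}\|_{L_\infty(\p)} <\infty$ to the finite-variation part, I obtain an inequality of the form
\begin{align*}
    \E^{\F_a}[|S_t - S_a|^2] \;\lee\; C \, \E^{\F_a}\!\left[\int_a^t \sigma(S_{u-})^2 \od u\right]
\end{align*}
for some constant $C$ depending only on $a^Z_{\eqref{eq:uniformly-bounded-C}}$, $j^Z_{\eqref{eq:uniformly-bounded-KV}}$, $b^Z_{\eqref{eq:uniformly-bounded-KV}}$ and $T$.

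Next, the Lipschitz property of $\sigma$ gives pointwise $\sigma(S_t)^2 \lee 2\sigma(S_a)^2 + 2|\sigma|_{\Lip}^2 |S_t - S_a|^2$. Setting $f(t):=\E^{\F_a}\!\bigl[\int_a^t \sigma(S_u)^2 \od u\bigr]$ (which equals the corresponding integral with $\sigma(S_{u-})$ since $S$ has no fixed-time discontinuities), the previous bound yields
\begin{align*}
    f'(t) \;=\; \E^{\F_a}[\sigma(S_t)^2] \;\lee\; 2\sigma(S_a)^2 + 2 C |\sigma|_{\Lip}^2 \, f(t), \qquad f(a)=0.
\end{align*}
Gronwall's inequality then delivers $f(b) \lee c_{\eqref{eq:1:conditional-Y-1-step}}^2 (b-a)\sigma(S_a)^2$ for a constant depending on $|\sigma|_{\Lip}$, $T$ and $C$, which is precisely \eqref{eq:1:conditional-Y-1-step}.

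For \eqref{eq:2:conditional-Y-1-step}, I would apply $|\sigma(S_t)-\sigma(S_a)|^2 \lee |\sigma|_{\Lip}^2 |S_t - S_a|^2$, insert the bound on $\E^{\F_a}[|S_t-S_a|^2]$ from the first step, and replace $\E^{\F_a}[\int_a^t \sigma(S_{u-})^2 \od u]$ by $c_{\eqref{eq:1:conditional-Y-1-step}}^2(t-a)\sigma(S_a)^2$ via \eqref{eq:1:conditional-Y-1-step} (applied on $[a,t]$). Integrating the resulting linear-in-$t$ bound over $t\in[a,b]$ produces the quadratic factor $(b-a)^2$, so that $c_{\eqref{eq:2:conditional-Y-1-step}}$ carries $|\sigma|_{\Lip}$ as a factor (matching what is used in \cref{exam:Upsilon-martingale-setting}). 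The main technical point is the circular dependence of the $L_2$-norm of $S_t-S_a$ on $\sigma(S_{u-})$ inside the stochastic integrals; this is what forces the Gronwall step in the first estimate and is the only nontrivial ingredient, everything else being standard BDG/isometry bookkeeping under the $L_\infty$-bounds on the characteristics of $Z$.
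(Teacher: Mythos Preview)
Your approach is essentially the same as the paper's: both bound $\E^{\F_a}[|S_t-S_a|^2]$ via the SDE decomposition and conditional It\^o isometry/Cauchy--Schwarz, feed this into the Lipschitz estimate for $\sigma$, apply Gronwall to obtain \eqref{eq:1:conditional-Y-1-step}, and then bootstrap to \eqref{eq:2:conditional-Y-1-step}. The only cosmetic difference is that the paper writes the Gronwall step in integrated form and tests against sets $A\in\F_a$ (avoiding the need to differentiate the random function $f$), whereas you phrase it as a differential inequality for $f'$; this is a minor presentational point, not a substantive one.
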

	
	\begin{proof}
		Fix $a\in [0, T)$. For any $b \in (a, T]$, a.s.,
		\begin{align}\label{eq:3:conditional-Y-1-step}
			&\ce{\F_a}{\int_a^b |\sigma(S_t) - \sigma(S_a)|^2 \od t}  \lee |\sigma|_{\Lip}^2 \, \ce{\F_a}{\int_a^b |S_t - S_a|^2 \od t} \notag\\
			& = |\sigma|_{\Lip}^2 \, \bbce{\F_a}{\int_a^b \left|\int_a^t \sigma(S_{u-}) \(\od Z^\rc_u + \int_{ \R_0} z (N_Z - \pi_Z)(\od u, \od z) + b^Z_u \od u\)\right|^2 \od t} \notag\\
			& \lee 3 |\sigma|_{\Lip}^2  \, \ce{\F_a}{\int_a^b \( \int_a^t \sigma(S_{u-})^2 (|a^Z_u|^2 + |j^Z_u|^2) \od u +  \int_a^t |b^Z_u|^2 \od u\int_a^t \sigma(S_{u-})^2 \od u\) \od t} \notag\\
			& \lee c_{\eqref{eq:3:conditional-Y-1-step}}^2 \ce{\F_a}{\int_a^b \int_a^t \sigma(S_u)^2 \od u \od t},
		\end{align}
		where in order to obtain the second inequality we use the conditional It\^o isometry for the martingale term and apply H\"older's inequality for the finite variation term. The last inequality comes from the fact that $t \mapsto \sigma(S_t)$ has at most countable discontinuities, and
		\begin{align}\label{eq:constant-lipschit-related}
			c_{\eqref{eq:3:conditional-Y-1-step}}^2 := 3 |\sigma|_{\Lip}^2(|a^Z_{\eqref{eq:uniformly-bounded-C}}|^2 + |j^Z_{\eqref{eq:uniformly-bounded-KV}}|^2 + |b^Z_{\eqref{eq:uniformly-bounded-KV}}|^2).
		\end{align}
		Then the triangle inequality implies that, a.s.,
		\begin{align*}
			\ce{\F_a}{\int_a^b \sigma(S_t)^2 \od t} & \lee 2(b-a)\sigma(S_a)^2 + 2\ce{\F_a}{\int_a^b |\sigma(S_t) - \sigma(S_a)|^2 \od t}\\
			& \lee 2(b-a)\sigma(S_a)^2 + 2c_{\eqref{eq:3:conditional-Y-1-step}}^2 \ce{\F_a}{\int_a^b \int_a^t \sigma(S_u)^2 \od u \od t}.
		\end{align*}
		Now, for any $A \in \F_a$, it holds that
		\begin{align*}
			\int_a^b \E\1_A \sigma(S_t)^2 \od t  & \lee 2(b-a) \E \1_A \sigma(S_a)^2 + 2c_{\eqref{eq:3:conditional-Y-1-step}}^2 \int_a^b \int_a^t \E \1_A \sigma(S_u)^2 \od u \od t .
		\end{align*}
		Since $\E\int_0^T \sigma(S_u)^2 \od u < \infty$ due to \eqref{eq:square-integrability-S}, using Gronwall's inequality yields 
		\begin{align*}
			\int_a^b \E \1_A \sigma(S_t)^2 \od t  \lee  2(b-a)\E \1_A \sigma(S_a)^2 \e^{2c_{\eqref{eq:3:conditional-Y-1-step}}^2(b-a)},
		\end{align*}
		which verifies \eqref{eq:1:conditional-Y-1-step} with $c_{\eqref{eq:1:conditional-Y-1-step}}^2 := 2\e^{2c_{\eqref{eq:3:conditional-Y-1-step}}^2T}$. In order to obtain \eqref{eq:2:conditional-Y-1-step}, we apply  \eqref{eq:1:conditional-Y-1-step} to the right-hand side of \eqref{eq:3:conditional-Y-1-step}, and then we can let $c_{\eqref{eq:2:conditional-Y-1-step}}^2 = \tfrac{1}{2}c_{\eqref{eq:1:conditional-Y-1-step}}^2 c_{\eqref{eq:3:conditional-Y-1-step}}^2 = c_{\eqref{eq:3:conditional-Y-1-step}}^2 \e^{2 c_{\eqref{eq:3:conditional-Y-1-step}}^2 T}.$
	\end{proof}
	
	\subsubsection*{Proof of \cref{theo:approximation-QV-BMO}} For $\vartheta \in \adm$ and $\tau = (t_i)_{i=0}^n \in \mathcal T_{\det}$, we define the process $\QV$, which is adapted, has continuous and non-decreasing paths on $[0, T]$, by
	\begin{align}\label{eq:QV-process}
		\QV_t : = \sum_{i=1}^n \int_{t_{i-1}\wedge t}^{t_i \wedge t}  |\vartheta_{u} -  \vartheta_{t_{i-1}} |^2 \sigma(S_u)^2  \od u.
	\end{align}
	For $a\in [0, T)$, applying conditional It\^o's isometry and H\"older's inequality yields, a.s., 
	\begin{align}\label{eq:dominant-QV}
		& \ce{\F_a}{|E^\riem_T(\vartheta, \tau) - E^\riem_a(\vartheta, \tau)|^2}  \notag\\
		& \lee 3\bbce{\F_a}{\int_a^T \bigg|\vartheta_{u-} - \sum_{i=1}^n \vartheta_{t_{i-1}-} \1_{(t_{i-1}, t_i]}(u)\bigg|^2 \sigma(S_{u-})^2 \(|a^Z_u|^2 + |j^Z_u|^2  + \int_a^T |b^Z_r|^2 \od r\) \od u} \notag \\
		& \lee 3(|a^Z_{\eqref{eq:uniformly-bounded-C}}|^2 + |j^Z_{\eqref{eq:uniformly-bounded-KV}}|^2 +  |b^Z_{\eqref{eq:uniformly-bounded-KV}}|^2) \bbce{\F_a}{\int_a^T \bigg|\vartheta_{u-} - \sum_{i=1}^n \vartheta_{t_{i-1}-} \1_{(t_{i-1}, t_i]}(u)\bigg|^2 \sigma(S_{u-})^2 \od u} \notag \\
		& = 3(|a^Z_{\eqref{eq:uniformly-bounded-C}}|^2 + |j^Z_{\eqref{eq:uniformly-bounded-KV}}|^2 +  |b^Z_{\eqref{eq:uniformly-bounded-KV}}|^2) \ce{\F_a}{\QV_T - \QV_a},
	\end{align}
	where the equality comes from the fact that the number of discontinuities of a c\`adl\`ag function is at most countable and $\vartheta \in \adm$ has no fixed-time discontinuity. We recall from \cref{rema:deterministic-bmo} that one can use deterministic times instead of stopping times in the definition of $\|\cdot\|_{\bmo_2^\Phi(\p)}$. Therefore, \cref{theo:approximation-QV-BMO} is a direct consequence of \eqref{eq:dominant-QV} and the following:
	
	\begin{prop}\label{prop:QV-BMO}
		Let \cref{assumption-stochastic-integral} hold for some $\theta \in (0, 1]$. Then there exists
		a constant $c_{\eqref{eq:QV-BMO}}>0$ such that for any $\tau \in \cT_{\det}$ and any $a\in [0, T)$, a.s.,
		\begin{align}\label{eq:QV-BMO}
			\ce{\F_a}{\QV_T - \QV_a} \lee c_{\eqref{eq:QV-BMO}}^2 \|\tau\|_\theta \Phi_a^2.
		\end{align}
		Consequently, $\|\QV\|_{\BMO_1^{\Phi^2}(\p)} \lee c_{\eqref{eq:QV-BMO}}^2 \|\tau\|_\theta$.
	\end{prop}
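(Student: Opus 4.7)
The plan is to handle $\QV_T - \QV_a$ by decomposing at the first mesh point strictly after $a$. Let $k$ be the unique index with $a \in [t_{k-1}, t_k)$ and split
\begin{align*}
\QV_T - \QV_a = \int_a^{t_k}|\vartheta_u - \vartheta_{t_{k-1}}|^2 \sigma(S_u)^2\, du + \sum_{i = k+1}^{n}\int_{t_{i-1}}^{t_i}|\vartheta_u - \vartheta_{t_{i-1}}|^2\sigma(S_u)^2\, du.
\end{align*}
For each $i \gee k+1$ the tower property together with \cref{pre-assumption-stochastic-integral} applied on $(t_{i-1}, t_i)$ (with monotone convergence $b \uparrow T$ when $i=n$) produces the one-step bound
\begin{align*}
\E^{\F_a}\bigg[\int_{t_{i-1}}^{t_i}|\vartheta_u - \vartheta_{t_{i-1}}|^2\sigma(S_u)^2\, du\bigg] \lee c_{\eqref{eq:assumption-stochastic-integral}}^2\, \E^{\F_a}\bigg[\int_{(t_{i-1}, t_i]}(t_i - u)\Upsilon(du)\bigg].
\end{align*}
For the boundary piece I use $|\vartheta_u - \vartheta_{t_{k-1}}|^2 \lee 2|\vartheta_u - \vartheta_a|^2 + 2|\vartheta_a - \vartheta_{t_{k-1}}|^2$. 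The first summand is handled in the same way by \cref{pre-assumption-stochastic-integral} on $(a, t_k)$, while the second is $\F_a$-measurable, and the growth condition \eqref{eq:growth-strategy} together with the monotonicity of $\Theta$ gives $|\vartheta_a - \vartheta_{t_{k-1}}|^2 \lee 4 c_{\eqref{eq:growth-strategy}}^2 (T - t_{k-1})^{\theta - 1}\Theta_a^2$; combined with the estimate $\E^{\F_a}[\int_a^{t_k}\sigma(S_u)^2 du] \lee c_{\eqref{eq:1:conditional-Y-1-step}}^2(t_k - a)\sigma(S_a)^2$ from \cref{lem:sigma(Y)-estimate} and the elementary identity $(t_k - a)(T-t_{k-1})^{\theta -1} \lee \|\tau\|_\theta$, this second summand contributes at most $C\|\tau\|_\theta \Phi_a^2$.

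The main technical obstacle is the pointwise weight comparison
\begin{align*}
t_i - u \lee 2\|\tau\|_\theta (T-u)^{1-\theta}, \qquad u \in (t_{i-1}, t_i],
\end{align*}
needed to convert the conditional $\Upsilon$-integrals above into ones against $(T-u)^{1-\theta}\Upsilon(du)$. The naive chain $t_i - u \lee \|\tau\|_\theta(T-t_{i-1})^{1-\theta}$ runs in the wrong direction since $(T-t_{i-1})^{1-\theta} \gee (T-u)^{1-\theta}$, with ratio unbounded for intervals close to $T$. I circumvent this by splitting cases at the threshold $T - t_{i-1}$ versus $2\|\tau\|_\theta^{1/\theta}$. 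If $T - t_{i-1} \gee 2\|\tau\|_\theta^{1/\theta}$, then $(T-t_{i-1})^\theta \gee 2^\theta \|\tau\|_\theta$ forces $t_i - t_{i-1} \lee (T-t_{i-1})/2$, so $(T-u)^{1-\theta} \gee 2^{\theta - 1}(T-t_{i-1})^{1-\theta}$ on $(t_{i-1}, t_i]$ and hence $t_i - u \lee 2^{1-\theta}\|\tau\|_\theta(T-u)^{1-\theta}$. Otherwise $(T-t_{i-1})^\theta \lee 2^\theta\|\tau\|_\theta$, and the crude bound $t_i - u \lee T - u = (T-u)^\theta(T-u)^{1-\theta} \lee (T-t_{i-1})^\theta(T-u)^{1-\theta}$ gives $t_i - u \lee 2^\theta \|\tau\|_\theta(T-u)^{1-\theta}$. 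Both constants are bounded by $2$ uniformly in $\theta \in (0, 1]$, and the identical argument bounds $t_k - u$ on $(a, t_k]$.

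Inserting this pointwise inequality into the one-step estimates and invoking the curvature condition \eqref{eq:thm:curvature-condition} produces
\begin{align*}
\E^{\F_a}\bigg[\int_{(a,t_k]}(t_k - u)\Upsilon(du) + \sum_{i=k+1}^n\int_{(t_{i-1}, t_i]}(t_i - u)\Upsilon(du)\bigg] \lee 2\|\tau\|_\theta\, \E^{\F_a}\bigg[\int_{(a,T)}(T-u)^{1-\theta}\Upsilon(du)\bigg] \lee 2c_{\eqref{eq:thm:curvature-condition}}^2\|\tau\|_\theta \Phi_a^2,
\end{align*}
which together with the contribution from the $|\vartheta_a - \vartheta_{t_{k-1}}|^2$-piece establishes \eqref{eq:QV-BMO}. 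For the $\BMO_1^{\Phi^2}(\p)$-norm assertion, note that $\QV$ is continuous and non-decreasing with $\QV_0 = 0$, so $|\QV_T - \QV_{\rho -}| = \QV_T - \QV_\rho$ for every stopping time $\rho$; by the observation preceding \cref{lemm:relation-BMO-bmo}, continuity identifies $\|\QV\|_{\BMO_1^{\Phi^2}(\p)}$ with $\|\QV\|_{\bmo_1^{\Phi^2}(\p)}$, and by \cref{rema:deterministic-bmo} the latter is governed by deterministic times, for which the bound \eqref{eq:QV-BMO} has just been shown.
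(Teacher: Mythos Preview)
Your strategy is sound and indeed more self-contained than the paper's: the paper verifies a structural hypothesis and then invokes \cite[Theorem~5.3]{GN20} as a black box, whereas you carry out the telescoping summation directly. Both routes use the same ingredients (the one-step bound \eqref{eq:assumption-stochastic-integral}, the growth condition \eqref{eq:growth-strategy} for the boundary interval, \cref{lem:sigma(Y)-estimate}, and the curvature condition \eqref{eq:thm:curvature-condition}), so the difference is presentational rather than conceptual; your version has the advantage of not depending on an external reference.

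Two arithmetic points need repair. First, the inequality $|\vartheta_a-\vartheta_{t_{k-1}}|^2\lee 4c_{\eqref{eq:growth-strategy}}^2(T-t_{k-1})^{\theta-1}\Theta_a^2$ is in the wrong direction: since $\theta-1\lee 0$ and $T-a\lee T-t_{k-1}$, the dominating term is $(T-a)^{\theta-1}$, so the correct bound is $4c_{\eqref{eq:growth-strategy}}^2(T-a)^{\theta-1}\Theta_a^2$. This still gives what you need because $(t_k-a)(T-a)^{\theta-1}\lee\|\tau\|_\theta$ holds by the same mechanism as your ``elementary identity''. Second, your case-split for the pointwise weight comparison is both more complicated than necessary and contains a slip: from $(T-t_{i-1})^\theta\gee 2^\theta\|\tau\|_\theta$ you only get $t_i-t_{i-1}\lee 2^{-\theta}(T-t_{i-1})$, not $\tfrac12(T-t_{i-1})$, and the resulting constant $(1-2^{-\theta})^{-(1-\theta)}$ is not bounded uniformly in $\theta\in(0,1]$. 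The clean fix is a one-line monotonicity argument: for $u\in[t_{i-1},t_i)$ the function $f(u)=(t_i-u)(T-u)^{\theta-1}$ has derivative with sign given by $-(T-t_i)-\theta(t_i-u)<0$, hence $f(u)\lee f(t_{i-1})\lee\|\tau\|_\theta$. This yields $t_i-u\lee\|\tau\|_\theta(T-u)^{1-\theta}$ with constant $1$ and simultaneously handles both the main intervals and the boundary estimate $(t_k-a)(T-a)^{\theta-1}\lee\|\tau\|_\theta$.
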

	
	\begin{proof}		By the monotonicity of $\Theta$ and \eqref{eq:growth-strategy}, we have that for $c_{\eqref{eq:estimate-phi}} : = \sqrt{2} c_{\eqref{eq:growth-strategy}}$ and for any $0\lee s <t <T$, a.s.,
		\begin{align}\label{eq:estimate-phi}
			|\vartheta_t -\vartheta_s|^2 \sigma(S_t)^2 & \lee c_{\eqref{eq:estimate-phi}}^2 \big((T-t)^{\theta -1} + (T-s)^{\theta -1}\big) \Phi_t^2.
		\end{align}
		We aim to apply \cite[Theorem 5.3]{GN20} to get \eqref{eq:QV-BMO}. To do this, we define the random measure
		\begin{align*}
			\Pi(\omega, \od t) : = \sigma(S_t(\omega))^2 \od t, \quad \omega \in \Omega.
		\end{align*}
		It is clear that $\Pi(\omega, (0, t]) <\infty$ for any $(\omega, t) \in \Omega \times (0, T)$. For $0\lee s \lee a < b <T$, a.s., 
		\begin{align*}
			&\bbce{\F_a}{\int_{(a, b]} |\vartheta_t - \vartheta_s|^2 \Pi(\cdot, \od t)} = \bbce{\F_a}{\int_{(a, b]} |\vartheta_t - \vartheta_s|^2 \sigma(S_t)^2 \od t} \\
			& \lee 2 \bbce{\F_a}{|\vartheta_a - \vartheta_s|^2 \int_{(a, b]} \sigma(S_t)^2 \od t + \int_{(a, b]} |\vartheta_t - \vartheta_a|^2 \sigma(S_t)^2 \od t  }\\
			& \lee 2 \bbce{\F_a}{|\vartheta_a - \vartheta_s|^2  \Pi(\cdot, (a, b]) + c_{\eqref{eq:assumption-stochastic-integral}}^2 \int_{(a, b]} (b-t) \Upsilon(\cdot, \od t) }.
		\end{align*}
		Let $\tau = (t_i)_{i=0}^n \in \cT_{\det}$ and $a \in [t_{k-1}, t_k)$ for $k  \in [1, n]$. Applying \cite[Theorem 5.3]{GN20} yields a constant $c>0$ independent of $\tau$ and $a$ such that, a.s., 
		\begin{align*}
			& \ce{\F_a}{\QV_T - \QV_a} \\
			&  \lee c \|\tau\|_\theta \, \bbce{\F_a}{\int_{(a, T)} (T-t)^{1- \theta} \Upsilon(\cdot, \od t) + \frac{(T-t_{k-1})^{1- \theta}}{t_k - t_{k-1}} |\vartheta_a - \vartheta_{t_{k-1}}|^2 \int_{(a, t_k]} \sigma(S_t)^2  \od t}\\
			& \lee c \|\tau\|_\theta \[c_{\eqref{eq:thm:curvature-condition}}^2 \Phi_a^2 + c_{\eqref{eq:1:conditional-Y-1-step}}^2 \frac{(T-t_{k-1})^{1- \theta}}{t_k - t_{k-1}} (t_k-a) |\vartheta_a - \vartheta_{t_{k-1}}|^2  \sigma(S_a)^2 \]\\
			& \lee c \|\tau\|_\theta \[c_{\eqref{eq:thm:curvature-condition}}^2  + c_{\eqref{eq:1:conditional-Y-1-step}}^2 c_{\eqref{eq:estimate-phi}}^2 \frac{(T-t_{k-1})^{1- \theta}}{t_k - t_{k-1}} (t_k-a)  \big((T-a)^{\theta -1} + (T-t_{k-1})^{\theta -1}\big) \] \Phi_a^2\\
			& \lee c \|\tau\|_\theta ( c_{\eqref{eq:thm:curvature-condition}}^2  + 2 c_{\eqref{eq:1:conditional-Y-1-step}}^2 c_{\eqref{eq:estimate-phi}}^2 ) \Phi_a^2,
		\end{align*}
		which implies \eqref{eq:QV-BMO} with $c_{\eqref{eq:QV-BMO}}^2 =  c(c_{\eqref{eq:thm:curvature-condition}}^2  + 2 c_{\eqref{eq:1:conditional-Y-1-step}}^2 c_{\eqref{eq:estimate-phi}}^2)$.	For the ``Consequently'' part, since $\QV$ is continuous, it holds that $\|\QV\|_{\BMO_1^{\Phi^2}(\p)} = \|\QV\|_{\bmo_1^{\Phi^2}(\p)} \lee c_{\eqref{eq:QV-BMO}}^2 \|\tau\|_\theta$.	
	\end{proof}

	\subsection{Proofs of results in  \cref{sebsec:approx-correction,subsec:approx-accuracy,subsec:model-improve-rate}} \label{subsec:proofs-sec3}
	
	Let $\ep >0$, $\kappa\gee 0$ and recall $\rho(\ep, \kappa) = (\rho_i(\ep, \kappa))_{i\gee 0}$ in \cref{defi:random-times}. Due to $\eqref{eq:jump-S-Z}$ and the assumption $\sigma(S_{-}) >0$, it holds that
	\begin{align*}
		|\Delta S| > \sigma(S_{-}) \ep (T-\cdot)^\kappa \Leftrightarrow |\Delta Z| > \ep (T- \cdot)^\kappa.
	\end{align*}
	Hence,  we derive from \eqref{eq:random-times-S} the relations
	\begin{align}\label{eq:random-times-Z}
		\rho_i(\ep, \kappa) & = \inf\left\{T \gee t >\rho_{i-1}(\ep, \kappa) : |\Delta Z_t| > \ep (T-t)^\kappa\right\}\wedge T, \; i \gee 1.
	\end{align}
	Since $Z$ is c\`adl\`ag and the underlying filtration satisfies the usual conditions (right continuity and completeness), it implies that $\rho_i(\ep, \kappa)$ are stopping times satisfying $\rho_{i-1}(\ep, \kappa) < \rho_i(\ep, \kappa)$ for $1 \lee  i \lee \cN_{\eqref{eq:cardinality-S}}(\ep, \kappa)$.
	
	For a non-negative Borel function $f$ defined on $\R$, we denote
	\begin{align*}
		\|f(z)\star \nu\|_{L_{\infty}(\p \otimes \Leb)} : = \left\|(\omega, t) \mapsto \int_{\R} f(z) \nu_t(\omega, \od z) \right\|_{L_\infty(\Omega \times [0, T], \p \otimes \Leb)} \in [0, \infty].
	\end{align*}
	Then condition \eqref{eq:square-integrability-nu} is re-written as
	\begin{align}\label{eq:uniformly-square-integrable}
		\|z^2 \star \nu\|_{L_\infty(\p\otimes \Leb)} < \infty.
	\end{align}

	\begin{lemm}\label{lemm:cardinality-random-nets}
		Let  $\ep >0$, $\kappa \gee 0$ be real numbers. Then, for any $\alpha \in [0, \frac{1}{\kappa})$, one has
		\begin{align}\label{eq:estimate-norm-cardinality}
			\|\cN_{\eqref{eq:cardinality-S}}(\ep, \kappa)\|_{L_2(\p)}  \lee 1 + \sqrt{c_{\eqref{eq:estimate-norm-cardinality}}} + c_{\eqref{eq:estimate-norm-cardinality}},
		\end{align}	
		where $c_{\eqref{eq:estimate-norm-cardinality}} := T \|\1_{\{|z|>1\}} \star \nu\|_{L_\infty(\p\otimes \Leb)} +   \ep^{-\alpha} \sup_{r \in (0, 1)} \|  r^\alpha \1_{\{r < |z|\lee 1\}} \star \nu\|_{L_\infty(\p\otimes \Leb)} \frac{T^{1- \alpha \kappa}}{1- \alpha \kappa}.$
	\end{lemm}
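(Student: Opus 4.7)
\medskip
\noindent\textbf{Proof plan.} The idea is to identify $\cN_{\eqref{eq:cardinality-S}}(\ep,\kappa)-1$ with the number of jumps of $Z$ on $[0,T]$ that exceed the time-dependent threshold $\ep(T-t)^\kappa$, and then apply the standard compensation trick for a Poisson-type integral.

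First, because $Z$ has no fixed-time discontinuities (\cref{rema:assumption-S-Z}(2)) and $|\Delta S|>\sigma(S_-)\ep(T-\cdot)^\kappa$ is equivalent to $|\Delta Z|>\ep(T-\cdot)^\kappa$ by \eqref{eq:jump-S-Z} and $\sigma(S_-)>0$, one obtains from \eqref{eq:random-times-Z} that a.s. $\cN_{\eqref{eq:cardinality-S}}(\ep,\kappa)-1$ equals
\[
K:=\int_{0}^{T}\!\!\int_{\R_0} \1_{\{|z|>\ep(T-t)^\kappa\}}\, N_Z(\od t,\od z),
\]
so that $\|\cN_{\eqref{eq:cardinality-S}}(\ep,\kappa)\|_{L_2(\p)}\lee 1+\|K\|_{L_2(\p)}$.

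Next I would compensate: writing $K=\widetilde M_T+A_T$ with
\[
A_T=\int_{0}^{T}\!\!\int_{\R_0}\1_{\{|z|>\ep(T-t)^\kappa\}}\,\pi_Z(\od t,\od z),\qquad \widetilde M_T=K-A_T,
\]
where $\widetilde M=(\widetilde M_t)_{t\in[0,T]}$ is an $L_2$-martingale provided $A_T\in L_1(\p)$. Splitting the integrand according to $|z|>1$ versus $\ep(T-t)^\kappa<|z|\lee 1$, using \eqref{eq:condition-compensator-measure}, and applying the $L_\infty(\p\otimes\Leb)$ bounds in the definition of $c_{\eqref{eq:estimate-norm-cardinality}}$, one gets the $\omega$-pathwise estimate
\[
A_T \lee T\,\|\1_{\{|z|>1\}}\star\nu\|_{L_\infty(\p\otimes\Leb)} + \sup_{r\in(0,1)}\|r^\alpha \1_{\{r<|z|\lee 1\}}\star\nu\|_{L_\infty(\p\otimes\Leb)} \int_0^T \frac{\od t}{(\ep(T-t)^\kappa)^\alpha}
\]
for any $\alpha\in[0,1/\kappa)$, where the $t$-integral equals $\ep^{-\alpha}\,T^{1-\alpha\kappa}/(1-\alpha\kappa)$. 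This yields $A_T\lee c_{\eqref{eq:estimate-norm-cardinality}}$ almost surely, hence $\|A_T\|_{L_2(\p)}\lee c_{\eqref{eq:estimate-norm-cardinality}}$ and in particular $A_T\in L_1(\p)$.

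Finally, by It\^o's isometry for compensated Poisson integrals,
\[
\|\widetilde M_T\|_{L_2(\p)}^2 \;=\; \E\!\int_0^T\!\!\int_{\R_0}\1_{\{|z|>\ep(T-t)^\kappa\}}\,\pi_Z(\od t,\od z)\;=\;\E A_T\;\lee\;c_{\eqref{eq:estimate-norm-cardinality}},
\]
so combining the triangle inequality with the two bounds gives $\|K\|_{L_2(\p)}\lee \sqrt{c_{\eqref{eq:estimate-norm-cardinality}}}+c_{\eqref{eq:estimate-norm-cardinality}}$, and hence the claim. The only subtle point is the splitting step, where one must check that $\alpha\kappa<1$ makes the time integral $\int_0^T(T-t)^{-\alpha\kappa}\od t$ finite; this is the sole place where the hypothesis $\alpha\in[0,1/\kappa)$ is used, and it is the main (though mild) obstacle in the argument.
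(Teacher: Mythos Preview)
Your proof is correct and follows essentially the same approach as the paper: bound $\cN_{\eqref{eq:cardinality-S}}(\ep,\kappa)$ by $1$ plus the counting integral $K$ against $N_Z$, show the compensator $A_T$ is bounded by $c_{\eqref{eq:estimate-norm-cardinality}}$ a.s.\ via the same large/small jump split, and then use the $L_2$-isometry for the compensated part to get $\|\widetilde M_T\|_{L_2(\p)}\lee\sqrt{c_{\eqref{eq:estimate-norm-cardinality}}}$. The only cosmetic difference is that the paper uses the inequality $\cN_{\eqref{eq:cardinality-S}}(\ep,\kappa)\lee 1+K$ rather than your a.s.\ equality $\cN_{\eqref{eq:cardinality-S}}(\ep,\kappa)-1=K$; both are valid since $\Delta Z_T=0$ a.s.
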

	
	\begin{proof} We may assume that $c_{\eqref{eq:estimate-norm-cardinality}}  <\infty$, otherwise \eqref{eq:estimate-norm-cardinality} is trivial. 
		
		\textit{Step 1}. We show that, a.s.,
		\begin{align*}
			\int_0^T \!\! \int_{\R} \1_{\{|z| > \ep(T-t)^\kappa\}} \pi_Z(\od t, \od z) \lee c_{\eqref{eq:estimate-norm-cardinality}}.
		\end{align*} 
		Indeed, one decomposes
		\begin{align*}
			&  \int_0^T \!\! \int_{\R} \1_{\{|z| > \ep(T-t)^\kappa\}} \pi_Z(\od t, \od z)\\
			& = \int_0^T \!\! \int_{\R} \1_{\{|z| > 1\vee (\ep(T-t)^\kappa)\}} \pi_Z(\od t, \od z) + \int_0^T \!\! \int_{\R} \1_{\{1\gee |z| > \ep(T-t)^\kappa\}} \pi_Z(\od t, \od z),
		\end{align*} 
		where the first term on the right-hand side is  upper bounded by $T \| \1_{\{|z|>1\}} \star \nu\|_{L_\infty(\p\otimes \Leb)}$ a.s. Let us denote
		\begin{align}\label{eq:lemm-constant}
			c_{\eqref{eq:lemm-constant}}: = \ts \sup_{r \in (0, 1)} \|  r^\alpha \1_{\{r < |z|\lee 1\}} \star \nu\|_{L_\infty(\p\otimes \Leb)} <\infty.
		\end{align}
	 By an argument using rational approximation with respect to $r$, we infer that 
		\begin{align*}
			\int_{r < |z| \lee 1} \nu_t(\omega, \od z) \lee c_{\eqref{eq:lemm-constant}} r^{-\alpha},\quad \forall r \in (0, 1),
		\end{align*} 
		for $\p\otimes \Leb$-a.e. $(\omega, t) \in \Omega \times [0, T]$. For the second term in the above decomposition, using Fubini's theorem we get that, a.s., 
		\begin{align*}
			& \int_0^T \!\! \int_{\R} \1_{\{1 \gee |z| > \ep(T-t)^\kappa\}} \pi_Z(\od t, \od z)  =  \int_{\{(t, z) \in [0, T] \times \R \,:\, \ep(T-t)^\kappa < |z| \lee 1\}} \nu_t(\od z)  \od t \\
			& \lee c_{\eqref{eq:lemm-constant}} \ep^{-\alpha} \int_{0}^T (T-t)^{-\alpha \kappa} \od t = c_{\eqref{eq:lemm-constant}} \ep^{-\alpha} \frac{T^{1- \alpha \kappa}}{1 - \alpha \kappa}.
		\end{align*}

		\textit{Step 2}. Combining \textit{Step 1} with \cite[Ch.II, Proposition 1.28]{JS03} allows us to write, a.s.,
		$$\int_0^T \!\! \int_{\R} \1_{\{|z| > \ep(T-t)^\kappa\}} N_Z(\od t, \od z)= \int_0^T \!\! \int_{\R} \1_{\{|z| > \ep(T-t)^\kappa\}} \[(N_Z-\pi_Z)(\od t, \od z)  + \pi_Z(\od t, \od z)\].$$ Since $\cN_{\eqref{eq:cardinality-S}}(\ep, \kappa) \lee 1 + \int_0^T \!\! \int_{\R} \1_{\{|z| > \ep(T-t)^\kappa\}} N_Z(\od t, \od z)$ by \eqref{eq:random-times-Z}, we have
		\begin{align*}
			& \|\cN_{\eqref{eq:cardinality-S}}(\ep, \kappa)\|_{L_2(\p)} \lee 1 + \left\|\int_0^T \!\! \int_{\R} \1_{\{|z| > \ep(T-t)^\kappa\}} N_Z(\od t, \od z) \right\|_{L_2(\p)} \\
			& \lee 1 + \left\|\int_0^T \!\! \int_{\R} \1_{\{|z| > \ep(T-t)^\kappa\}} (N_Z-\pi_Z)(\od t, \od z) \right\|_{L_2(\p)} + \left\|\int_0^T \!\! \int_{\R} \1_{\{|z| > \ep(T-t)^\kappa\}} \pi_Z(\od t, \od z)\right\|_{L_2(\p)}\\
			& = 1 + \left\|\int_0^T \!\! \int_{\R} \1_{\{|z| > \ep(T-t)^\kappa\}} \pi_Z(\od t, \od z)\right\|_{L_1(\p)}^{\frac{1}{2}} + \left\|\int_0^T \!\! \int_{\R} \1_{\{|z| > \ep(T-t)^\kappa\}} \pi_Z(\od t, \od z)\right\|_{L_2(\p)}\\
			& \lee 1 +  \sqrt{c_{\eqref{eq:estimate-norm-cardinality}}} + c_{\eqref{eq:estimate-norm-cardinality}},
		\end{align*}
		where one uses \cite[Ch.II, Theorem 1.33(a)]{JS03} to derive the equality.
	\end{proof}

	In order to prove main results, we need the following technical lemma.
	\begin{lemm}\label{lem:small-ball-property}
		Let $\alpha \in [0, \infty)$. Assume that $\mu$ is a Borel measure on $[-1, 1]$ with $\mu(\{0\}) =0$.  If 
		$\sup_{r \in (0, 1)} r^\alpha \int_{r < |x| \lee 1} \mu(\od x) \lee c_{\mu,\alpha} < \infty$,
		then for $\gamma >\alpha$ one has
		\begin{align*}
			\int_{|x| \lee r} |x|^\gamma \mu(\od x) \lee \frac{c_{\mu, \alpha} 2^\gamma}{1 - 2^{\alpha - \gamma}}  r^{\gamma -\alpha} \quad \mbox{for any } r \in (0, 1],
		\end{align*}
		and for $0 < \gamma \lee \alpha$ one has
		\begin{align*}
			\int_{r < |x| \lee 1} |x|^\gamma \mu(\od x) \lee \begin{cases}
				c_{\mu, \alpha} 2^\alpha (1- \log r) & \mbox{ if } \gamma = \alpha\\[3pt]
				\dfrac{c_{\mu, \alpha} 2^{2\alpha-\gamma}}{2^{\alpha - \gamma}-1} r^{\gamma - \alpha} & \mbox { if } \gamma \in (0, \alpha)
			\end{cases}
			\quad \mbox{for all } r\in (0, 1].
		\end{align*}
	\end{lemm}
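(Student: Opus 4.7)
The proof is an elementary dyadic decomposition exploiting the single hypothesis $\mu(\{r<|x|\le 1\})\le c_{\mu,\alpha} r^{-\alpha}$, valid for every $r\in(0,1)$. The plan is first to establish this pointwise bound from the given supremum condition by an approximation with rationals (or just by monotonicity of $r\mapsto \mu(\{r<|x|\le 1\})$), and then to slice the region of integration into geometric shells, on each of which $|x|^\gamma$ is estimated by its maximum and the $\mu$-mass by the hypothesis, and finally sum the resulting geometric/harmonic series.

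For the first bound I would write
\begin{align*}
\{|x|\le r\}\setminus\{0\}=\bigsqcup_{k\ge 0}A_k(r),\qquad A_k(r):=\{r 2^{-k-1}<|x|\le r 2^{-k}\},
\end{align*}
and then estimate, for $\gamma>\alpha$,
\begin{align*}
\int_{|x|\le r}|x|^\gamma\mu(\od x)
\le \sum_{k\ge 0}(r 2^{-k})^\gamma\,\mu(A_k(r))
\le c_{\mu,\alpha}\sum_{k\ge 0}(r 2^{-k})^\gamma\,(r 2^{-k-1})^{-\alpha},
\end{align*}
which factors out $r^{\gamma-\alpha}$ and leaves a convergent geometric series in $2^{\alpha-\gamma}$; absorbing the prefactor $2^\alpha$ into $2^\gamma$ (since $\gamma>\alpha$) yields the stated constant $\frac{c_{\mu,\alpha}2^\gamma}{1-2^{\alpha-\gamma}}$.

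For the second bound the shells go the other way: set $K:=\lfloor\log_2(1/r)\rfloor$ so that $\{r<|x|\le 1\}\subseteq \bigsqcup_{k=0}^{K}\{r 2^k<|x|\le r 2^{k+1}\}$, use $|x|^\gamma\le (r 2^{k+1})^\gamma$ on shell $k$, and $\mu(\{r 2^k<|x|\le r 2^{k+1}\})\le c_{\mu,\alpha}(r 2^k)^{-\alpha}$ from the hypothesis (valid because $r 2^k\le 1$ for $k\le K$). This produces
\begin{align*}
\int_{r<|x|\le 1}|x|^\gamma\mu(\od x)\le c_{\mu,\alpha}\,2^\gamma r^{\gamma-\alpha}\sum_{k=0}^{K}2^{k(\gamma-\alpha)}.
\end{align*}
In the critical case $\gamma=\alpha$ the sum is $K+1\le 1-\log r$, giving the logarithmic bound; in the subcritical case $\gamma<\alpha$ the sum is dominated by the convergent tail $\sum_{k\ge 0}2^{-k(\alpha-\gamma)}=\tfrac{2^{\alpha-\gamma}}{2^{\alpha-\gamma}-1}$, yielding the stated $r^{\gamma-\alpha}$ rate. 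There is no real obstacle here: the only minor nuisance is bookkeeping of constants so that the displayed prefactors $2^\alpha$ and $2^{2\alpha-\gamma}$ come out as stated, which is achieved by using the slightly loose bound $2^\alpha\le 2^\gamma$ in the first case and retaining the first dyadic shift $2^\gamma\cdot 2^{\alpha-\gamma}=2^\alpha\cdot 2^{\alpha-\gamma}=2^{2\alpha-\gamma}$ in the second.
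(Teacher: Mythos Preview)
Your proposal is correct and follows essentially the same dyadic-shell argument as the paper; the only cosmetic difference is that you slice into $r$-scaled shells $\{r2^{-k-1}<|x|\le r2^{-k}\}$ (respectively $\{r2^k<|x|\le r2^{k+1}\}$) whereas the paper fixes absolute shells $\{2^{-k-1}<|x|\le 2^{-k}\}$ indexed from $m_r=\lfloor-\log r\rfloor$, which is why your raw constants come out slightly sharper before you weaken them to match the statement. Your final parenthetical ``$2^\gamma\cdot 2^{\alpha-\gamma}=2^\alpha\cdot 2^{\alpha-\gamma}$'' is a typo (the left side is just $2^\alpha$), but since $2^\alpha\le 2^{2\alpha-\gamma}$ for $\gamma\le\alpha$ your bound still implies the stated one.
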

	
	\begin{proof}
		We pick an $r \in (0, 1]$ and let $m_r \in \bN$ such that $2^{-m_r -1} < r \lee 2^{-m_r}$. Then, for $\gamma  \in (\alpha, \infty)$, it holds that
		\begin{align*}
			\int_{|x| \lee r} |x|^\gamma \mu(\od x) & \lee \sum_{k = m_r}^\infty \int_{2^{-k - 1} < |x| \lee 2^{-k}} |x|^\gamma \mu(\od x)  \lee \sum_{k = m_r}^\infty 2^{- \gamma k} \int_{2^{-k - 1} < |x| \lee 2^{-k}} \mu(\od x) \\
			& \lee \sum_{k = m_r}^\infty 2^{-\gamma k} \int_{2^{-k - 1} < |x| \lee 1} \mu(\od x)\lee c_{\mu, \alpha} \sum_{k=m_r}^\infty 2^{-\gamma k} 2^{\alpha (k+1)}\\
			& = c_{\mu, \alpha}  \frac{2^{\alpha} 2^{m_r(\alpha - \gamma)}}{1 - 2^{\alpha -\gamma}} \lee  \frac{c_{\mu, \alpha} 2^\gamma}{1 - 2^{\alpha - \gamma}}  r^{\gamma -\alpha}.
		\end{align*}
		For $\gamma \in (0, \alpha]$, using the similar argument as above we also obtain
		\begin{align*}
			\int_{r < |x| \lee 1} |x|^\gamma \mu (\od x) & \lee \sum_{k = 0}^{m_r} \int_{2^{-k  -1} < |x| \lee 2^{-k}} |x|^\gamma \mu(\od x) \lee \sum_{k = 0}^{m_r} 2^{-\gamma k} \int_{2^{-k  -1} < |x| \lee 2^{-k}} \mu(\od x)\\
			& \lee \sum_{k = 0}^{m_r} 2^{-\gamma k} \int_{2^{-k  -1} < |x| \lee 1} \mu(\od x) \lee c_{\mu, \alpha} \sum_{k = 0}^{m_r} 2^{-\gamma k} 2^{\alpha(k+1)}.
		\end{align*}
		If $\gamma =\alpha$, then 
		\begin{align*}
			\int_{r < |x| \lee 1} |x|^\gamma \mu (\od x)  & \lee 2^\alpha c_{\mu, \alpha} (m_r +1)  \lee  c_{\mu, \alpha} 2^\alpha (1 - \log r).
		\end{align*}
		If $\gamma \in (0, \alpha)$, then 
		\begin{align*}
			\int_{r < |x| \lee 1} |x|^\gamma \mu (\od x) & \lee c_{\mu, \alpha} \frac{2^{2\alpha- \gamma}}{2^{\alpha -\gamma}-1} 2^{(\alpha - \gamma)m_r} \lee  \frac{c_{\mu, \alpha} 2^{2\alpha-\gamma}}{2^{\alpha - \gamma}-1} r^{\gamma - \alpha}. \qedhere
		\end{align*}
	\end{proof}

	\subsubsection{Proof of \cref{theo:conergence-rate-BMO}}  Denote $\kappa := \frac{1-\theta}{2} \in [0, \frac{1}{2})$.
	

	\textit{Step 1.} We handle the correction term in \eqref{eq:approximation-correction} and the corresponding error. For $\ep>0$,
	\begin{align*}
		\E \int_0^T\!\!\int_{\R} |z| \1_{\{|z| > \ep (T-t)^\kappa\}} \nu_t(\od z) \od t & \lee  \ep^{-1} \E \int_0^T (T-t)^{-\kappa} \int_{\R} z^2 \nu_t(\od z) \od t\\
		& \lee \ep^{-1} \frac{T^{1- \kappa}}{1 - \kappa} \|z^2 \star \nu\|_{L_\infty(\p \otimes \Leb)} <\infty,
	\end{align*}
	where the finiteness holds due to \eqref{eq:uniformly-square-integrable}. This allows us to decompose		
	\begin{align*}
		\int_0^{\boldsymbol{\cdot}} \!\!\int_{ \R_0} z (N_Z - \pi_Z)(\od u,  \od z) =  Z^{\ep, 1} + Z^{\ep, 2} - \gamma^{\ep},
	\end{align*}
	where 
	\begin{align*}
		Z^{\ep, 1} & : = \int_0^{\boldsymbol{\cdot}} \!\!\int_{\R_0} z \1_{\{|z| \lee \ep (T - u)^\kappa\}}(N_Z - \pi_Z)(\od u, \od z),\\
		Z^{\ep, 2} & : = \int_0^{\boldsymbol{\cdot}} \!\!\int_{\R} z \1_{\{|z| > \ep(T-u)^\kappa\}} N_Z(\od u, \od z),\\
		\gamma^{\ep} & : = \int_0^{\boldsymbol{\cdot}}\!\! \int_{\R} z \1_{\{|z| > \ep(T-u)^\kappa\}} \nu_u(\od z) \od u.
	\end{align*}
	Recall $\vartheta^\tau$ in \cref{defi:approximation-correction}. Since \eqref{eq:uniformly-square-integrable} holds in our context, applying \cref{lemm:cardinality-random-nets} with $\alpha =2$ yields $\cN_{\eqref{eq:cardinality-S}}(\ep, \kappa) <\infty$ a.s. Hence, outside a set of probability zero, we have that, for all $t\in [0, T]$,
	\begin{align*}
		& \sum_{\rho_i(\ep, \kappa) \in [0, t] \cap [0, T)} \big(\vartheta_{\rho_i(\ep, \kappa)-}  - \vartheta^\tau_{\rho_i(\ep, \kappa)}\big)  \Delta S_{\rho_i(\ep, \kappa)}\\
		& = \sum_{\rho_i(\ep, \kappa) \in [0, t] \cap [0, T)} \big(\vartheta_{\rho_i(\ep, \kappa)-}  - \vartheta^\tau_{\rho_i(\ep, \kappa)}\big) \sigma\(S_{\rho_i(\ep, \kappa)-}\) \Delta Z_{\rho_i(\ep, \kappa)} \\
		& = \int_{[0, t] \cap [0, T)} (\vartheta_{u-} - \vartheta^\tau_{u}) \sigma(S_{u-}) \od Z^{\ep, 2}_u.
	\end{align*}
	By the representation of $Z$ in \eqref{eq:decomposition-Z}, one can decompose
	\begin{align*}
		\od S_t & = \sigma(S_{t-}) \od Z_t = \sigma(S_{t-}) \(\od Z^\rc_t + b^Z_t \od t + \int_{ \R_0} z (N_Z - \pi_Z)(\od t,  \od z)\)\\
		& = \sigma(S_{t-}) \big(\od Z^\rc_t + b^Z_t \od t + \od Z^{\ep, 1}_t + \od Z^{\ep, 2}_t - \od \gamma_t^{\ep}\big).
	\end{align*}
	We get from the arguments above, together with the fact $\Delta Z^{\ep, 2}_T = \Delta Z_T = 0$ a.s., that
	\begin{align}\label{eq:error-correction-decomposition}
		&E^{\corr}(\vartheta, \tau| \ep, \kappa)   = \int_0^{\boldsymbol{\cdot}} (\vartheta_{u-} - \vartheta^\tau_{u}) \od S_u - \sum_{\rho_i(\ep, \kappa) \in [0, \boldsymbol{\cdot}] \cap [0, T)} \big(\vartheta_{\rho_i(\ep, \kappa)-} - \vartheta^\tau_{\rho_i(\ep, \kappa)}\big) \Delta S_{\rho_i(\ep, \kappa)} \notag\\
		& = \int_0^{\boldsymbol{\cdot}} (\vartheta_{u-} - \vartheta^\tau_{u}) \sigma(S_{u-}) (\od Z^\rc_u + b^Z_u \od u + \od Z^{\ep, 1}_u + \od Z^{\ep, 2}_u - \od \gamma_u^{\ep})   -  \int_0^{\boldsymbol{\cdot}} (\vartheta_{u-} - \vartheta^\tau_{u}) \sigma(S_{u-}) \od Z^{\ep, 2}_u \notag\\
		& = E^{\rm C}(\vartheta, \tau| \ep, \kappa) + E^{\rm S}(\vartheta, \tau| \ep, \kappa) - E^{\rm D}(\vartheta, \tau| \ep, \kappa),
	\end{align}
	where the ``continuous part'', ``small jump part'' and ``drift part'' are given by
	\begin{align*}
		E^{\rm C}(\vartheta, \tau| \ep, \kappa) & : = \int_0^{\boldsymbol{\cdot}} (\vartheta_{u-} - \vartheta^\tau_{u}) \sigma(S_{u-}) (\od Z^\rc_u + b^Z_u \od u), \\
		E^{\rm S}(\vartheta, \tau| \ep, \kappa) & : = \int_0^{\boldsymbol{\cdot}} (\vartheta_{u-} - \vartheta^\tau_{u}) \sigma(S_{u-}) \od Z^{\ep, 1}_u,\\
		E^{\rm D}(\vartheta, \tau| \ep,  \kappa) & := \int_0^{\boldsymbol{\cdot}} (\vartheta_{u-} - \vartheta^\tau_{u}) \sigma(S_{u-}) \int_{\R} z\1_{\{|z| > \ep(T-u)^\kappa\}} \nu_u(\od z) \od u.
	\end{align*}
	The triangle inequality applied to \eqref{eq:error-correction-decomposition} gives
	\begin{align}\label{eq:decompose-error-correction}
		\|E^{\corr}(\vartheta, \tau| \ep, \kappa)\|_{\BMO_2^{\ol\Phi}(\p)} & \lee \sum_{i \in \{\rm{S}, \rm{C}, \rm{D}\}} \|E^{i}(\vartheta, \tau| \ep, \kappa)\|_{\BMO_2^{\ol\Phi}(\p)}.
	\end{align}

	\textit{Step 2}. We preliminary investigate the right-hand side of \eqref{eq:decompose-error-correction}.
	
	\textit{Step 2.1}. Consider $E^{\rm{C}}(\vartheta, \tau| \ep, \kappa)$. We apply the conditional It\^o isometry for the martingale component and apply the  Cauchy--Schwarz inequality for the finite variation component of $E^{\rm C}(\vartheta, \tau| \ep, \kappa)$ to derive that, for $a\in [0, T)$, a.s., 
	\begin{align*}
		& \sqrt{\ce{\F_a}{|E^{\rm C}_T(\vartheta, \tau| \ep, \kappa) - E^{\rm C}_a(\vartheta, \tau| \ep, \kappa)|^2}} \notag \\
		& \lee   \sqrt{\ce{\F_a}{\int_a^T|\vartheta_{u-} - \vartheta^\tau_{u}|^2 \sigma(S_{u-})^2  \od \<Z^\rc\>_u }}  +  \sqrt{\ce{\F_a}{\int_a^T |\vartheta_{u-} - \vartheta^\tau_{u}|^2 \sigma(S_{u-})^2 \od u \int_a^T |b^Z_u|^2 \od u}} \notag \\
		& \lee  (a^Z_{\eqref{eq:uniformly-bounded-C}} + b^Z_{\eqref{eq:uniformly-bounded-KV}}) \sqrt{\ce{\F_a}{\QV_T - \QV_a}} \lee  (a^Z_{\eqref{eq:uniformly-bounded-C}} + b^Z_{\eqref{eq:uniformly-bounded-KV}}) c_{\eqref{eq:QV-BMO}} \sqrt{\|\tau\|_\theta} \, \Phi_a \notag\\
		& \lee (a^Z_{\eqref{eq:uniformly-bounded-C}} + b^Z_{\eqref{eq:uniformly-bounded-KV}}) c_{\eqref{eq:QV-BMO}} \sqrt{\|\tau\|_\theta} \, \ol \Phi_a,
	\end{align*}
where $\QV$ is given in \eqref{eq:QV-process}, and where we use the fact that a c\`adl\`ag function has at most countably many discontinuities for the second inequality.
	Since $E^{\rm C}(\vartheta, \tau| \ep, \kappa)$ has continuous paths, it implies that 
	\begin{align}\label{eq:BMO-estimate-continuous-part}
		\|E^{\rm C}(\vartheta, \tau| \ep, \kappa)\|_{\BMO_2^{\ol \Phi}(\p)} = \|E^{\rm C}(\vartheta, \tau| \ep, \kappa)\|_{\bmo_2^{\ol \Phi}(\p)} \lee c_{\eqref{eq:QV-BMO}} (a^Z_{\eqref{eq:uniformly-bounded-C}} + b^Z_{\eqref{eq:uniformly-bounded-KV}}) \sqrt{\|\tau\|_\theta}.
	\end{align}
	
	\textit{Step 2.2}. Consider $E^{\rm{S}}(\vartheta, \tau| \ep, \kappa)$. Since $\Phi \in \cSM_2(\p)$ by assumption, \cref{lemm:properties-BMO-SM}\eqref{item:bar-Phi-SM} implies that $\ol\Phi \in \cSM_2(\p)$. Then \cref{lemm:relation-BMO-bmo} asserts that
	\begin{align}\label{eq:equiv-BMO-bmo+jump}
		\|E^{\rm{S}}(\vartheta, \tau| \ep, \kappa)\|_{\BMO_2^{\ol\Phi}(\p)} \sim_{c_{\eqref{eq:equiv-BMO-bmo+jump}}} \|E^{\rm{S}}(\vartheta, \tau| \ep, \kappa)\|_{\bmo_2^{\ol\Phi}(\p)} + |\Delta E^{\rm S}(\vartheta, \tau| \ep, \kappa)|_{\ol\Phi}.
	\end{align}
	Since $\vartheta$, $\sigma(S)$ and $\Phi$ are c\`adl\`ag on $[0, T)$, one can find an $\Omega_0$ with $\p(\Omega_0)=1$ on which \eqref{eq:estimate-phi} holds for all $0 \lee s < t < T$. Hence, for $\omega \in \Omega_0$ we have
	\begin{align*}
		|\vartheta_t - \vartheta_s| \sigma(S_t) \lee  \sqrt{2}c_{\eqref{eq:estimate-phi}} (T-t)^{-\kappa} \Phi_t, \quad \forall 0\lee s<t<T.
	\end{align*}
	Due to \eqref{eq:condition-compensator-measure}, one has $\pi_Z(\omega, \{t\} \times \R_0) = 0$ for any  $(\omega, t) \in \Omega \times [0, T]$. Then it holds that
	\begin{align*}
		\big|\Delta Z^{\ep, 1}_t\big| = \left|\int_{\R_0} z \1_{\{|z| \lee \ep (T-t)^\kappa\}} N_Z(\{t\}, \od z) - \int_{\R_0} z \1_{\{|z| \lee \ep (T-t)^\kappa\}} \pi_Z(\{t\}, \od z)\right| \lee \ep(T-t)^\kappa
	\end{align*} 
	for all $t \in [0, T]$ a.s. Moreover, since $\Delta E^{\rm S}(\vartheta, \tau| \ep, \kappa) = (\vartheta_{-} - \vartheta^\tau) \sigma(S_{-}) \Delta Z^{\ep, 1}$, there is an $\Omega_1$ with $\p(\Omega_1) =1$ (with keeping $\Delta Z^{\ep, 1}_T = 0$ a.s. in mind) such that  for all $(\omega, t) \in \Omega_1 \times [0, T]$,
	\begin{align*}
		|\Delta E_t^{\rm S}(\vartheta, \tau| \ep, \kappa)| & = \big|(\vartheta_{t-} - \vartheta^\tau_{t}) \sigma(S_{t-}) \Delta Z^{\ep, 1}_t\big| \lee  \sqrt{2}c_{\eqref{eq:estimate-phi}} (T-t)^{-\kappa} \Phi_{t-}  \ep (T-t)^\kappa\\
		& =  \sqrt{2}c_{\eqref{eq:estimate-phi}} \ep \Phi_{t-} \lee 
		\sqrt{2}c_{\eqref{eq:estimate-phi}} \ep \ol\Phi_t.
	\end{align*}
	According to the definition of $|\cdot|_{\ol\Phi}$ given in \cref{lemm:relation-BMO-bmo}, one then gets 
	\begin{align}\label{eq:error-small-jump-norm}
		|\Delta E^{\rm S}(\vartheta, \tau | \ep, \kappa)|_{\ol\Phi} \lee \sqrt{2} c_{\eqref{eq:estimate-phi}} \ep.
	\end{align}
	Let us continue with $\|E^{\rm S}(\vartheta, \tau| \ep, \kappa)\|_{\bmo_2^{\ol \Phi}(\p)} $. It follows from the conditional It\^o isometry that, for $a\in [0, T)$, a.s., 
	\begin{align}\label{eq:error-small-jump-bmo}
		& \ce{\F_a}{|E^{\rm S}_T(\vartheta, \tau| \ep, \kappa) - E^{\rm S}_a(\vartheta, \tau| \ep, \kappa)|^2} \notag \\
		& = \ce{\F_a}{\int_a^T|\vartheta_{u-} - \vartheta^\tau_{u}|^2 \sigma(S_{u-})^2 \int_{\R}  \1_{\{|z| \lee \ep(T-u)^\kappa\}} z^2 \nu_u(\od z) \od u} \notag\\
		& \lee \|\1_{\{|z| \lee \ep T^\kappa\}} z^2 \star \nu\|_{L_\infty(\p \otimes \Leb)} \ce{\F_a}{\int_a^T|\vartheta_{u-} - \vartheta^\tau_{u}|^2 \sigma(S_{u-})^2 \od u} \notag\\
		& = \|\1_{\{|z| \lee \ep T^\kappa\}} z^2 \star \nu\|_{L_\infty(\p \otimes \Leb)} \ce{\F_a}{\QV_T - \QV_a} \notag\\
		& \lee  \|\1_{\{|z| \lee \ep T^\kappa\}} z^2 \star \nu\|_{L_\infty(\p \otimes \Leb)} c_{\eqref{eq:QV-BMO}}^2 \|\tau\|_\theta \ol \Phi_a^2.
	\end{align}
		Combing  \eqref{eq:error-small-jump-norm} and \eqref{eq:error-small-jump-bmo} with \eqref{eq:equiv-BMO-bmo+jump}  we obtain for $c_{\eqref{eq:error-small-jump-BMO}}: = c_{\eqref{eq:equiv-BMO-bmo+jump}}((\sqrt{2} c_{\eqref{eq:estimate-phi}}) \vee  c_{\eqref{eq:QV-BMO}})$ that
	\begin{align}\label{eq:error-small-jump-BMO}
		\|E^{\rm S}(\vartheta, \tau| \ep, \kappa)\|_{\BMO_2^{\ol\Phi}(\p)} \lee c_{\eqref{eq:error-small-jump-BMO}} \(\ep + \sqrt{\|\1_{\{|z| \lee \ep T^\kappa\}} z^2 \star \nu\|_{L_\infty(\p \otimes \Leb)}} \sqrt{\|\tau\|_\theta}\).
	\end{align}

	\textit{Step 2.3}. Consider $E^{\rm D}(\vartheta, \tau| \ep, \kappa)$. Since $E^{\rm D}(\vartheta, \tau| \ep, \kappa)$ is continuous, it holds that
	\begin{align*}
		\|E^{\rm D}(\vartheta, \tau| \ep, \kappa)\|_{\BMO_2^{\ol\Phi}(\p)} = \|E^{\rm D}(\vartheta, \tau| \ep, \kappa)\|_{\bmo_2^{\ol \Phi}(\p)}.
	\end{align*}
	Then, for any $a\in [0, T)$, we use the Cauchy--Schwarz inequality to get, a.s., 
	\begin{align}\label{eq:estimate-drift-term}
		& \ce{\F_a}{|E^{\rm D}_T(\vartheta, \tau| \ep, \kappa) - E^{\rm D}_a(\vartheta, \tau| \ep, \kappa)|^2} \notag\\
		& \lee \bbce{\F_a}{\bigg(\int_a^T \bigg| \int_{\R} z \1_{\{|z| > \ep(T-u)^\kappa\}} \nu_u(\od z)\bigg|^2 \od u\bigg)\(\int_a^T|\vartheta_{u-} - \vartheta^\tau_{u}|^2 \sigma(S_{u-})^2 \od u\)} \notag\\
		& =: \ce{\F_a}{\mathrm{I}_{\eqref{eq:estimate-drift-term}} \mathrm{II}_{\eqref{eq:estimate-drift-term}}}.
	\end{align}
	
	\eqref{item:1:small-jump-intensity} We now exploit the condition \eqref{eq:item:1:small-jump-intensity}. A rational approximation argument with respect to $r$ yields that, for $\p\otimes \Leb$-a.e. $(\omega, u) \in \Omega \times [0, T]$,
	\begin{align}\label{eq:item:1:constant-estimate}
		\int_{r < |z| \lee 1} \nu_u(\omega, \od z) \lee c_{\eqref{eq:item:1:small-jump-intensity}} r^{-\alpha}, \quad \forall r \in (0, 1).
	\end{align}
	Let us first examine the right-hand side of \eqref{eq:error-small-jump-BMO}. For any $(\omega, u) \in \Omega \times [0, T]$ such that \eqref{eq:item:1:constant-estimate} is satisfied and that $\int_{\R} z^2 \nu_u(\omega, \od z) \lee \|z^2 \star \nu\|_{L_\infty(\p \otimes \Leb)} <\infty$, we apply \cref{lem:small-ball-property} with $\mu(\cdot) = \nu_u(\omega, \cdot)$ and $\gamma =2 \gee \alpha$ to obtain a $c_{\eqref{eq:estimate:small-jum}} >0$ independent of $\omega, u, \ep$ such that
	\begin{align}\label{eq:estimate:small-jum}
		\int_{|z| \lee \ep T^\kappa} z^2 \nu_u(\omega, \od z) \lee c_{\eqref{eq:estimate:small-jum}} ((\ep T^{\kappa})  \wedge 1)^{2- \alpha},
	\end{align}
	where one remarks that \eqref{eq:estimate:small-jum} particularly holds when $\alpha = 2$ or when $\ep T^\kappa >1$. Thus,
	\begin{align*}
		\|\1_{\{|z| \lee \ep T^\kappa\}} z^2 \star \nu\|_{L_\infty(\p \otimes \Leb)} \lee c_{\eqref{eq:estimate:small-jum}} ((\ep T^{\kappa})  \wedge 1)^{2- \alpha} \lee c_{\eqref{eq:estimate:small-jum}} (1+ T^\kappa)^{2- \alpha} (\ep  \wedge 1)^{2- \alpha}.
	\end{align*}
	Then it follows from \eqref{eq:error-small-jump-BMO}  that
	\begin{align}\label{eq:error-small-jump-BMO-1}
		\|E^{\rm S}(\vartheta, \tau| \ep, \kappa)\|_{\BMO_2^{\ol\Phi}(\p)} \lee c_{\eqref{eq:error-small-jump-BMO-1}} ( \ep + (\ep\wedge 1)^{1 - \frac{\alpha}{2}} \sqrt{\|\tau\|_\theta})
	\end{align}
	for some constant $c_{\eqref{eq:error-small-jump-BMO-1}} >0$ independent of $\ep$ and $\tau$.

	We continue with $\mathrm{I}_{\eqref{eq:estimate-drift-term}}$ and  $\mathrm{II}_{\eqref{eq:estimate-drift-term}}$. Let $(\omega, u) \in \Omega \times [0, T]$ such that $0 < \ep(T-u)^\kappa \lee 1$ and \eqref{eq:item:1:constant-estimate} holds. We first apply \cref{lem:small-ball-property} 
	and then use Fubini's theorem to get that, a.s., 
	\begin{align*}
		& \sqrt{\int_0^T\bigg| \int_{\ep (T-u)^\kappa < |z| \lee 1} |z| \nu_u (\od z) \bigg|^2 \od u} \\
		& \lee 2c_{\eqref{eq:item:1:small-jump-intensity}} \begin{cases}
			\frac{\sqrt{T} }{1- 2^{\alpha -1}} & \mbox{ if } \alpha \in (0, 1)\\
			\sqrt{T} (\log^+(\frac{1}{\ep}) +1) + \kappa \sqrt{\int_0^T \log^2(T-u) \od u} & \mbox{ if } \alpha =1\\[5pt]
			\Big[\frac{2^{2\alpha-2}}{2^{\alpha -1}-1}  \sqrt{\int_0^T  (T-u)^{2\kappa(1-\alpha)} \od u} \Big] \ep^{1-\alpha} & \mbox{ if } \alpha \in (1, 2]
		\end{cases} \\
		& \lee 2 c_{\eqref{eq:item:1:small-jump-intensity}} c_{\alpha, \kappa, T} \begin{cases} 1 & \mbox{ if } \alpha \in (0, 1)\\
			\log^+(\frac{1}{\ep}) +1  & \mbox{ if } \alpha =1\\
			\ep^{1-\alpha} & \mbox{ if } \alpha \in (1, 2]
		\end{cases}
	\end{align*}
	for some constant $c_{\alpha, \kappa, T}>0$ depending at most on $\alpha, \kappa, T$, and where one notices that $2 \kappa (1- \alpha) + 1>0$.		For the first factor $\mathrm{I}_{\eqref{eq:estimate-drift-term}}$, the triangle inequality gives, a.s., 
	\begin{align} \label{eq:first-factor-drift-term}
		\sqrt{\mathrm{I}_{\eqref{eq:estimate-drift-term}}} & \lee \sqrt{\int_a^T \bigg|\int_{|z| > 1 \vee (\ep(T-u)^\kappa)} z \nu_u(\od z)\bigg|^2\od u} + \sqrt{\int_a^T \bigg|\int_{\ep(T-u)^\kappa < |z| \lee 1} z \nu_u(\od z)\bigg|^2\od u} \notag  \\
		& \lee \sqrt{T} \|\1_{\{|z|>1\}} |z| \star \nu\|_{L_\infty(\p\otimes \Leb)} + \sqrt{\int_0^T \bigg|\int_{\ep(T-u)^\kappa < |z| \lee 1} z \nu_u(\od z)\bigg|^2\od u} \notag\\
		& \lee c_{\eqref{eq:first-factor-drift-term}} (1 + h(\ep))
	\end{align}
	for some constant  $c_{\eqref{eq:first-factor-drift-term}} = c_{\eqref{eq:first-factor-drift-term}}(\alpha, \kappa, T, \nu)>0$ and for \begin{align*}
		h(\ep) := \begin{cases}
			1 & \mbox{ if } \alpha \in (0, 1)\\
			\log^+(\frac{1}{\ep}) & \mbox{ if } \alpha =1\\
			\ep^{1- \alpha} & \mbox{ if } \alpha \in (1, 2].
		\end{cases}
	\end{align*}
 For the second factor $\mathrm{II}_{\eqref{eq:estimate-drift-term}}$, we apply  \cref{prop:QV-BMO} to obtain, a.s., 
	\begin{align*}
		\ce{\F_a}{\mathrm{II}_{\eqref{eq:estimate-drift-term}}} =  \ce{\F_a}{\QV_T - \QV_a} \lee c_{\eqref{eq:QV-BMO}}^2 \|\tau\|_\theta \Phi_a^2 \lee c_{\eqref{eq:QV-BMO}}^2 \|\tau\|_\theta \ol\Phi_a^2.
	\end{align*}
	Hence,
	\begin{align}\label{eq:drift-error-process-1}
		\|E^{\rm D}(\vartheta, \tau| \ep, \kappa)\|_{\BMO_2^{\ol\Phi}(\p)}  \lee  c_{\eqref{eq:QV-BMO}} c_{\eqref{eq:first-factor-drift-term}} (1+ h(\ep)) \sqrt{ \|\tau\|_\theta}.
	\end{align}
	Eventually, we plug \eqref{eq:BMO-estimate-continuous-part}, \eqref{eq:error-small-jump-BMO-1} and  \eqref{eq:drift-error-process-1} into \eqref{eq:decompose-error-correction} to derive \eqref{1eq:approx-BMO}.
	
	
	\eqref{item:2:jump-behavior} If \eqref{eq:item:2:jump-behavior} holds, then  $\mathrm{I}_{\eqref{eq:estimate-drift-term}} \lee 2 T (\| \1_{\{|z| >1\}} |z| \star \nu\|_{L_\infty(\p \otimes \Leb)}^2 + c_{\eqref{eq:item:2:jump-behavior}}^2)=:c_{\eqref{eq:drift-error-process-2}}^2$. Hence,
	\begin{align}\label{eq:drift-error-process-2}
		\|E^{\rm D}(\vartheta, \tau| \ep, \kappa)\|_{\BMO_2^{\ol\Phi}(\p)} \lee  c_{\eqref{eq:drift-error-process-2}} c_{\eqref{eq:QV-BMO}}  \sqrt{ \|\tau\|_\theta}.
	\end{align}
	Combining \eqref{eq:BMO-estimate-continuous-part}, \eqref{eq:error-small-jump-BMO-1} and \eqref{eq:drift-error-process-2} with \eqref{eq:decompose-error-correction} yields \eqref{1eq:approx-BMO-symmetric}. \qed

	\subsubsection{Proof of \cref{theo:improve-rate}}
	Denote $\kappa : = \frac{1- \theta}{2} \in [0, \frac{1}{2})$ as before.	Pick $\tau = (t_i)_{i=0}^n \in \cT_{\det}$ and $\ep >0$. For $u \in [0, T)$, we define
	\begin{align*}
		D_{\ep, \kappa}(u) : = \int_{|z| > \ep(T-u)^\kappa} z \nu_u(\od z).
	\end{align*}
The conclusion for $E^{\corr}(\vartheta, \tau| \ep, \kappa)$ is shown by using again \eqref{eq:decompose-error-correction}, where the estimate for the ``small jump part'' $E^{\rm S}(\vartheta, \tau| \ep, \kappa)$ is taken from \eqref{eq:error-small-jump-BMO-1}. Here, we focus on improving the estimate for the ``drift part'' $E^{\rm D}(\vartheta, \tau| \ep, \kappa)$.

	\textit{Step 1.} We show that there is a constant $c_{\eqref{eq:estimate-time-net-D}}>0$ independent of $\tau$ and $\ep$ such that
	\begin{align}\label{eq:estimate-time-net-D}
		D_{\eqref{eq:estimate-time-net-D}} &:= \sup_{i =1, \ldots, n} \;\sup_{r \in (t_{i-1}, t_i] \cap[0, T)}  \bigg[ \frac{1}{(T-r)^{\kappa}} \int_{r}^{t_i} |D_{\ep, \kappa}(u)| \od u\bigg] \notag \\
		& \lee c_{\eqref{eq:estimate-time-net-D}} \begin{cases}
			\|\tau\|_\theta & \mbox{if } \eqref{eq:small-jump-pure-jump-2} \mbox{ holds}\\
			\big[1+ \log^+(\frac{1}{\ep}) + \log^+(\frac{1}{\|\tau\|_\theta})\big] \|\tau\|_\theta & \mbox{if } \eqref{eq:small-jump-pure-jump-1} \mbox{ holds with } \alpha =1\\[3pt]
			\|\tau\|_\theta + \ep^{1- \alpha} \|\tau\|_\theta^{1- \kappa(\alpha-1)} & \mbox{if } \eqref{eq:small-jump-pure-jump-1} \mbox{ holds with } \alpha \in (1, 2].
		\end{cases}
	\end{align}
	Indeed, since $\|z^2\star \nu\|_{L_\infty([0, T], \Leb)}=\|u \mapsto \int_{\R} z^2 \nu_u(\od z)\|_{L_\infty([0, T], \Leb)} <\infty$ by assumption, applying \cref{lem:small-ball-property} yields a constant $c_{\eqref{eq:estimate-D}} > 0$ not depending on $\ep$ such that, for $\Leb$-a.e. $u \in [0, T)$,
	\begin{align}\label{eq:estimate-D}
		|D_{\ep, \kappa}(u)| \lee c_{\eqref{eq:estimate-D}} \begin{cases}
			1 & \mbox{ if } \eqref{eq:small-jump-pure-jump-2} \mbox{ holds}\\
			1 + \log^+(\frac{1}{\ep}) + \log^+(\frac{1}{T-u}) & \mbox{ if } \eqref{eq:small-jump-pure-jump-1} \mbox{ holds with } \alpha =1\\
			1 + \ep^{1- \alpha}(T-u)^{\kappa(1- \alpha)} & \mbox{ if } \eqref{eq:small-jump-pure-jump-1} \mbox{ holds with } \alpha \in (1, 2].
		\end{cases}
	\end{align}
\textit{Case 1.} If \eqref{eq:small-jump-pure-jump-2}  holds, then \eqref{eq:estimate-D} immediately implies
\begin{align*}
	D_{\eqref{eq:estimate-time-net-D}} \lee c_{\eqref{eq:estimate-D}} \sup_{i =1, \ldots, n} \;\sup_{r \in (t_{i-1}, t_i] \cap[0, T)}  \bigg[ \frac{t_i - r}{(T-r)^{2\kappa}} (T-r)^\kappa\bigg] \lee c_{\eqref{eq:estimate-D}} T^\kappa \|\tau\|_\theta.
\end{align*}
\textit{Case 2.} \eqref{eq:small-jump-pure-jump-1}  holds with  $\alpha =1$.  For $r \in [t_{i-1}, t_i)$, $i = 1, \ldots, n$, one has
\begin{align*}
	&  \int_r^{t_i} \log^+\(\frac{1}{T-u}\) \od u  \lee (t_i - r) \log^+\(\frac{1}{t_i - r}\) + (t_i - r) \log\e,
\end{align*}
where we first integrate by parts the left-hand side and then use the inequality $b\log^+(\frac{1}{b}) - a \log^+(\frac{1}{a}) \lee (b-a) \log^+(\frac{1}{b-a})$ for all $0<a<b$. Since  $x \mapsto x \log^+(\frac{1}{x})$ is non-decreasing on $(0, \frac{1}{\e}]$ and $0< \frac{1}{\e T^\theta} \frac{t_i - r}{(T-r)^{1-\theta}} \lee \frac{\|\tau\|_\theta}{\e T^\theta} \lee \frac{1}{\e}$ for any $r \in [t_{i-1}, t_i)$, we obtain
\begin{align*}
	&\frac{1}{(T-r)^{\kappa}} \int_r^{t_i} \log^+\(\frac{1}{T-u}\) \od u\\ &\lee \frac{1}{\e T^\theta}\frac{t_i-r}{(T-r)^{2\kappa}}  \[\log^+\(\frac{\e T^\theta(T-r)^{2\kappa}}{t_i -r}\)  + \log^+\(\frac{1}{\e T^\theta (T-r)^{2\kappa}}\) + \log \e\]\e T^\theta (T-r)^\kappa\\
	& \lee \frac{\|\tau\|_\theta}{\e T^\theta} \log^+\(\frac{\e T^\theta}{\|\tau\|_\theta}\) + \|\tau\|_\theta \sup_{r \in [0, T)} \[(T-r)^\kappa \log^+\(\frac{1}{\e T^\theta (T-r)^{2\kappa}}\)\] + \|\tau\|_\theta T^\kappa \log \e\\
	& \lee c_{T, \theta} \|\tau\|_\theta \big( 1 + \log^+\big(\tfrac{1}{\|\tau\|_\theta}\big)\big)
\end{align*}
for some constant $c_{T, \theta}>0$ depending at most on $T, \theta$. Hence,
\begin{align*}
	D_{\eqref{eq:estimate-time-net-D}} \lee c_{\eqref{eq:estimate-D}} \[T^\kappa \big(1 + \log^+\big(\tfrac{1}{\ep}\big)\big)  + c_{T, \theta} \big(1+ \log^+\big(\tfrac{1}{\|\tau\|_\theta}\big)\big)\] \|\tau\|_\theta.
\end{align*}
\textit{Case 3.} \eqref{eq:small-jump-pure-jump-1}  holds with  $\alpha \in (1, 2]$. 
Again, using \eqref{eq:estimate-D} and keeping $\frac{1}{2} < 1 - \kappa(\alpha-1) \lee 1$ in mind we get that, for any $r \in (t_{i-1}, t_i]\cap [0, T)$ and any $i = 1, \ldots, n$,
	\begin{align*}
		&\frac{1}{(T-r)^{\kappa}} \int_r^{t_i} |D_{\ep, \kappa}(u)| \od u   \lee  \frac{c_{\eqref{eq:estimate-D}}}{(T-r)^{\kappa}} \[(t_i - r) + \ep^{1- \alpha} \int_r^{t_i} (T- u)^{\kappa(1- \alpha)} \od u \]\\
		& = c_{\eqref{eq:estimate-D}}  \bigg[\frac{t_i - r}{(T-r)^{\kappa}} +  \frac{\ep^{1- \alpha}}{1 - \kappa(\alpha-1)} \frac{(T-r)^{1 - \kappa(\alpha-1)} - (T -t_{i})^{1 - \kappa(\alpha-1)}}{(T-r)^{\kappa}}  \bigg]\\
		& \lee c_{\eqref{eq:estimate-D}}  \bigg[\frac{t_i - r}{(T-r)^{\kappa}}  +  \frac{\ep^{1- \alpha}}{1 - \kappa(\alpha-1)}\frac{(t_i -r)^{1 - \kappa( \alpha-1)}}{(T-r)^{\kappa}} \bigg]\\
		& = c_{\eqref{eq:estimate-D}}  \[\frac{t_i - r}{(T-r)^{2\kappa}} (T-r)^{\kappa}   +  \frac{\ep^{1- \alpha}}{1 - \kappa(\alpha-1)}\bigg[\frac{t_i -r}{(T-r)^{2\kappa}}\]^{1 - \kappa( \alpha-1)} (T-r)^{2\kappa (1- \kappa(\alpha -1)) - \kappa}\bigg]\\
		& \lee c_{\eqref{eq:estimate-D}} \bigg[ \|\tau\|_\theta T^{\kappa} + \frac{\ep^{1- \alpha}}{1 - \kappa(\alpha-1)} \|\tau\|_\theta^{1 - \kappa(\alpha-1)} T^{2\kappa(1 - \kappa(\alpha -1)) - \kappa}\bigg],
	\end{align*}
	where we notice that $2\kappa(1- \kappa (\alpha -1))-\kappa \gee 0$ so that $(T-r)^{2\kappa(1- \kappa (\alpha -1))-\kappa} \lee T^{2\kappa(1- \kappa (\alpha -1))-\kappa}$ for all $r \in [0, T]$. Thus, the desired assertion follows.

	\textit{Step 2.} We examine the ``drift part'' $E^{\rm D}(\vartheta, \tau| \ep, \kappa)$.
	We let $a \in [t_{k-1}, t_k)$, $k \in[1,n]$, and set $s_i : = a\vee t_i$, $i = k-1, \ldots, n$. Denote
	\begin{align}\label{eq:constant-D}
		D_{\eqref{eq:constant-D}}: = \sup_{i =k, \ldots, n} \;\sup_{r \in (s_{i-1}, s_i] \cap[0, T)}  \bigg[ \frac{1}{(T-r)^{\kappa}} \int_{r}^{s_i} |D_{\ep, \kappa}(u)| \od u\bigg].
	\end{align}
Recall from \cref{assumption-pure-jump} that $\vartheta S = M + V$ where $V_t := \int_0^t v_u \od u$. Then the triangle inequality yields, a.s.,
	\begin{align}\label{eq:estimate-drift-term-special-case}
		& \frac{1}{4}\ce{\F_a}{|E^{\rm D}_T(\vartheta, \tau| \ep, \kappa) - E^{\rm D}_a(\vartheta, \tau| \ep, \kappa)|^2} \notag\\
		& = \frac{1}{4}\bbce{\F_a}{\bigg|(\vartheta_a - \vartheta_{t_{k-1}})\int_a^{t_k}  S_u D_{\ep, \kappa}(u) \od u + \sum_{i=k}^n \int_{s_{i-1}}^{s_i} (\vartheta_u - \vartheta_{s_{i-1}}) S_u D_{\ep, \kappa}(u) \od u\bigg|^2} \notag\\
		& \lee \bbce{\F_a}{\bigg|(\vartheta_a - \vartheta_{t_{k-1}})\int_a^{t_k}  S_u D_{\ep, \kappa}(u) \od u\bigg|^2} + \bbce{\F_a}{\bigg|\sum_{i=k}^n \int_{s_{i-1}}^{s_i} (M_u - M_{s_{i-1}}) D_{\ep, \kappa}(u) \od u\bigg|^2}\notag \\
		& \quad +  \bbce{\F_a}{\bigg|\sum_{i=k}^n \int_{s_{i-1}}^{s_i} (V_u - V_{s_{i-1}}) D_{\ep, \kappa}(u) \od u\bigg|^2} + \bbce{\F_a}{\bigg|\sum_{i=k}^n \vartheta_{s_{i-1}} \int_{s_{i-1}}^{s_i} (S_u - S_{s_{i-1}}) D_{\ep, \kappa}(u) \od u\bigg|^2} \notag \\
		& =: \mathrm{I}_{\eqref{eq:estimate-drift-term-special-case}} + \mathrm{II}_{\eqref{eq:estimate-drift-term-special-case}} + \mathrm{III}_{\eqref{eq:estimate-drift-term-special-case}} + \mathrm{IV}_{\eqref{eq:estimate-drift-term-special-case}}.
	\end{align}
	For $\mathrm{I}_{\eqref{eq:estimate-drift-term-special-case}}$, we make use of the growth property of $\vartheta$ and the monotonicity of $\Theta$ to get, a.s., 
\begin{align*}
	\mathrm{I}_{\eqref{eq:estimate-drift-term-special-case}} & \lee 2c_{\eqref{eq:growth-strategy}}^2 (T-a)^{\theta-1} \Theta_a^2 \bigg[\int_a^{t_k} |D_{\ep, \kappa}(u)|\od u\bigg]^2 \bce{\F_a}{\ts\sup_{u \in [a, t_k]} S_u^2} \\
	& \ts \lee 2 D_{\eqref{eq:constant-D}}^2 c_{\eqref{eq:growth-strategy}}^2 \bce{\F_a}{\sup_{u \in [a, t_k]} \Theta_u^2 S_u^2} = 2 D_{\eqref{eq:constant-D}}^2 c_{\eqref{eq:growth-strategy}}^2 \bce{\F_a}{\sup_{u \in [a, t_k]}\Phi_u^2} \\
	& \lee 2 D_{\eqref{eq:constant-D}}^2 c_{\eqref{eq:growth-strategy}}^2 \|\Phi\|_{\cSM_2(\p)}^2 \Phi_a^2.
\end{align*}
	For $\mathrm{II}_{\eqref{eq:estimate-drift-term-special-case}}$, using the orthogonality of martingale increments we find that the mixed terms in the square expansion vanish under the conditional expectation. Then, applying the stochastic Fubini theorem and the conditional It\^o isometry we obtain, a.s., 
	\begin{align*}
		\mathrm{II}_{\eqref{eq:estimate-drift-term-special-case}} & = \sum_{i=k}^n\bbce{\F_a}{ \bigg| \int_{(s_{i-1},s_i]\cap(0, T)} (M_u - M_{s_{i-1}}) D_{\ep, \kappa}(u) \od u\bigg|^2}  \\
		& = \sum_{i=k}^n \bbce{\F_a}{ \bigg| \int_{(s_{i-1}, s_i] \cap (0, T)} \bigg(\int_{[r, s_{i}]} D_{\ep, \kappa}(u) \od u\bigg) \od M_r  \bigg|^2} \\
		& = \sum_{i=k}^n \bbce{\F_a}{\int_{(s_{i-1}, s_i] \cap (0, T)} \bigg|\int_{[r, s_{i}]} D_{\ep, \kappa}(u) \od u\bigg|^2 \od \<M\>_r}\\
		& \lee \sup_{i =k, \ldots, n} \;\sup_{r \in (s_{i-1}, s_i] \cap[0, T)} \bigg[\frac{1}{(T-r)^{1- \theta}} \bigg|\int_{[r, s_{i}]} D_{\ep, \kappa}(u) \od u\bigg|^2 \bigg] \bbce{\F_a}{\int_{(a, T)} (T-u)^{1- \theta} \od \<M\>_u}\\
		& \lee  D_{\eqref{eq:constant-D}}^2 \bbce{\F_a}{\int_{(a, T)} (T-u)^{1- \theta} \Upsilon(\cdot, \od u)}\\
		& \lee D_{\eqref{eq:constant-D}}^2 c_{\eqref{eq:thm:curvature-condition}}^2 \Phi_a^2.
	\end{align*}
For $\mathrm{III}_{\eqref{eq:estimate-drift-term-special-case}}$, we use Fubini's theorem and H\"older's inequality to obtain, a.s., 
\begin{align*}
	\mathrm{III}_{\eqref{eq:estimate-drift-term-special-case}}& \lee \bbce{\F_a}{\bigg|\sum_{i=k}^n \int_{(s_{i-1}, s_i] \cap (0, T)} \bigg(\int_{s_{i-1}}^u |v_r| \od r\bigg) |D_{\ep, \kappa}(u)| \od u \bigg|^2}\\
	& \lee  D_{\eqref{eq:constant-D}}^2 \bbce{\F_a}{\bigg|\int_{(a, T)} (T-r)^{\frac{1-\theta}{2}} |v_r| \od r\bigg|^2} \lee D_{\eqref{eq:constant-D}}^2 (T-a) \bbce{\F_a}{\int_{(a, T)} (T-r)^{1-\theta} v_r^2 \od r}\\
	& \lee D_{\eqref{eq:constant-D}}^2 T \bbce{\F_a}{\int_{(a, T)} (T-r)^{1-\theta} \Upsilon(\cdot, \od r)} \lee D_{\eqref{eq:constant-D}}^2 T c_{\eqref{eq:thm:curvature-condition}}^2 \Phi_a^2.
\end{align*} 
For $\mathrm{IV}_{\eqref{eq:estimate-drift-term-special-case}}$, we also exploit the martingale property of $S$, the monotonicity of $\Theta$, and follow the same argument as for $\mathrm{II}_{\eqref{eq:estimate-drift-term-special-case}}$ to get, a.s.,
\begin{align*}
	\mathrm{IV}_{\eqref{eq:estimate-drift-term-special-case}} & = \sum_{i=k}^n 
	\bbce{
		\F_a}{\bigg|\vartheta_{s_{i-1}}\int_{s_{i-1}}^{s_i}(S_u - S_{s_{i-1}})D_{\ep, \kappa}(u)\od u\bigg|^2}\\
	& = \sum_{i=k}^n 
	\bbce{
		\F_a}{\vartheta_{s_{i-1}}^2\int_{(s_{i-1}, s_i]}\bigg|\int_{[r, s_i]} D_{\ep, \kappa}(u) \od u\bigg|^2 \od \<S\>_r}\\
	& 
	\lee D_{\eqref{eq:constant-D}}^2 \sum_{i=k}^n \bbce{\F_a}{\vartheta_{s_{i-1}}^2 \int_{(s_{i-1}, s_i]} (T-r)^{1- \theta} \od \<S\>_r}\\
	& \lee D_{\eqref{eq:constant-D}}^2 \|z^2 \star \nu\|_{L_\infty([0, T], \Leb)} \sum_{i=k}^n \bbce{\F_a}{(T-s_{i-1})^{1- \theta}\vartheta_{s_{i-1}}^2 \bbce{\F_{s_{i-1}}}{\int_{s_{i-1}}^{s_i} S_r^2 \od r}}\\
	& \lee D_{\eqref{eq:constant-D}}^2 \|z^2 \star \nu\|_{L_\infty([0, T], \Leb)}  c_{\eqref{eq:growth-strategy}}^2 \sum_{i=k}^n \bbce{\F_a}{\int_{s_{i-1}}^{s_i} \Theta_r^2 S_r^2 \od r}\\
	& \lee  D_{\eqref{eq:constant-D}}^2 \|z^2 \star \nu\|_{L_\infty([0, T], \Leb)}  c_{\eqref{eq:growth-strategy}}^2 (T-a) \|\Phi\|_{\cSM_2(\p)}^2 \Phi_{a}^2,
\end{align*}
where in order to obtain the second inequality we employ the assumption that $
\od S_t = S_{t-} \od Z_t$ and $
\od \<Z\>_t = \int_{\R} z^2 \nu_t(\od z) \od t$ with $\|z^2 \star \nu\|_{L_\infty([0, T], \Leb)}  <\infty$.

	 Eventually, plugging the estimates for $\mathrm{I}_{\eqref{eq:estimate-drift-term-special-case}}$--$\mathrm{IV}_{\eqref{eq:estimate-drift-term-special-case}}$ into \eqref{eq:estimate-drift-term-special-case}, and using the fact that $D_{\eqref{eq:constant-D}}  \lee D_{\eqref{eq:estimate-time-net-D}}$, we derive a constant $c_{\eqref{eq:estimate-BMO-drift}}>0$ independent of $
	 \tau$ and $
	 \ep$ such that
	 \begin{align}\label{eq:estimate-BMO-drift}
	 	& \|E^{\rm D}_T(\vartheta, \tau| \ep, \kappa)\|_{\BMO_2^{\ol \Phi}(\p)}  \lee \|E^{\rm D}_T(\vartheta, \tau| \ep, \kappa)\|_{\BMO_2^{\Phi}(\p)} \lee c_{\eqref{eq:estimate-BMO-drift}} D_{\eqref{eq:constant-D}} \notag \\
&  \lee c_{\eqref{eq:estimate-BMO-drift}} c_{\eqref{eq:estimate-time-net-D}}  \begin{cases}
			\|\tau\|_\theta & \mbox{ if } \eqref{eq:small-jump-pure-jump-2} \mbox{ holds}\\
			\big[1 + \log^+(\frac{1}{\ep}) + \log^+(\tfrac{1}{\|\tau\|_\theta})\big] \|\tau\|_\theta & \mbox{ if } \eqref{eq:small-jump-pure-jump-1} \mbox{ holds with } \alpha =1\\[3pt]
			\|\tau\|_\theta + \ep^{1- \alpha} \|\tau\|_\theta^{1- \kappa(\alpha -1)} & \mbox{ if } \eqref{eq:small-jump-pure-jump-1} \mbox{ holds with } \alpha \in (1, 2].
		\end{cases}
	\end{align}
	
	\textit{Step 3.} Combining \eqref{eq:estimate-BMO-drift} and \eqref{eq:error-small-jump-BMO-1} with \eqref{eq:decompose-error-correction} yields the conclusion, where we remark that the condition \eqref{eq:small-jump-pure-jump-1} with $\alpha \in (0, 1)$ implies \eqref{eq:small-jump-pure-jump-2} due to \cref{lem:small-ball-property} (with $\gamma =1$). \qed

	\subsubsection{Proof of  \cref{prop:cardinality-combined-nets}} Denote $\kappa: = \frac{1 - \theta}{2} \in [0, \frac{1}{2})$.
	We first consider the particular case when $\bQ= \p$,  $r=\infty$ and $q=2$.  By \cref{defi:approximation-correction}\eqref{item:defi:combined-net},
	\begin{align*}
		n+1 = \#\tau_n &\lee \# \comb{\tau_n}{\rho\(\ep_n, \kappa\)}\lee n+1 + \cN_{\eqref{eq:cardinality-S}}\(\ep_n, \kappa\).
	\end{align*}
	Thus,
	\begin{align*}
		n+1 \lee \left\|\# \comb{\tau_n}{\rho\(\ep_n, \kappa\)} \right\|_{L_2(\p)} \lee n+1 + \left\|\cN_{\eqref{eq:cardinality-S}}\(\ep_n, \kappa\)\right\|_{L_2(\p)}.
	\end{align*}
	Since $\inf_{n \gee 1} \sqrt[\alpha]{n} \ep_n >0$ by assumption, we derive from \eqref{eq:estimate-norm-cardinality} that
	\begin{align*}
		c_{\eqref{eq:estimate-norm-cardinality}} & = T \|\1_{\{|z|>1\}} \star \nu\|_{L_\infty(\p\otimes \Leb)} + \ep_n^{-\alpha}  \sup_{r \in (0, 1)} \| r^\alpha \1_{\{r< |z|\lee 1\}}\star \nu\|_{L_\infty(\p\otimes \Leb)} \frac{T^{1- \alpha \kappa}}{1- \alpha \kappa} \lee c n
	\end{align*}
	for some constant $c>0$ independent of $n$. Using \eqref{eq:estimate-norm-cardinality} gives the desired conclusion.

	We next assume a probability measure $\bQ \ll \p$ with $\od \bQ/\od \p \in L_r(\p)$. Since $\frac{1}{2/q} + \frac{1}{r} =1$, applying H\"older's inequality yields
	\begin{align*}
		\left\|\#\comb{\tau_n}{ \rho\(\ep_n, \kappa\)}\right\|_{L_q(\bQ)} \lee \left\|\#\comb{\tau_n}{ \rho\(\ep_n, \kappa\)}\right\|_{L_2(\p)} \|\od \bQ/\od \p\|_{L_r(\p)}^{1/q},
	\end{align*}
	and hence, \eqref{eq:norm-combined-nets} follows. \qed

	\subsection{Proofs of results in \Cref{subsec:MVH-growth-weight}}\label{subsec:proof-sec-4}
	
	\subsubsection{Proof of \cref{theo:application-Levy}} It follows from  \cref{assum-maringale-setting} that $\int_{|x|>1} \e^{2x} \nu(\od x) <\infty$. Since $g$ has at most linear growth at infinity, one has $g(S_T) \in L_2(\p)$.
	
	\eqref{item: weight-regularity-Levy} follows from \cref{prop:regularity-weight-process}. 
	
	
	\eqref{item:strategy-growth} We let $\ell := \nu$ in \eqref{eq:varGamma-function} and obtain from \eqref{eq:integrand-general}  that 
	$$\vartheta^g_t = c_{\eqref{eq:integrand-general}}^{-2} \varGamma_\nu(T-t, S_t) \quad \mbox{a.s., } \forall t \in [0, T).$$
	Let us examine each case in \cref{tab:theta-cases}. Applying \cref{theo:envelop-MVH-strategy}\eqref{item:1:sigma>0-envelope} and \eqref{item:2:sigma=0-envelope} yields $\mathrm{A1}$ and  $\mathrm{A2}$ respectively. For $\mathrm{A3}$, since $\nu \in \sS(\alpha)$, \cref{remark:Holder-stable-like}\eqref{item:rema:conditions-C-alpha-1} asserts that $\sup_{r \in (0, 1)} r^\alpha \int_{r< |x| \lee 1} \nu(\od x) <\infty$. Applying \cref{theo:envelop-MVH-strategy}\eqref{item:2.2:sigma=0-envelope} with $\beta  = \alpha$  yields \begin{align*}
		|\vartheta^g_t| \lee c_{\eqref{eq:integrand-general}}^{-2} c_{\eqref{eq:thm:Holder-case-estimate:psi-process}} U(t) S_t^{\eta -1} \mbox{ a.s., } \forall t\in [0, T),
	\end{align*}
	which verifies $\mathrm{A3}$. 
	
	
	\eqref{item:example-assumprion-stoc-inte} The SDE for $S = \e^X$ is 
	\begin{align}\label{eq:SDE-levy}
		\od S_t = S_{t-} \od Z_t, \quad S_0 =1,	
	\end{align}
	where $Z$ is \textit{another} L\'evy process under $\p$.  Under \cref{assum-maringale-setting}, it is known that $Z$ is also an $L_2(\p)$-martingale with zero mean (see, e.g., \cite[Proposition 8.20]{CT03}). Hence, conditions \textbf{[S]} and \textbf{[Z]} in   \Cref{subsec:setting-stochastic-integal} are fulfilled. Moreover, $\vartheta^g \in \adm$ due to \cref{theo:MVH-strategy}(\ref{item:prop:strategy},~\ref{item:prop:maringale-property}).
	
	Let us now verify \cref{assumption-stochastic-integral}. Since $M = \vartheta^gS$ is an $L_2(\p)$-martingale by \cref{theo:MVH-strategy}\eqref{item:prop:maringale-property}, \cref{pre-assumption-stochastic-integral} holds because of  \cref{exam:Upsilon-martingale-setting} (with $V \equiv 0$). Thanks to \eqref{eq:thm:approx-bmo}, the growth condition \eqref{eq:growth-strategy} is satisfied for $\theta$ given in \cref{tab:theta-cases} case-wise.  We now only need to check the curvature condition \eqref{eq:thm:curvature-condition}. If $U \equiv 1$ (in $\mathrm{A1}$ with $\eta =1$ and in $\mathrm{A2}$), then the martingale $M$ is closed in $L_2$ by $M_T: = L_2(\p)\mbox{-}\lim_{t \uparrow T} M_t$ due to \eqref{eq:thm:approx-bmo} and $\Phi(\eta) \in \cSM_2(\p)$. Then, for $\theta =1$ and for any $a\in [0, T)$, one has, a.s., 
	\begin{align*}
		\bbce{\F_a}{\int_{(a, T)} \Upsilon(\cdot, \od t)} & = \bbce{\F_a}{\int_{(a, T)} \od \<M\>_t + \int_{(a, T)} M_t^2 \od t }\\
		& \lee \bbce{\F_a}{|M_T-M_a|^2 + c_{\eqref{eq:thm:approx-bmo}}^2  (T-a) \sup_{ t \in (a, T)} \Phi(\eta)_t^2}\\
		& \lee c_{\eqref{eq:thm:approx-bmo}}^2 (T + 1) \|\Phi(\eta)\|_{\cSM_2(\p)}^2 \Phi(\eta)_a^2.
	\end{align*}
	For remaining cases, we set $\hat\theta: = \eta$ in $\mathrm{A1}$ for $\eta \in (0, 1)$, and set $\hat\theta : = \frac{2(1+ \eta)}{\alpha} -1 \in (0, 1]$ in $\mathrm{A3}$. Then, for any $\theta \in (0, \hat \theta)$, using the function $U$ in \cref{tab:theta-cases} we get
	$$(T-t)^{1- \theta} M_t^2 \lee c_{\eqref{eq:thm:approx-bmo}}^2 (T-t)^{1- \theta} U(t)^2 \Phi(\eta)_t^2 \to 0 \quad \mbox{a.s. as } t \uparrow T.$$
	Thus, for any $a\in [0, T)$, a.s., 
	\begin{align}\label{eq:second-term-Upsilon}
		\bbce{\F_a}{\int_{(a, T)} (T-t)^{1- \theta} M_t^2 \od t} \lee c_{\eqref{eq:second-term-Upsilon}} c_{\eqref{eq:thm:approx-bmo}}^2   \|\Phi(\eta)\|_{\cSM_2(\p)}^2 \Phi(\eta)_a^2
	\end{align}
	for some constant $c_{\eqref{eq:second-term-Upsilon}} >0$ depending at most on $\hat\theta, \theta, T$. Integrating by parts and applying conditional It\^o's isometry yield, a.s., 
	\begin{align}\label{eq:first-term-Upsilon}
		& \bbce{\F_a}{\int_{(a, T)} (T-t)^{1- \theta} \od \<M\>_t} = \bbce{\F_a}{\lim_{a< b \uparrow T} \int_{(a, b]} (T-t)^{1- \theta} \od \<M\>_t} \notag\\
		& =  \bbce{\F_a}{\lim_{a< b \uparrow T}\bigg[(T-b)^{	1- \theta}|M_b - M_a|^2 + (1- \theta) \int_{(a, b]}(T - t)^{-\theta} |M_t - M_a|^2 \od t\bigg]} \notag\\
		& \lee (1- \theta) \bbce{\F_a}{\int_{(a, T)} (T-t)^{- \theta}M_t^2 \od t}  \lee c_{\eqref{eq:first-term-Upsilon}} c_{\eqref{eq:thm:approx-bmo}}^2 \|\Phi(\eta)\|_{\cSM_2(\p)}^2 \Phi(\eta)_a^2
	\end{align}
	for some $c_{\eqref{eq:first-term-Upsilon}} = c_{\eqref{eq:first-term-Upsilon}}(\hat\theta, \theta, T) >0$. 
	Combining \eqref{eq:second-term-Upsilon} with \eqref{eq:first-term-Upsilon} yields the conclusion.
	
	For the particular case $\sigma =0$, it is easy to check that  \cref{assumption-pure-jump} holds true. \qed

	\subsubsection{Proof of \cref{coro:convergence-rate-levy}} Let $\nu_Z$ denote the L\'evy measure of $Z$ (which appears in the SDE \eqref{eq:SDE-levy}) under $\p$. By the relation between the L\'evy measures of $Z$ and of $X$ given in  \Cref{subsec:exponential-Levy}, some simple calculations yield that, for any $\alpha \in [0, 2]$,
	\begin{align*}
		\sup_{r \in (0, 1)} r^\alpha \int_{r < |z|\lee 1} \nu_Z(\od z) <\infty \; \Leftrightarrow \; \sup_{r \in (0, 1)} r^\alpha \int_{r < |x|\lee 1} \nu(\od x) <\infty.
	\end{align*}

	\eqref{item:1:coro:convergence-rate-levy} Since $\int_{|x| >1} \e^{2x} \nu(\od x) <\infty$ by \cref{assum-maringale-setting}, we  get from  \cref{prop:regularity-weight-process} that	$\Phi(\eta)$ belongs to $\cSM_2(\p)$. 	Let us examine each case in  \cref{tab:rates}.
	
	\underline{For $\mathrm{B1}$}, the given range of $(\eta, \alpha)$ yields $\int_{|x| \lee 1} |x|^{1+ \eta} \nu(\od x) <\infty$. We apply  \cref{theo:application-Levy}\eqref{item:example-assumprion-stoc-inte}, case $\mathrm{A2}$ in  \cref{tab:theta-cases}, to get $\theta =1$. Then, applying  \cref{theo:BMO-convergent-rate}\eqref{item:2:coro:BMO-convergent-rate} gives the corresponding $R(n)$ and $\ep_n$.
	
	\underline{For $\mathrm{B2}$}, we note that $\sS(\alpha) \subset \sUS(\alpha)$. Using  \cref{theo:application-Levy}\eqref{item:example-assumprion-stoc-inte}, case $\mathrm{A3}$ in \cref{tab:theta-cases}, to get the range of  $\theta$, and then applying  \cref{theo:BMO-convergent-rate}\eqref{item:2:coro:BMO-convergent-rate} we obtain the conclusions for $R(n)$ and $\ep_n$.
	
	\underline{For $\mathrm{B3}$}, we first get $\theta =1$ due to \cref{tab:theta-cases}, case $\mathrm{A1}$.  We decompose $E^{\corr}$ into three components $E^{\rm C}$, $E^{\rm S}$ and $E^{\rm D}$ as in \eqref{eq:error-correction-decomposition}. For the ``continuous part'' $E^{\rm C}$, \eqref{eq:BMO-estimate-continuous-part} yields
	\begin{align*}
		\sup_{n \gee 1} \sqrt{n}\|E^{\rm C}(\vartheta^g, \tau^1_n \,| \sqrt{1/n}, 0)\|_{\BMO_2^{\ol \Phi(1)}(\p)} < \infty.
	\end{align*}
	For the ``small jump part'' $E^{\rm S}$ and the ``drift part'' $E^{\rm D}$, we apply  \eqref{eq:error-small-jump-BMO-1} and \eqref{eq:estimate-BMO-drift} with $\alpha =2$ respectively to obtain
	\begin{align*}
		\sup_{n \gee 1} \sqrt{n} \Big(\|E^{\rm S}(\vartheta^g, \tau^1_n \,| \sqrt{1/n}, 0)\|_{\BMO_2^{\ol \Phi(1)}(\p)} + \|E^{\rm D}(\vartheta^g, \tau^1_n \,| \sqrt{1/n}, 0)\|_{\BMO_2^{\ol \Phi(1)}(\p)} \Big) <\infty.
	\end{align*}
	Thus, the desired assertion follows from \eqref{eq:decompose-error-correction}. The argument \underline{for $\mathrm{B4}$} is similar to $\mathrm{B3}$.
	
	\eqref{item:2:coro:convergence-rate-levy} Since $\int_{|x| >1} \e^{px} \nu(\od x) <\infty$, it follows from  \cref{prop:regularity-weight-process,lemm:properties-BMO-SM} that $\Phi(\eta)$ and $\ol\Phi(\eta)$ belong to  $\cSM_p(\p)$ for all $\eta \in [0, 1]$. Hence, the conclusion follows from   \cref{lemm:feature-BMO}(\ref{item:Lp-estimate-BMO-feature}, \ref{item:SM-BMO-relation}). \qed

	\section{It\^o's chaos expansion and proof of \cref{theo:MVH-strategy}} \label{app-sec:proof-MVH-strategy}
	
	\subsection{Exponential L\'evy processes}\label{subsec:exponential-Levy} Let $X$ be a L\'evy process with characteristic triplet $(\gamma, \sigma, \nu)$ as in \Cref{subsection-exponential-levy-process}.	It is known that the ordinary exponential $S = \e^X$ can be represented as the \textit{Dol\'eans--Dade exponential} (or \textit{stochastic exponential}) $\cE(Z)$ of another L\'evy process $Z$ (see, e.g., \cite[Theorem 5.1.6]{Ap09}). This means that $S = \cE(Z)$ and
	\begin{align*}
		\od S_t= S_{t-}\od Z_t, \quad S_0 = 1.
	\end{align*}

The path relation of $X$ and $Z$ is given by 
			\begin{align*}
				Z_t = X_t + \frac{\sigma^2 t}{2} + \sum_{0 \lee s\lee t}\(\e^{\Delta X_s}-1-\Delta X_s\),\quad \forall t\in [0, T] \;\mbox{ a.s.,}
			\end{align*}
			which implies $\Delta Z = \e^{\Delta X} -1$. Hence, if $\bF^Z = (\F^Z_t)_{t\in [0, T]}$ is the augmented natural filtration induced by $Z$, then $\F^Z_t = \F^X_t$ for all $t\in [0, T]$. 
			
			 The relation between the characteristic triplets $(\gamma, \sigma, \nu)$ of $X$ and $(\gamma_Z,  \sigma_Z, \nu_Z)$ of $Z$ is provided, e.g., in \cite[Theorem 5.1.6]{Ap09}. In particular, one has $\sigma_Z = \sigma$ and $\nu_Z(\cdot)   = \int_{ \R}\1_{\{\e^x -1 \in \cdot\}}\nu(\od x)$.

	\subsection{It\^o's chaos expansion} \label{subsection-Levy-process} 
	
	We present briefly the Malliavin calculus for L\'evy processes by means of It\^o's chaos expansion. For further details, the reader can refer to \cite{Ap09, SUV07b} and the references therein. 
	
	We define the $\sigma$-finite measures $\mu$ on $\cB(\R)$ and $\m$ on $\cB([0, T] \times \R)$ by setting
	\begin{align*}
		\mu(\od x) := \sigma^2 \delta_0(\od x) + x^2 \nu(\od x)\quad \mbox{and} \quad \m := \Leb \otimes \mu,
	\end{align*}
	where $\delta_0$ is the Dirac measure at zero. For $B\in \cB([0, T] \times \R)$ with $\m(B)<\infty$, the random measure $M$ is defined by
	\begin{align*}
		M(B): = \sigma \int_{\{t\in [0, T] : (t, 0) \in B\}} \od W_t + L_2(\p)\mbox{-}\lim_{ n \to \infty}\int_{B \cap ([0, T] \times \{\frac{1}{n} <|x| < n\})} x \wt N(\od t, \od x),
	\end{align*} 
	where $W$ is the standard Brownian motion and $\wt N(\od t, \od x) : = N(\od t, \od x) - \od t \nu(\od x)$ is the compensated Poisson random measure appearing in the L\'evy--It\^o decomposition of $X$ (see \cite[Theorem 2.4.16]{Ap09}). 	Set $L_2(\mu^{0}) = L_2(\m^{0}) : = \R$, and for $n \gee 1$ we denote 
	\begin{align*}
		L_2(\mu^{\otimes n}) : = L_2(\R^n, \cB(\R^n), \mu^{\otimes n}),\quad 
		L_2(\m^{\otimes n}) : = L_2(([0, T] \times \R)^n, \cB(([0, T] \times \R)^n), \m^{\otimes n}).
	\end{align*} 	
For $n\gee 1$, the multiple integral $I_n \colon L_2(\m^{\otimes n}) \to L_2(\p)$ is defined by a standard approximation where the multiple integral of simple functions is given as follows: for 
	$\xi_n^m: = \sum_{k=1}^m a_k \1_{B^k_1 \times \cdots \times B^k_n}$,
	 $a_k \in \R$, $B^k_i \in \cB([0, T]\times \R)$ with $\m(B^k_i) <\infty$ and $B^k_i \cap B^k_j = \emptyset$ for $m \gee 1$, $k=1,\ldots, m$, $i, j = 1, \ldots, n$, $i\neq j$,  we define 
	$I_n(\xi_n^m): = \sum_{k=1}^m a_k M(B^k_1)\cdots M(B^k_n).$
	
	According to \cite[Theorem 2]{It56}, we have the following It\^o chaos expansion
	\begin{align*}
		L_2(\Omega, \F_T^X, \p) =  \oplus_{n=0}^\infty  \{ I_n(\xi_n) : \xi_n \in L_2(\m^{\otimes n})\},
	\end{align*}
	where $I_0(\xi_0):= \xi_0 \in \R$. Let $\tilde \xi_n$ denote the symmetrization\footnote{$\tilde \xi_n((t_1, x_1), \ldots, (t_n, x_n)) := \frac{1}{n!}\sum_{\pi}\xi_n ((t_{\pi(1)}, x_{\pi(1)}), \ldots, (t_{\pi(n)}, x_{\pi(n)}))$,
	where the sum is taken over all permutations $\pi$ of $\{1, \ldots, n\}$.} of $\xi_n \in L_2(\m^{\otimes n})$.  It then turns out from the definition of $I_n$ that $I_n(\xi_n) = I_n(\tilde \xi_n)$ a.s. By the It\^o chaos decomposition,  $\xi \in L_2(\p)$ if and only if there are $\xi_n \in L_2(\m^{\otimes n})$ so that $\xi =  \sum_{n=0}^\infty I_n(\xi_n)$ a.s., and this  expansion is unique if every $\xi_n$ is symmetric. 
	Furthermore,  $\|\xi\|_{L_2(\p)}^2  = \sum_{n=0}^\infty n! \|\tilde \xi_n\|_{L_2(\m^{\otimes n})}^2$.

	\begin{defi}\label{Malliavin-derivative}
		Let $\bD_{1,2}$ consist of all $\xi = \sum_{n=0}^\infty I_n(\xi_n) \in L_2(\p)$ with
		\begin{align*}
			\|\xi\|_{\bD_{1,2}}^2 := \sum_{n=0}^\infty (n+1)! \|\tilde \xi_n\|_{L_2(\m^{\otimes n})}^2 <\infty.
		\end{align*}
		The \textit{Malliavin derivative operator}  $D \colon \bD_{1,2} \to L_2(\p \otimes \m)$, where $L_2(\p\otimes \m) : = L_2(\Omega \times [0, T] \times \R, \F \otimes \cB([0, T] \times \R), \p \otimes  \m)$, is defined for $\xi = \sum_{n=0}^\infty I_n(\xi_n) \in \bD_{1,2}$ by
		\begin{align*}
			D_{t, x} \xi := \sum_{n=1}^\infty n I_{n-1}(\tilde \xi_n((t, x), \cdot)), \quad (\omega, t, x) \in \Omega \times [0, T] \times \R.
		\end{align*}
	\end{defi}

The following proposition was established in \cite[Corollary 3.1 in the second article of this thesis]{La13} and it provides an equivalent condition such that a functional of $X_t$ belongs to $\bD_{1,2}$. For $X$ being the Brownian motion, see \cite[Proposition V.2.3.1]{MAKL95}, and for $X$ without a Brownian component, see \cite[Lemma 3.2]{GS16}.

	\begin{prop}[\cite{La13}] \label{lemm:Malliavin-derivative-functional}
		Let $t \in (0, T]$ and a Borel function $f\colon \R \to \R$ such that $f(X_t) \in L_2(\p)$. Then, $f(X_t) \in \bD_{1, 2}$ if and only if the following two assertions hold:
		\begin{enumerate}[\quad \rm (a)]

			\item when $\sigma >0$, $f$ has a weak derivative\footnote{A locally integrable function $h$ is called a \textit{weak derivative} (unique up to a $\Leb$-null set) of a locally integrable function $f$ on $\R$ if 
				$\int_\R f(x) \phi'(x)\od x = - \int_\R h(x) \phi(x) \od x$ for any smooth function $\phi$ with compact support in $\R$. If such an $h$ exists, then we denote $f'_w := h$.} $f'_w$ on $\R$ with $f'_w(X_t) \in L_2(\p)$,
			\item the map $(s, x) \mapsto \frac{f(X_t + x) - f(X_t)}{x}\1_{[0, t] \times \R_0}(s, x)$ belongs to $L_2(\p \otimes \m)$.
		\end{enumerate}
		Furthermore, if $f(X_t) \in \bD_{1,2}$, then for $\p\otimes \m$-a.e. $(\omega, s, x) \in \Omega \times [0, T]\times \R$ one has
		\begin{align*}
			D_{s, x}f(X_t) = f'_w(X_t)\1_{[0, t] \times \{0\}}(s, x) + \frac{f(X_t + x) - f(X_t)}{x}\1_{[0, t] \times \R_0}(s, x),
		\end{align*}
		where we set, by convention, $f'_w:=0$ whenever $\sigma =0$.
	\end{prop}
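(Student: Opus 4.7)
The plan is to combine an explicit chaos-expansion computation for $f(X_t)$ with the closability of $D$ and a mollification argument, treating the Brownian direction ($x=0$) and the Poisson direction ($x\neq 0$) separately but in parallel.

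First, I would verify the formula for smooth test functions. For $f \in C_b^\infty(\R)$ one can either invoke the chain rule for $D$ directly, or compute the chaos coefficients of $f(X_t)$ explicitly: writing $X_t = \sigma W_t + \int_0^t\!\int_{\R_0} x\tilde N(\od s,\od x) + \gamma t + \int_0^t\!\int_{|x|>1} x N(\od s,\od x) - \int_0^t\!\int_{|x|>1} x \nu(\od x)\od s$ and expanding, one finds, using that $D_{s,x}$ reduces the order of a chaos by one, the identity
\begin{align*}
D_{s,x} f(X_t) = f'(X_t)\1_{[0,t]\times\{0\}}(s,x) + \frac{f(X_t+x)-f(X_t)}{x}\1_{[0,t]\times\R_0}(s,x)\quad \text{$\p\otimes\m$-a.e.,}
\end{align*}
consistent with $\mu(\od x) = \sigma^2\delta_0(\od x) + x^2\nu(\od x)$ (the factor $x^{-1}$ on the jump part pairs with $x^2\nu$ to give a finite-difference norm weighted by $\nu$). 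A direct check shows the right-hand side is in $L_2(\p\otimes\m)$ whenever $f \in C_b^\infty$.

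For the sufficiency direction in the general case, assume (a) and (b). Fix a smooth mollifier $(\phi_\ep)_{\ep>0}$ and set $f_\ep := f \ast \phi_\ep \in C^\infty$. Then $f_\ep(X_t) \to f(X_t)$ in $L_2(\p)$ (using (b) to truncate, or the at most polynomial growth implicit in $f(X_t)\in L_2$), and by the smooth case
\begin{align*}
D_{s,x}f_\ep(X_t) = f'_\ep(X_t)\1_{[0,t]\times\{0\}}(s,x) + \frac{f_\ep(X_t+x)-f_\ep(X_t)}{x}\1_{[0,t]\times\R_0}(s,x).
\end{align*}
When $\sigma>0$, $f'_\ep = f'_w \ast \phi_\ep$, so by (a) we have $f'_\ep(X_t)\to f'_w(X_t)$ in $L_2(\p)$. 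For the jump term, I would prove $L_2(\p\otimes\m)$-convergence of the difference quotient by splitting the $x$-integral into $|x|\lee 1$ and $|x|>1$ and applying dominated convergence with the integrable envelope furnished by (b); the continuity of translations in $L_2$ and Jensen's inequality handle the mollification uniformly in $x$. Closability of $D$ then gives $f(X_t)\in \bD_{1,2}$ with the asserted expression for $D_{s,x}f(X_t)$.

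For necessity, assume $f(X_t)\in \bD_{1,2}$. Applying the same mollification to $f$ produces $f_\ep(X_t)\in \bD_{1,2}$ with the explicit Malliavin derivative from Step~1, and $Df_\ep(X_t)\to Df(X_t)$ in $L_2(\p\otimes\m)$ by standard arguments for bounded operators on $\bD_{1,2}$ commuting with translation. Extracting the restriction to $[0,t]\times\R_0$ forces the jump quotient to lie in $L_2(\p\otimes\m)$, yielding (b). The restriction to $[0,t]\times\{0\}$ gives an $L_2(\p)$-limit $g(X_t)$ of $f'_\ep(X_t)$, and a routine argument (test $g$ against compactly supported smooth functions using the density of the law of $X_t$ when $\sigma>0$) identifies $g$ as the weak derivative $f'_w$, proving~(a). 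The main obstacle is the $L_2(\p\otimes\m)$-convergence of the difference quotient under mollification; this is where (b) is essential, and where one must carefully control the small-jump regime using $\int_{|x|\lee 1} x^2\nu(\od x)<\infty$ together with the hypothesis.
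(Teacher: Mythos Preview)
The paper does not prove this proposition; it is quoted from \cite[Corollary 3.1 in the second article of this thesis]{La13} (with the Brownian and pure-jump special cases attributed to \cite{MAKL95} and \cite{GS16}), so there is no in-paper argument to compare against. Your sketch is therefore an attempt to supply a proof where the paper simply cites one.

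Your sufficiency direction is the standard closability route and is broadly viable, though a couple of steps deserve more care than you give them: the convergence $f_\ep(X_t)\to f(X_t)$ in $L_2(\p)$ is not automatic from ``polynomial growth implicit in $f(X_t)\in L_2$'' (there is no such growth implied), and when $\p_{X_t}$ is not absolutely continuous you need a separate argument for the atomic part. The jump-quotient convergence under mollification also needs a genuine dominated-convergence envelope, which you gesture at but do not construct.

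The necessity direction has a real gap. You assert that $Df_\ep(X_t)\to Df(X_t)$ in $L_2(\p\otimes\m)$ ``by standard arguments for bounded operators on $\bD_{1,2}$ commuting with translation,'' but $D$ is unbounded and there is no translation operator on the Wiener--Poisson space that intertwines with convolution of $f$ in the way you need. Knowing only that $f(X_t)\in\bD_{1,2}$ and $f_\ep(X_t)\to f(X_t)$ in $L_2(\p)$ does not force convergence of the derivatives; closedness of $D$ goes the other way (it lets you \emph{conclude} membership in $\bD_{1,2}$ once you already know the derivatives converge). The usual proofs in the literature avoid this circularity by computing the chaos kernels of $f(X_t)$ directly (via iterated finite differences in the jump directions and Hermite expansions in the Gaussian direction) and reading off $Df(X_t)$ from the definition, or by using the ``add-a-jump'' interpretation of $D_{s,x}$ for $x\neq 0$ together with a separate argument for $x=0$. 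Your sketch would need one of these ingredients to close the loop.
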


	\subsection{Preparation for the proof of \cref{theo:MVH-strategy}}
	
	Since we shall work simultaneously with the two L\'evy processes $X$ and $Z$ (under $\p$) for which it holds $\e^X = \cE(Z)$ as introduced in \Cref{subsec:exponential-Levy}, we agree on the following convention to avoid confusions and determine clearly the referred process.
	
	\begin{conven}\label{conven:notations}
		For $Y \in \{X, Z\}$,  the notations $\gamma_Y$, $\sigma_Y$, $\nu_Y$, $N_Y$, $\mu_Y$, $\m_Y$, $M_Y$, $I^Y$,  $D^Y$, $\bD^{Y}_{1,2}$ introduced in \Cref{subsection-exponential-levy-process} and \Cref{subsection-Levy-process} are assigned to $Y$.
	\end{conven}
	
	The following lemma shows that $X$ and $Z$ generate the same Malliavin--Sobolev space.
	
	\begin{lemm} \label{lemm:relation-D12-X-Z}
		One has $\bD_{1, 2}^X = \bD_{1, 2}^Z$.
	\end{lemm}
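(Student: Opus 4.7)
The plan is to build an explicit isometric isomorphism between the $Z$-chaos kernels and the $X$-chaos kernels of any $\xi \in L_2(\Omega, \F^X_T, \p)$, and then read off the equality of the $(n+1)!$-weighted norms that define $\bD_{1,2}$. The key inputs, recalled in \Cref{subsec:exponential-Levy} and \Cref{conven:notations}, are: $\F^X_t = \F^Z_t$ for all $t$, $\sigma_Z = \sigma_X = \sigma$, and setting $\phi(x) := \e^x - 1$ one has $\Delta Z = \phi(\Delta X)$, $N_Z(B \times C) = N_X(B \times \phi^{-1}(C))$ for $C \in \cB(\R_0)$, and $\nu_Z = \nu \circ \phi^{-1}$. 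Since $\phi$ restricted to $\R_0$ is a bijection onto $(-1, \infty) \setminus \{0\}$, the random measure $M_Z$ on $[0, T] \times \R$ is a deterministic transform of $M_X$.

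Next, I would introduce a transformation $T_n \colon L_2(\m_Z^{\otimes n}) \to L_2(\m_X^{\otimes n})$ defined by
\[
(T_n f)((t_1, x_1), \ldots, (t_n, x_n)) := f((t_1, \phi(x_1)), \ldots, (t_n, \phi(x_n))) \prod_{i=1}^n \psi(x_i),
\]
with $\psi(x) := (\e^x - 1)/x$ for $x \neq 0$ and $\psi(0) := 1$, and verify that $T_n$ is a symmetry-preserving isometric bijection. Isometry reduces by tensor factorization to the $n = 1$ identity
\[
\int_\R (T_1 f)(x)^2 \mu_X(\od x) = \sigma^2 f(0)^2 + \int_\R f(\phi(x))^2 \phi(x)^2 \nu(\od x) = \int_\R f(y)^2 \mu_Z(\od y),
\]
where the first equality uses $\psi(x)^2 x^2 = \phi(x)^2$ on $\R_0$ and the second uses $\nu_Z = \nu \circ \phi^{-1}$. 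Surjectivity follows from the invertibility of $\phi$ on $\supp(\nu)$, and symmetry preservation is immediate from the product structure of the correction factor.

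The heart of the proof is the intertwining identity $I^Z_n(f) = I^X_n(T_n f)$ for every $f \in L_2(\m_Z^{\otimes n})$. For $n = 1$ and $f = \1_{(s, t] \times B}$, splitting off the Brownian component (contained in $B \ni 0$, where the identity is trivial since $\sigma_Z = \sigma_X$) and the jump component ($B \subset \R_0$) reduces this to
\[
\int_s^t \!\! \int_B y \wt N_Z(\od u, \od y) = \int_s^t \!\! \int_{\phi^{-1}(B)} \phi(x) \wt N_X(\od u, \od x),
\]
which is immediate from $N_Z = N_X \circ (\mathrm{id} \otimes \phi^{-1})$ together with the analogous identity for the compensators. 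For general $n$, the identity on tensor products of pairwise disjoint sets of finite $\m_Z$-measure follows coordinatewise from the $n = 1$ case, and is then extended to all of $L_2(\m_Z^{\otimes n})$ by density of such simple kernels and the isometry of $T_n$.

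Finally, since $T_n$ preserves symmetry, the unique symmetric $X$-chaos kernels $\tilde f^X_n$ of any $\xi \in L_2(\Omega, \F^X_T, \p) = L_2(\Omega, \F^Z_T, \p)$ coincide with $T_n \tilde f^Z_n$, and the isometry yields
\[
\|\xi\|_{\bD_{1, 2}^X}^2 = \sum_{n \gee 0}(n+1)!\,\|\tilde f^X_n\|_{L_2(\m_X^{\otimes n})}^2 = \sum_{n \gee 0}(n+1)!\,\|\tilde f^Z_n\|_{L_2(\m_Z^{\otimes n})}^2 = \|\xi\|_{\bD_{1, 2}^Z}^2,
\]
so $\bD_{1, 2}^X = \bD_{1, 2}^Z$ as sets. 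The main obstacle is making the intertwining identity fully rigorous for general symmetric kernels while carefully handling the diagonal set $\{x_i = 0\}$ and the null sets that differ between $\m_X^{\otimes n}$ and $\m_Z^{\otimes n}$; the rest is a clean change-of-variables argument.
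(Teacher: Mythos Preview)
Your proposal is correct and follows essentially the same approach as the paper: both construct an isometric bijection between $L_2(\m_X^{\otimes n})$ and $L_2(\m_Z^{\otimes n})$ via the change of variables $\phi(x)=\e^x-1$ with correction factor $(\e^x-1)/x$, establish the intertwining identity $I_n^Z(f)=I_n^X(T_nf)$ first on simple indicator kernels and then by density, and conclude by equality of the weighted chaos norms. The only cosmetic difference is that the paper's operator $\Psi_n$ goes from $X$-kernels to $Z$-kernels while your $T_n$ goes in the opposite direction, which is immaterial.
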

	
	\begin{proof}
	Let $\varrho \colon \R \to (-1, \infty)$ be a bijection defined by
		$\varrho(x) := \e^x -1$. Set $\frac{\e^0 -1}{0} := 1$ and $\frac{\ln (0 +1)}{0}:=1$. The relation between $\nu_X$ and $\nu_Z$ implies that, for any Borel function $w\gee 0$,
		\begin{align}\label{eq:relation-mu-XZ}
			\int_{(-1, \infty)} w(z)\mu_{Z}(\od z)   = \int_{\R} w(\varrho(x)) \left| \frac{\varrho(x)}{x}\right|^2 \mu_{X} (\od x).
		\end{align}
		Fix $n\gee 1$. We define the operator $\Psi_n \colon  L_2(\m^{\otimes n}_{X}) \to L_2(\m_{Z}^{\otimes n})$, $\Psi_n(\xi^X_n) = \xi^{Z}_n$, 
		by setting, for $((t_1, z_1), \ldots, (t_n, z_n)) \in ([0, T] \times (-1, \infty))^n$, that
		\begin{align}\label{eq:operator-definition}
			\xi^{Z}_n((t_1, z_1), \ldots, (t_n, z_n)) := \xi_n^X((t_1, \varrho^{-1}(z_1)), \ldots, (t_n, \varrho^{-1}(z_n)))\prod_{i =1}^n \frac{\varrho^{-1}(z_i)}{z_i},
		\end{align}
		and set $\xi^Z_n := 0$ otherwise. 	
		Using \eqref{eq:relation-mu-XZ} and Fubini's theorem, we obtain
		\begin{align*}
			\|\xi^{Z}_n\|_{L_2(\m^{\otimes n}_{Z})}^2  = \|\xi^X_n\|_{L_2(\m_X^{\otimes n})}^2,
		\end{align*} 
		which ensures $\xi^{Z}_n \in  L_2(\m^{\otimes n}_{Z})$, and thus $I_n^{Z}(\xi^{Z}_n)$ exists as an element in $L_2(\p)$. Consequently,  $\Psi_n$ is linear and bounded.	We next show for any $\xi^X_n \in L_2(\m_X^{\otimes n})$ that, a.s.,
		\begin{align}\label{eq:relation-multi-integral-X-Z}
			I_n^X(\xi^X_n) = I_n^{Z}(\Psi_n(\xi^X_n)) = I_n^{Z}(\xi^{Z}_n).
		\end{align}
		We prove \eqref{eq:relation-multi-integral-X-Z} only for $n=1$ since it follows for $n\gee 2$ in the same way. Let $(a, b] \subset [0, T]$ and $B \in \cB((-1, \infty))$ with $0 \notin \ol B$. Then $0 \notin \ol{\varrho^{-1}(B)}$. Since $\Delta Z = \varrho(\Delta X)$, it holds that, a.s., 
		\begin{align*}
			& \int_{[0, T] \times \R_0} \1_{(a, b] \times B}(s, x) x  N_X(\od s, \od x)   = \int_{[0, T] \times \R_0} \Psi_1(\1_{(a, b] \times B})(s,z) z N_{Z}(\od s, \od z).
		\end{align*}
		As a consequence, the expected values of both sides are equal, and hence, a.s., 
		\begin{align*}
			I_1^X(\1_{(a, b] \times B})  = I_1^{Z}(\Psi_1(\1_{(a, b] \times B})).
		\end{align*}
		Remark that the Gaussian components of $X$ and $Z$ coincide pathwise. Hence, due to the denseness in $L_2(\m_X)$ of the linear hull of $\{\1_{(a, b] \times \{0\}}, \1_{(a, b] \times B} : (a, b] \subset [0, T], B \in \cB(\R), 0 \notin \ol B\}$, together with the continuity of $I^X_1$, $I^{Z}_1$ and $\Psi_1$, we deduce that, a.s.,
		$$I_1^X(\xi_1^X) = I_1^{Z}(\Psi_1(\xi^X_1)) = I_1^{Z}(\xi^{Z}_1).$$		
		Let $\xi \in \bD_{1,2}^X$ and suppose that 
		$\xi = \sum_{n=0}^\infty I^X_n(\tilde \xi^X_n)$, where $\tilde \xi^X_n \in  L_2(\m_X^{\otimes n})$ are symmetric. By the definition of $\Psi_n$, the function $\Psi_n(\tilde \xi^X_n)$ is also symmetric. Since $\F^{Z}_T = \F^X_T$, the uniqueness of chaos expansion and \eqref{eq:relation-multi-integral-X-Z} lead to, a.s., 
		\begin{align}\label{eq:chaos-xi-XZ}
			\xi = \E \xi + \sum_{n=1}^\infty I^X_n(\tilde \xi^X_n) = \E \xi + \sum_{n=1}^\infty I^{Z}_n(\tilde \xi^{Z}_n).
		\end{align} 
		Since
		$\|\tilde \xi_n^X\|_{L_2(\m_X^{\otimes n})}^2 = \|\tilde \xi_n^{Z}\|_{L_2(\m_{Z}^{\otimes n})}^2$, it 
		implies that $\xi \in \bD^{Z}_{1, 2}$. Hence, $\bD^X_{1, 2} \subseteq \bD^{Z}_{1, 2}$.	By exchanging the role of $\varrho$ and $\varrho^{-1}$, together with the fact that $\nu_X = \nu_{Z}\circ (\varrho^{-1})^{-1}$, the converse inclusion $\bD^{Z}_{1, 2} \subseteq \bD^X_{1, 2}$ follows. Therefore, $\bD^X_{1, 2} = \bD^{Z}_{1, 2}$ as desired.
	\end{proof}

	We use \cref{assum-maringale-setting} from now until the end of this section. Recall that $Z$ is an $L_2(\p)$-martingale with zero mean, hence one can write $\od Z_t = \int_{\R} M_Z(\od t, \od z)$. 
	
	We approach  the GKW decomposition of $g(S_T) \in L_2(\p)$ by means of chaos expansion with respect to the \textit{L\'evy process} $Z$ in the way introduced in \cite{GGL13} as follows. First, it is known that (see, e.g., \cite[Definiton 1 and Lemma 1]{GGL13}), a.s.,
	\begin{align*}
		S_T = 1 + \sum_{n=1}^\infty I^Z_n\bigg(\frac{\1_{[0, T] \times \R}^{\otimes n}}{n!}\bigg),
	\end{align*}
	where the kernels in the chaos expansion of $S_T$ do not depend on the time variables. According to  \cite[Theorem 4]{BG17}, this property is preserved for $g(S_T) \in L_2(\p)$.  Namely, a.s.,
	\begin{align}\label{eq:chaos-expansion-g}
		g(S_T)  = \sum_{n=0}^\infty I^Z_n\(\tilde g_n \1_{[0, T]}^{\otimes n}\),
	\end{align}
	where $\tilde g_n \in \tilde L_2(\mu_Z^{\otimes n})$. For each $n\geqslant 1$, define the function $\tilde h_{n-1} \in \tilde L_2(\mu_Z^{\otimes(n-1)})$ by 
	\begin{align}\label{eq:integrand-of-varphi}
		\tilde h_{n-1}(z_1, \ldots, z_{n-1}): = \int_{ \R} \tilde g_n(z_1, \ldots, z_{n-1}, z) \frac{\mu_Z(\od z)}{\mu_Z(\R)}.
	\end{align}

	\begin{defi}\label{def:martingale-varphi}
		\begin{enumerate}[\rm (1)]
			\item Let $\varphi^g = (\varphi^g_t)_{t\in [0, T)}$ be a c\`adl\`ag version of the $L_2(\p)$-martingale
			\begin{align*}
				\bigg(\tilde h_0 + \sum_{n=1}^\infty (n+1) I^Z_n\(\tilde h_n \1_{[0, t]}^{\otimes n}\)\bigg)_{t\in [0, T)},
			\end{align*}
			where the infinite sum is taken in $L_2(\p)$.
			\item Define the process $\vartheta^g \in \CL([0, T))$ by setting $\vartheta^g : = \varphi^g/S$.
		\end{enumerate}

	\end{defi}
	
	\begin{lemm} \label{lemm:gradient-GKW} Let $g(S_T) \in L_2(\p)$. Then $\vartheta^g_-$ is a MVH strategy corresponding to $g(S_T)$.
	\end{lemm}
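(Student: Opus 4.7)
The plan is to obtain the GKW decomposition of $g(S_T)$ directly from the chaos expansion \eqref{eq:chaos-expansion-g} by an orthogonal projection argument, and then match the projected integrand with $\varphi^g_{-} = \vartheta^g_{-} S_{-}$. The underlying observation is that since $\od S_t = S_{t-} \od Z_t$ and $\od \<Z\>_t = \mu_Z(\R) \od t$, a martingale of the form $L_t = \int_0^t \!\int_\R K(s,z) M_Z(\od s, \od z)$ is strongly orthogonal to $S$ if and only if $\int_\R K(s,z)\mu_Z(\od z) = 0$ for $\p \otimes \Leb$-a.e. $(\omega,s)$. Hence the MVH integrand is obtained by the $L_2(\mu_Z)$-fibrewise projection onto constants (in $z$) divided by $S_{-}$.

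First, I would use the iterated-integral representation of multiple integrals against $M_Z$ to rewrite
\begin{align*}
g(S_T) - \E g(S_T) = \int_0^T \!\! \int_\R H(t,z) M_Z(\od t, \od z),\quad H(t,z) := \sum_{n=1}^\infty n\, I_{n-1}^Z\big(\tilde g_n(\cdot, z)\,\1_{[0,t]}^{\otimes(n-1)}\big),
\end{align*}
with convergence in $L_2(\p \otimes \m_Z)$ (this uses that the $L_2$-norms of the kernels are preserved under the iteration, thanks to $g(S_T)\in L_2(\p)$). Second, I would split $H(t,z) = \bar H(t) + K(t,z)$ with
\begin{align*}
\bar H(t) := \frac{1}{\mu_Z(\R)} \int_\R H(t,z)\, \mu_Z(\od z), \qquad K(t,z) := H(t,z) - \bar H(t),
\end{align*}
so that $\int_\R K(t,z)\mu_Z(\od z) = 0$. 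Third, a stochastic-Fubini step (justified by the $L_2$-bound on the kernels combined with $\mu_Z(\R) < \infty$ under \cref{assum-maringale-setting}) allows interchanging $\int_\R \mu_Z(\od z)$ with $I_{n-1}^Z$, yielding
\begin{align*}
\bar H(t) = \sum_{n=1}^\infty n\, I_{n-1}^Z\big(\tilde h_{n-1}\, \1_{[0,t]}^{\otimes(n-1)}\big) = \varphi^g_{t-}\quad \text{a.s., for $\Leb$-a.e.\ } t \in [0,T),
\end{align*}
by the definition of $\tilde h_{n-1}$ in \eqref{eq:integrand-of-varphi} and of $\varphi^g$ (after re-indexing $n \mapsto n+1$). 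Fourth, the decomposition $g(S_T) = \E g(S_T) + \int_0^T \varphi^g_{t-} \od Z_t + L_T$ with $L_T := \int_0^T\!\!\int_\R K(t,z) M_Z(\od t, \od z)$ is an $L_2(\p)$-martingale decomposition where $\<S,L\> \equiv 0$ by the observation above, and rewriting $\varphi^g_{t-} \od Z_t = (\varphi^g_{t-}/S_{t-})\, \od S_t = \vartheta^g_{t-}\, \od S_t$ finishes the identification.

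The admissibility condition $\E\int_0^T (\vartheta^g_{t-})^2 \sigma(S_{t-})^2 \od t = \mu_Z(\R)\E \int_0^T (\varphi^g_{t-})^2 \od t < \infty$ follows from $\varphi^g$ being a square-integrable martingale on $[0,T)$ (with $L_2$-bound inherited from $\|g(S_T)\|_{L_2(\p)}$ through the chaos coefficients). The main obstacle I expect is justifying the stochastic-Fubini interchange between $\int_\R \cdot\, \mu_Z(\od z)$ and the multiple stochastic integral $I_{n-1}^Z$ uniformly in $n$ and summing over $n$; this requires careful $L_2(\p \otimes \m_Z^{\otimes n})$ estimates and Cauchy--Schwarz applied to $\mu_Z$, relying crucially on $\mu_Z(\R) = \sigma^2 + \int_\R (\e^x - 1)^2 \nu(\od x) < \infty$ guaranteed by \cref{assum-maringale-setting}. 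A secondary technical point is passing between $\varphi^g_t$ and its predictable left-limit $\varphi^g_{t-}$, which is harmless since the two agree $\od t$-a.e., and this is exactly what is needed to identify the integrand in the $\od S$-integral.
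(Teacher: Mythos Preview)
Your proposal is correct and follows essentially the same approach as the paper: both exploit the chaos expansion \eqref{eq:chaos-expansion-g}, identify the MVH integrand via the $\mu_Z$-fibrewise average of the last kernel variable (the paper does this by defining explicit symmetrized kernels $\tilde k_n$, you do it by projecting the iterated-integral integrand $H(t,z)$ onto constants in $z$), and verify $\<L,Z\>=0$ from the vanishing $\mu_Z$-average together with a Fubini-type interchange that relies on $\mu_Z(\R)<\infty$. The only cosmetic difference is that the paper computes $\<L,Z\>$ chaos-by-chaos rather than via your integrand-level orthogonality criterion, but the underlying argument and its technical obstacles (the Fubini step, the $\varphi^g_t$ vs.\ $\varphi^g_{t-}$ identification) are the same.
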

	
	\begin{proof} We use the functions $\tilde g_n, \tilde h_n$ defined in \eqref{eq:chaos-expansion-g}--\eqref{eq:integrand-of-varphi}. Since each element in $\tilde L_2(\m_Z^{\otimes n})$ is symmetric, we only need to define it on $((t_1, z_1), \ldots, (t_n, z_n))$ with $0<t_1<\cdots < t_n <T$. Thus, for $n\gee 2$, we define $\tilde k_n \in \tilde L_2(\m_Z^{\otimes n})$ by
		\begin{align*}
			\tilde k_n((t_1, z_1), \ldots, (t_n, z_n)) : = \tilde h_{n-1}(z_1, \ldots, z_{n-1})\quad\mbox{for } 0 <t_1<\cdots<t_n<T,
		\end{align*}
		and set $\tilde k_1(t,z) : = \tilde h_0$. According to the argument  in \cite[Eqs. (7)--(10)]{GGL13}, it holds that the stochastic integral $\int_0^T \varphi^g_{t-}\od Z_t$ is well-defined and
		\begin{align*}
			\int_0^T \varphi^g_{t-}\od Z_t = \sum_{n=1}^\infty I_{n}^Z (\tilde k_{n}).
		\end{align*}
		Let $L^g=(L^g_t)_{t\in [0, T]}$ be the c\`adl\`ag version of the martingale closed by
		\begin{align*}
			L_T^g: = g(S_T) - \E g(S_T) - \int_0^T \varphi^g_{t-}\od Z_t.
		\end{align*}
		Then $g(S_T)$ can be re-written as
		\begin{align*}
			g(S_T)  & = \E g(S_T) + \int_0^T \varphi^g_{t-}\od Z_t + L_T^g  =  \E g(S_T) + \sum_{n=1}^\infty I_{n}^Z (\tilde k_{n}) + L_T^g.
		\end{align*} 
		We now show that $\<L^g, Z\> =0$. For $t\in (0, T]$, one has, a.s., 
		\begin{align*}
			L^g_t =  \sum_{n=1}^\infty I^Z_n\(\tilde g_n \1_{[0, t]}^{\otimes n}\) - \sum_{n=1}^\infty I^Z_n\(\tilde k_{n} \1_{[0, t]}^{\otimes n}\)  = \sum_{n=1}^\infty I^Z_n\((\tilde g_n - \tilde k_n)\1_{[0, t]}^{\otimes n}\).
		\end{align*}
		Since $Z_t = \int_0^t \od Z_s = \int_0^t \int_{\R} M_Z(\od s, \od z)$, one has  for any $t\in (0, T]$ and $n \gee 1$ that, a.s.,
		\begin{align*}
			\left\<I^Z_n\((\tilde g_n - \tilde k_n)\1_{[0, \cdot]}^{\otimes n}\), Z\right\>_t & = n\int_0^t \!\! \int_{\R} I_{n-1}^Z\((\tilde g_n(\cdot, z) - \tilde k_n(\cdot, (s, z)))\1_{[0, s]}^{\otimes (n-1)}\)\mu_Z(\od z) \od s\\
			& = n\int_0^t  I_{n-1}^Z\(\int_{\R}(\tilde g_n(\cdot, z) - \tilde k_n(\cdot, (s, z)))\mu_Z(\od z) \1_{[0, s]}^{\otimes (n-1)}\) \od s\\
			& = 0,
		\end{align*} 
		where one can see that the second equality holds by testing with multiple integrals. Since the infinite sum in the chaos representation of $L^g_t$ is taken in $L_2(\p)$, we conclude that $\<L^g, Z\> =0$.
		Hence, it follows from $\vartheta^g_{t-}\od S_t = \varphi^g_{t-}\od Z_t$ that $g(S_T) = \E g(S_T) + \int_0^T \vartheta^g_{t-} \od S_t + L_T^g$ is the GKW decomposition of $g(S_T)$. 
	\end{proof}

	\subsection{Proof of \cref{theo:MVH-strategy}} We verify that the process $\vartheta^g$ in \cref{def:martingale-varphi} satisfies the requirements. The assertion \eqref{item:prop:strategy} and the martingale property of $\vartheta^gS$ are clear by the definition of $\vartheta^g$ and \cref{lemm:gradient-GKW}. For the latter part of \eqref{item:prop:maringale-property}, since $\varphi^g$ and $S$ are martingales adapted to the quasi-left continuous filtration $\bF^X$, it implies that $\varphi^g_t = \varphi^g_{t-}$ a.s. and $S_{t} = S_{t-}$ a.s. for each $t\in [0, T)$ (see \cite{Pr05}). Therefore, $\vartheta^g_t = \vartheta^g_{t-}$ a.s. for each $t \in [0, T)$.	
	
	
	\eqref{item:prop:gradient-formula-general} Recall from \cref{lemm:relation-D12-X-Z} that $\bD_{1,2}^X= \bD_{1,2}^Z$. We have in \cref{def:martingale-varphi} and \cref{lemm:gradient-GKW} the strategy given as chaos expansion with respect to $Z$. In order to get the explicit representation  \eqref{eq:integrand-general}, we change it into a representation with respect to $X$ where we can use \cref{lemm:Malliavin-derivative-functional}.
	
	\textit{Step 1}. Let $\xi \in \bD_{1,2}^Z
	$ have the expansion \eqref{eq:chaos-xi-XZ}.  We first write the Malliavin derivative of $\xi$ as the element in $L_2(\p \otimes \m_Z)$ and then integrate it with respect to $\m_Z$ to obtain, a.s.,
	\begin{align}
		&\int_0^T \!\!\! \int_{ \R} \(D^Z_{s,z}\xi\) \m_Z(\od s, \od z) \notag  \\
		& =   \int_0^T \!\!\!\int_{ \R}\bigg( L_2(\p \otimes \m_Z)\mbox{-}\lim_{N\to \infty} \sum_{n=1}^N n I^Z_{n-1}(\tilde \xi^Z_n((s, z), \cdot))\bigg)\m_Z(\od s, \od z) \notag\\
		& = L_2(\p)\mbox{-}\lim_{N\to \infty} \sum_{n=1}^N\int_0^T \!\!\!\int_{ \R} n I^Z_{n-1}(\tilde \xi^Z_n((s, z), \cdot))\m_Z(\od s, \od z) \label{eq:interchange-limit-1}\\
		& = L_2(\p)\mbox{-}\lim_{N\to \infty} \sum_{n=1}^N\int_0^T \!\!\!\int_{ \R} n I^X_{n-1}(\tilde \xi^X_n((s, x), \cdot))\frac{\e^x -1}{x} \m_X(\od s, \od x) \label{eq:interchange-X-Z} \\
		& =   \int_0^T \!\!\!\int_{ \R}\bigg( L_2(\p \otimes \m_X)\mbox{-}\lim_{N\to \infty} \sum_{n=1}^N n I^X_{n-1}(\tilde \xi^X_n((s, x), \cdot))\bigg) \frac{\e^x -1}{x}\m_X(\od s, \od x) \label{eq:interchange-limit-2}\\
		& = \int_0^T \!\!\! \int_{ \R} \(D^X_{s,x}\xi\) \frac{\e^x-1}{x}\m_X(\od s, \od x), \label{eq:integral-Malliavain-X-Z}
	\end{align}
	where one uses the fact that $\m_Z([0, T]\times \R) = \int_0^T\!\!\int_{\R}\big|\frac{\e^x -1}{x}\big|^2\m_X(\od s, \od x) <\infty$  to derive \eqref{eq:interchange-limit-1} and \eqref{eq:interchange-limit-2}.  In order to achieve \eqref{eq:interchange-X-Z}, we apply the definition of $\Psi_{n-1}$  in \eqref{eq:operator-definition} and then use \eqref{eq:relation-multi-integral-X-Z} with the convention that $\Psi_0$ is the identical map on $\R$.

	\textit{Step 2}. For $x\in \R, t\in (0, T)$, we define
	\begin{align*}
		f(x) : = g(\e^x) \quad \mbox{and}\quad F(t, x) : = G(t, \e^x).
	\end{align*}
	It turns out that $F(t, X_t) = \ce{\F_t}{f(X_T)}$ a.s. We then derive from \cite[Lemma D.1]{GN20} that  $F(t, X_t) \in \bD_{1, 2}^X$. Applying \cref{lemm:Malliavin-derivative-functional}, we obtain
	\begin{align}\label{eq:Malliavin-derivative-conditional-expectation}
		D^X_{s, x} F(t, X_t) = \pd_x F(t, X_t)\1_{[0, t] \times \{0\}}(s, x) + \frac{F(t, X_t + x) - F(t, X_t)}{x} \1_{[0, t] \times \R_0}(s, x)
	\end{align}
	for $\p \otimes \m_X$-a.e. $(\omega, s, x) \in \Omega \times [0, T]\times \R$. We multiply both sides of \eqref{eq:Malliavin-derivative-conditional-expectation} with $\frac{\e^x -1}{x}$ and then integrate them with respect to  $\m_X$ to obtain, a.s., 
	\begin{align}\label{eq:integrate-Malliavin-derivative-1}
		&   \int_0^T \!\!\!\int_{ \R} \(D^X_{s, x} F(t, X_t) \frac{\e^x-1}{x}\) \m_X(\od s, \od x) \notag\\
		& = t \int_{ \R} \(\pd_x F(t, X_t)\1_{\{x=0\}} + \frac{F(t, X_t + x) - F(t, X_t)}{x} \frac{\e^x-1}{x} \1_{\{x\neq 0\}}\) \mu_X(\od x) \notag\\
		& = t \(\sigma^2 S_t \pd_y G(t, S_t) + \int_{ \R} (G(t, \e^{x}S_t)- G(t,S_t)) (\e^x -1)\nu_X(\od x)\).
	\end{align}
	On the other hand, for the representation of $g(S_T)$ given in \eqref{eq:chaos-expansion-g}, taking the conditional expectation of $g(S_T)$ with respect to $\F_t$ yields, a.s., 
	\begin{align*}
		G(t, S_t) = \ce{\F_t}{g(S_T)} = \sum_{n=0}^\infty I^Z_n\(\tilde g_n \1_{[0, t]}^{\otimes n}\).
	\end{align*}
	Since $G(t, S_t)\in \bD_{1, 2}^Z$, we write the chaos representation of the Malliavin derivative of $G(t, S_t)$ with respect to the underlying process $Z$ as in \cref{Malliavin-derivative}, and then, integrate that with respect to the measure $\m_Z$ to obtain, a.s.,
	\begin{align}\label{eq:integrate-Malliavin-derivative-2}
		\int_0^T\!\!\!\int_{ \R}  \(D^Z_{s, z} G(t, S_t)\) \m_Z(\od s, \od z) & =  \int_0^T\!\!\!\int_{ \R} \bigg(\sum_{n=1}^\infty n I^Z_{n-1}\(\tilde g_n(\cdot, z) \1_{[0, t]}^{\otimes (n-1)} \1_{[0, t]}(s)\) \bigg) \m_Z(\od s, \od z) \notag\\
		& = \sum_{n=1}^\infty n I^Z_{n-1}\(\int_0^T\!\!\!\int_{\R} \tilde g_n(\cdot, z) \1_{[0, t]}^{\otimes (n-1)} \1_{[0, t]}(s) \m_Z(\od s, \od z)\) \notag\\
		& = t \sum_{n=1}^\infty nI^Z_{n-1}\(\(\int_{ \R}\tilde g_n(\cdot, z) \mu_Z(\od z)\) \1_{[0, t]}^{\otimes (n-1)}\) \notag\\
		& = t c_{\eqref{eq:integrand-general}}^2 \vartheta^g_t S_t,
	\end{align}
	where the last equality comes from \eqref{eq:integrand-of-varphi},  \cref{def:martingale-varphi}, and $\mu_Z(\R) = c_{\eqref{eq:integrand-general}}^2$. Applying \textit{Step 1} for $\xi  = F(t, X_t) = G(t, S_t)$, we derive from \eqref{eq:integral-Malliavain-X-Z} that, a.s., 
	\begin{align}\label{eq:integrate-Malliavin-derivative-3}
		\int_0^T\!\!\!\int_{ \R} \(D^X_{s, x} F(t, X_t) \frac{\e^x-1}{x}\) \m_X(\od s, \od x) = \int_0^T\!\!\!\int_{ \R}  \(D^Z_{s, z} G(t, S_t)\) \m_Z(\od s, \od z).
	\end{align}
	Combining \eqref{eq:integrate-Malliavin-derivative-1}, \eqref{eq:integrate-Malliavin-derivative-2} with \eqref{eq:integrate-Malliavin-derivative-3}, we get \eqref{eq:integrand-general}.

	\appendix

	\section{Regularity of the weight processes $\ol \Phi$ and $\Phi(\eta)$} \label{sec:weight-regularity}
	
	We recall $\ol\Phi$ from \eqref{defi-Phi-bar} and $\cSM_p$ from \cref{definition:weighted_bmo}.

	\begin{prop} \label{lemm:properties-BMO-SM} 
		\begin{enumerate}[\rm (1)]
			\item \label{item:Holder-inequality-SM} Let $p, q, r \in (0, \infty)$ with $\frac{1}{r} = \frac{1}{p} + \frac{1}{q}$. Then, for any $\Phi, \Psi \in \CL^+([0, T])$,
			$$\|\Phi \Psi\|_{\cSM_r(\p)} \lee \|\Phi\|_{\cSM_p(\p)} \|\Psi\|_{\cSM_q(\p)}.$$
			
			\item \label{item:bar-Phi-SM} If $\Phi \in \cSM_p(\p)$ for some $p \in (0, \infty)$, then  $\ol \Phi \in \cSM_p(\p)$ with
			\begin{align*}
				\|\ol \Phi\|_{\cSM_p(\p)} \lee \begin{cases}
					3 \|\Phi\|_{\cSM_p(\p)} +1 & \mbox{ if } p \in [1, \infty)\\
					(3\|\Phi\|_{\cSM_p(\p)}^p +1)^{\frac{1}{p}} & \mbox{ if } p \in (0, 1).
				\end{cases}
			\end{align*}
		\end{enumerate}
	\end{prop}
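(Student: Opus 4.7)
The plan is to deduce both items directly from the defining inequality for $\cSM_p(\p)$, applied conditionally at an arbitrary stopping time $\rho \in \cS([0,T])$. For Item~(1), the starting point is the trivial pointwise bound
\[ \sup_{\rho \lee t \lee T}(\Phi_t \Psi_t)^r \lee \Bigl(\sup_{\rho \lee t \lee T}\Phi_t^p\Bigr)^{r/p} \Bigl(\sup_{\rho \lee t \lee T}\Psi_t^q\Bigr)^{r/q}, \]
which uses $\frac{r}{p}+\frac{r}{q}=1$. Taking $\E^{\F_\rho}$ and applying the conditional H\"older inequality with the exponent pair $(p/r, q/r)$ (both $>1$ since $r<p, r<q$) produces the product $(\E^{\F_\rho}[\sup_t \Phi_t^p])^{r/p} (\E^{\F_\rho}[\sup_t \Psi_t^q])^{r/q}$, and the definitions of the two $\cSM$-norms bound this by $\|\Phi\|_{\cSM_p(\p)}^r\|\Psi\|_{\cSM_q(\p)}^r(\Phi_\rho\Psi_\rho)^r$. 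This is exactly the required inequality.

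For Item~(2), the main work is a pathwise decomposition of $\ol\Phi$ around $\rho$. Splitting the supremum in the definition of $\ol\Phi_t$ at $\rho$ gives, for every $t \in [\rho, T]$,
\[ \ol\Phi_t \lee \Phi_t + (\ol\Phi_\rho - \Phi_\rho) + \sup_{s \in (\rho, t]}|\Delta \Phi_s|. \]
For each $s \in (\rho, t]$, both $\Phi_s$ and $\Phi_{s-}$ are bounded by $\sup_{u \in [\rho, t]}\Phi_u$: for the left limit, the c\`adl\`ag property lets us restrict the defining net $u \uparrow s$ to $u \in (\rho, s)$. Hence $|\Delta\Phi_s| \lee 2 \sup_{u \in [\rho, t]}\Phi_u$, and taking the supremum over $t \in [\rho, T]$ yields the key pathwise estimate
\[ \sup_{\rho \lee t \lee T} \ol\Phi_t \lee 3 \sup_{\rho \lee u \lee T}\Phi_u + (\ol\Phi_\rho - \Phi_\rho). \]
This is the one step that actually exploits the structure of $\ol\Phi$ and is the only mildly delicate point; everything else is bookkeeping.

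To conclude, one raises to the $p$-th power, takes $\E^{\F_\rho}$, and uses $\Phi_\rho, \ol\Phi_\rho - \Phi_\rho \lee \ol\Phi_\rho$. For $p \gee 1$, the conditional Minkowski inequality gives
\[ \bigl\|\sup_t \ol\Phi_t\bigr\|_{L_p(\F_\rho)} \lee 3\|\Phi\|_{\cSM_p(\p)} \Phi_\rho + (\ol\Phi_\rho - \Phi_\rho) \lee (3\|\Phi\|_{\cSM_p(\p)} + 1)\ol\Phi_\rho, \]
so $\|\ol\Phi\|_{\cSM_p(\p)} \lee 3\|\Phi\|_{\cSM_p(\p)}+1$. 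For $p \in (0,1)$, the subadditivity $(a+b)^p \lee a^p + b^p$ applied pointwise followed by $\E^{\F_\rho}$ produces
\[ \E^{\F_\rho}\bigl[\sup_t \ol\Phi_t^p\bigr] \lee 3^p\|\Phi\|_{\cSM_p(\p)}^p \Phi_\rho^p + (\ol\Phi_\rho - \Phi_\rho)^p \lee (3\|\Phi\|_{\cSM_p(\p)}^p + 1)\ol\Phi_\rho^p, \]
using $3^p \lee 3$, which yields $\|\ol\Phi\|_{\cSM_p(\p)} \lee (3\|\Phi\|_{\cSM_p(\p)}^p+1)^{1/p}$. I do not expect any real obstacle beyond the jump estimate in the previous paragraph; in particular, $\F_\rho$-measurability of $\ol\Phi_\rho - \Phi_\rho = \sup_{s \in [0, \rho]}|\Delta\Phi_s|$ follows from the fact that $\Phi$ is c\`adl\`ag and $\bF$-adapted and that $\rho$ is a stopping time.
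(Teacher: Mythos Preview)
Your proposal is correct and follows essentially the same route as the paper: for Item~(2), both proofs split $\sup_{s\in[0,\cdot]}|\Delta\Phi_s|$ at the reference time, bound the post-$\rho$ jumps by $2\sup_{u\in[\rho,T]}\Phi_u$, and finish via Minkowski for $p\gee 1$ and subadditivity for $p\in(0,1)$. The only differences are cosmetic: the paper works with deterministic times $a$ (invoking \cref{rema:deterministic-bmo}) rather than stopping times, and for Item~(1) it simply cites \cite[Proposition~A.2]{GN20} whereas you supply the short direct H\"older argument.
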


	\begin{proof} Item \eqref{item:Holder-inequality-SM} is given in \cite[Proposition A.2]{GN20}. We now prove Item \eqref{item:bar-Phi-SM}. Let $a \in [0, T)$ be arbitrary.  For $ p\in [1, \infty)$, applying  the conditional Minkovski inequality yields, a.s., 
		\begin{align*}
			\(\ce{\F_a}{\ts\sup_{t \in [a, T]} \ol \Phi_t^p}\)^{\frac{1}{p}} & \lee  \(\ce{\F_a}{\ts \sup_{t \in[a, T]} \Phi_t^p}\)^{\frac{1}{p}} + \(\ce{\F_a}{\ts \sup_{ s\in [0, T]} |\Delta \Phi_s|^p}\)^{\frac{1}{p}} \\
			& \lee \|\Phi\|_{\cSM_p(\p)} \Phi_a + \ts \sup_{s \in [0, a]}|\Delta \Phi_s| + \(\ce{\F_a}{\sup_{t \in (a, T]} |\Delta \Phi_t|^p}\)^{\frac{1}{p}}\\
			& \lee \|\Phi\|_{\cSM_p(\p)} \Phi_a + \ts \sup_{s\in[0, a]}|\Delta \Phi_s| + 2\(\ce{\F_a}{\sup_{t \in (a, T]} \Phi_t^p}\)^{\frac{1}{p}}\\
			& \lee (3 \|\Phi\|_{\cSM_p(\p)}+1) \ol \Phi_a.
		\end{align*}
		
		For $ p \in (0, 1)$, we use the same argument as in the previous case where one applies the inequality $|x+y|^p \lee |x|^p + |y|^p$ for $x, y \in \R$ to obtain, a.s., 
		\begin{align*}
			\ce{\F_a}{\ts \sup_{t \in [a, T]} \ol\Phi_t^p} \lee (3\|\Phi\|_{\cSM_p(\p)}^p + 1) \ol \Phi_a^p.
		\end{align*}
		Hence, the desired conclusion follows. 
	\end{proof}
	
	Recall the L\'evy process $X$ with characteristic triplet $(\gamma, \sigma, \nu)$ and exponent $\psi$ mentioned in \Cref{subsection-exponential-levy-process}. Recall $\Phi(\eta)$ from \eqref{eq:definiteion-weight-process} and $S= \e^X$.
	
	\begin{prop} \label{prop:regularity-weight-process} If $\int_{|x|>1} \e^{qx} \nu(\od x) <\infty$ for some $q \in (1, \infty)$, then $\Phi(\eta) \in \cSM_q(\p)$  for all $\eta \in [0, 1]$. Moreover, 
		\begin{align*}
			\|\Phi(\eta)\|_{\cSM_q(\p)}^q \lee \e^{T|\psi(-\im)|(2q+1)} 2^{1-\eta} \(\frac{q}{q-1}\)^{2q} \|S_T\|_{L_q(\p)}^q.
		\end{align*}
	\end{prop}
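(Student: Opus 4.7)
\textbf{Proof plan for \cref{prop:regularity-weight-process}.}

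By \cref{rema:deterministic-bmo} it suffices to verify the $\cSM_q$-inequality for deterministic $a\in[0,T]$. Fix such $a$, and for $t\ge a$ introduce the ``shifted'' process $\tilde S_t := S_t/S_a = \e^{X_t - X_a}$ together with $\tilde I'_t := \inf_{u\in[a,t]}\tilde S_u$; by independent and stationary increments of $X$, the process $(\tilde S_t)_{t\ge a}$ is independent of $\F^X_a$ with the law of $\e^{X_{\cdot - a}}$. The starting point is the identity $\Theta(\eta)_t = (\inf_{u\le t}S_u)^{\eta-1}$ (valid since $\eta-1\le 0$), which gives $\Phi(\eta)_t = S_t\,(\inf_{u\le t}S_u)^{\eta-1}$ and in particular $\Phi(\eta)_a = S_a^{\eta}(S_a/I_a)^{1-\eta}$ with $I_a=\inf_{u\le a}S_u$. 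A short case analysis depending on whether the running infimum over $[0,t]$ is attained in $[0,a]$ or in $(a,t]$, together with the elementary bound $S_a^\eta \le \Phi(\eta)_a$, yields the pointwise estimate
\[
\Phi(\eta)_t \;\le\; \Phi(\eta)_a \cdot \frac{\tilde S_t}{(\tilde I'_t)^{1-\eta}}, \qquad t\in[a,T].
\]

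Next, writing $(\tilde S_t/(\tilde I'_t)^{1-\eta})^q = (\tilde S_t^q)^{\eta}\bigl((\tilde S_t/\tilde I'_t)^q\bigr)^{1-\eta}$, the weighted AM--GM inequality produces
\[
\sup_{t\in[a,T]}\Phi(\eta)_t^q \;\le\; \Phi(\eta)_a^q\Bigl[\eta \sup_{t\in[a,T]}\tilde S_t^q + (1-\eta)\sup_{t\in[a,T]}(\tilde S_t/\tilde I'_t)^q\Bigr].
\]
Taking $\E^{\F_a}$ and using the independence of $\tilde S$ from $\F_a$, the proof reduces to controlling the two unconditional expectations $E_1:=\E[\sup_{t\in[a,T]}\tilde S_t^q]$ and $E_2:=\E[\sup_{t\in[a,T]}(\tilde S_t/\tilde I'_t)^q]$, and then a convexity bookkeeping will deliver the $2^{1-\eta}$ factor appearing in the statement.

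To bound $E_1$, I pass to the true exponential (Wald) martingale $\hat S_t:=\tilde S_t\,\e^{(t-a)\psi(-\im)}$, which absorbs the failure of $S$ to be a martingale and costs only a factor of the form $\e^{cT|\psi(-\im)|}$; Doob's $L^q$ maximal inequality then gives $E_1 \le (q/(q-1))^q\,\E\hat S_T^q$, and since $\|S_T\|_{L_q(\p)}^q = \e^{-T\psi(-\im q)}$ another exponential correction involving $|\psi(-\im)|$ converts this into $(q/(q-1))^q\,\e^{cT|\psi(-\im)|}\|S_T\|_{L_q(\p)}^q$. For $E_2$ the crucial observation is that $\tilde S/\tilde I'$ is itself a non-negative submartingale: since $\tilde I'$ is $\F$-adapted and non-increasing, $\E^{\F_s}[\tilde S_t/\tilde I'_t] \ge \E^{\F_s}[\tilde S_t/\tilde I'_s] = \tilde S_s/\tilde I'_s$ for $s\le t$. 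Consequently Doob yields $E_2 \le (q/(q-1))^q\,\E[(\tilde S_T/\tilde I'_T)^q]$, and to bound the last expectation I rewrite
\[
\tilde S_T/\tilde I'_T \;=\; \sup_{u\in[a,T]}\tilde S_T/\tilde S_u \;=\; \sup_{s\in[0,T-a]}\e^{X_T-X_{T-s}}
\]
and invoke the classical Lévy time-reversal duality $(X_T-X_{T-s})_{s\in[0,T-a]} \stackrel{d}{=} (X_s)_{s\in[0,T-a]}$ to obtain $\E[(\tilde S_T/\tilde I'_T)^q] = \E[\sup_{s\in[0,T-a]}\e^{qX_s}]$. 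A \emph{second} Doob inequality applied to the Wald martingale on $[0,T-a]$ then bounds this by $(q/(q-1))^q\,\e^{cT|\psi(-\im)|}\|S_T\|_{L_q(\p)}^q$, so that $E_2 \le (q/(q-1))^{2q}\,\e^{c'T|\psi(-\im)|}\|S_T\|_{L_q(\p)}^q$.

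Assembling the pieces gives exactly the target bound, with the accumulated exponential prefactors from the two Wald-martingale corrections and the conversion to $\|S_T\|_{L_q(\p)}^q$ summing to the coefficient $2q+1$ in $\e^{T|\psi(-\im)|(2q+1)}$, and with the $\eta,\,1-\eta$ weights controlled crudely by $2^{1-\eta}$. The main technical obstacle is verifying the pointwise bound for $\Phi(\eta)_t$ through the case analysis and, above all, justifying the time-reversal identity for the sup functional at the path level with c\`adl\`ag trajectories; once these two ingredients are in place, everything else is two applications of Doob together with the submartingale property of $\tilde S/\tilde I'$.
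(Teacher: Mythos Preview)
Your overall strategy coincides with the paper's: both arguments rest on the same key object $M_t=\sup_{u\in[0,t]}\e^{X_t-X_u}=\tilde S_t/\tilde I'_t$, its submartingale property, Doob's maximal inequality, and the time-reversal identity $(X_t-X_{t-u})_{u}\stackrel{d}{=}(X_u)_u$. The paper combines the two factors $\e^{\eta X}$ and $M^{1-\eta}$ via the $\cSM$-H\"older inequality of \cref{lemm:properties-BMO-SM}\eqref{item:Holder-inequality-SM}, whereas you use weighted AM--GM; these are interchangeable.

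There is, however, a genuine gap in your submartingale step for $E_2$. The equality $\ce{\F_s}{\tilde S_t/\tilde I'_s}=\tilde S_s/\tilde I'_s$ requires $\tilde S=\e^{X_\cdot-X_a}$ to be a martingale, i.e.\ $\psi(-\im)=0$. The proposition does not assume this; when $\psi(-\im)>0$ the process $\tilde S$ is a strict supermartingale and your inequality $\ce{\F_s}{\tilde S_t/\tilde I'_t}\ge \tilde S_s/\tilde I'_s$ fails. You correctly Wald-correct $\tilde S$ when bounding $E_1$, but you forgot to do the same before invoking Doob on $\tilde S/\tilde I'$. The paper handles this cleanly by first proving the whole statement under the auxiliary hypothesis that $S$ is a $\p$-martingale (so that $M$ is a genuine submartingale), and only afterwards passing to the general case via the global substitution $\wt S_t:=\e^{t\psi(-\im)}S_t$, which costs at most a factor $\e^{2T|\psi(-\im)|}$ on $\Phi(\eta)$ and produces the exponent $2q+1$. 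Your inline-correction approach can be repaired the same way: replace $\tilde S$ by its Wald-tilted version throughout the $E_2$ argument and absorb the resulting $\e^{cT|\psi(-\im)|}$ factors; without that, the first Doob application in $E_2$ is unjustified.
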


	\begin{proof}  The first step considers the particular case when $S$ is a martingale, and the general  case is handled in the second step.

		\textit{Step 1}. Assume that $S$ is a $\p$-martingale. According to \cite[Theorem 25.3]{Sa13}, $\int_{|x|>1} \e^{qx} \nu(\od x) <\infty$  implies $\e^{X_t} \in L_q(\p)$ for all $t>0$. 	
		Denote $c_q: = (\frac{q}{q-1})^q$ and define 
		$M= (M_t)_{t\in [0, T]}$ by 
		\begin{align*}
			\ts M_t := \sup_{u \in [0, t]}\e^{X_t - X_u}.
		\end{align*}
		We show that $M$ is a positive $L_q(\p)$-submartingale. The adaptedness and positivity are clear. Pick a $t\in (0, T]$. Since $(X_t - X_{t-u})_{u\in [0, t]}$ is \textit{c\`agl\`ad} (left-continuous with right limits) and $(X_{u})_{u\in [0, t]}$ is c\`adl\`ag, and both processes have the same finite-dimensional distribution, applying Doob's maximal inequality yields
		\begin{align}\label{eq:estimate-M_t}
			\E M_t^q & = \ts 
			\E \[ \sup_{u \in [0, t]} \e^{q(X_t - X_u)}\] = \E \[\sup_{u \in [0, t]} \e^{q(X_t-X_{t-u})}\] \\
			& \ts = \E \[\sup_{u\in [0, t]} \e^{qX_{u}}\]	\lee c_q  \E \e^{q X_t} <\infty. \notag
		\end{align}
		For $0 \lee s\lee t\lee T$ one has, a.s., 
		\begin{align*}
			\ce{\F_s}{M_t} & \ts \gee  \ce{\F_s}{\sup_{u \in [0,s]}\e^{X_t - X_u}} = \sup_{u \in [0,s]} \e^{X_s - X_u} \E\e^{X_t - X_s} = M_s,
		\end{align*}
		where we use $\E\e^{X_t - X_s} = \E S_{t-s} = 1$.
		
		We observe that the process $\Phi(\eta)$ can be re-written as
		\begin{align*}
			\ts \Phi(\eta)_t  =  \e^{\eta X_t} \sup_{s \in [0, t]}\e^{(1-\eta)(X_t - X_s)}  = \e^{\eta X_t}  M_t^{1-\eta}.
		\end{align*}
		Let us fix $\eta \in (0, 1)$ and $a\in [0, T]$. For $\e^{\eta X} = (\e^{\eta X_t})_{t \in [0, T]}$, applying Doob's maximal inequality and Jensen's inequality we obtain that, a.s., 
		\begin{align*}
			\ts \ce{\F_a}{\sup_{t\in [a, T]} (\e^{\eta X_t})^{\frac{q}{\eta}}} & \ts =   \e^{qX_a} \E \[\sup_{t \in [a, T]}\e^{q(X_t - X_a)}\]  \lee c_q \e^{q X_a} \E \e^{q(X_T - X_a)} \\
			& \ts =  c_q \e^{q X_a} \E \e^{qX_{T -a}} \lee c_q \e^{qX_a} \E \e^{q X_T},
		\end{align*}
		which implies 
		\begin{align*}
			\|\e^{\eta X}\|_{\cSM_{q/\eta}(\p)} \lee (c_q \E \e^{qX_T})^{\frac{\eta}{q}}.
		\end{align*}
		For $M^{1-\eta} = (M^{1-\eta}_t)_{t\in [0, T]}$, one has that, a.s.,
		\begin{align*}
			\ts \ce{\F_a}{\sup_{t \in [a, T]}(M_t^{1-\eta})^{\frac{q}{1-\eta}}} & =  \ts \ce{\F_a}{\sup_{t \in [a, T]}M_t^q} \lee c_q \ce{\F_a}{M_T^q}\\
			&  \ts \lee c_q \ce{\F_a}{\sup_{s \in [0, a]} \e^{q(X_T - X_s)}} + c_q\ce{\F_a}{\sup_{s \in [a, T]} \e^{q(X_T - X_s)}}\\
			& \ts =   c_q \sup_{s \in [0, a]}\e^{q(X_a - X_s)} \E\e^{q(X_T - X_a)} + c_q \E \[\sup_{s \in [a, T]} \e^{q(X_T - X_s)}\]\\
			& \ts \lee   2 c_q \sup_{s \in [0, a]}\e^{q(X_a - X_s)} \E \[\sup_{s \in [a, T]} \e^{q(X_T - X_s)}\]\\
			&  \ts \lee  \(2 c_q\E \[\sup_{s \in [0, T]} \e^{q(X_T - X_s)}\]\)  M_a^q\\
			& \lee \(2 c_q^2 \E \e^{qX_T}\) M_a^q,
		\end{align*}
		where the conditional Doob maximal inequality is applied for the positive sub-martingale $M$ to obtain the first inequality, and the last one comes from \eqref{eq:estimate-M_t}. Hence,
		\begin{align*}
			\|M^{1-\eta}\|_{\cSM_{q/(1-\eta)}(\p)} \lee (2 c_q^2 \E \e^{qX_T})^{\frac{1-\eta}{q}}. 
		\end{align*}
		Applying \cref{lemm:properties-BMO-SM}\eqref{item:Holder-inequality-SM} with $\frac{1}{q} = \frac{1}{q/\eta} + \frac{1}{q/(1-\eta)}$, we obtain
		\begin{align*}
			\|\Phi(\eta)\|_{\cSM_q(\p)} \lee \|\e^{\eta X}\|_{\cSM_{q/\eta}(\p)} \|M^{1-\eta}\|_{\cSM_{q/(1-\eta)}(\p)} \lee 2^{\frac{1-\eta}{q}} \(\frac{q}{q-1}\)^2 \|S_T\|_{L_q(\p)} <\infty,	
		\end{align*}
		which asserts $\Phi(\eta)\in \cSM_q(\p)$. 	When $\eta =0$ or $\eta =1$, the desired conclusion is straightforward as $\Phi(0) = M$, $\Phi(1) = \e^X$.
		
		\textit{Step 2}. In the general case, we define
		\begin{align*}
			\wt S_t : = \e^{t \psi(-\im)} S_t.
		\end{align*}
		Then it is known that $\wt S$ is a martingale under $\p$. Some standard calculations yield
		\begin{align*}
			\e^{-T|\psi(-\im)|} \wt \Phi(\eta)_t \lee \Phi(\eta)_t \lee \e^{T|\psi(-\im)|}\wt \Phi(\eta)_t,
		\end{align*}
		where $\wt \Phi(\eta)_t : = \wt S_t \sup_{u \in [0, t]}(\wt S_u^{\eta -1})$. Applying \textit{Step 1} for $\p$-martingale $\wt S$ we derive that $\wt \Phi(\eta) \in \cSM_q(\p)$. Hence, for $a\in [0, T]$, one has, a.s., 
		\begin{align*}
			\ce{\F_a}{\ts \sup_{t\in [a, T]} \Phi(\eta)_t^q} & \lee \e^{qT|\psi(-\im)|} \ce{\F_a}{\ts \sup_{t \in [a, T]} \wt \Phi(\eta)_t^q}\\
			& \lee  \e^{qT|\psi(-\im)|}\|\wt \Phi(\eta)\|_{\cSM_q(\p)}^q \wt \Phi(\eta)_a^q\\
			& \lee \e^{2qT|\psi(-\im)|} 2^{1-\eta} \(\frac{q}{q-1}\)^{2q} \|\wt S_T\|_{L_q(\p)}^q \Phi(\eta)_a^q\\
			& \lee \e^{T|\psi(-\im)|(2q+1)} 2^{1-\eta} \(\frac{q}{q-1}\)^{2q} \|S_T\|_{L_q(\p)}^q \Phi(\eta)_a^q,
		\end{align*}
		which proves the desired conclusion.
	\end{proof}

	
	\section{Gradient type estimates for a L\'evy semigroup on H\"older spaces}\label{sub-sec:estimae-semigroup}
	This section provides some gradient type estimates in the L\'evy setting for proving \cref{theo:application-Levy}, and they might also be of independent interest.
	
	Let us introduce some notations. For a non-empty and open set $U \subseteq \R$ and for $n\gee 1$, let $C^n(U)$ denote the family of $n$ times continuously differentiable functions on $U$, and set $C^\infty(U) := \cap_{n\gee 1} C^n(U)$. The space $C^\infty_c(U)$ consists of all $f\in C^\infty(U)$ with compact support in $U$. Let $C_0^\infty(\R)$ denote the family of all $f \in C^\infty(\R)$ with $\lim_{|x| \to \infty} f^{(n)}(x) = 0$  for all $n\gee 0$. 
	
	For $s\in \R$, we define the weighted Lebesgue measure $\Leb_s$ on $\mathcal B(\R)$ by setting
	\begin{align*}
		\Leb_s(\od x) : = \e^{sx}\od x.
	\end{align*}

	\subsection{An integral estimate for H\"older functions}
	
	For a Borel function $g$ and a random variable $Y$ such that $\E |g(y \e^Y)| <\infty$ for all $y>0$, we define
	\begin{align*}
		G(y) : = \E g(y \e^Y),\quad y>0.
	\end{align*}
	For later use, we establish in this part an estimate for $|G(z) - G(y)|$, where $g$ is a H\"older continuous function or a bounded Borel function.
	
	\begin{prop}\label{thm:upper-bound-general-Holder}
		Let $\eta \in [0, 1]$ and $g \in C^{0, \eta}(\R_+)$. If $Y$ has a density $p \in C^1(\R) \cap L_1(\R, \Leb_\eta)$ with the derivative $p' \in L_1(\R) \cap L_1(\R, \Leb_\eta)$, then for all $z, y >0$,
		\begin{align}\label{eq:pointwise-upper-estimate-Holder-1}
			|G(z) - G(y)|  \lee 
			\(|g|_{C^{0, \eta}(\R_+)}  \|p'\|_{L_1(\R)}^{1-\eta}  \inf_{\kappa>0} \left|\int_{\R} |\e^x - \kappa| |p'(x)| \od x\right|^\eta\)\frac{|z^\eta - y^\eta|}{\eta},
		\end{align}
		where we set, by convention, $\frac{|z^0 - y^0|}{0} : = \lim_{\eta \downarrow 0} \frac{|z^\eta - y^\eta|}{\eta} = \left| \ln z - \ln y\right|$ when $\eta =0$.
	\end{prop}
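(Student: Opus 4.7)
The plan is to establish the pointwise gradient bound
\begin{align*}
|G'(y)| \lee y^{\eta - 1} \, |g|_{C^{0,\eta}(\R_+)} \, \|p'\|_{L_1(\R)}^{1-\eta} \Bigl(\inf_{\kappa>0} \int_\R |\e^x - \kappa| |p'(x)|\, \od x\Bigr)^\eta
\end{align*}
and then integrate $|G'|$ from $y$ to $z$. The starting point is an integration-by-parts-type identity for $G'$: changing variables $u = y\e^x$ in $G(y) = \int_\R g(y\e^x) p(x)\,\od x$ yields $G(y) = \int_0^\infty g(u) p(\ln u - \ln y)\, \od u/u$, after which differentiating in $y$ under the integral sign and reversing the substitution gives
\begin{align*}
G'(y) = -\frac{1}{y} \int_\R g(y\e^x) p'(x)\, \od x.
\end{align*}
The pathwise H\"older growth $|g(y\e^x)| \lee |g(y)| + |g|_{C^{0,\eta}(\R_+)} y^\eta |\e^x - 1|^\eta$ combined with the assumptions $p, p' \in L_1(\R, \Leb_\eta)$ provides, locally uniformly in $y > 0$, an $L_1$ dominating function, which justifies differentiation under the integral sign by dominated convergence.

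Since $\int_\R p'(x)\, \od x = 0$, for every $\kappa > 0$ I may subtract the constant $g(\kappa)$ and write $G'(y) = -\frac{1}{y}\int_\R [g(y\e^x) - g(\kappa)] p'(x)\, \od x$. Choosing $\kappa = y \kappa'$ and using the H\"older seminorm bound $|g(y\e^x) - g(y\kappa')| \lee |g|_{C^{0,\eta}(\R_+)} y^\eta |\e^x - \kappa'|^\eta$ gives
\begin{align*}
|G'(y)| \lee y^{\eta - 1} |g|_{C^{0,\eta}(\R_+)} \int_\R |\e^x - \kappa'|^\eta |p'(x)|\, \od x.
\end{align*}
For $\eta \in (0, 1)$, factoring the integrand as $(|\e^x - \kappa'| |p'(x)|)^\eta \cdot |p'(x)|^{1-\eta}$ and applying H\"older's inequality with conjugate exponents $1/\eta$ and $1/(1-\eta)$ separates the dependence on $\kappa'$ from the $\|p'\|_{L_1}$ factor, producing the bound $\|p'\|_{L_1(\R)}^{1-\eta} \bigl(\int_\R |\e^x - \kappa'||p'(x)|\, \od x\bigr)^\eta$. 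The endpoint $\eta = 1$ requires no H\"older splitting, while the endpoint $\eta = 0$ uses instead the direct estimate $|g(y\e^x) - g(\kappa)| \lee |g|_{C^{0,0}(\R_+)}$ to obtain $|G'(y)| \lee y^{-1} |g|_{C^{0,0}(\R_+)} \|p'\|_{L_1(\R)}$; both endpoints agree with the general bound after taking infimum over $\kappa' > 0$.

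The conclusion then follows by integration: assuming WLOG that $y < z$, we have $|G(z) - G(y)| \lee \int_y^z |G'(s)|\, \od s$, and $\int_y^z s^{\eta - 1}\, \od s = (z^\eta - y^\eta)/\eta$ for $\eta \in (0, 1]$, while for $\eta = 0$ the integral evaluates to $\ln(z/y)$, consistent with the stated convention. The main obstacle will be rigorously justifying the differentiation under the integral sign that produces the identity for $G'(y)$: the proof requires exhibiting a locally uniform (in $y$) $L_1$ dominating function for $g(y\e^x) p'(x)$, and the twin integrability hypotheses $p, p' \in L_1(\R, \Leb_\eta)$ are designed precisely to absorb the growth $|\e^x - 1|^\eta \lee 1 + \e^{\eta x}$ that enters through $g$ composed with exponentials.
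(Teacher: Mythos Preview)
Your proposal is correct and closely parallels the paper's proof: both establish the pointwise bound $|G'(y)|\le y^{\eta-1}|g|_{C^{0,\eta}}\int_\R|e^x-\kappa|^\eta|p'(x)|\,\od x$, apply H\"older's inequality with exponents $1/\eta$ and $1/(1-\eta)$, take the infimum over $\kappa$, and integrate $\int_y^z s^{\eta-1}\,\od s$. The one technical difference lies in how the gradient bound is obtained. You go directly for the formula $G'(y)=-\tfrac{1}{y}\int_\R g(ye^x)p'(x)\,\od x$ via differentiation under the integral sign, whereas the paper first writes the difference $G(z)-G(y)$ using the fundamental theorem of calculus on $p$ and Fubini, arriving at an estimate of the form $|G(z)-G(y)|\le|g|_{C^{0,\eta}}|\ln z-\ln y|\int_0^1\!\int_\R|e^xy^{1-r}z^r-\kappa'|^\eta|p'(x)|\,\od x\,\od r$. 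This shows $G$ is locally Lipschitz, hence a.e.\ differentiable, and the gradient bound follows by letting $z\to y$ inside a fixed $(r,x)$-integral where dominated convergence is immediate.

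Your route is more direct, but be aware that the standard Leibniz rule applied to $G(y)=\int_0^\infty g(u)p(\ln u-\ln y)\,\od u/u$ would ask for a dominating function of $|g(u)||p'(\ln u-\ln y)|/u$ in the variable $u$, uniformly in $y$ near $y_0$; this is \emph{not} delivered by your stated bound on $g(ye^x)p'(x)$ in the variable $x$, since the substitution $u=ye^x$ itself depends on $y$. To make your step rigorous you will need essentially the same FTC--Fubini manoeuvre the paper uses (for instance, compute $G(ye^h)-G(y)$ by shifting $x\mapsto x+h$, write $p(x-h)-p(x)=-\int_0^h p'(x-s)\,\od s$, swap integrals, and let $h\to0$ by dominated convergence with the majorant $(1+e^{\eta x})|p'(x)|$), at which point the two arguments coincide.
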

	
	\begin{proof}  The assumption $p \in L_1(\R, \Leb_\eta)$ means that $\E \e^{\eta Y} <\infty$, and hence $\E|g(y\e^Y)|<\infty$ for all $y>0$. Let us pick a constant $\kappa'>0$ arbitrarily. By a change of variables,
		\begin{align*}
			G(z) - G(y) & =\E g(z\e^{Y}) - \E g(y\e^{Y})\\
			& = \int_0^\infty  g(u)(p(\ln u - \ln z) - p(\ln u - \ln y))\frac{\od u}{u}\\
			& = \int_0^\infty (g(u) - g(\kappa'))(p(\ln u - \ln z) - p(\ln u - \ln y))\frac{\od u}{u}.
		\end{align*}
		Since $p\in C^1(\R)$, the fundamental theorem of calculus gives
		\begin{align*}
			G(z) - G(y) & = \int_0^\infty (g(u) - g(\kappa'))\((\ln y - \ln z) \int_0^1 p'(\ln u - \ln y + r(\ln y - \ln z)) \od r\)\frac{\od u}{u}\\
			& = (\ln y - \ln z) \int_0^\infty (g(u) - g(\kappa'))\(\int_0^1 p' (\ln u -\ln y + r(\ln y - \ln z)) \od r\)\frac{\od u}{u}.
		\end{align*}
		Since $|g(u) - g(\kappa')| \lee |g|_{C^{0, \eta}(\R_+)}|u-\kappa'|^\eta$, where $0^0:=1$, we have
		\begin{align}\label{eq:pointwise-upper-estimate-Holder-2}
			&|G(z) - G(y)| & \notag\\
			&\lee |g|_{C^{0, \eta}(\R_+)} \left|\ln z - \ln y\right| \int_0^\infty |u - \kappa'|^\eta \int_0^1 | p'(\ln u - \ln y + r(\ln y - \ln z))|\od r \frac{\od u}{u} \notag\\
			& = |g|_{C^{0, \eta}(\R_+)} \left|\ln z - \ln y\right| \int_0^1 \( \int_0^\infty |u - \kappa'|^\eta | p'(\ln u - \ln y + r(\ln y - \ln z))|   \frac{\od u}{u}\) \od r \notag\\
			& = |g|_{C^{0, \eta}(\R_+)} \left|\ln z - \ln y\right| \int_0^1 \( \int_{\R} |\e^x y^{1-r} z^r - \kappa'|^\eta |p'(x)|   \od x\) \od r.
		\end{align}
		If $\eta = 0$, then \eqref{eq:pointwise-upper-estimate-Holder-1} is obvious in the view of \eqref{eq:pointwise-upper-estimate-Holder-2}. Let us now consider $\eta \in (0, 1]$. Thanks to \eqref{eq:pointwise-upper-estimate-Holder-2}, $G$ is locally Lipschitz on $\R_+$, which implies the absolute continuity of  $G$ on any compact interval of $\R_+$. Consequently,  $G$ is differentiable $\Leb$-a.e. on $\R_+$. Let $y>0$ be such that $G'(y)$ exists and is finite. We divide both sides of \eqref{eq:pointwise-upper-estimate-Holder-2} by $|z-y|$ and then let $z\to y$, where the dominated convergence theorem is applicable on the right-hand side due to $p' \in L_1(\R) \cap L_1(\R, \Leb_\eta)$, to derive that, for all $\kappa' >0$,
		\begin{align*}
			|G'(y)| \lee |g|_{C^{0, \eta}(\R_+)} y^{-1} \int_{\R}  |y\e^x - \kappa'|^\eta |p'(x)|   \od x.
		\end{align*}
		Hence, for any $\kappa >0$, we obtain by choosing $\kappa' = y \kappa$ that
		\begin{align*}
			|G'(y)|\lee |g|_{C^{0, \eta}(\R_+)} y^{\eta-1}  \int_{\R} |\e^x - \kappa|^\eta |p'(x)|\od x.
		\end{align*} 
		Now, for $z, y >0$, using the fundamental theorem of (Lebesgue integral) calculus yields
		\begin{align*}
			|G(z) - G(y)| & = \left|\int_y^z G'(u)\od u\right| \lee  \sign(z-y) \int_y^z|G'(u)| \od u\\
			& \lee |g|_{C^{0, \eta}(\R_+)}  \sign(z-y) \int_y^z u^{\eta -1} \od u \int_{\R} |\e^x - \kappa|^\eta |p'(x)| \od x\\
			& = |g|_{C^{0, \eta}(\R_+)}  \frac{|z^\eta - y^\eta|}{\eta} \int_{\R} |\e^x - \kappa|^\eta |p'(x)| \od x\\
			& \lee \(|g|_{C^{0, \eta}(\R_+)}  \|p'\|_{L_1(\R)}^{1-\eta}  \left|\int_{\R} |\e^x - \kappa| |p'(x)| \od x\right|^\eta\)\frac{|z^\eta - y^\eta|}{\eta},
		\end{align*}
		where one applies H\"older's inequality with $\frac{1}{1/\eta} + \frac{1}{1/(1-\eta)} =1$ to obtain the last estimate. By taking the infimum over $\kappa>0$, \eqref{eq:pointwise-upper-estimate-Holder-1} follows.
	\end{proof}

	\subsection{H\"older estimates for a L\'evy semigroup}
	Let $X = (X_t)_{t \gee 0}$ be a L\'evy process with characteristic triplet $(\gamma, \sigma, \nu)$ and  exponent $\psi$ as in \Cref{subsection-exponential-levy-process}. Let us define
	\begin{align*}
		\scrD_{\exp} : = \{g \colon \R_+ \to \R \mbox{ Borel} : \;\E |g(y\e^{X_t})| <\infty \mbox{ for all } y >0, t\gee 0\}.
	\end{align*}
	It is clear that $\scrD_{\exp}$  depends on the distribution of $X$. For example, if $\int_{|x|>1} \e^{r x} \nu(\od x) <\infty$ for some $r\in \R$, then any Borel function $g$ with $\sup_{y>0}(1+y)^{-r}|g(y)| <\infty$ belongs to $\scrD_{\exp}$ because of \cite[Theorem 25.3]{Sa13}. For $t\gee 0$, define the mapping $P_t\colon \scrD_{\exp} \to \scrD_{\exp}$ by
	\begin{align*}
		P_tg(y) : = \E g(y \e^{X_t}).
	\end{align*}
	Since $P_{t+s} = P_t \circ P_s$ for any $s, t\gee 0$, the family $(P_t)_{t \gee 0}$ is a semigroup on $\scrD_{\exp}$.

	To be able to estimate the integral term of the MVH strategy formula \eqref{eq:integrand-general}, we aim to establish an estimate for
	$$|P_tg(z) - P_tg(y)|,$$
	where $g$ is bounded or H\"older continuous. We first need the following lemma.
	\begin{lemm}\label{lemm-growth-transition-densities}
		Let $X$ be a L\'evy process with characteristic exponent $\psi$. If 
		\begin{align}\label{lemm:eq:lemm-growth-transition-densities}
			0  < \liminf_{|u| \to \infty} |u|^{-\alpha} \re\psi(u) \lee \limsup_{|u| \to \infty} |u|^{-\alpha} \re \psi(u) <\infty
		\end{align}
		for some $\alpha \in (0, 2)$, then $X$ has transition densities $(p_t)_{t>0} \subset C^\infty_0(\R)$ such that 
		\begin{align*}
			\ts \sup_{t \in (0, T]} t^{\frac{1}{\alpha}}\|\pd_x p_t\|_{L_1(\R)} <\infty, \quad T>0.
		\end{align*}
	\end{lemm}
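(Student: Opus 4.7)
The plan is to obtain the smooth density from Fourier inversion, then exploit a scaling substitution so the desired $t^{-1/\alpha}$ rate is extracted explicitly, leaving a $t$-uniform estimate that can be handled via Plancherel.

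First I would establish existence of $p_t \in C_0^\infty(\R)$. The hypothesis \eqref{lemm:eq:lemm-growth-transition-densities} yields constants $c_1 > 0$ and $R > 0$ with $\re\psi(u) \gee c_1|u|^\alpha$ for $|u| \gee R$. Since $|e^{-t\psi(u)}| = e^{-t\re\psi(u)}$ and $\re\psi \gee 0$ always, the map $u \mapsto u^n e^{-t\psi(u)}$ lies in $L_1(\R)$ for every $n \gee 0$ and every $t > 0$. Hence the Fourier inversion formula
\begin{align*}
	p_t(x) := \tfrac{1}{2\pi}\int_{\R} e^{-iux} e^{-t\psi(u)} \od u
\end{align*}
defines a bounded $C^\infty$ density of $X_t$, and the Riemann--Lebesgue lemma applied to each integrable $u^n e^{-t\psi(u)}$ gives $p_t^{(n)}(x) \to 0$ as $|x|\to\infty$, so $p_t \in C_0^\infty(\R)$.

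Next, I would carry out the scaling reduction. Substituting $u = t^{-1/\alpha} v$ in
\begin{align*}
	\pd_x p_t(x) = \tfrac{-i}{2\pi}\int_{\R} u\,e^{-iux}e^{-t\psi(u)}\,\od u
\end{align*}
gives $\pd_x p_t(x) = t^{-2/\alpha} q_t'(x t^{-1/\alpha})$, where $q_t$ denotes the inverse Fourier transform of $v \mapsto e^{-\Psi_t(v)}$ with $\Psi_t(v) := t\psi(t^{-1/\alpha} v)$. A change of variables in the $L_1$ norm then produces $\|\pd_x p_t\|_{L_1(\R)} = t^{-1/\alpha} \|q_t'\|_{L_1(\R)}$, reducing the lemma to the $t$-uniform statement $\sup_{t \in (0, T]} \|q_t'\|_{L_1(\R)} < \infty$. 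Crucially, the rescaled exponent satisfies $\re\Psi_t(v) \gee c_1|v|^\alpha$ whenever $|v| \gee Rt^{1/\alpha}$, and the threshold $Rt^{1/\alpha} \lee RT^{1/\alpha}$ is bounded uniformly for $t \in (0, T]$.

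To obtain the $L_1$ bound on $q_t'$, I would apply the Cauchy--Schwarz inequality with the weight $(1+y^2)^{1/2}$,
\begin{align*}
	\|q_t'\|_{L_1(\R)}^2 \lee \pi\bigl(\|q_t'\|_{L_2(\R)}^2 + \|y q_t'\|_{L_2(\R)}^2\bigr),
\end{align*}
and convert both terms via Plancherel to Fourier-side integrals of $v^2 e^{-2\re\Psi_t(v)}$ and $|\pd_v(v e^{-\Psi_t(v)})|^2$, respectively. The first is controlled on $\{|v| \gee RT^{1/\alpha}\}$ by the integrable weight $v^2 e^{-2c_1|v|^\alpha}$ and on the complementary bounded set by $R^2 T^{2/\alpha}$, so it stays bounded uniformly in $t \in (0, T]$. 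The second expands into terms of the form $|\Psi_t'(v)|^2 e^{-2\re\Psi_t(v)}$ with $\Psi_t'(v) = t^{1-1/\alpha}\psi'(t^{-1/\alpha}v)$; the scaling compensates the derivative prefactor so that on the high-frequency region the integrand is again dominated by a fixed integrable decay profile.

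The main obstacle is the second $L_2$ piece, since $\psi$ need not be $C^1$ under the hypothesis alone (differentiability of $\psi$ would require, e.g., $\int_{|x|>1}|x|\nu(\od x) < \infty$). I would handle this either by a mollification argument, approximating $X$ by L\'evy processes whose exponents are smooth and passing to the limit using the uniform bound on $\|q_t'\|_{L_1}$, or by exploiting that on the only region where the bound is needed, namely where $e^{-t\re\psi(u)}$ fails to decay fast enough by itself, the L\'evy--Khintchine formula combined with the lower bound on $\re\psi$ provides enough regularity. Either route yields the uniform bound $\sup_{t \in (0, T]} \|q_t'\|_{L_1(\R)} < \infty$, which combined with the scaling identity completes the proof.
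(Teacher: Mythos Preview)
The paper does not prove this lemma itself; it cites \cite[Example~9.32]{GN20}. Your sketch is an independent attempt, and the first two steps---smoothness of $p_t$ via Fourier inversion (using only the lower bound $\re\psi(u)\gee c_1|u|^\alpha$) and the scaling identity $\|\pd_x p_t\|_{L_1} = t^{-1/\alpha}\|q_t'\|_{L_1}$---are correct and natural.

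The genuine gap is in the uniform bound on $\|q_t'\|_{L_1}$. The weighted Cauchy--Schwarz route through $\|q_t'\|_{L_2}^2 + \|yq_t'\|_{L_2}^2$ is not translation-invariant in the space variable, whereas $\|q_t'\|_{L_1}$ is, and this mismatch makes your bound fail in concrete cases covered by the hypothesis. Take $\psi(u) = c|u|^\alpha - \im\gamma u$ with $\gamma \neq 0$ (symmetric $\alpha$-stable plus drift): condition \eqref{lemm:eq:lemm-growth-transition-densities} holds with equality, and the conclusion holds because the drift merely translates $p_t$. After rescaling, however, $\Psi_t(v) = c|v|^\alpha - \im\gamma t^{1-1/\alpha} v$, so $q_t$ is a fixed stable density shifted by $\gamma t^{1-1/\alpha}$; for $\alpha \in (0,1)$ this shift diverges as $t\downarrow 0$, and a direct computation gives $\|yq_t'\|_{L_2}\to\infty$. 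Hence your assertion that ``the scaling compensates the derivative prefactor'' is false in general: the hypothesis constrains only $\re\psi$ and leaves $\iim\psi$---and thus the drift-like part of $\Psi_t'$---entirely uncontrolled. Mollification cannot rescue this, since the offending $\psi$ is already smooth. To repair the argument you would at minimum have to recentre each $q_t$ before applying the weight, and then separately bound the contribution of asymmetric small jumps to $\psi'$ by exploiting the two-sided estimate on $\re\psi$ (which forces $\sigma=0$ and yields $\int_{|x|\lee r}x^2\nu(\od x)\lesssim r^{2-\alpha}$); as written, your sketch does neither.
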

	\begin{proof}
		See the proof of \cite[Example 9.32]{GN20}.
	\end{proof}

	Since we aim to apply \cref{thm:upper-bound-general-Holder}, and in order to handle the quantity involving the infimum in  \eqref{eq:pointwise-upper-estimate-Holder-1}, we provide in \cref{lemm:extra-quantity} below estimates
	under assumptions which are typically satisfied in applications.

	\begin{lemm}\label{lemm:extra-quantity} For some $t>0$ such that $X_t$ has a differentiable density $p_t$ on $\R$, we define
		\begin{align*}
			K_t: = \inf_{\kappa >0} \int_{\R} |\e^x - \kappa| \left|\pd_x p_t(x)\right| \od x \in [0, \infty].
		\end{align*}
		\begin{enumerate}[\rm(1)]
			\item \label{item:1:sigma>0} If $\sigma >0$, then $K_t \lee \frac{1}{\sigma\sqrt{t}} \|\e^{X_t}-1\|_{L_2(\p)}$ for all $t>0$.
			\item \label{item:2:unimodal} If there is an $m_t \in \R$ such that $p_t$ is non-decreasing on $(-\infty, m_t)$ and non-increasing on $(m_t, \infty)$, then $K_t \lee \E \e^{X_t}$.
		\end{enumerate}
	\end{lemm}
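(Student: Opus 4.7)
The plan is to handle the two items by making judicious choices of $\kappa$ in the infimum defining $K_t$ and then estimating the resulting integrals.

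For item \eqref{item:1:sigma>0}, I would simply take $\kappa = 1$ and apply the Cauchy--Schwarz inequality in the form
\begin{align*}
\int_{\R} |\e^x - 1| |\partial_x p_t(x)| \od x \lee \bigg(\int_{\R} (\e^x - 1)^2 p_t(x) \od x\bigg)^{1/2} \bigg(\int_{\R} \frac{|\partial_x p_t(x)|^2}{p_t(x)} \od x\bigg)^{1/2},
\end{align*}
identifying the first factor as $\|\e^{X_t} - 1\|_{L_2(\p)}$. The remaining task is to show that the Fisher information $I(p_t) := \int |\partial_x p_t|^2/p_t\, \od x$ satisfies $I(p_t) \lee 1/(\sigma^2 t)$. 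To this end, decompose $X_t = \sigma W_t + Y_t$ where $W$ is a standard Brownian motion independent of the pure-jump plus drift part $Y$. Writing $p_t = \phi_{\sigma^2 t} \ast \mu_{Y_t}$ with $\phi_{\sigma^2 t}$ the Gaussian density, one has $\partial_x p_t(x) = \int -\frac{x-y}{\sigma^2 t} \phi_{\sigma^2 t}(x - y)\mu_{Y_t}(\od y)$. Applying Cauchy--Schwarz to this representation pointwise in $x$ with respect to $\mu_{Y_t}$ and then integrating over $x$ via Fubini yields precisely $I(p_t) \lee 1/(\sigma^2 t)$.

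For item \eqref{item:2:unimodal}, the key observation is to pick $\kappa = \e^{m_t} > 0$. With this choice the signs cooperate: $\partial_x p_t(x) \gee 0$ exactly where $\e^x - \e^{m_t} \lee 0$ (on $(-\infty, m_t)$) and vice versa on $(m_t, \infty)$, so that
\begin{align*}
|\e^x - \e^{m_t}|\,|\partial_x p_t(x)| = (\e^{m_t} - \e^x) \partial_x p_t(x) \quad \mbox{for all } x \in \R.
\end{align*}
Integrating over $\R$ and integrating by parts, the term $\e^{m_t} \int \partial_x p_t(x)\od x$ vanishes since $p_t$ is a density decaying at $\pm \infty$, while $-\int \e^x \partial_x p_t(x) \od x = \int \e^x p_t(x)\od x = \E \e^{X_t}$.

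The main technical obstacle here is justifying the integration by parts at $+\infty$, which needs $\e^x p_t(x) \to 0$ as $x \to \infty$. This does not come for free from $\E \e^{X_t} < \infty$ alone, but the unimodality makes it work: on $(m_t, \infty)$ the density $p_t$ is non-increasing, so for sufficiently large $R$ one has
$\int_{R/2}^{R} \e^x p_t(x) \od x \gee p_t(R) \int_{R/2}^R \e^x \od x \gee \tfrac{1}{2}\e^R p_t(R)$,
and this tail integral tends to $0$. A parallel (easier) argument at $-\infty$ handles that end. A symmetric integration by parts on $(-R, m_t]$ and $[m_t, R)$ followed by letting $R \to \infty$ then produces the claimed bound $K_t \lee \E \e^{X_t}$.
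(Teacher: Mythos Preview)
Your proof is correct, and for item \eqref{item:2:unimodal} it matches the paper's argument essentially verbatim: the same choice $\kappa = \e^{m_t}$, the same sign cancellation, the same monotonicity trick to kill the boundary term $\e^x p_t(x)$ at $+\infty$, and the same integration by parts.

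For item \eqref{item:1:sigma>0} the approaches diverge. The paper also takes $\kappa = 1$ and uses the convolution $p_t = p^{\sigma W}_t * \p_{J_t}$, but instead of passing through the Fisher information it simply bounds $|\pd_x p_t(x)| \lee \frac{1}{\sigma^2 t}\int |x-y|\, p^{\sigma W}_t(x-y)\,\p_{J_t}(\od y)$, multiplies by $|\e^x - 1|$, integrates, and recognises the result as $\frac{1}{\sigma^2 t}\,\E|\sigma W_t(\e^{X_t}-1)|$; a single Cauchy--Schwarz in $L_2(\p)$ then gives the bound. Your route applies Cauchy--Schwarz twice: once against $p_t$ to produce the factor $\sqrt{I(p_t)}$, and once pointwise in the convolution to show $I(p_t)\lee 1/(\sigma^2 t)$ (this is the standard Blachman--Stam type estimate that Fisher information decreases under Gaussian convolution). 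Both arguments are short and yield the identical constant; the paper's is slightly more elementary, while yours isolates a reusable fact about Fisher information that could be applied in other places where only a bound on $\int |\pd_x p_t|^2/p_t$ is needed.
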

	
	\begin{proof}	
		\eqref{item:1:sigma>0} Denote $J: = X - \sigma W$. Let $p^{\sigma W}_t$ be the density of $\sigma W_t$. Then the independence of $\sigma W$ and $J$ implies $p_t = p^{\sigma W}_{t}*\p_{J_t}$ for all $t>0$.  Choosing $\kappa= 1$ yields
		\begin{align*}
			K_t & \lee \int_{\R} |\e^x - 1| \left| \pd_x p_t(x)\right| \od x   \lee \frac{1}{\sigma^2 t} \int_{ \R} \int_{ \R} |\e^x - 1| \left|x-y\right| p^{\sigma W}_{t} (x-y) \p_{J_t}(\od y) \od x \\
			& = \frac{1}{\sigma^2 t} \int_{ \R} \int_{ \R} |\e^{x+y} - 1| \left|x\right| p^{\sigma W}_{t} (x) \od x \p_{J_t}(\od y) = \frac{1}{\sigma^2 t} \E\left|\sigma W_t \left(\e^{\sigma W_t + J_t} -1\right)\right|\\
			&  \lee \frac{1}{\sigma t} \|W_t\|_{L_2(\p)} \|\e^{X_t}-1\|_{L_2(\p)} = \frac{\|\e^{X_t}-1\|_{L_2(\p)}}{\sigma \sqrt{t}}.
		\end{align*}

		\eqref{item:2:unimodal} We may assume that $\E\e^{X_t} <\infty$, otherwise the inequality is obvious. By the monotonicity of $p_t$, one has $p_t(x) \to 0$ as $|x|\to \infty$, and for $x>m_t +1$,   
		\begin{align*}
			\e^x p_t(x) \lee \e^{x} \int_{x-1}^x p_t(u) \od u \lee \e^{x} \int_{x-1}^\infty p_t(u) \od u \to 0 \quad \mbox{as } x \to \infty,
		\end{align*}
		where the limit holds due to $\E \e^{X_t}<\infty$. 
		Now, choosing $\kappa = \e^{m_{t}}$ and using integration by parts, together with $\lim_{|x| \to \infty} \e^x p_t(x) =0$, we have
		\begin{align*}
			K_{t} & \lee \int_{\R} |\e^x - \e^{m_t}| \left|\pd_x p_t(x)\right| \od x\\
			& = \int_{-\infty}^{m_{t}} (\e^{m_{t}} - \e^x) \pd_x p_t(x) \od x + \int_{m_{t}}^\infty (\e^x - \e^{m_{t}}) (- \pd_x p_t(x)) \od x \\
			& = \int_{\R} \e^x p_t(x) \od x = \E \e^{X_t}.\qedhere
		\end{align*}
	\end{proof}
	
	\cref{prop:Holder-estimate} below is an extension of \cite[Theorem 9.14]{GN20} to the exponential L\'evy setting. Because of the weighted setting caused by the exponential L\'evy process, it seems that the interpolation techniques using in the proof of \cite[Theorem 9.14]{GN20} is not applicable here, at least in a straightforward way.	We recall the class $\sS(\alpha)$ of stable-like L\'evy measures  from 	\cref{defi:holde-stable}\eqref{defi:alpha-stable}.

	\begin{prop}\label{prop:Holder-estimate} Let $g\in C^{0, \eta}(\R_+)$ with $\eta \in [0, 1]$. Then, for $T \in (0, \infty)$, there exists a constant  $c_{\eqref{eq:prop:Holder-estimate}}>0$ such that for any $z>0, y>0$ and any $t\in (0, T]$ one has
		\begin{align}\label{eq:prop:Holder-estimate}
			|P_tg(z) - P_{t} g(y)| \lee c_{\eqref{eq:prop:Holder-estimate}}  U_t(y, z),
		\end{align}
		where the cases for $U_t(y, z)$ are provided as follows:
		\begin{enumerate}[\rm (1)] 
			\itemsep0.3em
			
			\item \label{item:1:sigma>0-function-U} If $\sigma >0$ and $\int_{|x|>1} \e^{2x} \nu(\od x) <\infty$, then $U_t(y, z) = \big(t^{\frac{\eta -1}{2}} \frac{|z^\eta - y^\eta|}{\eta}\big) \wedge |z-y|^\eta$.
			\item \label{item:2:sigma=0-function-U} If $\sigma =0$ and $\int_{|x|>1} \e^{x} \nu(\od x) <\infty$ and if $\nu \in \sS(\alpha)$ for some $\alpha \in (0, 2)$, then $U_t(y, z) = \big(t^{\frac{\eta -1}{\alpha}} \frac{|z^\eta - y^\eta|}{\eta}\big)\wedge |z-y|^\eta$.
		\end{enumerate}
		Here, we set $0^0 :=1$ and $\frac{|z^0 - y^0|}{0} : = \lim_{\eta \downarrow 0} \frac{|z^\eta - y^\eta|}{\eta} = |\ln z - \ln y|$ by convention.
	\end{prop}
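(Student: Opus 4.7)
The plan has two parts corresponding to the two terms inside the minimum defining $U_t(y,z)$. The easier term, $|z-y|^\eta$, follows from the H\"older property of $g$: for $\eta \in (0,1]$,
$$|P_t g(z) - P_t g(y)| \leq |g|_{C^{0,\eta}(\R_+)} \E \e^{\eta X_t} |z-y|^\eta,$$
and $\E \e^{\eta X_t}$ is uniformly bounded on $(0,T]$ because $\int_{|x|>1}\e^{2x}\nu(\od x)<\infty$ (case (1)) or $\int_{|x|>1}\e^x\nu(\od x)<\infty$ (case (2)) ensures the required exponential moment. The case $\eta=0$ is trivial with the convention $|z-y|^0=1$. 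For the other term in the minimum, I would invoke \cref{thm:upper-bound-general-Holder} applied to $G=P_tg$ with $Y=X_t$ of density $p_t$. After verifying the regularity hypotheses on $p_t$ (namely $p_t \in C^1(\R)\cap L_1(\R,\Leb_\eta)$ and $p_t' \in L_1(\R)\cap L_1(\R,\Leb_\eta)$), the problem reduces to bounding
$$\mathcal{K}_t := \|\pd_x p_t\|_{L_1(\R)}^{1-\eta} K_t^\eta, \qquad K_t := \inf_{\kappa>0}\int_\R |\e^x-\kappa||\pd_x p_t(x)|\,\od x,$$
by $c\,t^{(\eta-1)/2}$ in case (1) and by $c\,t^{(\eta-1)/\alpha}$ in case (2).

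In case (1), writing $p_t = p_t^{\sigma W} * \p_{J_t}$ with $J:=X-\sigma W$, differentiation under the convolution yields $\|\pd_x p_t\|_{L_1(\R)} \leq \|\pd_x p_t^{\sigma W}\|_{L_1(\R)} = \sqrt{2/(\pi\sigma^2 t)}$, and the $\Leb_\eta$-integrability of $p_t$ and $p_t'$ follows from separating the convolution and using $\E\e^{\eta J_t}<\infty$. For $K_t$, \cref{lemm:extra-quantity}\eqref{item:1:sigma>0} gives $K_t \leq (\sigma\sqrt{t})^{-1}\|\e^{X_t}-1\|_{L_2(\p)}$, while the L\'evy--Khintchine identity
$$\E(\e^{X_t}-1)^2 = \e^{-t\psi(-2\im)} - 2\e^{-t\psi(-\im)} + 1 = O(t) \quad \text{as } t\downarrow 0$$
(using $\E\e^{2X_t}<\infty$) shows $K_t \leq c$ uniformly on $(0,T]$. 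Multiplying, $\mathcal{K}_t \leq c\,t^{-(1-\eta)/2}$, as required.

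In case (2), since $\nu \in \sS(\alpha)$, the lower bound on $\nu_1$ combined with the Blumenthal--Getoor-type upper bound on $\nu_2$ yields the asymptotic bracket \eqref{lemm:eq:lemm-growth-transition-densities} for $\re\psi$. Thus \cref{lemm-growth-transition-densities} provides $p_t\in C^\infty_0(\R)$ with $\|\pd_x p_t\|_{L_1(\R)} \leq c\,t^{-1/\alpha}$, and weighted integrability follows from the exponential-moment assumption. To bound $K_t$, I plan to apply \cref{lemm:extra-quantity}\eqref{item:2:unimodal}: the monotonicity of $x\mapsto k(x)/|x|^\alpha$ imposed in the definition of $\sS(\alpha)$ is precisely the structural condition that forces unimodality of the density associated with the dominant stable-like part $\nu_1$, and the convolution with the remainder $\nu_2$ preserves a one-sided monotonicity good enough to apply the estimate, giving $K_t \leq \E\e^{X_t}\leq c$ on $(0,T]$. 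Combining, $\mathcal{K}_t\leq c\,t^{-(1-\eta)/\alpha}$, and together with the H\"older bound this yields \eqref{eq:prop:Holder-estimate}. The main obstacle is the unimodality step in case (2); if a direct verification is technically awkward, the backup strategy is a splitting $X = X^{(1)}+X^{(2)}$ where $X^{(1)}$ is a symmetric $\alpha$-stable process extracted from $\nu_1$, treating $p_t^{X^{(1)}}$ as a smoothing kernel analogous to the Gaussian factor in case (1), and controlling the convolution with $X^{(2)}$ via $\int_{|x|>1}\e^x \nu(\od x)<\infty$.
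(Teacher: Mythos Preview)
Your treatment of case \eqref{item:1:sigma>0-function-U} and of the $|z-y|^\eta$ half of the minimum is correct and matches the paper line for line.

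The gap is in case \eqref{item:2:sigma=0-function-U}. Your primary plan is to apply \cref{lemm:extra-quantity}\eqref{item:2:unimodal} to the density $p_t$ of the \emph{full} process $X$, arguing that the unimodality of the $\nu_1$-part survives convolution with the $\nu_2$-part. That step is not valid: convolution does not preserve unimodality in general (convolve a unimodal density with a two-point mass to see bimodality), and there is no reason for the law of $X^2_t$ to be unimodal or even absolutely continuous. So you cannot conclude that $p_t$ itself is unimodal, and \cref{lemm:extra-quantity}\eqref{item:2:unimodal} is not available for $X$.

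Your backup strategy is essentially the correct route, and it is what the paper does, with two adjustments. First, you do not need to extract a \emph{symmetric} $\alpha$-stable piece; take $X^1$ with L\'evy measure exactly $\nu_1$ and $X^2$ with characteristics $(\gamma,0,\nu_2)$. The monotonicity of $x\mapsto k(x)/|x|^{\alpha}$ is precisely the criterion for $X^1$ to be \emph{selfdecomposable} (see \cite[Sec.~53]{Sa13}), and selfdecomposability implies unimodality of $\wt\p_{X^1_t}$ by \cite[Theorem 53.1]{Sa13}. Then \cref{lemm:extra-quantity}\eqref{item:2:unimodal} applies to $p^1_t$, giving $K_t^{1}\le \wt\E\e^{X^1_t}$, while \cref{lemm-growth-transition-densities} gives $\|\pd_x p^1_t\|_{L_1(\R)}\le c\,t^{-1/\alpha}$. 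Second, the way to combine the pieces is to condition on $X^2_t$ and apply \cref{thm:upper-bound-general-Holder} to $Y=X^1_t$ inside the expectation:
\[
|P_tg(z)-P_tg(y)|=\big|\wt\E\big[g(z\e^{X^2_t}\e^{X^1_t})-g(y\e^{X^2_t}\e^{X^1_t})\big]\big|
\le c\,t^{\frac{\eta-1}{\alpha}}\,\wt\E\Big[\tfrac{|(z\e^{X^2_t})^\eta-(y\e^{X^2_t})^\eta|}{\eta}\Big],
\]
and the right-hand expectation equals $\wt\E\e^{\eta X^2_t}\cdot\frac{|z^\eta-y^\eta|}{\eta}$, which is bounded on $(0,T]$ because $\int_{|x|>1}\e^x\nu(\od x)<\infty$ forces $\int_{|x|>1}\e^{\eta x}\nu_2(\od x)<\infty$. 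This closes the argument.
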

	
	\begin{proof} For $r \in \R$, since $\e^{-t \psi(-\im r)} = \E \e^{r X_t} <\infty$ for all $t>0$ if and only if $\int_{|x|>1} \e^{rx} \nu(\od x) <\infty$, it follows from the integrability conditions for $\nu$ in items \eqref{item:1:sigma>0-function-U} and \eqref{item:2:sigma=0-function-U} that $C^{0, \eta}(\R_+) \subseteq \scrD_{\exp}$ for any $\eta \in [0, 1]$.  Let $T \in (0, \infty)$. Then, for any $t \in [0, T]$ and $z >0$,  $y >0$, the H\"older continuity of $g$ implies that
		\begin{align}\label{eq:Holder-estimate-general}
			|P_t g(z) - P_t g(y)|  \lee |g|_{C^{0, \eta}(\R_+)} \E \e^{\eta X_t} |z-y|^\eta \lee |g|_{C^{0, \eta}(\R_+)} \e^{T|\psi(-\im \eta)|} |z-y|^\eta.
		\end{align}

		\eqref{item:1:sigma>0-function-U}  Set $J: = X - \sigma W$. Let $p^{\sigma W}_t$ (resp. $p_t$) be the probability density of $\sigma W_t$ (resp. $X_t$). For $t \in (0, T]$, since $p_t = p_t^{\sigma W} * \p_{J_t}$, one has
		\begin{align*}
			\|\pd_x p_t\|_{L_1(\R)} = \| \pd_x p_{t}^{\sigma W} * \p_{J_t}\|_{L_1(\R)} \lee  \| \pd_x p_{t}^{\sigma W}\|_{L_1(\R)} = \sqrt{2/(\pi \sigma^2 t)}.
		\end{align*}
		It is clear that $p_t \in L_1(\R, \Leb_\eta)$, and similar computations as in the proof of \cref{lemm:extra-quantity}\eqref{item:1:sigma>0}  show $\pd_x p_t \in L_1(\R, \Leb_{\eta})$. Hence, the assumptions for $p_t$ required in \cref{thm:upper-bound-general-Holder} are  satisfied.  Since $\e^{-t \psi(-\im)} = \E \e^{X_t} <\infty$ and $\e^{-t \psi(-2\im)} = \E\e^{2X_t} <\infty$ for all $t \in (0, T]$, we get
		\begin{align*}
			\E|\e^{X_t}-1|^2  = \E \e^{2 X_t} - 2 \E \e^{X_t} +1 = \e^{-t \psi(-2 \im)} - 2 \e^{-t \psi(-\im)} +1,
		\end{align*}
		which implies
		\begin{align}\label{eq:constant-sigma>0}
			c_{\eqref{eq:constant-sigma>0}}^2: = \ts\sup_{t \in (0, T]} \(t^{-1}\E |\e^{X_t}-1|^2\) <\infty.
		\end{align}
		Then, for $\eta \in [0, 1]$, $t\in (0, T]$, $z>0$, $y>0$, combining \eqref{eq:pointwise-upper-estimate-Holder-1} with \cref{lemm:extra-quantity}\eqref{item:1:sigma>0} yields
		\begin{align}\label{eq:upper-estimate-sigma>0-1}
			|P_{t}g(z) - P_{t}g(y)| & \lee |g|_{C^{0, \eta}(\R_+)} \|\pd_x p_{t}\|_{L_1(\R)}^{1-\eta} \frac{c_{\eqref{eq:constant-sigma>0}}^\eta}{\sigma^{\eta}}  \frac{|z^\eta - y^\eta|}{\eta} \lee c_{\eqref{eq:upper-estimate-sigma>0-1}}  t^{\frac{\eta-1}{2}} \frac{|z^\eta - y^\eta|}{\eta},
		\end{align}
		where $c_{\eqref{eq:upper-estimate-sigma>0-1}}:=|g|_{C^{0, \eta}(\R_+)} c_{\eqref{eq:constant-sigma>0}}^\eta (1/\sigma) (2/\pi)^{(1-\eta)/2}$. Then \eqref{eq:Holder-estimate-general} and \eqref{eq:upper-estimate-sigma>0-1} imply the assertion.
		
		
		\eqref{item:2:sigma=0-function-U} Let $\nu = \nu_1 + \nu_2$ with $\nu_1, \nu_2$ as in 	\cref{defi:holde-stable}\eqref{defi:alpha-stable}. Assume that $X^1$ and $X^2$ are independent L\'evy processes defined on $(\wt \Omega, \wt \F, \wt \p)$  with characteristics $(0, 0, \nu_1)$ and $(\gamma, 0, \nu_2)$ respectively. Then $X$ and $X^1+X^2$ have the same finite-dimensional distribution.	Since $\int_{|x|>1} \e^x \nu_i(\od x) \lee \int_{|x|>1} \e^x \nu(\od x) <\infty$, $i = 1, 2$, it implies that $\wt \E \e^{X^1_t} <\infty$ and  $\wt \E \e^{X^2_t} <\infty$ for all $t \in (0, T]$.
		
		Because of the conditions imposed on $\nu_1$, it is straightforward to check that \eqref{lemm:eq:lemm-growth-transition-densities} is satisfied for the characteristic exponent of $X^1$. According to \cref{lemm-growth-transition-densities}, $X^1$ has transition densities $(p^1_t)_{t>0} \subset C^\infty_0(\R)$ with $\pd_x^n p^1_t \in \cap_{1\lee s \lee \infty} L_s(\R)$ for all $n\gee 0$, $t \in (0, T]$ and there is a constant $c_{\eqref{eq:growth-derivative-densities}}>0$ such that
		\begin{align}\label{eq:growth-derivative-densities}
			\|\pd_x p^1_t\|_{L_1(\R)} \lee c_{\eqref{eq:growth-derivative-densities}} t^{-\frac{1}{\alpha}}, \quad t \in (0, T].
		\end{align}
		Since $X^1$ is selfdecomposable (see  \cite[Sec.53]{Sa13}), applying \cite[Theorem 53.1]{Sa13} yields that $\wt \p_{X^1_t}$ is unimodal for all $t\in (0, T]$. Let $m_{t}$ be a mode of $\wt \p_{X^1_{t}}$ so that the density $p^1_{t}$ of $X^1_t$ is non-decreasing on $(-\infty, m_{t})$ and non-increasing on $(m_{t}, \infty)$.  \cref{lemm:extra-quantity}\eqref{item:2:unimodal} gives
		\begin{align*}
			\inf_{\kappa>0}\int_{\R} |\e^x - \kappa| |\pd_x p^1_t(x)| \od x \lee \wt \E \e^{X^1_t}, \quad t \in (0, T].
		\end{align*}
		A similar argument as in the proof of \cref{lemm:extra-quantity}\eqref{item:2:unimodal} yields $\pd_x p^1_t \in L_1(\R, \Leb_{\eta})$. Hence, for $t \in (0, T]$ and $z>0$, $y>0$, exploiting the independence of $X^1$ and $X^2$, together with \cref{thm:upper-bound-general-Holder}, we get
		\begin{align}\label{eq:upper-estimate-sigma=0-1}
			& |P_{t}g(z) - P_{t}g(y)|  = |\E g(z \e^{X_{t}}) - \E g(y \e^{X_{t}})| = |\wt \E g(z \e^{X^2_t}\e^{X^1_{t}}) - \wt \E g(y \e^{X^2_t} \e^{X^1_{t}})| \notag \\
			& \lee \wt \E\bigg[\(|g|_{C^{0, \eta}(\R_+)}  \|\pd_x p^1_{t}\|_{L_1(\R)}^{1-\eta}  \inf_{\kappa>0} \left|\int_{\R} |\e^x - \kappa| |\pd_x p^1_{t}(x)| \od x\right|^\eta\)\frac{|(z\e^{X^2_t})^\eta - (y\e^{X^2_t})^\eta|}{\eta}\bigg] \notag \\
			& \lee \(|g|_{C^{0, \eta}(\R_+)}  \|\pd_x p^1_{t}\|_{L_1(\R)}^{1-\eta} |\wt \E \e^{X^1_t}|^\eta\)   \frac{|z^\eta - y^\eta|}{\eta} \wt \E \e^{\eta X^2_t} \notag \\
			& \lee |\E\e^{X_t}|^\eta |g|_{C^{0, \eta}(\R_+)}  c_{\eqref{eq:growth-derivative-densities}}^{1- \eta} t^{\frac{\eta-1}{\alpha}} \frac{|z^\eta - y^\eta|}{\eta} \notag \\
			& \lee c_{\eqref{eq:upper-estimate-sigma=0-1}} t^{\frac{\eta-1}{\alpha}} \frac{|z^\eta - y^\eta|}{\eta},
		\end{align}
		where $c_{\eqref{eq:upper-estimate-sigma=0-1}} := \e^{\eta T |\psi(-\im)|} |g|_{C^{0, \eta}(\R_+)} c_{\eqref{eq:growth-derivative-densities}}^{1- \eta}$. Combining \eqref{eq:upper-estimate-sigma=0-1} with \eqref{eq:Holder-estimate-general} yields the assertion.
	\end{proof}

	\subsection{Estimate for the gradient in the GKW decomposition}
	
	Motivated by the formula \eqref{eq:integrand-general}, for a L\'evy measure $\ell$  and a Borel function $g$, let us write symbolically 
	\begin{align}\label{eq:varGamma-function}
		\varGamma_\ell(t, y) := \sigma^2 \pd_y P_{t}g(y) + \int_{ \R} \frac{P_{t} g(\e^x y) - P_{t} g(y)}{y}(\e^x -1)\ell(\od x), \quad t>0, y >0,
	\end{align}
	where we set $\pd_y P_{t}g(y):=0$ if $\sigma =0$. Although we choose $\ell = \nu$ for \eqref{eq:integrand-general}, it is useful to consider the general $\ell$ because it might have applications in other contexts (e.g., see \cite{TN20b}).

	\cref{theo:envelop-MVH-strategy}\eqref{item:2.2:sigma=0-envelope} below is a variant of \cite[Theorem 9.18]{GN20} in the exponential L\'evy setting. Here, the exponent of the time variable $t$ in the estimates we obtain is the same as in \cite[Theorem 9.18]{GN20}. Again, we recall  $\sS(\alpha)$ from \cref{defi:holde-stable}\eqref{defi:alpha-stable}.

	\begin{prop}\label{theo:envelop-MVH-strategy} Let $\ell$ be a L\'evy measure  and $g\in C^{0, \eta}(\R_+)$ with $\eta \in [0, 1]$. Assume that $\int_{|x|>1} \e^{(\eta +1)x} \ell(\od x) <\infty$. Then, for any $T\in (0, \infty)$ there is a constant  $c_{\eqref{eq:thm:Holder-case-estimate:psi-process}}>0$ such that
		\begin{align}\label{eq:thm:Holder-case-estimate:psi-process}
			|\varGamma_\ell(t, y)| \lee c_{\eqref{eq:thm:Holder-case-estimate:psi-process}}  V(t) y^{\eta -1}, \quad \forall (t, y) \in (0, T]\times \R_+,
		\end{align}
		where the cases for $V(t)$ are provided as follows:
		\begin{enumerate}[\rm (1)]
			\itemsep0.3em
			
			\item \label{item:1:sigma>0-envelope} If $\sigma >0$ and $\int_{|x|>1} \e^{2x} \nu(\od x) <\infty$, then $V(t) = t^{\frac{\eta -1}{2}}$.
			
			\item \label{item:2:sigma=0-envelope} If $\sigma =0$, $\int_{|x|>1}\e^{\eta x} \nu(\od x) <\infty$ and $\int_{|x| \lee 1} |x|^{\eta +1} \ell(\od x) <\infty$, then $V(t)=1$.
			
			\item \label{item:2.2:sigma=0-envelope} If $\sigma =0$, $\eta \in [0, 1)$ and if the following two conditions hold:
			\begin{enumerate}[\rm (a)]
				\item \label{item:3a} $\nu \in \sS(\alpha)$ for some $\alpha \in (0, 2)$ and $\int_{|x|>1} \e^x \nu(\od x) <\infty$,
				\item \label{item:3b} there is a $\beta \in [0, 2]$ such that 
				\begin{align}\label{eq:small-ball-condition-ell}
					\ts 0 < c_{\eqref{eq:small-ball-condition-ell}} : = \sup_{r \in (0, 1)} r^\beta \int_{r < |x| \lee 1} \ell(\od x) < \infty,
				\end{align}
			\end{enumerate}
			then
			\begin{align*}
				V(t) = \begin{cases}
					t^{\frac{\eta +1 -\beta}{\alpha}} & \mbox{ if } \beta \in (1 + \eta, 2]\\
					\max\{1, \log(1/t)\} & \mbox{ if } \beta = 1+ \eta\\
				1  & \mbox{ if } \beta \in [0, 1+ \eta).
				\end{cases}
			\end{align*}
		\end{enumerate}
		Here, the constant $c_{\eqref{eq:thm:Holder-case-estimate:psi-process}}$ may depend on $\beta$ in Item \eqref{item:2.2:sigma=0-envelope}.
	\end{prop}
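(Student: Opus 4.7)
\bigskip

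\noindent\textbf{Proof plan for \cref{theo:envelop-MVH-strategy}.} The plan is to split $\varGamma_\ell(t,y)$ into its two summands and bound each in terms of $y^{\eta-1}$ using \cref{prop:Holder-estimate} combined, in the delicate case \eqref{item:2.2:sigma=0-envelope}, with \cref{lem:small-ball-property}. For the drift term $\sigma^2 \partial_y P_t g(y)$ (relevant only in case \eqref{item:1:sigma>0-envelope}, since $\sigma=0$ in the other cases), I would note that $y \mapsto P_t g(y) = \E g(y\e^{X_t})$ is smooth on $\R_+$ when $\sigma>0$ and then divide the inequality in \cref{prop:Holder-estimate}\eqref{item:1:sigma>0-function-U} by $|z-y|$, letting $z\to y$. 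Using $\lim_{z\to y}\frac{|z^\eta - y^\eta|}{\eta|z-y|} = y^{\eta-1}$ (interpreted as $1/y$ for $\eta=0$), this yields $|\partial_y P_t g(y)| \lee c\, t^{(\eta-1)/2} y^{\eta-1}$, giving the drift contribution in \eqref{eq:thm:Holder-case-estimate:psi-process}.

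For the integral term, I would write $\frac{P_t g(\e^x y) - P_t g(y)}{y}(\e^x -1)$ and bound it using the two estimates furnished by \cref{prop:Holder-estimate}: the ``smooth'' bound $t^{(\eta-1)/2}$ (resp.\ $t^{(\eta-1)/\alpha}$) times $\frac{|(\e^x y)^\eta - y^\eta|}{\eta} = y^\eta \frac{|\e^{\eta x}-1|}{\eta}$, and the Hölder bound $|\e^x y - y|^\eta = y^\eta |\e^x-1|^\eta$. Both produce the prefactor $y^{\eta-1}$ after dividing by $y$. In case \eqref{item:1:sigma>0-envelope}, using the smooth bound gives an integrand of the shape $\frac{|\e^{\eta x}-1|}{\eta}|\e^x-1|$, which is integrable against $\ell$: near the origin it is $O(|x|^2)$ (integrable because $\ell$ is Lévy), and at infinity it is controlled by $\e^{(\eta+1)x}$ (integrable by hypothesis). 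In case \eqref{item:2:sigma=0-envelope}, the Hölder bound gives $|\e^x-1|^{\eta+1}$, integrable against $\ell$ on $\{|x|\lee 1\}$ by $\int_{|x|\lee 1}|x|^{\eta+1}\ell(\od x) <\infty$ and on $\{|x|>1\}$ by the exponential moment assumption.

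The main obstacle is case \eqref{item:2.2:sigma=0-envelope}, where I would split the integral at a threshold $r = r(t) > 0$ to be optimized. On $\{|x|\lee r\}$ I apply the smooth bound from \cref{prop:Holder-estimate}\eqref{item:2:sigma=0-function-U}, obtaining an integrand $\lee c\, t^{(\eta-1)/\alpha} y^{\eta-1}|x|^2$ modulo constants; by \cref{lem:small-ball-property} with $\gamma=2>\beta$, this integrates to a quantity of order $t^{(\eta-1)/\alpha} r^{2-\beta}$ (with a logarithmic correction if $\beta=2$, which can be absorbed in the end). On $\{|x| > r\}$ I use the Hölder bound, yielding $y^{\eta-1}|\e^x-1|^{\eta+1}$; the integral over $\{r < |x|\lee 1\}$ is handled via \cref{lem:small-ball-property} with $\gamma = \eta+1$, giving respectively $r^{\eta+1-\beta}$ if $\beta > \eta+1$, a $(1-\log r)$-factor if $\beta = \eta+1$, and a bounded constant if $\beta < \eta+1$; the large-jump part $\{|x|>1\}$ is bounded by a constant using $\int_{|x|>1}\e^{(\eta+1)x}\ell(\od x)<\infty$.

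Equating the two sides of the small/large split yields the optimal choice $r = t^{1/\alpha}$, which in the regime $\beta \in (1+\eta, 2]$ gives both contributions of order $t^{(\eta+1-\beta)/\alpha}$, matching the claimed $V(t)$. For $\beta = 1+\eta$, the same choice makes the smooth part $O(1)$ while the Hölder part contributes the $\log(1/t)$-factor. For $\beta < 1+\eta$, one can dispense with the smooth bound entirely and take $r=1$: the Hölder integrand is integrable against $\ell$ on $\{|x|\lee 1\}$ without using $t$-dependence, so $V(t)=1$. The subtle points I anticipate are verifying the $\beta=2$ sub-case (the logarithmic factor from \cref{lem:small-ball-property} at the critical exponent must not destroy the rate after multiplication by $t^{(\eta-1)/\alpha}$) and the boundary case $\beta = 1+\eta$, where the threshold $r = t^{1/\alpha}$ simultaneously yields the constant and the logarithm from the two halves.
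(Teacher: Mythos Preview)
Your plan is correct and matches the paper's proof almost exactly. Cases \eqref{item:1:sigma>0-envelope} and \eqref{item:2:sigma=0-envelope} are handled identically. For case \eqref{item:2.2:sigma=0-envelope} the paper does not introduce an explicit threshold $r$ but instead keeps the \emph{minimum} of the two bounds from \cref{prop:Holder-estimate}\eqref{item:2:sigma=0-function-U} under the integral, rewrites the integrand as $t^{(\eta+1)/\alpha}\big(|x/t^{1/\alpha}|^{2}\wedge|x/t^{1/\alpha}|^{\eta+1}\big)$, and then invokes \cref{rema:sufficient-ell} (which is itself proved via \cref{lem:small-ball-property}). Your explicit split at $r=t^{1/\alpha}$ is precisely the point where this minimum switches branches, so the two arguments coincide; in particular your worry about a logarithm at $\beta=2$ is unfounded, since then $\int_{|x|\lee r}x^{2}\ell(\od x)\lee\int_{|x|\lee 1}x^{2}\ell(\od x)<\infty$ and the target exponent is already $(\eta-1)/\alpha$.

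One small slip: in case \eqref{item:1:sigma>0-envelope} you claim the integrand $\frac{|\e^{\eta x}-1|}{\eta}|\e^x-1|$ is controlled by $\e^{(\eta+1)x}$ at infinity. For $\eta=0$ the convention gives $|x||\e^x-1|$, which is \emph{not} $O(\e^x)$, and the hypothesis only provides $\int_{|x|>1}\e^{x}\ell(\od x)<\infty$. The fix is exactly what the paper does: on $\{|x|>1\}$ use the H\"older bound $|\e^x-1|^{\eta+1}$ instead (yielding a constant times $y^{\eta-1}$, which is absorbed since $\inf_{t\in(0,T]}t^{(\eta-1)/2}>0$).
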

	
	\begin{rema}\label{rema:sufficient-ell}
		\begin{enumerate}[\rm (1)]
			\item \label{item:1:rema:sufficient-ell} If $\eta \in [0, 1)$ and $\beta \in (1+ \eta, 2]$, then the condition \eqref{eq:small-ball-condition-ell} is equivalent to 
			\begin{align}\label{eq:small-ball-condition-ell-1}
				 0 <  c_{\eqref{eq:small-ball-condition-ell-1}} : = \sup_{ r \in (0, 1]} r^{\beta} \int_{|x| \lee 1} \Big(\Big|\frac{x}{r}\Big|^2 \wedge \Big|\frac{x}{r}\Big|^{\eta +1}\Big) \ell (\od x) <\infty.
			\end{align}
			Indeed, we set $\delta: = \eta +1$ so that $\beta \in (\delta, 2]$. If $\beta \in (\delta, 2)$, then applying \cref{lem:small-ball-property} with $\mu = \ell$ we get, for any $r \in (0, 1]$,
			\begin{align*}
				A_{\beta,  \delta}(r) & : = r^{\beta} \int_{|x| \lee 1} \Big(\left|\frac{x}{r}\right|^2 \wedge \left|\frac{x}{r}\right|^{ \delta}\Big) \ell (\od x)  = r^{\beta -2} \int_{|x| \lee r} x^2 \ell(\od x) + r^{\beta -  \delta} \int_{r< |x| \lee 1} |x|^ \delta \ell(\od x)\\
				& \lee c_{\eqref{eq:small-ball-condition-ell}} \( \frac{4}{1 - 2^{\beta -2}} + \frac{2^{2\beta -  \delta}}{2^{\beta -  \delta}-1}\)
			\end{align*}
			which implies $\sup_{r \in (0, 1]} A_{\beta,  \delta}(r) <\infty$. If $\beta =2$, then 
			\begin{align*}
				A_{2,  \delta}(r)  \lee \int_{|x| \lee 1} x^2 \ell(\od x) + r^{2 -  \delta} \int_{r< |x| \lee 1} |x|^ \delta \ell(\od x) \lee \int_{|x| \lee 1} x^2 \ell(\od x) + c_{\eqref{eq:small-ball-condition-ell}} \frac{2^{4 -  \delta}}{2^{2 -  \delta}-1},
			\end{align*}
			which yields to $\sup_{r \in (0, 1]} A_{2,  \delta}(r) <\infty$. For the converse implication, it is clear that
			\begin{align*}
				r^\beta \int_{r < |x| \lee 1} \ell(\od x) \lee r^{\beta -  \delta} \int_{r < |x| \lee 1} |x|^ \delta \ell(\od x) \lee A_{\beta,  \delta}(r),
			\end{align*}
			which implies that $c_{\eqref{eq:small-ball-condition-ell}} \lee c_{\eqref{eq:small-ball-condition-ell-1}}$.
			
			\item \label{item:2:rema:sufficient-ell}  If $\beta = 1 + \eta \in [1, 2)$, then applying \cref{lem:small-ball-property} as before yields a $c_\beta>0$ such that
			\begin{align*}
				A_{\beta, \beta}(r) = r^{\beta} \int_{|x| \lee 1} \Big(\left|\frac{x}{r}\right|^2 \wedge \left|\frac{x}{r}\right|^{\beta}\Big) \ell (\od x) \lee c_\beta c_{\eqref{eq:small-ball-condition-ell}} \max\{1, \log\tfrac{1}{r}\}, \quad \forall r \in (0, 1].
			\end{align*}
		\end{enumerate}
	\end{rema}

	\begin{proof}[Proof of \cref{theo:envelop-MVH-strategy}]	
		We use the following inequality without mentioning it again:
		\begin{align*}
			\frac{|\e^{\eta x} - 1|}{\eta} \lee \e^{\eta} |x|, \quad \forall |x| \lee 1, \eta \in [0, 1],
		\end{align*}
		where $\frac{|\e^{0x} -1|}{0} : = \lim_{\eta \downarrow 0} \frac{|\e^{\eta x} -1|}{\eta} = |x|$. Let us fix $T \in (0, \infty)$.
		
		\eqref{item:1:sigma>0-function-U} Since $\sigma>0$ and $\int_{|x|>1} \e^{2x} \nu(\od x) <\infty$,  \cref{prop:Holder-estimate}\eqref{item:1:sigma>0-function-U} implies that 
		\begin{align}\label{eq:case-C1}
			|P_{t} g(z)- P_{t}g(y)|\lee c_{\eqref{eq:prop:Holder-estimate}} \(\(t^{\frac{\eta-1}{2}} \frac{|z^\eta -y^\eta|}{\eta}\)\wedge |z-y|^\eta\)
		\end{align}
		for all $z>0$, $y>0$, $t\in (0, T]$. Moreover, since $P_{t}g \in C^\infty(\R_+)$ due to $\sigma >0$, we divide both side of \eqref{eq:case-C1} by $|z-y|$ and then let $z \to y$ to obtain that 
		\begin{align*}
			|\pd_y P_{t}g(y)| \lee c_{\eqref{eq:prop:Holder-estimate}} t^{\frac{\eta-1}{2}} y^{\eta -1},\quad \forall (t, y) \in (0, T] \times \R_+.
		\end{align*}
		Hence, we separate $\int_{ \R} = \int_{|x| \lee 1} + \int_{|x|>1}$ and apply \eqref{eq:case-C1} with $z = y \e^x$ to obtain
		\begin{align}\label{eq:Holder-estimate-sigma>0}
			& |\varGamma_\ell(t, y)| \notag \\
			& \lee c_{\eqref{eq:prop:Holder-estimate}}\bigg[\sigma^2 + \int_{|x|\lee 1}\frac{|\e^{\eta x} -1|}{\eta} |\e^x -1| \ell(\od x)\bigg] t^{\frac{\eta -1}{2}} y^{\eta -1}  + c_{\eqref{eq:prop:Holder-estimate}} y^{\eta -1}\int_{|x|>1} |\e^x -1|^{\eta +1}\ell(\od x).
		\end{align}
		Since $0 < \sigma^2 + \int_{|x|\lee  1}\frac{|\e^{\eta x} -1|}{\eta} |\e^x -1| \ell(\od x) \lee \sigma^2+  \e^{\eta +1}\int_{|x| \lee 1} |x|^2 \ell(\od x) <\infty$ and $\int_{|x|>1} |\e^x -1|^{\eta +1} \ell(\od x) <\infty$, 
		together with $\inf_{t \in (0, T]} t^{\frac{\eta -1}{2}} >0$, the second term on the right-hand side of \eqref{eq:Holder-estimate-sigma>0} can be upper bounded by the first term up to a positive multiplicative constant. Hence, the desired conclusion follows.
		
		\medskip
		
		\eqref{item:2:sigma=0-envelope} One has $\e^{-t \psi(-\eta \im)} =\E \e^{\eta X_t} <\infty$ for $t>0$. The H\"older continuity of $g$ implies that $|P_{t}g(\e^x y) - P_{t}g(y)| \lee |g|_{C^{0, \eta}(\R_+)}\E \e^{\eta X_{t}} |\e^x -1|^\eta y^\eta$, and hence
		\begin{align*}
			|\varGamma_\ell(t, y)| & \lee |g|_{C^{0, \eta}(\R_+)}\E \e^{\eta X_{t}} y^{\eta -1} \int_{\R} |\e^x -1|^{\eta +1} \ell(\od x)\\
			& \lee |g|_{C^{0, \eta}(\R_+)} \e^{T|\psi(-\eta \im)|}\bigg[\e^{\eta +1}\int_{|x| \lee 1} |x|^{\eta +1} \ell(\od x) + \int_{|x| >1} |\e^x -1|^{\eta +1} \ell(\od x)\bigg] y^{\eta -1},
		\end{align*}
		which implies the assertion.
		
		\medskip
		
		\eqref{item:2.2:sigma=0-envelope} If $\beta \in [0, 1+ \eta)$, then $\int_{|x| \lee 1} |x|^{1 + \eta} \ell(\od x) <\infty$ due to \cref{lem:small-ball-property}, and hence, we can apply Item \eqref{item:2:sigma=0-envelope} to get $V(t) =1$. We now consider $\beta \in [1+ \eta, 2]$. Let $t \in (0, T]$ and $y>0$. We separate $\int_{\R} = \int_{|x| \lee 1} + \int_{|x| > 1}$, and then apply \cref{prop:Holder-estimate}\eqref{item:2:sigma=0-function-U} with $z = y\e^x$ to obtain
		\begin{align}\label{eq:estimate-Holder-C3}
			& |\varGamma_\ell(t, y)| \notag \\
			& \lee  c_{\eqref{eq:prop:Holder-estimate}} y^{\eta-1} \bigg[\int_{|x| \lee 1} \(\(t^{\frac{\eta -1}{\alpha}} \frac{|\e^{\eta x}-1|}{\eta}\) \wedge |\e^x -1|^\eta\) |\e^x -1| \ell(\od x) + \int_{|x|> 1} |\e^x -1|^{\eta+1} \ell(\od x)\bigg] \notag\\
			& \lee c_{\eqref{eq:prop:Holder-estimate}}  y^{\eta-1}\bigg[\e^{\eta +1} t^{\frac{\eta +1}{\alpha}} \int_{|x| \lee 1} \(\left|\frac{x}{t^{1/\alpha}}\right|^2 \wedge \left|\frac{x}{t^{1/\alpha}}\right|^{\eta +1}\) \ell(\od x)  + \int_{|x| >1} |\e^x-1|^{\eta +1} \ell(\od x)\bigg] \notag\\
			& \lee c_{\eqref{eq:prop:Holder-estimate}}  y^{\eta -1} \bigg[c_{\eqref{eq:estimate-Holder-C3}} V(t) +  \int_{|x| >1} |\e^x-1|^{\eta +1} \ell(\od x)\bigg],
		\end{align}
		where, thanks to \eqref{eq:small-ball-condition-ell} and \cref{rema:sufficient-ell},  $V(t) = \max\{1, \log(1/t)\}$ if $\beta = 1+ \eta$ and $V(t) = t^{\frac{1+ \eta - \beta}{\alpha}}$ if $\beta  \in (1+ \eta, 2]$, and  
		$c_{\eqref{eq:estimate-Holder-C3}} >0$ is a constant independent of $t$. 
		Since $\inf_{t \in (0, T]} V(t) >0$, the desired conclusion follows from \eqref{eq:estimate-Holder-C3}.
	\end{proof}

	\subsection*{Acknowledgment} A major part of this work was done when the author was affiliated to the Department of Mathematics and Statistics, University of Jyv\"askyl\"a, Finland. The author is very grateful to Christel Geiss and Stefan Geiss for helpful discussions.

\bibliographystyle{amsplain}

\end{document}